\documentclass[12pt]{amsart}
\usepackage[margin=1.2in]{geometry}
\usepackage{graphicx,latexsym}
\usepackage{tikz}
\usepackage{amsfonts, amssymb, amsmath, amsthm}


\newcommand{\N}{\mathbb{N}}
\newcommand{\Z}{\mathbb{Z}}
\newcommand{\R}{\mathbb{R}}
\newcommand{\C}{\mathbb{C}}

\newcommand{\zw}{\mathcal{A}_{\omega}}

\newcommand{\ow}{\mathcal{O}_{\omega}}
\newcommand{\of}{\mathcal{O}_{(\omega)}}

\newcommand{\pw}{\mathcal{P}_{\omega}}

\newcommand{\dx}{{\rm d}x }

\newcommand{\dv}{{\rm d}v }
\newcommand{\dt}{{\rm d}t }
\newcommand{\dxi}{{\rm d}\xi }
\newcommand{\dz}{{\rm d}z }
\newcommand{\dml}{{\rm d}m(\lambda) }
\newcommand{\dl}{{\rm d}\lambda }
\newcommand{\dzeta}{{\rm d}\zeta }

\usetikzlibrary{arrows,chains,matrix,positioning,scopes}
\makeatletter
\tikzset{join/.code=\tikzset{after node path={%
\ifx\tikzchainprevious\pgfutil@empty\else(\tikzchainprevious)%
edge[every join]#1(\tikzchaincurrent)\fi}}}
\makeatother
\tikzset{>=stealth',every on chain/.append style={join},
         every join/.style={->}}
\tikzstyle{labeled}=[execute at begin node=$\scriptstyle,
   execute at end node=$]

\newtheorem{theorem}{Theorem}[section]
\newtheorem{proposition}[theorem]{Proposition}
\newtheorem{lemma}[theorem]{Lemma}
\newtheorem{corollary}[theorem]{Corollary}

\theoremstyle{definition}

\theoremstyle{remark}
\newtheorem{remark}[theorem]{Remark}
\newtheorem{examples} [theorem]{Examples}

\numberwithin{equation}{section}

\begin{document}
\title[Hyperfunctions and ultrahyperfunctions of fast growth]{On the non-triviality of certain spaces of analytic functions. Hyperfunctions and ultrahyperfunctions of fast growth}

\author[A. Debrouwere]{Andreas Debrouwere}
\address{Department of Mathematics, Ghent University, Krijgslaan 281, 9000 Gent, Belgium}
\email{Andreas.Debrouwere@UGent.be}
\thanks{A. Debrouwere gratefully acknowledges support by Ghent University, through a BOF Ph.D.-grant.}

\author[J. Vindas]{Jasson Vindas}
\thanks{The work of J. Vindas was supported by Ghent University, through the BOF-grant 01N01014.}
\address{Department of Mathematics, Ghent University, Krijgslaan 281, 9000 Gent, Belgium}
\email{Jasson.Vindas@UGent.be}

\subjclass[2010]{Primary 30D60, 46E10, 46F15. Secondary 46F05, 46F12, 46F20}
\keywords{spaces of analytic functions; hyperfunctions; ultrahyperfunctions; ultradistributions; boundary values; analytic representations; non-triviality; Laplace transform; Gelfand-Shilov spaces}
\begin{abstract}
We study function spaces consisting of analytic functions with fast decay on horizontal strips of the complex plane with respect to a given weight function. 
 Their duals, so called spaces of (ultra)hyperfunctions of fast growth, generalize the spaces of Fourier hyperfunctions and Fourier ultrahyperfunctions.  An analytic representation theory for their duals is developed and applied to characterize the non-triviality of these function spaces 
 in terms of the growth order of the weight function. In particular, we show that the Gelfand-Shilov spaces of Beurling type $\mathcal{S}^{(p!)}_{(M_p)}$ and Roumieu type $\mathcal{S}^{\{p!\}}_{\{M_p\}}$ are non-trivial if and only if
$$
\sup_{p \geq 2}\frac{(\log p)^p}{h^pM_p} < \infty,
$$
for all $h > 0$ and some $h > 0$, respectively. We also study boundary values of holomorphic functions in spaces of ultradistributions of exponential type, which may be of quasianalytic type.

\end{abstract}

\maketitle

%
%
%
\section{Introduction}

The purpose of this paper is to introduce and analyze two families of spaces of analytic functions and provide an analytic representation theory for their duals. Our function spaces consist of analytic functions with very fast decay on strips of the complex plane with respect to a weight function. Their duals lead to new classes of hyperfunctions and ultrahyperfunctions of `fast growth', and generalize the Fourier hyperfunctions and the Fourier ultrahyperfunctions.

Fourier hyperfunctions were systematically studied by Kawai in \cite{Kawai}, and their local theory includes that of Sato's hyperfunctions \cite{Kaneko,Morimoto2}. In one dimension, their global sections on $\overline{\mathbb{R}}=[-\infty,\infty]$ are the dual of the space of analytic functions with exponential decay on some (horizontal) strip, the latter test function space coincides with the Gelfand-Shilov space $\mathcal{S}^{\{1\}}_{\{1\}}=\mathcal{S}^{\{p!\}}_{\{p!\}}$ of Roumieu type \cite{PilipovicK,Gelfand}. Moreover, in the cohomological approach, this dual space can be represented as the quotient of the space of analytic functions defined outside the real line and having infra-exponential growth outside every strip containing the real line modulo its subspace of entire functions of infra-exponential type.  We mention that the use of analytic functions for the representation of dual spaces has a long tradition, which goes back to the pioneer works of K\"{o}the \cite{Kothe2,Kothe3} and Silva \cite{Silva1956,Silva}. We refer to the monographs \cite{PilipovicK,C-M} for accounts on analytic representations of (ultra)distributions, see also \cite{d-p-v,f-g-g} for recent results.

Interestingly, several basic problems in the theory of PDE naturally lead to ultradistribution spaces whose elements are not hyperfunctions \cite{Gelfand2}. Important instances of such spaces are the spaces of tempered ultrahyperfunctions and Fourier ultrahyperfunctions, introduced in one-dimension by Silva \cite{Silva} and in several variables by Hasumi \cite{hasumi} and Park and Morimoto \cite{Park}, see also \cite{Hoskins,Morimoto,Morimoto-78,SousaPinto,zharinov}. 
More recently, microlocal analysis, edge of the wedge theorems, and Bochner-Schwartz theorems in the context of ultrahyperfunctions have been investigated in \cite{b-n2010,franco-renoldi2007,y-s2006}; applications of tempered ultrahyperfunctions can be found e.g. in \cite{estrada-vindasB2013,oka-yoshino}. Also in recent times, ultrahyperfunctions have shown to be quite useful in mathematical physics, particularly as a framework for Wightman-type axiomatic formulations of relativistic quantum field theory with a fundamental length \cite{b-n2004,franco-l-r,N-B,soloviev}.

In this article we are interested in the following generalization of Kawai's and Silva's works (see also \cite[Chap.\ 1]{Gelfand2}).
Let $\omega:[0,\infty)\to[0,\infty)$ be a non-decreasing function. For $h>0$ we denote by $T^{h}$ the horizontal strip of the complex plane $|\operatorname{Im}  z|<h$. We shall study the space $\mathcal{U}_{(\omega)}(\C)$ of entire functions $\varphi$ satisfying
\begin{equation}\label{eq1intro}
 \sup_{z \in T^h}|\varphi(z)|e^{\omega(\lambda |\operatorname{Re}z|)} < \infty, 
 \end{equation}
for every $h,\lambda>0$, and the space $\mathcal{A}_{\{\omega\}}(\R)$ of analytic functions $\varphi$ defined on some strip $T^h$and satisfying the estimate (\ref{eq1intro})
for some $h, \lambda > 0$. We call their duals $\mathcal{U}'_{(\omega)}(\C)$ and  $\mathcal{A}'_{\{\omega\}}(\R)$ the spaces of \emph{ultrahyperfunctions of $(\omega)$-type} and \emph{hyperfunctions of $\{\omega\}$-type}, respectively. When $\omega(t)=t$, one recovers the spaces of Fourier ultrahyperfunctions and Fourier hyperfunctions.

The first natural question to be addressed is whether these spaces are non-trivial. We shall provide a necessary and sufficient condition on the growth of $\omega$ characterizing the non-triviality of both spaces. One of our main results asserts that $\mathcal{U}_{(\omega)}(\C)$ contains a non-identically zero function if and only if 

\begin{equation}
\label{eq2intro}\lim_{t\to\infty} e^{-\mu t}\omega(t)=0,
\end{equation}
for each $\mu>0$, while the corresponding non-triviality assertion holds for $\mathcal{A}_{\{\omega\}}(\R)$ if only if (\ref{eq2intro}) is satisfied for some $\mu>0$. We remark that this characterization is of similar nature to the Denjoy-Carleman theorem in the theory of ultradifferentiable functions \cite{Bjorck,Komatsu}. In the case of $\mathcal{A}_{\{\omega\}}(\R)$, the result will follow from complex analysis arguments. The analysis of $\mathcal{U}_{(\omega)}(\C)$ requires a more elaborate treatment, involving duality arguments and analytic representations.

It is worth pointing out that when $\omega$ is the associated function \cite{Komatsu} of a logarithmically convex weight sequence  $(M_{p})_{p\in\mathbb{N}}$, our test function spaces coincide with Gelfand-Shilov spaces of mixed type \cite{PilipovicK,Gelfand}, that is, $\mathcal{U}_{(\omega)}(\C)=\mathcal{S}^{(p!)}_{(M_p)}$  and $\mathcal{A}_{\{\omega\}}(\R)=\mathcal{S}^{\{p!\}}_{\{M_p\}}$. Specializing our result, we obtain (cf. Proposition \ref{pro-epsilon-sequences}): $\mathcal{S}^{(p!)}_{(M_p)}$ and $\mathcal{S}^{\{p!\}}_{\{M_p\}}$ are non-trivial if and only if the weight sequence satisfies the mild lower bound
\begin{equation}\label{eq3intro}
\sup_{p \geq 2}\frac{(\log p)^p}{h^pM_p} < \infty,
\end{equation}
for all $h > 0$ in the Beurling case and for some $h > 0$ in the Roumieu case; see Example \ref{ex1sequences} for instances of sequences that satisfy (\ref{eq3intro}).
Notice that finding precise conditions on two weight sequences that characterize the non-triviality of $\mathcal{S}^{(N_p)}_{(M_p)}$ and $\mathcal{S}^{\{N_p\}}_{\{M_p\}}$ is a long-standing open question, raised by Gelfand and Shilov \cite[Chap.\ 1]{Gelfand2}. Our result then solves this question when one fixes $N_{p}=p!$.

 Our second goal is to give an analytic representation theory for the dual spaces $\mathcal{U}'_{(\omega)}(\C)$ and $\mathcal{A}'_{\{\omega\}}(\R)$. We will show that every ultrahyperfunction of $(\omega)$-type (hyperfunction of $\{\omega\}$-type) can be represented as the boundary value of an analytic function defined outside some strip (outside the real line) and satisfying bounds of type $O(e^{\omega(\lambda |\operatorname*{Re} z|)})$ on substrips of its domain. Furthermore, we prove a result concerning the analytic continuation of functions whose boundary values give rise to the zero functional, which can either be viewed as a weighted version of Painlev\'e's theorem on analytic continuation or as a one-dimensional version of the edge of the wedge theorem. This allows us to express $\mathcal{U}'_{(\omega)}(\C)$ and $\mathcal{A}'_{\{\omega\}}(\R)$ as quotients of certain spaces of analytic functions. We mention that in order to establish the non-triviality of $\mathcal{U}_{(\omega)}(\C)$, we should already pass through the analytic representation theory of some intermediate duals of certain spaces of analytic functions.

As an application of our ideas, we shall study in the last part of the article boundary values of holomorphic functions in spaces of ultradistributions of exponential type. Such spaces are defined in terms of a weight function $\omega$ that is subadditive but not necessarily non-quasianalytic, and they are the duals of spaces of ultradifferentiable functions that generalize the Hasumi-Silva space $\mathcal{K}_{1}(\mathbb{R})$ of exponentially rapidly decreasing smooth functions \cite{Hoskins,hasumi,zielezny}. We use here some variants of the theory of almost analytic extensions \cite{PilipovicK,Petzsche} and Laplace transform characterizations of several interesting subspaces of Fourier (ultra)hyperfunctions. In order to study the Laplace transform, we introduce a notion of support for (ultra)hyperfunctions of fast growth in the spirit of Silva \cite{Silva} and provide a support separation theorem.

The paper is organized as follows. Section \ref{sect-weight} is devoted to the study of some properties of weight functions. Many crucial arguments in the article depend upon the existence of analytic functions satisfying certain lower and upper bounds with respect to a  weight function on a strip, Section \ref{harmonic} deals with the construction of such analytic functions. We also prove there a quantified Phragm\'en-Lindel\"of type result for analytic functions on strips. Basic properties of the test function spaces $\mathcal{U}_{(\omega)}(\C)$ and $\mathcal{A}_{\{\omega\}}(\R)$ are discussed in Section \ref{testfunctions}, where we determine their images under the Fourier transform as well. The non-triviality theorem for $\mathcal{A}_{\{\omega\}}(\R)$ is also shown in Section \ref{testfunctions}. In Section \ref{section boundary values} we present the analytic representation theory for $\mathcal{U}'_{(\omega)}(\C)$ and $\mathcal{A}'_{\{\omega\}}(\R)$, and, as an application, we characterize the non-triviality of the space $\mathcal{U}_{(\omega)}(\C)$. We introduce the notion of (real) support in Section \ref{sect-support} and provide a support separation theorem. In Section \ref{section subadditive}, we give a variant of the theory from the previous sections that applies to spaces defined via subadditive weights. For a weight function $\omega$, the modification consists in replacing (\ref{eq1intro}) in the definition of the test functions spaces by estimates of the form
\[
 \sup_{z \in T^h}|\varphi(z)|e^{\lambda \omega(|\operatorname{Re}z|)} < \infty.
\]
We mention that, in the Roumieu case, these test function spaces were investigated by Langenbruch \cite{Langenbruch} under a mild condition (much weaker than subadditivity) on the weight function $\omega$ (see Remark \ref{remark-non-triv}). If $\omega$ is subadditive, the resulting dual spaces are subspaces of the Fourier (ultra)hyperfunctions, which we employ to introduce spaces of ultradistributions of exponential type as their images under the Fourier transform, in analogy to the Beurling-Bj\"{o}rck approach to ultradistribution theory \cite{Bjorck}. We conclude the article with the study of boundary values of analytic functions in these ultradistribution spaces of exponential type in Section \ref{section laplace}. 

\section{Weight functions}\label{sect-weight}
In this preliminary section we prove some auxiliary results on weight functions that will be used later in this work. We also discuss the special case when the weight function arises as the associated function of a weight sequence. Another class of weight functions will be considered in Section \ref{section subadditive}.

A weight function  is simply a non-decreasing function $\omega: [0,\infty) \rightarrow [0,\infty)$. Unless otherwise stated, we shall always assume throughout Sections \ref{sect-weight}-\ref{sect-support}  that $\omega$ satisfies

\begin{equation} \lim_{t \to \infty} \frac{\omega(t)}{\log t} = \infty.
\label{fasterthanlog}
\end{equation}
We often consider the ensuing additional conditions on weight functions:  
\begin{itemize}
\item[$(\delta)\phantom-$] $\displaystyle 2\omega(t) \leq \omega(Ht) + \log A$, $t \geq 0$, for some $A, H \geq 1$,
\item[$(\epsilon)_{0\phantom0}$] $\displaystyle \int_0^\infty \omega(t) e^{-\mu t}\dt < \infty$ for all $\mu > 0$, 
\item[$(\epsilon)_{\infty}$]  $\displaystyle \int_0^\infty \omega(t) e^{-\mu t}\dt < \infty$ for some $\mu > 0$. 
\end{itemize}
We also introduce the following quantified version of $(\epsilon)_0$ and $(\epsilon)_\infty$:
\begin{itemize}
\item[$(\epsilon)_{\mu\phantom0}$]  $\displaystyle \int_0^\infty \omega(t) e^{-\mu t}\dt < \infty$, \qquad  $\mu > 0$.
\end{itemize}
We extend $\omega$ to the whole real line by setting $\omega(t) = \omega(|t|)$, $t \in \R$. Furthermore, for $\lambda > 0$ we employ the short-hand notation $\omega_\lambda(t) = \omega(\lambda t)$. 

The relation $\omega \subset \sigma$ between two weight functions means that there are $C,\lambda>0$ such that 
$$
\sigma(t) \leq \omega_{\lambda}(t) + C, \qquad t \geq 0.
$$
The stronger relation $\omega \prec \sigma$ means that the latter inequality remains valid for every $\lambda>0$ and suitable $C=C_{\lambda}>0$. The reader should keep in mind that these two relations ``reverse'' orders of growth. We say that $\omega$ and $\sigma$ are \emph{equivalent}, denoted by $\omega \sim \sigma$, if both $\omega \subset \sigma$ and $\sigma \subset \omega$ hold.
\begin{examples} Some examples of weight functions are:
\begin{itemize}
\item $t^s, \quad s > 0,$ 
\item $ \exp(t^s\log^r(1+t)), \quad 0 \leq s < 1,\: r \geq 0, \: sr > 0,$
\item $\displaystyle\exp\left(\frac{t}{\log^s(e+t)}\right), \quad s > 0,$
\item $\ e^{t}$.
\end{itemize}
They all satisfy $(\delta)$. Moreover, the first three of them fulfill $(\epsilon)_0$,  while the last one satisfies $(\epsilon)_\infty$ but not $(\epsilon)_0$.  
\end{examples}

The next lemma gives alternative forms of $(\epsilon)_{0}$ and $(\epsilon)_{\infty}$.
\begin{lemma} \label{growth}
Let $\nu > \mu > 0$ and suppose $\omega$ is a weight function satisfying $(\epsilon)_\mu$, then $\omega(t) = o(e^{\nu t})$. Consequently, $\omega$ satisfies $(\epsilon)_0$ ($(\epsilon)_\infty$) if and only if $e^t \prec \omega$ ($e^t \subset \omega$).
\end{lemma}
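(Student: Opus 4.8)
The plan is to first establish the pointwise bound $\omega(t)=o(e^{\nu t})$ and then read off the two equivalences by combining it with elementary Laplace-type integral estimates. For the pointwise bound the key observation is that, since $\omega$ is non-decreasing, the integral condition $(\epsilon)_\mu$ forces a pointwise growth restriction: for $t\geq 1$,
\[
\omega(t-1)\,e^{-\mu t}\;\leq\;\int_{t-1}^{t}\omega(s)\,e^{-\mu s}\,\mathrm{d}s\;\leq\;\int_{0}^{\infty}\omega(s)\,e^{-\mu s}\,\mathrm{d}s\;=:\;C<\infty,
\]
so that $\omega(u)\leq Ce^{\mu}e^{\mu u}$ for all $u\geq 0$; multiplying by $e^{-\nu u}$ and using $\nu>\mu$ gives $\omega(u)e^{-\nu u}\to 0$, i.e.\ $\omega(t)=o(e^{\nu t})$. (One can even get $\omega(t)=o(e^{\mu t})$ by using that the tail $\int_{t-1}^{\infty}\omega(s)e^{-\mu s}\,\mathrm{d}s$ tends to $0$, but only the stated weaker form is needed below.)

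For the equivalences, I would first unwind the definitions of $\subset$ and $\prec$: the relation $e^{t}\subset\omega$ amounts to $\omega(t)=O(e^{\lambda t})$ for some $\lambda>0$, and $e^{t}\prec\omega$ amounts to $\omega(t)=O(e^{\lambda t})$ for every $\lambda>0$ — the additive constants in the definitions of these relations cause no trouble, since any bound $\omega(t)\leq C'e^{\lambda t}$ can be rewritten as $\omega(t)\leq e^{\mu t}+C''$ for any $\mu>\lambda$ (bound by $e^{\mu t}$ for $t$ large and by $\omega$ on a compact interval otherwise). Then: if $\omega$ satisfies $(\epsilon)_{\infty}$, say $(\epsilon)_\mu$ holds, the first part gives $\omega(t)=O(e^{2\mu t})$, hence $e^{t}\subset\omega$; and if $\omega$ satisfies $(\epsilon)_{0}$, then for every $\nu>0$ we may pick $\mu\in(0,\nu)$, and $(\epsilon)_\mu$ together with the first part gives $\omega(t)=o(e^{\nu t})$, hence $e^{t}\prec\omega$. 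Conversely, if $e^{t}\subset\omega$, say $\omega(t)\leq C'e^{\lambda t}$, then $\int_{0}^{\infty}\omega(t)e^{-\mu t}\,\mathrm{d}t<\infty$ for every $\mu>\lambda$, which is $(\epsilon)_{\infty}$; and if $e^{t}\prec\omega$, then for an arbitrary $\mu>0$ the bound $\omega(t)=O(e^{\mu t/2})$ makes $\int_{0}^{\infty}\omega(t)e^{-\mu t}\,\mathrm{d}t$ converge, which is $(\epsilon)_{0}$.

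I do not expect a genuine obstacle here; the lemma is elementary. The only step that requires a little thought is the first one — converting the integral condition $(\epsilon)_\mu$ into a pointwise growth bound — and this is exactly where monotonicity of $\omega$ is used. Everything else is routine: evaluating elementary exponential integrals and keeping track of the additive constants in the relations $\subset$ and $\prec$.
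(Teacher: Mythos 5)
Your proof is correct, and it is worth noting that your route for the main estimate differs from the paper's. The paper argues by contradiction: if $\omega(t)\neq o(e^{\nu t})$ one extracts a sparse sequence $(t_n)$ with $\omega(t_n)\geq\varepsilon e^{\nu t_n}$ and $t_{n+1}\geq \nu t_n/\mu$, and then shows, summing the contributions of the intervals $[t_n,t_{n+1}]$ and using monotonicity, that $\int_0^\infty\omega(t)e^{-\mu t}\,{\rm d}t$ would diverge, contradicting $(\epsilon)_\mu$. You instead use monotonicity directly on a unit interval, $\omega(t-1)e^{-\mu t}\leq\int_{t-1}^{t}\omega(s)e^{-\mu s}\,{\rm d}s\leq C$, to obtain the global pointwise bound $\omega(u)\leq Ce^{\mu}e^{\mu u}$, from which $\omega(t)=o(e^{\nu t})$ for $\nu>\mu$ is immediate; as you observe, the tail version of the same inequality even yields the stronger conclusion $\omega(t)=o(e^{\mu t})$, which the paper's statement does not claim. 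Both arguments hinge on the same monotonicity idea, but yours is direct rather than by contradiction, is shorter, and gives a slightly sharper result; the paper's sequence construction, on the other hand, is the kind of argument that foreshadows the counting-function manipulations used later (e.g.\ in Proposition \ref{pro-epsilon-sequences}). Your handling of the equivalences (unwinding $\subset$ and $\prec$, disposing of the additive constants by passing to a slightly larger exponent, and the elementary exponential integrals for the converse directions) is exactly the routine verification the paper compresses into the word ``Consequently,'' and it is carried out correctly.
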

\begin{proof}
Suppose the opposite, then there would exist $\varepsilon > 0$ and a sequence of positive numbers $(t_n)_{n \in \N}$  such that
\[ \omega(t_n) \geq \varepsilon e^{\nu t_n}, \qquad t_{n+1} \geq \frac{\nu t_n}{\mu}, \qquad  n \in \N.\]
Hence,
\[ \int_0^\infty \omega(t) e^{-\mu t}\dt \geq \sum_{n = 0}^\infty \int_{t_n}^{t_{n+1}} \omega(t) e^{-\mu t}\dt \geq \left(\frac{\nu}{\mu} - 1\right)t_0\sum_{n = 0}^\infty \omega(t_n) e^{-\nu t_n}.  \]
Since the last series is divergent, this contradicts $(\epsilon)_\mu$.
\end{proof}

We now show three useful lemmas.
\begin{lemma}\label{equivalent}
Let $\omega$ be a weight function satisfying $(\delta)$ and $(\epsilon)_0$ ($(\epsilon)_\infty$). Then, there is another weight function $\sigma$ with $\omega \sim \sigma$ that satisfies $(\delta)$, $(\epsilon)_0$ ($(\epsilon)_\infty$), and the additional condition
\begin{itemize}
\item[$(\zeta)$] $\displaystyle \lim_{t \to \infty} \sigma(\lambda t) - \sigma(t) = \infty, \qquad \forall \lambda > 1.$
\end{itemize}
\end{lemma}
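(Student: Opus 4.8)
The plan is to manufacture $\sigma$ from $\omega$ by a smoothing-and-growth-boosting procedure so that the marginal increments $\sigma(\lambda t)-\sigma(t)$ are forced to diverge while only changing $\omega$ up to equivalence. The cleanest way to achieve $(\zeta)$ is to make $\sigma$ essentially a function of $\log t$ that is \emph{convex} in the variable $\log t$: if $\psi(u):=\sigma(e^u)$ is convex and nondecreasing and tends to infinity, then $\psi(u+\log\lambda)-\psi(u)$ is nondecreasing in $u$, and it tends to infinity as soon as $\psi$ is not eventually constant, which is guaranteed by \eqref{fasterthanlog}. So the core of the construction is: pass to the variable $u=\log t$, replace the resulting function by a suitable convex minorant/majorant that is still equivalent to it, and transport back. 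Concretely, one natural candidate is
\[
 \sigma(t) \;=\; \sup_{0\le s\le t}\bigl(\,\alpha(s) + (\log t-\log s)\,\beta(s)\,\bigr)
\]
for appropriate auxiliary data, or more simply to take the least convex (in $\log t$) majorant of $\omega$; one checks this majorant stays $\le \omega_\lambda + C$ for a fixed $\lambda$ because $\omega$ already dominates a power of $\log t$ unboundedly (by \eqref{fasterthanlog}) so there is ``room'' above $\omega$ for its log-convexification without overshooting $\omega_\lambda$.

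After fixing the construction, I would verify the four required properties in order. First, $\sigma$ is a weight function (nondecreasing, $[0,\infty)\to[0,\infty)$) — immediate from the sup/convexification. Second, $\omega\sim\sigma$: the inequality $\omega\subset\sigma$ (i.e.\ $\sigma$ dominates $\omega$ up to dilation and constant) is built in if we take a majorant; the reverse $\sigma\subset\omega$ is where one uses \eqref{fasterthanlog} quantitatively — the log-convex envelope of $\omega$ cannot grow faster than $\omega$ dilated by a fixed factor precisely because the ``gap'' one is allowed to fill is controlled by how far $\omega$ already exceeds $\log t$. Third, $(\delta)$ is inherited: since $2\omega(t)\le\omega(Ht)+\log A$ and $\sigma\sim\omega$, one gets $2\sigma(t)\le 2\omega_{\lambda_1}(t)+C_1\le\omega(H\lambda_1 t)+C_2\le\sigma_{\lambda_2}(t)+C_3$; a small bookkeeping check confirms the constants and dilation factors combine correctly. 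Fourth, $(\epsilon)_0$ (resp.\ $(\epsilon)_\infty$) is preserved: by Lemma \ref{growth} these conditions are equivalent to $e^t\prec\omega$ (resp.\ $e^t\subset\omega$), and the relations $\prec,\subset$ are clearly stable under passing to an equivalent weight, so $\sigma\sim\omega$ immediately gives $e^t\prec\sigma$ (resp.\ $e^t\subset\sigma$), hence $(\epsilon)_0$ (resp.\ $(\epsilon)_\infty$) for $\sigma$.

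The main obstacle is the simultaneous balancing act in the construction: we need $\sigma$ large enough (convex in $\log t$, with unbounded increments) yet not so large that it escapes the equivalence class of $\omega$. The key quantitative input is \eqref{fasterthanlog}, which says $\omega(t)/\log t\to\infty$; this provides exactly the slack needed to replace $\omega$ by a smoother, more regular weight without leaving its equivalence class. I expect the delicate point to be choosing the envelope so that $(\zeta)$ holds for \emph{all} $\lambda>1$ (not just large $\lambda$) — this forces the convexity-in-$\log t$ to be genuine (strictly increasing slopes) rather than merely eventual, and one must rule out the degenerate possibility that $\sigma$ becomes asymptotically linear in $\log t$, which would make $\sigma(\lambda t)-\sigma(t)$ bounded; but again \eqref{fasterthanlog} excludes asymptotic linearity in $\log t$, closing the argument.
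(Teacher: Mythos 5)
Your reduction steps are fine: convexity of $\psi(u)=\sigma(e^u)$ together with $\psi(u)/u\to\infty$ does force $\psi(u+\log\lambda)-\psi(u)\geq \log\lambda\cdot\psi'_+(u)\to\infty$, and the preservation of $(\delta)$ and of $(\epsilon)_0/(\epsilon)_\infty$ under $\sim$ is routine. The genuine gap is the construction itself, i.e.\ the claim that a convex-in-$\log t$ majorant of $\omega$ can be kept inside the equivalence class. First, ``the least convex majorant'' is not a well-defined object (the pointwise infimum of convex majorants need not be convex; what exists canonically is the greatest convex \emph{minorant}). More importantly, your justification for the crucial bound $\sigma\leq\omega_\lambda+C$ --- that \eqref{fasterthanlog} leaves ``room'' --- is not a proof and is false as a general principle: \eqref{fasterthanlog} is a lower bound on $\omega$ and gives no control on how far a convex-in-$\log$ majorant overshoots. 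In the variable $u=\log t$, a nondecreasing $\psi_0$ with long plateaus and enormous jumps satisfies \eqref{fasterthanlog} but admits no convex function squeezed between $\psi_0(u)$ and $\psi_0(u+c)+C$: to respect the upper bound on a plateau the majorant must stay nearly flat, then to dominate the next jump it must acquire a huge slope on an interval of length $c$, and convexity propagates that slope forward, violating the upper bound on the following plateau. What rules out such weights here is precisely $(\delta)$, which you never use in the construction: in the $u$-variable $(\delta)$ reads $\psi_0(u+\log H)\geq 2\psi_0(u)-\log A$, and it is this doubling (not \eqref{fasterthanlog}) that makes a convexification within a fixed translate possible, e.g.\ $\psi(u)=\sup_{v\geq 0}\bigl[\psi_0(v)+\tfrac{(\psi_0(v)-\log A)_+}{\log H}(u-v)\bigr]$ is convex, $\geq\psi_0(u)$, and $\leq\psi_0(u+\log H)+\log A$. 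So your strategy can be repaired, but only by rerouting the upper estimate through $(\delta)$; as written, the central step is asserted with the wrong reason.

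For comparison, the paper avoids convexification entirely and is much shorter: using \eqref{fasterthanlog} choose $t_n\uparrow\infty$ with $\omega(t)\geq n\log t$ for $t\geq t_n$, set $\rho(t)=n\log t$ on $[t_n,t_{n+1})$ and $\sigma=\omega+\rho$. Then $\sigma\leq 2\omega$, and a single application of $(\delta)$ gives $\sigma\subset\omega$ (while $\omega\subset\sigma$ is trivial), so $\omega\sim\sigma$ and the remaining conditions are inherited; condition $(\zeta)$ follows from $\sigma(\lambda t)-\sigma(t)\geq\rho(\lambda t)-\rho(t)\geq n\log\lambda$ for $t\geq t_n$. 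Note how the roles are distributed there: \eqref{fasterthanlog} only guarantees the small perturbation $\rho\leq\omega$, and $(\delta)$ carries the equivalence --- the opposite of the division of labour in your sketch.
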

\begin{proof}
Let $(t_n)_{n \in \N}$ be a sequence of non-negative numbers such that  $t_0 = 0 < t_1 < t_2 < \ldots <t_n\rightarrow \infty$ and 
$$
\omega(t) \geq n\log t, \qquad t \geq t_n.
$$
Define
$$
\rho(t) = n \log t, \qquad \mbox{if } t \in [t_n, t_{n+1}),
$$
and $\sigma(t) = \omega(t) + \rho(t) \leq 2\omega(t)$. The condition $(\delta)$ thus implies that $\omega$ and $\sigma$ are equivalent weight functions. Since $(\delta)$ and $(\epsilon)_0$ ($(\epsilon)_\infty$) are invariant under the relation $\sim$, the weight $\sigma$ satisfies these conditions as well. For $\lambda > 1$ and $t \in [t_n, t_{n+1})$, we have
$$
\sigma(\lambda t) - \sigma(t)\geq  \rho(\lambda t) - \rho(t) \geq n(\log (\lambda t) - \log t) = n \log \lambda, 
$$
whence $(\zeta)$ follows.
\end{proof}

\begin{lemma} \label{majorizationweight}
Let $\omega$ be a weight function satisfying $(\epsilon)_0$  ($(\epsilon)_\infty$). Then, there is a weight function $\sigma$ satisfying $(\epsilon)_0$ ($(\epsilon)_\infty$), $(\delta)$, and $\omega(t)\leq \sigma(t)$ for all $t\geq0$.
\end{lemma}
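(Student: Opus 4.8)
The plan is to treat the conditions $(\epsilon)_\infty$ and $(\epsilon)_0$ separately, the former being almost immediate. In both cases the starting point is Lemma~\ref{growth}: if $\omega$ satisfies $(\epsilon)_\infty$, then $e^t\subset\omega$ gives $\omega(t)\le e^{\lambda t}+C$ for some $\lambda,C>0$, hence (since $e^{\lambda t}\ge 1$) $\omega(t)\le(1+C)e^{\lambda t}$ for all $t\ge 0$; while if $\omega$ satisfies $(\epsilon)_0$, then $e^t\prec\omega$ provides, for each $n\in\N$, a constant $A_n\ge 1$ with $\omega(t)\le A_ne^{t/n}$ for all $t\ge 0$. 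In the $(\epsilon)_\infty$ case I would simply take $\sigma(t)=(1+C)e^{\lambda t}$: this is a weight function with $\omega\le\sigma$, it satisfies $(\epsilon)_\infty$ since $\int_0^\infty e^{(\lambda-\mu)t}\,\dt<\infty$ for $\mu>\lambda$, and it satisfies $(\delta)$ with $H=2$ because $x^2\ge 2x-1$ for $x\ge 1$ yields $2(1+C)e^{\lambda t}\le(1+C)e^{2\lambda t}+(1+C)$.

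The substantial case is $(\epsilon)_0$, where I would construct $\sigma$ so that $\log\sigma$ is continuous, piecewise linear, and concave. Concretely, $\sigma$ is a continuous gluing of exponentials with rates decreasing to $0$: $\sigma(t)=c_1e^t$ on $[0,a_2]$ and $\sigma(t)=c_ke^{t/k}$ on $[a_k,a_{k+1}]$ for $k\ge 2$, along a strictly increasing sequence $a_2<a_3<\cdots\to\infty$ to be chosen. Continuity forces the recursion $c_k=c_{k-1}e^{a_k/(k(k-1))}$ with $c_1:=A_1$, so the $c_k$ increase and their growth is governed by that of the $a_k$. I would fix $c_1=A_1$ and pick the break points recursively: at stage $k\ge 2$, having already fixed $a_2,\dots,a_{k-1}$ (hence $c_2,\dots,c_{k-1}$ and $P_{k-1}:=\sum_{j=2}^{k-1}a_j/(j(j-1))$), take $a_k$ larger than $a_{k-1}$ (with $a_1:=0$) and larger than each of the three quantities $4k$, $k(k-1)\log^+(A_k/c_{k-1})$, and $k(P_{k-1}+k)$, and set $c_k=c_{k-1}e^{a_k/(k(k-1))}$.

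It then remains to verify four things. That $\sigma$ is a weight function with $\sigma(t)/\log t\to\infty$ is clear once $\sigma\ge\omega$ is established. For $\sigma\ge\omega$: the bound $a_k\ge k(k-1)\log^+(A_k/c_{k-1})$ forces $c_k\ge A_k$, so on $[a_k,a_{k+1}]$ one has $\sigma(t)=c_ke^{t/k}\ge A_ke^{t/k}\ge\omega(t)$, and on $[0,a_2]$ one uses $c_1=A_1$ together with $\omega(t)\le A_1e^t$. For $(\delta)$, concavity of $\log\sigma$ is the point: if $4t\ge a_2$ and $4t$ lies in the $n$-th piece, the slope of $\log\sigma$ on $[t,4t]$ is everywhere at least $1/n$, so $\log\sigma(4t)-\log\sigma(t)\ge 3t/n\ge 3a_n/(4n)\ge 3>\log 2$ (using $4t\ge a_n$ and $a_n\ge 4n$), whence $\sigma(4t)\ge 2\sigma(t)$; the remaining bounded range $0\le t<a_2/4$ is absorbed by setting $\log A:=2\sigma(a_2/4)$, so $(\delta)$ holds with $H=4$. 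For $(\epsilon)_0$: fix $\mu>0$; the integral of $\sigma(t)e^{-\mu t}$ over $[0,a_2]$ is finite, and once $k$ satisfies $k(k-1)\ge 4/\mu$ and $k\ge 8/\mu$, the integral over $[a_k,a_{k+1}]$ is at most $(2/\mu)\,c_ke^{-\mu a_k/2}=(2A_1/\mu)\,e^{P_{k-1}+a_k(1/(k(k-1))-\mu/2)}\le(2A_1/\mu)\,e^{P_{k-1}-\mu a_k/4}\le(2A_1/\mu)\,e^{-2k}$, where the last inequality uses $a_k\ge k(P_{k-1}+k)$; summing over $k$ gives convergence, and since the recursion did not involve $\mu$ this holds for every $\mu>0$.

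The main obstacle is precisely the $(\epsilon)_0$ case: one must dominate a possibly very irregular $\omega$ by a weight $\sigma$ that is regular enough to satisfy $(\delta)$ yet still subexponential, so that $(\epsilon)_0$ survives. Two observations make the construction work. First, for a semilogarithmically concave, piecewise-exponential $\sigma$, condition $(\delta)$ reduces to a mere lower bound $a_k\gtrsim k$ on the break points. Second, the tail of $\int_0^\infty\sigma(t)e^{-\mu t}\,\dt$ is controlled, uniformly in $\mu$, by the ``past'' partial sums $P_{k-1}$ together with the rapid growth of $a_k$, so a single $\mu$-independent recursive choice of break points handles all $\mu$ at once.
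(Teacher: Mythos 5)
Your proposal is correct: the Roumieu case via the crude majorant $(1+C)e^{\lambda t}$ works, and in the Beurling case your recursive choice of break points does give a continuous, piecewise-exponential majorant of $\omega$ whose log-concavity yields $(\delta)$ (with $H=4$) and whose $\mu$-independent construction yields $(\epsilon)_0$ for every $\mu>0$; I checked the three recursive lower bounds on $a_k$ and they do exactly what you claim. However, this is a genuinely different and far heavier route than the paper's. The paper's proof is a one-liner valid in both cases simultaneously: take $\sigma(t)=\int_0^{t+1}\omega(x)\,\dx$. Monotonicity of $\omega$ gives $\sigma(t)\geq\int_t^{t+1}\omega(x)\,\dx\geq\omega(t)$; since $\sigma'(t)=\omega(t+1)$ is non-decreasing, $\sigma$ is convex, whence $2\sigma(t)\leq\sigma(2t)+\sigma(0)$, which is $(\delta)$ with $H=2$; and $(\epsilon)_\mu$ transfers directly (e.g.\ by Fubini, $\int_0^\infty\sigma(t)e^{-\mu t}\,\dt\leq \mu^{-1}e^{\mu}\int_0^\infty\omega(x)e^{-\mu x}\,\dx$), so no case distinction and no appeal to Lemma \ref{growth} is needed. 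The trick you are reconstructing by hand is that $(\delta)$ follows from convexity up to an additive constant, and an integral average of a non-decreasing function is automatically convex and dominating. What your construction buys, at the cost of length, is extra structure: an explicitly subexponential majorant with piecewise constant logarithmic slopes $1/k\downarrow 0$ (so $\log\sigma$ concave), in the spirit of Lemma \ref{weight} and of the regularized weights of Lemma \ref{equivalentweightsub}; the paper's $\sigma$ instead stays much closer to $\omega$ in growth, since $\sigma(t)\leq(t+1)\omega(t+1)$, though only the domination $\omega\leq\sigma$ is actually used later (Theorems \ref{nontrivial2} and \ref{non-trivial-outside}).
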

\begin{proof} We take $\sigma(t)=\int_{0}^{t+1} \omega(x)\mathrm{d}x$. The condition $(\epsilon)_0$ ($(\epsilon)_\infty$) clearly holds for $\sigma$. We also have $\sigma(t)\geq \int_{t}^{t+1}\omega(x)\mathrm{d}x\geq \omega(t)$. Furthermore, since $\omega(t)$ is non-decreasing, we obtain that $\sigma$ is convex; therefore, $2\sigma(t)\leq \sigma(2t)+\sigma(0)$.
\end{proof}

\begin{lemma} \label{weight}
Let $\omega$ be a weight function satisfying $(\epsilon)_0$. Then, there is a weight function $\sigma$ satisfying $(\epsilon)_0$ such that $\omega_\lambda(t) = o(\sigma(t))$ for all $\lambda > 0$.
\end{lemma}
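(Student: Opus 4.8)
The plan is to obtain $\sigma$ as a piecewise rescaling of $\omega$. I would fix an increasing sequence $0 = T_0 < T_1 < T_2 < \cdots \to \infty$, to be specified later, and set $\sigma(t) = n\,\omega(nt)$ for $t \in [T_n,T_{n+1})$, $n \geq 1$, and $\sigma(t) = \omega(t)$ for $t \in [0,T_1)$. Before worrying about the choice of the $T_n$, I would record the cheap facts: $\sigma$ is non-decreasing (on each piece $t \mapsto n\,\omega(nt)$ is non-decreasing, and at a breakpoint the incoming value, bounded by $n\,\omega(nT_{n+1})$, is dominated by the outgoing value $(n+1)\,\omega((n+1)T_{n+1})$ because both the prefactor and the argument only increase), $\sigma$ maps $[0,\infty)$ into $[0,\infty)$, and $\sigma(t) \geq \omega(t)$ for all $t$, so $\sigma$ inherits \eqref{fasterthanlog} and is genuinely a weight function. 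None of the remaining steps will depend on which sequence $(T_n)$ we chose, except for the verification of $(\epsilon)_0$.

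Next I would check the little-$o$ statement, which comes essentially for free. It is enough to prove $\omega_n(t) = o(\sigma(t))$ for every $n \in \N$, since $\omega_\lambda \leq \omega_{\lceil\lambda\rceil}$. If $t \in [T_m,T_{m+1})$ with $m \geq n$, then $\sigma(t) = m\,\omega(mt) \geq m\,\omega(nt) = m\,\omega_n(t)$, so $\omega_n(t)/\sigma(t) \leq 1/m$; letting $t \to \infty$ forces $m \to \infty$, whence $\omega_n(t)/\sigma(t) \to 0$ (note $\sigma$ is eventually positive by \eqref{fasterthanlog}). Hence $\omega_\lambda = o(\sigma)$ for every $\lambda > 0$, no matter how the $T_n$ were picked.

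The real work — and the only place where $(\epsilon)_0$ for $\omega$ is used — is in choosing the $T_n$ so that $\sigma$ still satisfies $(\epsilon)_0$. Since $(\epsilon)_0$ for $\sigma$ amounts to $\int_0^\infty \sigma(t) e^{-t/k}\,\dt < \infty$ for every $k \in \N$, and since (after the substitution $u = mt$)
$$
\int_{T_m}^{T_{m+1}} m\,\omega(mt)\, e^{-t/k}\,\dt \;=\; \int_{mT_m}^{mT_{m+1}} \omega(u)\, e^{-u/(km)}\,\du \;\leq\; \int_{mT_m}^{\infty} \omega(u)\, e^{-u/(km)}\,\du ,
$$
it suffices to have $\int_0^{T_1}\omega(t)e^{-t/k}\,\dt + \sum_{m \geq 1}\int_{mT_m}^\infty \omega(u) e^{-u/(km)}\,\du < \infty$ for every $k$. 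The first term is finite by $(\epsilon)_0$. Each tail integral in the series is finite by $(\epsilon)_0$ as well, and, by dominated convergence applied to the convergent integral $\int_0^\infty \omega(u) e^{-u/(km)}\,\du$, for each $m$ one can choose $T_m$ — consistently with $T_m > T_{m-1}$ — so large that $\int_{mT_m}^\infty \omega(u) e^{-u/(km)}\,\du \leq 2^{-m}$ simultaneously for all $k \in \{1,\dots,m\}$. With such a choice, for fixed $k$ the series splits into a finite head $\sum_{m<k}$ of finite terms and a tail $\sum_{m\geq k}$ dominated by $\sum 2^{-m}$, so $\sigma$ satisfies $(\epsilon)_0$.

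The one delicate point I expect is exactly this diagonalization: a single sequence $(T_m)$ must be made to work for every $k$ at once. This is handled by reducing the condition "$\forall\,\mu>0$" to the countable family $\mu = 1/k$, $k\in\N$, and imposing at stage $m$ only the finitely many conditions with $k \leq m$; the remaining conditions for a fixed $k$ are taken care of by all stages $m \geq k$. Everything else is routine bookkeeping.
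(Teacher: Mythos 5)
Your construction is the same as the paper's: the paper defines $\sigma(t)=n\omega(nt)$ on blocks $[t_n/n,t_{n+1}/(n+1))$ with the breakpoints chosen inductively so that $\int_{t_n}^{\infty}\omega(u)e^{-u/n^2}\,\mathrm{d}u\leq 2^{-n}$, which (since $km\leq m^2$ for $k\leq m$) is exactly your diagonal choice of $T_m$ packaged into a single tail condition per stage. The proposal is correct and essentially identical in approach, differing only in bookkeeping.
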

\begin{proof}
We inductively determine a sequence of non-negative numbers $(t_n)_{n \in \Z_+}$ with $t_1 = 0$ that satisfies 
\[ \int_{t_{n}}^\infty \omega(t)e^{-t/n^2}\dt \leq \frac{1}{2^n}, \qquad \frac{t_n}{n} \geq \frac{t_{n-1}}{n-1} + 1, \qquad n \geq 2.  \]
We now define
\[ \sigma(t) = n\omega(nt), \qquad \mbox{for } t \in \left[\frac{t_n}{n}, \frac{t_{n+1}}{n+1}\right). \]
Clearly, $\sigma$ is a weight function and  $\omega_\lambda(t) = o(\sigma(t))$ for all $\lambda > 0$. Moreover, for each $n_0 \in \Z_+$,
\begin{align*}
 \int_0^\infty \sigma(t) e^{-t/n_0}\dt 
&\leq  \int_0^{t_{n_0}/n_0} \sigma(t) e^{-t/n_0}\dt + \sum_{n = n_0}^\infty \int_{t_{n}/n}^{t_{n+1}/n+1} n\omega(nt) e^{-t/n}\dt \\
&\leq t_{n_0}\omega(t_{n_0}) +   \sum_{n = n_0}^\infty \int_{t_n}^\infty \omega(t) e^{-t/n^2}\dt \\
&\leq t_{n_0}\omega(t_{n_0}) +   \sum_{n = n_0}^\infty  \frac{1}{2^{n}}  < \infty. 
\end{align*}
\end{proof}

We now consider the case when the weight function arises as the associated function of a weight sequence \cite{Komatsu}. Let $(M_p)_{p \in \N}$ be a sequence of positive real numbers and define
$m_p = M_p / M_{p-1}$ for all $p \geq 1$.  We shall always assume that $\lim_{p \rightarrow \infty} m_p = \infty$ and that the sequence $M_p$ is log-convex,  that is, $M_p^2 \leq M_{p-1}M_{p+1}$ for $p \geq 1$, or, equivalently, that $m_{p}$ is non-decreasing. In the sequel, we consider the ensuing conditions for weight sequences:
\begin{itemize}
\item[$(M.2)\phantom -$] $\displaystyle M_{p+q}\leq A H^{p+q} M_{p} M_{q}, p,q \in \mathbb{N}$, for some $A, H \geq 1$,
\item[$(M.5)_{0\phantom 0}$] $\displaystyle \sum_{n = 1}^\infty e^{-\mu m_p} < \infty$ for all $\mu > 0$,
\item[$(M.5)_\infty$] $\displaystyle \sum_{n = 1}^\infty e^{-\mu m_p} < \infty$ for some $\mu > 0$.
 \end{itemize}

The associated function $M$ of $M_p$ is defined as
$$ 
M(t) = \sup_{p \in \N} \log \left(\frac{t^pM_0}{M_p}\right), \qquad t > 0,
$$
and $M(0)=0$.
Clearly, $M$ is a weight function. 
We denote by
$$
m(t) = \sum_{m_p \leq t} 1, \qquad t \geq 0,
$$
the counting function of the sequence $(m_p)_{p\in\mathbb{Z}_+}$. It is well known that  \cite[Eq.\ (3.11), p.\ 50]{Komatsu} 
\begin{equation} 
M(t) = \int_0^t \frac{m(\lambda)}{\lambda} \dl, \qquad t \geq 0. 
\label{counting}
\end{equation}
As customary, the relation $M_p\subset N_p$ between two weight sequences means that there are $C,h>0$ such that 
$M_p\leq Ch^{p}N_{p}$ for all $p\in\mathbb{N}$. The stronger relation $M_p\prec N_p$ means that the latter inequality remains valid for every $h>0$ and a suitable $C=C_{h}>0$. We say that $M_p$ and $N_p$ are \emph{equivalent}, denoted by $M_p \sim N_p$, if both $M_p\subset N_p$ and $N_p\subset M_p$ hold. Denote by $M$ and $N$ the associated functions of $M_p$ and $N_p$, respectively. Then, $M_p\subset N_p$ ($M_p \prec N_p$) if and only if $M\subset N$ ($M \prec N$) \cite[Lemmas 3.8 and 3.10]{Komatsu} and therefore our use of the symbols $\subset$, $\prec$, and $\sim$ for weight functions is consistent with that for weight sequences.

\begin{examples}\label{ex1sequences}Some examples of weight sequences are:  
\begin{itemize}
\item $p!^{s}$, \quad $s > 0$, 
\item $\log(p+e)^{s(p+e)^r}$, \quad  $s,r \geq 1$, $sr > 1$,
\item $\log(p+e)^{(p+e)}$.
\end{itemize}
They all satisfy $(M.2)$. Moreover, the first two of them fulfill $(M.5)_0$  while the last one satisfies $(M.5)_\infty$ but not $(M.5)_0$.  
\end{examples}

The next proposition characterizes $(M.5)_{0}$ and $(M.5)_{\infty}$ in terms of the associated function.
\begin{proposition}\label{pro-epsilon-sequences}
Let $M_p$ be a weight sequence.  Then, $M$ satisfies $(\delta)$ if and only if $M_p$ satisfies $(M.2)$. Moreover, the following statements are equivalent: 
\begin{itemize}
\item[$(i)$] $M$ satisfies $(\epsilon)_0$ ($(\epsilon)_\infty$),
\item[$(ii)$] $m$ satisfies $(\epsilon)_0$ ($(\epsilon)_\infty$),
\item[$(iii)$] $e^t  \prec M$ ($e^t  \subset M$),
\item[$(iv)$] $e^t  \prec m$ ($e^t  \subset m$),
\item[$(v)$] $M_p$ satisfies $(M.5)_0$ ($(M.5)_\infty$).
\item[$(vi)$] $\log(p+e)^{(p+e)} \prec M_p$ ($\log(p+e)^{(p+e)} \subset M_p$).
\end{itemize}
\end{proposition}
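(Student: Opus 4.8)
The plan is to prove the equivalences in a cycle, exploiting the integral representations $M(t) = \int_0^t \frac{m(\lambda)}{\lambda}\,\dl$ from (\ref{counting}) and the known relationship between a weight sequence and its associated function. The statement that $M$ satisfies $(\delta)$ if and only if $M_p$ satisfies $(M.2)$ is classical (see \cite[Proposition 3.6]{Komatsu}); I would only recall the argument briefly or cite it. For the main chain, I would first observe that $(i) \Leftrightarrow (iii)$ and $(ii) \Leftrightarrow (iv)$ are immediate from Lemma \ref{growth}, since that lemma says precisely that a weight function $\psi$ satisfies $(\epsilon)_0$ (resp.\ $(\epsilon)_\infty$) if and only if $e^t \prec \psi$ (resp.\ $e^t \subset \psi$). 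So the real content is to link the pair $\{(i),(iii)\}$ (about $M$) with the pair $\{(ii),(iv)\}$ (about $m$), and to link both with the sequence conditions $(v)$ and $(vi)$.

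To connect $M$ and $m$: from (\ref{counting}) one has, for $t \geq 1$, the elementary two-sided bound $m(t)\log\lambda \leq M(\lambda t) - M(t) = \int_t^{\lambda t}\frac{m(u)}{u}\,\du$ on one side (using monotonicity of $m$, since $m(u)\geq m(t)$ on $[t,\lambda t]$) and $M(\lambda t) = \int_0^{\lambda t}\frac{m(u)}{u}\,\du \geq \int_t^{\lambda t}\frac{m(u)}{u}\,\du$ with $m(u)\le m(\lambda t)$, hence $M(\lambda t)-M(t)\le m(\lambda t)\log\lambda$; combined with $M(t) \le t \cdot m(t)$-type estimates, one gets that $M$ and $m$ are ``comparable'' in a way that makes $(\epsilon)_\mu$-type integrability transfer between them. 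Concretely, $M(t)\le t\,m(t)$ is false in general, but $M(t) = \int_0^t \frac{m(\lambda)}{\lambda}\dl$ together with $m$ nondecreasing gives $M(2t) - M(t) \ge m(t)\log 2$, so $m(t) \le \frac{1}{\log 2}(M(2t) - M(t)) \le \frac{1}{\log 2}M(2t)$; this shows $\int_0^\infty m(t)e^{-\mu t}\dt < \infty$ whenever $\int_0^\infty M(2t)e^{-\mu t}\dt<\infty$, i.e.\ $(\epsilon)_{\mu/2}$ for $M$ implies $(\epsilon)_\mu$ for $m$. Conversely $M(t) = \int_0^t \frac{m(\lambda)}{\lambda}\dl \le \int_0^1\frac{m(\lambda)}{\lambda}\dl + m(t)\log t \le C + m(t)\log t$ for $t\geq 1$ (here I should be careful about the behaviour near $0$, but $m$ vanishes on $[0,m_1)$ so the integral starts at $m_1>0$), and since $\log t = o(e^{\varepsilon t})$ this shows that $(\epsilon)_\mu$ for $m$ gives $(\epsilon)_{\mu'}$ for $M$ for any $\mu' > \mu$ after absorbing the $\log t$ factor via Lemma \ref{growth}-type reasoning. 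This yields $(i)\Leftrightarrow(ii)$, and $(iii)\Leftrightarrow(iv)$ then follows, or can be read off directly.

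For the sequence side, the key identity is $m(m_p) \geq p$ and more precisely $m(t) = \#\{p : m_p \le t\}$, so $\sum_p e^{-\mu m_p}$ is, up to the usual comparison between a sum over the jump points and an integral against the counting function, the same as $\int_0^\infty e^{-\mu t}\,dm(t)$, and integration by parts turns $\int_0^\infty e^{-\mu t}\,dm(t)$ into $\mu\int_0^\infty m(t)e^{-\mu t}\dt$ (boundary terms vanish because $m$ grows subexponentially once we know any of the equivalent conditions, and in the other direction one argues by contradiction as in Lemma \ref{growth}). This gives $(ii)\Leftrightarrow(v)$, being careful that ``$(M.5)_0$'' means the sum converges for all $\mu>0$ and ``$(M.5)_\infty$'' for some $\mu>0$, matching $(\epsilon)_0$ and $(\epsilon)_\infty$ respectively. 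Finally, $(v)\Leftrightarrow(vi)$ is obtained by specializing the earlier equivalences to the concrete sequence $N_p = \log(p+e)^{(p+e)}$: its quotients are $n_p = N_p/N_{p-1} \sim \log p$ asymptotically (a short computation with logarithms: $\log N_p - \log N_{p-1} = (p+e)\log\log(p+e) - (p-1+e)\log\log(p-1+e) \sim \log\log p + O(1/\log p)$, so actually $n_p \sim \log p$), hence $\sum_p e^{-\mu n_p} \sim \sum_p p^{-\mu}$, which converges for all $\mu>0$ in neither the ``for all'' nor a uniform sense — wait, $\sum p^{-\mu}$ converges iff $\mu>1$, so $N_p$ satisfies $(M.5)_\infty$ but not $(M.5)_0$, consistent with Examples \ref{ex1sequences}; then $(vi)$, which compares $N_p$ with $M_p$ via $\prec$ or $\subset$, translates through \cite[Lemmas 3.8, 3.10]{Komatsu} into a comparison of associated functions $N \prec M$ or $N \subset M$, and since $N \sim e^t$ up to the relevant equivalence (this is essentially $(iii)\Leftrightarrow(v)$ applied to the borderline sequence), $(vi)$ becomes $e^t \prec M$ (resp.\ $e^t \subset M$), i.e.\ $(iii)$.

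The main obstacle I anticipate is the careful bookkeeping in the transfer between $m$ and $M$ and between the series and the integral: all the estimates involve $\log t$ factors, shifts by $1$, and the distinction between ``for all $\mu$'' versus ``for some $\mu$'', and one must consistently invoke Lemma \ref{growth} to absorb subexponential factors like $\log t$, $t$, and $t\,\omega(t)$ without changing which of $(\epsilon)_0$/$(\epsilon)_\infty$ holds. A secondary subtlety is the low-$t$ behaviour (the integrals $\int_0^\infty$ versus $\int_{m_1}^\infty$, and the value $M(0)=0$), but this is harmless since everything is controlled on $[0,1]$. Once Lemma \ref{growth} is used as the workhorse, each individual implication is a short estimate, and the cleanest exposition is probably to prove $(i)\Leftrightarrow(ii)$ directly from (\ref{counting}), note $(i)\Leftrightarrow(iii)$ and $(ii)\Leftrightarrow(iv)$ from Lemma \ref{growth}, prove $(ii)\Leftrightarrow(v)$ by the summation-integration comparison, and deduce $(v)\Leftrightarrow(vi)$ by applying the already-established equivalences to the explicit borderline sequence $\log(p+e)^{(p+e)}$.
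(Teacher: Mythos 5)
Your proposal is correct and follows essentially the same route as the paper: cite Komatsu for $(M.2)\Leftrightarrow(\delta)$, use Lemma \ref{growth} for $(i)\Leftrightarrow(iii)$ and $(ii)\Leftrightarrow(iv)$, transfer the $(\epsilon)$-conditions between $m$ and $M$ via \eqref{counting}, compare the series in $(M.5)$ with $\int_0^\infty m(t)e^{-\mu t}\,{\rm d}t$, and reduce $(vi)$ to $(iii)$ through the borderline sequence $\log(p+e)^{(p+e)}$. The differences are only in the mechanics of the transfer: where you use the pointwise bounds $m(t)\leq (M(2t)-M(t))/\log 2$ and $M(t)\leq C+m(t)\log t$, the paper writes two exact integration-by-parts identities (one for $\sum_{m_p\leq t}e^{-\mu m_p}$, one for $\int_0^t M(\lambda)e^{-\mu\lambda}\,{\rm d}\lambda$) and lets Lemma \ref{growth} absorb boundary terms and the harmless $\log t$ and $1/\lambda$ factors; both versions involve exactly the quantifier bookkeeping (loss from $\mu$ to $\mu'>\mu$) that you describe, and both are fine. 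One caveat: your justification that the associated function $N$ of $\log(p+e)^{(p+e)}$ is equivalent to $e^t$ is incomplete as stated, since applying $(iii)\Leftrightarrow(v)$ to that sequence only yields $e^t\subset N$ (from $(M.5)_\infty$), while the step $(vi)\Leftrightarrow(iii)$ uses the two-sided equivalence $N\sim e^t$, in particular $N\subset e^t$ as well; this missing half does follow from your computation $n_p\sim\log p$ (the counting function of $(n_p)$ grows like $e^t$, so \eqref{counting} gives a lower bound of the form $N(t)\geq c\,e^{t/2}$ for large $t$) or from a direct estimate of the supremum defining $N$, and the paper itself simply asserts the equivalence without proof, so this is a gap in justification rather than in the argument's viability.
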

\begin{proof}
The equivalence between $(M.2)$ and $(\delta)$ is shown in \cite[Prop.\ 3.6]{Komatsu}. Integration by parts yields
\[ \sum_{m_p \leq t } e^{-\mu m_p} = \int_0^t e^{-\mu \lambda}  \dml = m(t)e^{-\mu t} + \mu  \int_0^t m(\lambda) e^{-\mu \lambda} \dl, \qquad \mu > 0.\]
Moreover, by \eqref{counting}, we obtain
\[ \int_0^t M(\lambda)e^{-\mu \lambda}  \dl = \frac{1}{\mu}  \int_0^t \frac{m(\lambda) e^{-\mu \lambda}}{\lambda} \dl - \frac{M(t)e^{-\mu t}}{\mu}, \qquad \mu > 0.\]
Lemma \ref{growth} now implies that $(i)$--$(v)$ are equivalent to one another. Since the associated function of the sequence $\log(p+e)^{(p+e)}$ is equivalent to $e^t$, conditions $(iii)$ and $(vi)$ are also equivalent to each other.
\end{proof}
\section{Analytic functions in a strip}\label{harmonic}
The goal of this section is to construct functions that are analytic and satisfy certain lower and upper bounds with respect to a weight function in a given horizontal strip of the complex plane. Since the functions we aim to construct are zero-free, we first study harmonic functions in a strip. We also show a Phragm\'en-Lindel\"of type result for analytic functions defined on strips and having decay with respect to a weight function, Proposition \ref{praghmenlindelof} actually delivers a useful three lines type inequality.  As usual,  $\mathcal{O}(V)$ stands for the space of analytic functions in an open set $V\subseteq \mathbb{C}$.

Given $h > 0$, we write $T^h =\R + i(-h,h)$, $T_+^h =\R + i(0,h)$, and $T^h_-=\R + i(-h,0)$. Furthermore, we shall always write $z = x+iy \in \C$ for a complex variable. The Poisson kernel of the strip $T^\pi_+$ is well-known. It is given by
\[ P(x,y) = \frac{\sin y}{\cosh x - \cos y}, \]
and has the ensuing properties \cite{Widder}:
\begin{enumerate}
\item[$(A)$]   $P(x,y)$ is harmonic on $T^{2\pi}_+$,
\item[$(B)$]   $P(x,y) > 0$ on $T^{\pi}_+$,
\item[$(C)$]  $\displaystyle |P(x,y)| \leq \frac{|\sin y|e^{-|x| + 1}}{\cosh1 -1}, \qquad  |x| \geq 1,\:  0 < y < 2\pi,$
\item[$(D)$]  $\displaystyle \int_{0}^\infty P(x,y)\dx = \pi - y, \qquad 0 < y < \pi$.
\end{enumerate}

We employ the notation
$$P_h(x,y) = P\left( \frac{\pi x}{h}, \frac{\pi y}{h}\right), \qquad h > 0,$$
and, for a measurable function $f$ on $\mathbb{R}$,
$$
P_h\{f; x,y\} = \frac{1}{2h} \int_{-\infty}^\infty P_h(t-x,y) f(t)\dt,
$$ 
its Poisson transform with respect to the strip $T_{+}^{h}$.
\begin{lemma} \label{fund}
Let $\omega$ be a weight function satisfying $(\epsilon)_{\pi/h}$.Then, $P_h\{\omega; x,y\}$ is harmonic on $T_+^h$ and satisfies the lower bound
$$
P_h\{ \omega; x,y\} \geq \frac{\omega(x)}{2}\left(1 - \frac{y}{h}\right), \qquad  x+iy \in T_+^h.
$$
If $\omega$ satisfies $(\epsilon)_{\pi/(2h)}$, then the upper bound
\[ P_h\{\omega; x,y\} \leq  \left(\omega(2x) + \omega\left(\frac{2h}{\pi}\right)\right)\left(1 - \frac{y}{h}\right) + C , \qquad  x+iy \in T^{h}_+,\]
holds as well, where
$$
C = \frac{e}{2h(\cosh 1 - 1)} \int_{0}^\infty e^{-\pi t/(2h)}\omega(t) \dt < \infty.
$$
\end{lemma}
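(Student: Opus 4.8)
The plan is to analyze the Poisson transform $P_h\{\omega; x,y\}$ directly using the listed properties $(A)$--$(D)$ of the Poisson kernel, splitting the defining integral according to the size of $|t-x|$ relative to the relevant scale and according to the size of $|t|$ relative to $|x|$, then bounding $\omega(t)$ using monotonicity.

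For harmonicity: since $\omega$ satisfies $(\epsilon)_{\pi/h}$, Lemma \ref{growth} gives $\omega(t) = o(e^{\nu t})$ for any $\nu > \pi/h$, so the integral defining $P_h\{\omega; x,y\}$ converges absolutely and locally uniformly on $T_+^h$ in view of the decay estimate $(C)$ for $P_h$ (which gives $|P_h(t-x,y)| \lesssim e^{-\pi|t-x|/h}$ when $|t-x| \geq h/\pi$). Differentiation under the integral sign together with property $(A)$ then shows $P_h\{\omega;x,y\}$ is harmonic on $T_+^h$.

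For the lower bound: I would use positivity of $P_h$ on $T_+^h$ (property $(B)$) together with monotonicity of $\omega$. Writing the integral as $\frac{1}{2h}\int_{-\infty}^\infty P_h(t-x,y)\omega(t)\,\dt$ and keeping only the contribution of $t$ on the same side of $0$ as $x$ and with $|t| \geq |x|$ — more precisely, restricting to those $t$ with $|t-x|$ small, say $t$ in a half-line starting at $x$ going away from the origin — one has $\omega(t) \geq \omega(x)$ there, and property $(D)$ shows the mass of $\frac{1}{2h}P_h(\cdot,y)$ on such a half-line equals $\frac12(1 - y/h)$. This yields $P_h\{\omega;x,y\} \geq \frac{\omega(x)}{2}(1 - y/h)$ directly; the factor $\frac12$ comes from using only one half-line, and the asymmetry is harmless because of the $|t|$-reflection.

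For the upper bound, which is the main obstacle: I would split $\int_{-\infty}^\infty = \int_{|t-x| \leq |x|} + \int_{|t-x| \geq |x|}$ (with a further case $|x|$ small handled by the bounded term). On the near region $|t-x| \leq |x|$, one has $|t| \leq 2|x|$, hence $\omega(t) \leq \omega(2x)$, and the total mass of $\frac{1}{2h}P_h(\cdot,y)$ over all of $\R$ is $\frac1h(\pi - \pi y/h)\cdot\frac{1}{\pi} = 1 - y/h$ by $(D)$ (doubling the half-line integral); this contributes $\omega(2x)(1 - y/h)$. There is a subtlety near $t = x$ and $|t| < 2h/\pi$ where one cannot use $(C)$; there $\omega(t) \leq \omega(2h/\pi)$, contributing the $\omega(2h/\pi)(1-y/h)$ term. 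On the far region $|t-x| \geq |x| \geq h/\pi$ (after absorbing the small-$|x|$ case), property $(C)$ gives $|P_h(t-x,y)| \lesssim e^{-\pi|t-x|/h+1}/(\cosh 1 - 1)$, and since $|t| \leq |t-x| + |x| \leq 2|t-x|$ there, monotonicity gives $\omega(t) \leq \omega(2|t-x|)$; substituting $s = 2|t-x|$ bounds this contribution by a constant multiple of $\int_0^\infty e^{-\pi s/(2h)}\omega(s)\,\ds$, which is exactly the claimed constant $C$ and is finite by $(\epsilon)_{\pi/(2h)}$ — this is precisely the point where the stronger hypothesis is needed, since the exponential in $(C)$ decays like $e^{-\pi|t-x|/h}$ but after the change of variables absorbing the factor $2$ from $\omega(2|t-x|)$ one only retains $e^{-\pi s/(2h)}$. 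Carefully bookkeeping the constants from the $\cosh 1 - 1$ denominator and the $e^1$ factor in $(C)$ yields the stated value of $C$. The delicate part throughout is making the "same side and farther from origin" argument for the lower bound rigorous given the $|t|$-reflection of $\omega$, and ensuring the near/far split in the upper bound does not double-count the region near $t = x$ with small $|t|$; both are handled by being slightly generous (the factor $\omega(2x)$ and the additive $\omega(2h/\pi)$ absorb any overlap).
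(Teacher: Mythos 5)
Your proposal is correct and follows essentially the same route as the paper: positivity $(B)$, the half-line mass identity $(D)$ and monotonicity give the lower bound, while the near/far splitting with $\omega(t)\leq\omega(2x)$ near $x$, $\omega(t)\leq\omega(2|t-x|)$ far away, and the exponential decay $(C)$ combined with $(\epsilon)_{\pi/(2h)}$ give the upper bound with exactly the stated constant. The only cosmetic differences are that the paper first rescales to $h=\pi$ and shifts the variable (folding onto a half-line) rather than splitting symmetrically around $t=x$, and it cites Widder's theorem for harmonicity instead of arguing convergence directly.
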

\begin{proof}
Since
$$
P_h\{\omega; x,y\} = P_\pi\left \{\omega_{h/\pi}; \frac{\pi x}{h}, \frac{\pi y}{h}\right \},
$$
we may assume that $h = \pi$. Set $P\{\omega; x,y\}  = P_\pi\{\omega; x,y\}$. The fact that $\omega$ satisfies $(\epsilon)_{1}$ implies that $P\{\omega ; x,y\}$ is harmonic on $T^{\pi}_+$ \cite[Thm.\ 1]{Widder}. By the symmetry properties of the weight function $\omega$ and the Poisson kernel $P$, it suffices to show the inequalities for $x \geq 0$. We have that
\begin{align*}
P\{\omega; x,y\} &= \frac{1}{2\pi}\int_{-\infty}^\infty P(t,y) \omega(t+x)\dt 
\geq \frac{1}{2\pi}\int_{0}^\infty P(t,y) \omega(t+x)\dt \\
& \geq \frac{\omega(x)}{2\pi}  \int_{0}^\infty P(t,y)\dt
= \frac{\omega(x)}{2}\left(1 - \frac{y}{\pi}\right) ,\\
\end{align*}
where, in the last equality, we have used property $(D)$ of the Poisson kernel $P$.
Next, assume that $\omega$ satisfies $(\epsilon)_{1/2}$. Then,
\begin{align*}
P\{\omega ; x,y\} 
&\leq \frac{1}{\pi}\int_{0}^\infty P(t,y) \omega(t+x)\dt \\
&=\frac{1}{\pi}\int_{0}^x P(t,y) \omega(t+x)\dt + \frac{1}{\pi}\int_{x}^\infty P(t,y) \omega(t+x)\dt \\
& \leq \frac{\omega(2x)}{\pi}\int_{0}^\infty P(t,y)\dt + \frac{1}{\pi}\int_{0}^\infty P(t,y) \omega(2t)\dt \\
& \leq \omega(2x)\left(1 - \frac{y}{\pi}\right)+ \frac{1}{\pi}\int_{0}^\infty P(t,y) \omega(2t)\dt. \\
\end{align*}
 Property $(C)$ of the Poisson kernel $P$ and condition $(\epsilon)_{1/2}$ imply that
\begin{align*}
\int_{0}^\infty P(t,y) \omega(2t)\dt &= \int_{0}^1 P(t,y) \omega(2t)\dt + \int_{1}^\infty P(t,y) \omega(2t)\dt \\
&\leq  \omega(2)(\pi -y) + \frac{e}{2(\cosh1 -1)} \int_{0}^\infty e^{-t/2} \omega(t)\dt < \infty.
\end{align*}
\end{proof}

Lemma \ref{fund} has the following important consequence.
\begin{proposition}\label{funda}
Let $\omega$ be a weight function satisfying $(\epsilon)_{\pi/(8h\lambda)}$. Then, there is $F \in \mathcal{O}(T^h)$ such that
$$
e^{\omega(\lambda x)} \leq |F(z)| \leq  Ce^{4\omega(2\lambda x)}, \qquad  z  \in T^h,
$$
for some $C=C_{h,\lambda} > 0$. If, in addition, $\omega$ satisfies $(\delta)$, then
$$
|F(z)| \leq  A^3Ce^{\omega(2H^2\lambda x)}, \qquad  z \in T^h,
$$
where $A,H$ are the constants occurring in $(\delta)$.
\end{proposition}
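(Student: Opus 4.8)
The plan is to obtain $F$ as the exponential of an analytic function whose real part is a suitably chosen harmonic function on $T^h$. Writing $z=x+iy$ as usual, I will construct a harmonic $u$ on $T^h$ with
\[
\omega(\lambda x)\;\le\; u(x,y)\;\le\; 4\omega(2\lambda x)+\log C,\qquad x+iy\in T^h ,
\]
for some constant $C=C_{h,\lambda}>0$. Since $T^h$ is convex, hence simply connected, $u$ has a single-valued harmonic conjugate $v$ there, and then $F:=e^{u+iv}\in\mathcal{O}(T^h)$ is zero-free with $|F|=e^{u}$, which delivers the two stated inequalities at once. (The $(\delta)$-version is then obtained from the upper bound purely algebraically, see below.)

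To build $u$ I apply Lemma \ref{fund} on the wider strip $T^{4h}_+$ with the auxiliary weight $t\mapsto 4\omega(\lambda t)=4\omega_\lambda(t)$, which is again a weight function. With strip parameter $4h$, the integrability hypothesis needed in Lemma \ref{fund} for its upper bound — that the weight satisfy $(\epsilon)_{\pi/(2\cdot 4h)}$ — unwinds, after the substitution $s=\lambda t$, to exactly $(\epsilon)_{\pi/(8h\lambda)}$ for $\omega$, which is our hypothesis; and the milder requirement $(\epsilon)_{\pi/(4h)}$ for the weight, needed for harmonicity and the lower bound, is then automatic. I embed $T^h=\{\,|y|<h\,\}$ into $T^{4h}_+$ as the centred sub-strip $\{\,h<y<3h\,\}$ (via $z\mapsto z+2ih$) and, to symmetrize in $y$, put
\[
u(x,y)=P_{4h}\{4\omega_\lambda;x,y+2h\}+P_{4h}\{4\omega_\lambda;x,2h-y\},\qquad x+iy\in T^h .
\]
Each summand is harmonic on $T^h$ — the first because $y+2h\in(0,4h)$, the second because $y\mapsto 2h-y$ is an affine reflection carrying $T^h$ into $\{0<v<4h\}$ and reflections preserve harmonicity — hence so is $u$. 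The symmetrization is inserted precisely so that the lower bound of Lemma \ref{fund}, which degenerates near the far edge of $T^{4h}_+$, does not degenerate at either edge $y=\pm h$ of $T^h$.

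It then remains only to feed the two estimates of Lemma \ref{fund} into the two summands. For the lower bound, adding the two copies of $P_{4h}\{4\omega_\lambda;x,w\}\ge 2\omega(\lambda x)\bigl(1-\tfrac{w}{4h}\bigr)$ with $w=y+2h$ and $w=2h-y$, the two affine factors in $y$ add to the constant $1$, so $u(x,y)\ge 2\omega(\lambda x)\ge\omega(\lambda x)$. For the upper bound, adding the two copies of $P_{4h}\{4\omega_\lambda;x,w\}\le\bigl(4\omega(2\lambda x)+4\omega(\tfrac{8h\lambda}{\pi})\bigr)\bigl(1-\tfrac{w}{4h}\bigr)+C'$ (with $C'$ the finite constant produced by Lemma \ref{fund}, its finiteness being exactly $(\epsilon)_{\pi/(8h\lambda)}$) the affine factors again sum to $1$, whence $u(x,y)\le 4\omega(2\lambda x)+\log C$ with $\log C:=4\omega(\tfrac{8h\lambda}{\pi})+2C'$. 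This gives the claimed bounds for $F=e^{u+iv}$. Finally, if $\omega$ satisfies $(\delta)$, applying $2\omega(t)\le\omega(Ht)+\log A$ twice yields $4\omega(2\lambda x)=2\cdot 2\omega(2\lambda x)\le 2\omega(2H\lambda x)+2\log A\le\omega(2H^{2}\lambda x)+3\log A$, hence $|F(z)|\le A^{3}Ce^{\omega(2H^{2}\lambda x)}$ on $T^h$.

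I do not expect a genuine obstacle here: the analytic substance is entirely contained in Lemma \ref{fund}, and turning a harmonic function into a zero-free analytic one on the simply connected strip is immediate. The only point requiring care is the bookkeeping of constants — choosing the enclosing strip of width $4h$ and the prefactor $4$ in the auxiliary weight $4\omega_\lambda$ so that the upper-bound exponent comes out precisely as $4\omega(2\lambda x)$ while consuming exactly the hypothesis $(\epsilon)_{\pi/(8h\lambda)}$ and nothing stronger — together with the small symmetrization trick in the $y$-variable that keeps the lower bound uniform up to $\partial T^h$.
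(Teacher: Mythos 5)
Your proof is correct and follows essentially the same route as the paper: there the function is obtained by exponentiating $U(x,y)=4P_{4h}\{\omega_\lambda;x,y+h\}$ plus a harmonic conjugate and feeding in the two bounds of Lemma \ref{fund} on the strip of width $4h$, with the hypothesis $(\epsilon)_{\pi/(8h\lambda)}$ consumed exactly as you describe. Your symmetrized sum of two Poisson transforms is a harmless but unnecessary refinement—the paper gets the uniform lower bound on $T^h$ simply from $1-(y+h)/(4h)\ge 1/2$, the prefactor $4$ absorbing the loss.
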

\begin{proof}
Define $U(x,y) = 4P_{4h}( \omega_\lambda ; x,y + h)$. Lemma \ref{fund} implies that $F(z) = e^{U(x,y) + iV(x,y)}$, with $V$ the harmonic conjugate of $U$, satisfies all requirements.
\end{proof}
\begin{remark} \label{remarksubadd}
Let $\omega$ be a weight function (not necessarily satisfying \eqref{fasterthanlog}) that is subadditive, i.e.
$$
\omega(t_1 +t_2) \leq \omega(t_1) + \omega(t_2), \qquad t_1, t_2 \geq 0.
$$
Observe that subadditivity implies that $\omega(t) = O(t)$; in particular, $(\epsilon)_0$ holds. One can readily show the inequalities (cf.\  the proof of Lemma \ref{fund})
$$
\frac{\omega(x)}{2}\left(1 - \frac{y}{h}\right) \leq P_h\{\omega; x,y\} \leq  \left(\omega(x) + \omega\left(\frac{h}{\pi}\right)\right)\left(1 - \frac{y}{h}\right) + C , \qquad  x+iy \in T^{h}_+,
$$
with
$$
C = \frac{e}{h(\cosh 1 - 1)} \int_{0}^\infty e^{-\pi t/h}\omega(t) \dt < \infty.
$$
Hence, for each $\lambda > 0$ and $h > 0$, there is $F \in \mathcal{O}(T^h)$ such that
$$
e^{\lambda\omega(x)} \leq |F(z)| \leq  Ce^{4\lambda\omega(x)}, \qquad  z \in T^h,
$$
for some $C > 0$. Subadditive weight functions will play an important role in Sections \ref{section subadditive} and \ref{boundary ultra exponential type} below.
\end{remark}
We end this section with a Phragm\'en-Lindel\"of type result for analytic functions defined on strips.
 We need the following lemma.
\begin{lemma}\label{subharmonicstrip}
Let $\varphi$ be analytic and bounded on the strip $T^h_+$ and continuous on $\overline{T^h_+}$. Then, if $\varphi$ is non-identically zero,
\begin{itemize}
\item[$(i)$] $ -\infty < \displaystyle \int_{-\infty}^\infty  \log|\varphi(x)|e^{-\pi |x|/h} \dx$ and $ -\infty < \displaystyle \int_{-\infty}^\infty  \log|\varphi(x + ih)|e^{-\pi |x|/h} \dx$,
\item[$(ii)$] $\log |\varphi(z)| \leq P_h\{\log |\varphi|; x,y\} + P_h\{\log |\varphi(\cdot + ih)|; x,h - y\}$, \quad $z \in T^h_+$.
\end{itemize}
\end{lemma}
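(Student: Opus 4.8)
The plan is to reduce the statement on the strip $T^h_+$ to the classical Poisson representation and Phragmén–Lindelöf theory on the upper half-plane by a conformal map, then transfer the integrability and the majorization back. First I would fix $h=\pi$ without loss of generality (the general case follows by the substitution $z \mapsto \pi z/h$, which maps $T^h_+$ onto $T^\pi_+$ and turns the weight $e^{-\pi|x|/h}$ into $e^{-|x|}$, exactly the weight attached to the Poisson kernel $P$ as in Lemma~\ref{fund}). On $T^\pi_+$ the conformal map $w = e^z$ sends the strip onto the upper half-plane $\mathbb{H} = \{\operatorname{Im} w > 0\}$, sending $\mathbb{R} = \partial_0 T^\pi_+$ onto the positive real axis $(0,\infty)$ and the line $\mathbb{R}+i\pi$ onto the negative real axis $(-\infty,0)$. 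Thus $\psi := \varphi \circ \log$ is analytic and bounded on $\mathbb{H}$ and continuous up to the boundary, and it is non-identically zero since $\varphi$ is.

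For part $(i)$: a bounded analytic function on $\mathbb{H}$ that does not vanish identically belongs to $H^\infty(\mathbb{H})$, hence has a nontangential boundary function whose logarithm is integrable against the Poisson kernel of $\mathbb{H}$, i.e. $\int_{\mathbb{R}} \frac{\log|\psi(s)|}{1+s^2}\, \mathrm{d}s > -\infty$ (this is the standard fact that $\log|\psi|$ has a harmonic majorant, or equivalently that the Nevanlinna/Smirnov integrability holds; see any Hardy-space reference). Splitting this integral over $s>0$ and $s<0$ and changing variables $s = e^x$ (resp. $s = -e^x$, corresponding to the boundary point $x+i\pi$) converts $\frac{\mathrm{d}s}{1+s^2}$ into a density comparable to $e^{-|x|}\,\mathrm{d}x$ near $\pm\infty$ and bounded on compacta, which yields precisely the two claimed inequalities $\int_{-\infty}^\infty \log|\varphi(x)|e^{-|x|}\,\mathrm{d}x > -\infty$ and $\int_{-\infty}^\infty \log|\varphi(x+i\pi)|e^{-|x|}\,\mathrm{d}x > -\infty$. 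Alternatively, and perhaps cleaner for the write-up, one argues directly on the strip: $\log|\varphi|$ is subharmonic on $T^\pi_+$ and bounded above (say by $\log(\sup|\varphi|)$), and one applies the known two-sided Poisson estimate / Phragmén–Lindelöf principle for the strip to get that $\log|\varphi|$ is dominated by the Poisson integral of its boundary values, which forces those boundary integrals to be finite once $\varphi \not\equiv 0$ (the $>-\infty$ direction being the subtle one, exactly as in the half-plane case).

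For part $(ii)$, once $(i)$ is known the right-hand side is a well-defined harmonic function $U$ on $T^\pi_+$: indeed $P_\pi\{\log|\varphi|;x,y\}$ is the Poisson integral over the lower edge and $P_\pi\{\log|\varphi(\cdot+i\pi)|;x,\pi-y\}$ is the Poisson integral over the upper edge (note $P(x,y)$ is harmonic on $T^{2\pi}_+$ by property $(A)$, so $y \mapsto \pi-y$ keeps us in the domain of harmonicity), and by property $(D)$ these Poisson integrals reproduce the respective boundary data. Then $v := \log|\varphi| - U$ is subharmonic on $T^\pi_+$, bounded above, and has boundary values $\le 0$ on both edges (a.e.); one concludes $v \le 0$, i.e. $(ii)$, by the Phragmén–Lindelöf principle for the strip — precisely, since $\varphi$ is bounded, $\log|\varphi|$ grows at most like a constant as $|x|\to\infty$, while $U$ stays bounded on horizontal sub-strips, so the "no exponential growth at the two ends of the strip" hypothesis of the strip version of Phragmén–Lindelöf is satisfied and the maximum principle applies. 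I expect the main obstacle to be the careful justification of $(i)$: proving that $\log|\varphi|$ is \emph{not} $-\infty$ in the weighted-integral sense, i.e. that a bounded nonzero analytic function on the strip cannot have boundary modulus decaying faster than any $e^{-c e^{\pi|x|/h}}$ on a set of positive measure — this is the strip analogue of the classical theorem on zero sets / Blaschke-type conditions, and the transfer via $w=e^z$ to $H^\infty(\mathbb{H})$ is the most transparent way to secure it. Everything else ($(ii)$ given $(i)$, and the reduction $h\mapsto\pi$) is routine.
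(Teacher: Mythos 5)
Your treatment of part $(i)$ is essentially the paper's argument: the paper normalizes $h=\pi$ and maps the strip conformally onto the unit disk via $z \mapsto (i-e^z)/(i+e^z)$, quoting the classical facts for bounded analytic functions on the disk (Koosis); your map $w=e^{z}$ onto the upper half-plane followed by the standard $H^{\infty}$ statement $\int_{-\infty}^{\infty} \log|\psi(s)|(1+s^{2})^{-1}\,\mathrm{d}s > -\infty$ and the change of variables $s=\pm e^{x}$ (with $e^{x}/(1+e^{2x}) \asymp e^{-|x|}$, and the positive part of $\log|\varphi|$ harmless since $\varphi$ is bounded) is the same reduction in a different coordinate, so $(i)$ is fine.

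For part $(ii)$, however, the argument you actually commit to --- set $U$ equal to the right-hand side, put $v=\log|\varphi|-U$, and run a Phragm\'en--Lindel\"of/maximum principle on the strip --- has a genuine gap. You assert that $v$ is bounded above, but $U$ is the Poisson integral of boundary data that is in general unbounded below (for instance near boundary zeros of $\varphi$, which may exist), so $-U$ need not be bounded above and no a priori upper bound for $v$ is available. Likewise, the boundary control ``$v\le 0$ a.e.'' is not verified at points where $\varphi$ vanishes on an edge: there both $\log|\varphi(z)|$ and, typically, $U(z)$ tend to $-\infty$, and showing $\limsup v\le 0$ at such points is exactly the delicate issue; an extended maximum principle allowing an exceptional boundary set needs both an upper bound on $v$ and the $\limsup$ condition off a set of harmonic measure zero, neither of which you establish. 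These are precisely the difficulties that the classical proof of ``$\log|f|$ is dominated by the Poisson integral of its boundary values'' for $H^{\infty}$ of the disk circumvents by dilations $f_{r}(z)=f(rz)$ or inner--outer factorization, tools with no direct analogue on the strip. The repair is to do for $(ii)$ exactly what you did for $(i)$, which is also the paper's (two-line) proof: transfer the known disk/half-plane inequality $\log|\psi(w)|\le$ (Poisson integral of $\log$ of the boundary modulus of $\psi$) back through the conformal map, using conformal invariance of harmonic measure and the fact that harmonic measure of $T^{h}_{+}$ at $z=x+iy$ decomposes into the densities $\tfrac{1}{2h}P_{h}(t-x,y)\,\mathrm{d}t$ on the lower edge and $\tfrac{1}{2h}P_{h}(t-x,h-y)\,\mathrm{d}t$ on the upper edge, which yields $(ii)$ verbatim.
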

\begin{proof} 
We may assume that $h = \pi$. The results can be derived by considering the conformal mapping 
$$z \rightarrow \frac{i-e^z}{i+e^z}$$
from the strip $T^\pi_+$ onto the the unit disk and using the well known counterparts of the statements for bounded analytic functions on the unit disk \cite{Koosis}. \end{proof}

\begin{proposition} \label{praghmenlindelof}
Let $\omega$ be a weight function satisfying $(\epsilon)_{\pi/h}$. Let $\varphi$ be holomorphic on the strip $T^h_+$ and continuous on $\overline{T^h_+}$,  and suppose that
$$
|\varphi(z)| \leq M, \quad z \in T^h_+, \qquad  |\varphi(x)| \leq Ce^{-\omega(x)}, \quad x \in \R,
$$
for some $M, C > 0$. Then,
$$
|\varphi(z)| \leq M^{y/h}C^{1 -(y/h)}\exp\left(-\frac{\omega(x)}{2}\left(1-\frac{y}{h}\right) \right), \qquad z = x+iy \in T^h_+.
$$
\end{proposition}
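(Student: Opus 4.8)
The plan is to obtain the three‑lines inequality from the harmonic majorization in Lemma \ref{subharmonicstrip}$(ii)$ combined with the lower bound for the Poisson transform of $\omega$ furnished by Lemma \ref{fund}, whose hypothesis $(\epsilon)_{\pi/h}$ is exactly the one assumed here. If $\varphi$ is identically zero the inequality is trivial, so assume $\varphi\not\equiv 0$. Then Lemma \ref{subharmonicstrip} is applicable and gives, for $z=x+iy\in T^h_+$,
\[
\log|\varphi(z)| \le P_h\{\log|\varphi|;x,y\} + P_h\{\log|\varphi(\cdot+ih)|;x,h-y\},
\]
so the task reduces to estimating the two terms on the right. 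The computation hinges on the elementary identity $P_h\{1;x,y\}=1-y/h$, valid for $0<y<h$, which one checks by the substitution $s=\pi(t-x)/h$ in the integral defining $P_h\{\,\cdot\,;x,y\}$, using that $P$ is even in its first variable and property $(D)$ of the Poisson kernel.

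For the first term I would use the boundary estimate, which gives $\log|\varphi(t)|\le \log C-\omega(t)$ for $t\in\R$. Since $P_h(t-x,y)>0$ on $T^h_+$, the Poisson transform is linear and monotone, and the integrals involved converge: the positive part of $\log|\varphi|$ on $\R$ is bounded (by $\max(\log C,0)$), integrability of the negative part against $P_h(t-x,y)\asymp e^{-\pi|t-x|/h}$ is Lemma \ref{subharmonicstrip}$(i)$, and $\int_0^\infty\omega(t)e^{-\pi t/h}\dt<\infty$ by $(\epsilon)_{\pi/h}$. Hence, invoking the identity above and then Lemma \ref{fund},
\[
P_h\{\log|\varphi|;x,y\} \le \log C\left(1-\frac{y}{h}\right)-P_h\{\omega;x,y\} \le \log C\left(1-\frac{y}{h}\right)-\frac{\omega(x)}{2}\left(1-\frac{y}{h}\right).
\]
For the second term, $|\varphi(t+ih)|\le M$ yields $\log|\varphi(t+ih)|\le \log M$, and since $0<h-y<h$ the same identity gives $P_h\{\log|\varphi(\cdot+ih)|;x,h-y\}\le \log M\cdot(y/h)$. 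Adding the two bounds and exponentiating produces
\[
|\varphi(z)| \le C^{1-(y/h)}M^{y/h}\exp\left(-\frac{\omega(x)}{2}\left(1-\frac{y}{h}\right)\right), \qquad z=x+iy\in T^h_+,
\]
which is the assertion.

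I do not expect a genuine obstacle: the proof is essentially an assembly of the previously established lemmas. The only points that require a little care are the well‑definedness of the Poisson transforms of the functions $\log|\varphi|$ and $\log|\varphi(\cdot+ih)|$, which are only bounded from above (handled by Lemma \ref{subharmonicstrip}$(i)$ together with $(\epsilon)_{\pi/h}$, and allowing the value $-\infty$, which is harmless for an upper bound), and the bookkeeping of constants when $C$ or $M$ is smaller than $1$ — this causes no trouble because $P_h\{\log C;x,y\}=\log C\,(1-y/h)$ and $P_h\{\log M;x,h-y\}=\log M\,(y/h)$ regardless of sign. If one prefers, one may first rescale to reduce to the case $h=\pi$, exactly as in the proofs of Lemmas \ref{fund} and \ref{subharmonicstrip}, but this is not essential.
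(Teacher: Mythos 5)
Your proof is correct and takes essentially the same route as the paper's: harmonic majorization via Lemma \ref{subharmonicstrip}$(ii)$, the normalization $P_h\{1;x,y\}=1-y/h$ coming from property $(D)$, and the lower bound of Lemma \ref{fund}, with the convergence issues handled exactly as needed under $(\epsilon)_{\pi/h}$. The only (immaterial) difference is that the paper first combines $P_h\{\log|\varphi|;x,y\}+P_h\{\omega;x,y\}$ and applies Jensen's inequality to the normalized Poisson measure before invoking the boundary estimate, whereas you bound $P_h\{\log|\varphi|;x,y\}\leq \log C\,(1-y/h)-P_h\{\omega;x,y\}$ directly by monotonicity and linearity of the Poisson transform; both computations produce the same estimate.
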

\begin{proof}
We may assume that $\varphi$ is non-identically zero.  By Lemma \ref{subharmonicstrip}$(ii)$ and property $(D)$ of the Poisson kernel we have that
$$
|\varphi(z)|e^{P_h\{\omega; x,y\}}\leq M^{y/h}\exp\left( \frac{1}{2h} \int_{-\infty}^\infty (\log|\varphi(t)| +\omega(t))P_h(t-x,y)\dt\right).
$$
By applying Jensen's inequality to the probability measure $P_h(t-x,y)/(2(h-y)) dt$ for $x \in \R$ and $0 < y < h$ fixed, we obtain
\begin{align*}
|\varphi(z)|e^{P_h\{\omega; x,y\}}&\leq \frac{M^{y/h}}{2(h -y)}\int_{-\infty}^\infty \exp\left(\left(1 -\frac{y}{h}\right)(\log|\varphi(t)| +\omega(t))\right)P_h(t-x,y)\dt \\
& \leq M^{y/h}C^{1 -(y/h)}.
\end{align*}
The result now follows from the first part of Lemma \ref{fund}.
\end{proof}
\section{Spaces of analytic functions of rapid decay on strips}\label{testfunctions}
We now introduce and discuss some basic properties of the spaces of analytic functions that we are mainly concerned with. They generalize the test function spaces for the Fourier hyperfunctions and ultrahyperfunctions \cite{Hoskins,Kaneko,Park,Silva}. We will also determine their images under the Fourier transform, extending various results from \cite{c-k-l-1997}. Throughout the rest of the article the parameters $h$, $k$,  $\lambda$, $b$, and $R$ always stand for positive real numbers.

Let $\omega$ be a weight function. We write $\mathcal{A}^h_\omega$ for the $(B)$-space (Banach space) consisting of all analytic functions $\varphi \in \mathcal{O}(T^h)$ that satisfy
$$
 \| \varphi \|^h_{\omega} := \sup_{z \in T^h}|\varphi(z)|e^{\omega(x)} < \infty.
$$
We set $\mathcal{A}^h_{\omega_\lambda} = \zw^{h,\lambda}$ and
$$\| \varphi \|^h_{\omega_\lambda} = \| \varphi \|^{h, \lambda}_{\omega} = \| \varphi \|^{h,\lambda}, \qquad \varphi \in \zw^{h,\lambda}.$$

Montel's theorem and the uniqueness property of holomorphic functions yield the following simple lemma.
 \begin{lemma}\label{compact}
Let $\omega$ and $\sigma$ be two weight functions such that 
\begin{equation}
\label{eqomega-sigma}
\lim_{t \to \infty}\sigma(t) - \omega(t) = \infty.
\end{equation}
Then, for $0 < h < k$, the restriction mapping $\mathcal{A}^k_\sigma \rightarrow \zw^{h}$ is injective and compact.
\end{lemma}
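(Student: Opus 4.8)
The plan is to prove Lemma \ref{compact} by reducing it to the standard Montel-type argument, with the weight estimate \eqref{eqomega-sigma} providing the ``gain'' that upgrades boundedness to relative compactness. First I would verify \emph{injectivity}: if $\varphi \in \mathcal{A}^k_\sigma$ restricts to the zero function on $T^h$, then since $T^h$ is a nonempty open subset of the connected open set $T^k$ and $\varphi$ is holomorphic on $T^k$, the uniqueness theorem for holomorphic functions forces $\varphi \equiv 0$ on $T^k$. (Strictly, $\varphi$ restricted to $T^h$ being the zero \emph{element of $\zw^h$} means it vanishes as a function on $T^h$, so this applies verbatim.) The restriction map is clearly linear and continuous, since $\|\varphi\|^h_\omega = \sup_{T^h}|\varphi|e^{\omega(x)} \le \sup_{T^k}|\varphi|e^{\sigma(x)} = \|\varphi\|^k_\sigma$ because $T^h \subseteq T^k$ and, enlarging $x$-range aside, $\omega(x) \le \sigma(x)$ fails in general — so here I must be slightly careful: \eqref{eqomega-sigma} gives $\sigma(t) - \omega(t) \to \infty$, hence $\sigma(t) \ge \omega(t)$ for $t$ large, and on the remaining compact $t$-range $\sigma - \omega$ is bounded below, so $\omega(t) \le \sigma(t) + C_0$ for all $t \ge 0$ and some $C_0 \ge 0$. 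Thus $\|\varphi\|^h_\omega \le e^{C_0}\|\varphi\|^k_\sigma$, giving continuity.

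Next, for \emph{compactness}, let $(\varphi_n)_{n\in\N}$ be a bounded sequence in $\mathcal{A}^k_\sigma$, say $\|\varphi_n\|^k_\sigma \le 1$. Since $|\varphi_n(z)| \le e^{-\sigma(x)} \le 1$ on $T^k$, the family is locally uniformly bounded on $T^k$, so by Montel's theorem a subsequence $(\varphi_{n_j})$ converges locally uniformly on $T^k$ to some $\varphi \in \mathcal{O}(T^k)$. Passing to the limit in $|\varphi_{n_j}(z)|e^{\sigma(x)} \le 1$ gives $\|\varphi\|^k_\sigma \le 1$, so $\varphi \in \mathcal{A}^k_\sigma$ and hence its restriction lies in $\zw^h$. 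It remains to show $\varphi_{n_j} \to \varphi$ in the norm $\|\cdot\|^h_\omega$, i.e. $\sup_{z \in T^h}|\varphi_{n_j}(z) - \varphi(z)|e^{\omega(x)} \to 0$. Fix $\varepsilon > 0$. Split $T^h = \{|x| \le R\} \cup \{|x| > R\}$. For $|x| > R$ we estimate $|\varphi_{n_j}(z) - \varphi(z)|e^{\omega(x)} \le (|\varphi_{n_j}(z)| + |\varphi(z)|)e^{\omega(x)} \le 2e^{\omega(x) - \sigma(x)} \le 2e^{-(\sigma(R) - \omega(R))} \cdot \sup_{|x|>R} e^{-(\sigma(x)-\omega(x)) + (\sigma(R)-\omega(R))}$; more simply, using \eqref{eqomega-sigma}, choose $R$ so large that $\sigma(x) - \omega(x) \ge \log(4/\varepsilon)$ for $|x| \ge R$, whence $|\varphi_{n_j}(z) - \varphi(z)|e^{\omega(x)} \le 2e^{-(\sigma(x)-\omega(x))} \le \varepsilon/2$ there, uniformly in $j$. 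On the compact set $K_R := \overline{T^h} \cap \{|x| \le R\} \subseteq T^k$ (here I use $h < k$, so $\overline{T^h} \subset T^k$), the local uniform convergence $\varphi_{n_j} \to \varphi$ together with boundedness of $e^{\omega(x)}$ on $K_R$ gives $\sup_{K_R}|\varphi_{n_j} - \varphi|e^{\omega(x)} < \varepsilon/2$ for $j$ large. Combining the two regions yields $\|\varphi_{n_j} - \varphi\|^h_\omega < \varepsilon$ for $j$ large, so $\varphi_{n_j} \to \varphi$ in $\zw^h$.

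This shows every bounded sequence in $\mathcal{A}^k_\sigma$ has a subsequence whose image under restriction converges in $\zw^h$; since $\zw^h$ is a Banach space, the restriction operator is compact. I expect the only genuinely delicate point to be the uniform-in-$j$ control of the tail $|x| > R$, which is exactly where hypothesis \eqref{eqomega-sigma} is used: without the divergence of $\sigma - \omega$ one only gets local uniform convergence, not norm convergence, so boundedness alone (the case $\sigma \sim \omega$ or worse) would fail to give compactness. Everything else is a routine combination of Montel's theorem, the identity theorem, and the completeness of the target space, and I would present it as such without belaboring the estimates.
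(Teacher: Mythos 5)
Your proof is correct and is exactly the argument the paper has in mind: the paper dispatches this lemma in one line ("Montel's theorem and the uniqueness property of holomorphic functions yield the following simple lemma"), and your write-up simply fills in the routine details—identity theorem for injectivity, Montel plus the tail estimate from $\sigma(t)-\omega(t)\to\infty$ for compactness. No gaps.
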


As topological vector spaces, we define 
\begin{equation}
\label{eq1mainspaces}
\mathcal{U}_{(\omega)}(\C) = \varprojlim_{ h \to \infty}\varprojlim_{ \lambda \to \infty} \mathcal{A}_\omega^{h,\lambda}, \qquad  \mathcal{A}_{\{\omega\}}(\R) =  \varinjlim_{ h \to 0^+} \varinjlim_{ \lambda \to 0^+}\mathcal{A}_\omega^{h,\lambda},
\end{equation}
and 
\begin{equation}
\label{eq2mainspaces}
\mathcal{A}^h_{(\omega)} = \varprojlim_{ k \to h^-}\varprojlim_{ \lambda \to \infty} \mathcal{A}_\omega^{k,\lambda}, \qquad \mathcal{A}^h_{\{\omega\}} = \varinjlim_{ k \to h^+} \varinjlim_{ \lambda \to 0^+}\mathcal{A}_\omega^{k,\lambda}.
\end{equation}
If $\omega$ satisfies $(\delta)$, Lemma \ref{compact} implies that $\mathcal{U}_{(\omega)}(\C)$ and $\mathcal{A}^h_{(\omega)}$ are $(FS)$-spaces, while $\mathcal{A}_{\{\omega\}}(\R)$ and $\mathcal{A}^h_{\{\omega\}}$ are $(DFS)$-spaces. Let $\sigma$ be another weight function such that $\omega \sim \sigma$, then $\mathcal{U}_{(\omega)}(\C) = \mathcal{U}_{(\sigma)}(\C)$ and $\mathcal{A}_{\{\omega\}}(\R)= \mathcal{A}_{\{\sigma\}}(\R)$, topologically. The same is true for the spaces (\ref{eq2mainspaces}).
We shall call the elements of the dual spaces $\mathcal{U}'_{(\omega)}(\C)$ and $\mathcal{A}'_{\{\omega\}}(\R)$ \emph{ultrahyperfunctions of $(\omega)$-type} and \emph{hyperfunctions of $\{\omega\}$-type}, respectively. When no reference to $\omega$ is made, we will simply call them (ultra)hyperfunctions of fast growth. We endow these (and other) duals with the strong topology.
 Observe that $\mathcal{U}_{(t)}(\C)$ and $\mathcal{A}_{\{t\}}(\R)$ are the test function spaces for the Fourier ultrahyperfunctions \cite{Hoskins, Park} and Fourier hyperfunctions \cite{Kawai}, respectively. If $\omega = M$ is the associated function of a weight sequence $M_p$, then $ \mathcal{U}_{(M_p)}(\C) := \mathcal{U}_{(M)}(\C) = \mathcal{S}^{(p!)}_{(M_p)}(\R)$ and  $\mathcal{A}_{\{M_p\}}(\R) := \mathcal{A}_{\{M\}}(\R) = \mathcal{S}^{\{p!\}}_{\{M_p\}}(\R)$, the mixed type Gelfand-Shilov spaces \cite{PPV}. In particular, $\mathcal{A}_{\{p!^\alpha\}}(\R)$, $\alpha > 0$, is equal to the Gelfand-Shilov space $\mathcal{S}^{\{1\}}_{\{\alpha\}}$ \cite{Gelfand}. 

As already pointed out in the introduction, it is a priori not obvious that the spaces (\ref{eq1mainspaces}) and (\ref{eq2mainspaces}) should contain non-identically zero functions. We address in this section the non-triviality of the spaces (\ref{eq2mainspaces}), and in particular that of $\mathcal{A}_{\{\omega\}}(\mathbb{R})=\bigcup _{h>0}\mathcal{A}_{\{\omega\}}^{h}$. The analysis of the corresponding problem for $\mathcal{U}_{(\omega)}(\mathbb{C})$ is more demanding and is postponed to Section \ref{section boundary values}. We begin with the following necessary condition for the non-triviality of $\mathcal{A}_\omega^{h,\lambda}$.
\begin{proposition} \label{mandel}
Let $\omega$ be a weight function and suppose that $\mathcal{A}_\omega^{h,\lambda}$ contains a function which is non-identically zero. Then, $\omega$ satisfies $(\epsilon)_{\pi/(h\lambda)}$.
\end{proposition}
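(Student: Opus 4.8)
The plan is to exploit the existence of a non-zero $\varphi \in \mathcal{A}_\omega^{h,\lambda}$ together with the sharp quantitative Phragmén–Lindelöf bound from Proposition \ref{praghmenlindelof}, which forces the weight $\omega$ to satisfy a Poisson-type integrability condition on a line inside the strip; that condition is exactly (or implies) $(\epsilon)_{\pi/(h\lambda)}$. Concretely, since $\varphi \in \mathcal{A}_\omega^{h,\lambda}$ we have $|\varphi(z)| \leq \|\varphi\|^{h,\lambda}$ on $T^h$ and $|\varphi(x)| \leq \|\varphi\|^{h,\lambda} e^{-\omega(\lambda x)}$ on $\mathbb{R}$. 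After the harmless rescaling $z \mapsto z/\lambda$ (which turns $T^h$ into $T^{h\lambda}$ and replaces $\omega_\lambda$ by $\omega$) and restricting attention to $T^{h\lambda}_+$, I would argue that the function cannot be identically zero there (by the uniqueness property of analytic functions, if $\varphi \neq 0$ on $T^h$ it is non-zero on the upper half-strip).

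Now I would apply Lemma \ref{subharmonicstrip}$(i)$ to $\varphi$ on $T^{h\lambda}_+$: since $\varphi$ is bounded, analytic on the strip, continuous up to the boundary, and not identically zero, the integral $\int_{-\infty}^\infty \log|\varphi(x)| e^{-\pi|x|/(h\lambda)}\,\mathrm{d}x > -\infty$. Combining this with the decay bound $\log|\varphi(x)| \leq \log\|\varphi\|^{h,\lambda} - \omega(\lambda x) = \log\|\varphi\|^{h,\lambda} - \omega(x)$ (after rescaling) on the real line gives
\[
-\infty < \int_{-\infty}^\infty \log|\varphi(x)| e^{-\pi|x|/(h\lambda)}\,\mathrm{d}x \leq \int_{-\infty}^\infty \left(\log\|\varphi\|^{h,\lambda} - \omega(x)\right) e^{-\pi|x|/(h\lambda)}\,\mathrm{d}x,
\]
and since $\int_{-\infty}^\infty e^{-\pi|x|/(h\lambda)}\,\mathrm{d}x < \infty$, we conclude $\int_{-\infty}^\infty \omega(x) e^{-\pi|x|/(h\lambda)}\,\mathrm{d}x < \infty$. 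By the symmetric extension $\omega(x)=\omega(|x|)$, this is precisely the statement that $\int_0^\infty \omega(t) e^{-(\pi/(h\lambda)) t}\,\mathrm{d}t < \infty$, i.e. $\omega$ satisfies $(\epsilon)_{\pi/(h\lambda)}$.

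The main obstacle — really the only subtle point — is making the reduction to the hypotheses of Lemma \ref{subharmonicstrip} clean: the lemma requires $\varphi$ bounded on the \emph{open} strip and continuous on its closure. Boundedness on the open strip is immediate from membership in $\mathcal{A}_\omega^{h,\lambda}$; continuity up to the boundary of $T^{h\lambda}_+$ is automatic because $\varphi$ is in fact analytic on the strictly larger strip $T^{h\lambda}$ (it is analytic across the real axis and across the line $\operatorname{Im} z = h\lambda$ is not in the domain, but one can instead work on $T^{h'}_+$ for $0 < h' < h\lambda$, apply the lemma there, and let $h' \to (h\lambda)^-$, or simply restrict to a slightly smaller sub-half-strip on which the closure sits inside $T^{h\lambda}$). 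Either way the integrability conclusion on $\mathbb{R}$ is unaffected. An alternative, perhaps slicker, route avoiding Lemma \ref{subharmonicstrip} entirely is to invoke Proposition \ref{praghmenlindelof} in the contrapositive: if $(\epsilon)_{\pi/(h\lambda)}$ failed, then $e^t \not\prec \omega$ fails in the relevant sense and one shows no bounded analytic function on the strip can decay like $e^{-\omega}$ on the real line unless it vanishes — but this essentially re-proves the Phragmén–Lindelöf input, so the direct argument via Lemma \ref{subharmonicstrip}$(i)$ is the most economical.
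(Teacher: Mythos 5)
Your overall strategy is the paper's: apply Lemma \ref{subharmonicstrip}$(i)$ to the (nonzero, bounded) function on a half-strip and play the resulting lower bound against $\log|\varphi(x)|\leq\log\|\varphi\|^{h,\lambda}-\omega(\lambda x)$. The gap is in how you meet the hypotheses of that lemma. The function (rescaled or not) is analytic only on the \emph{open} strip, so continuity up to the line $\operatorname{Im}z=h\lambda$ is not automatic, as you half-concede; but your fallback --- apply the lemma on $T^{h'}_+$ with $h'<h\lambda$ and let $h'\to(h\lambda)^-$, or restrict to a smaller sub-half-strip --- does \emph{not} leave ``the integrability conclusion on $\mathbb{R}$ unaffected,'' because the weight in Lemma \ref{subharmonicstrip}$(i)$ is tied to the width of the strip: on $T^{h'}_+$ it is $e^{-\pi|x|/h'}$. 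This route only yields $\int_0^\infty\omega(t)e^{-\mu t}\,\mathrm{d}t<\infty$ for every $\mu>\pi/(h\lambda)$, which is strictly weaker than the endpoint condition $(\epsilon)_{\pi/(h\lambda)}$ claimed in the statement (a nondecreasing weight comparable to $e^{\mu_0 t}/(1+t)$ for large $t$ satisfies $(\epsilon)_\mu$ for all $\mu>\mu_0$ but not $(\epsilon)_{\mu_0}$). Nor can you pass to the limit $h'\uparrow h\lambda$ in the lemma's conclusion: finiteness of $\int_{-\infty}^{\infty}\log^-|\varphi(x)|e^{-\pi|x|/h'}\,\mathrm{d}x$ for each $h'<h\lambda$ gives no uniform bound, so the limiting integral may well be infinite.

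The missing device, which is exactly how the paper proceeds, is to shift the function instead of shrinking the strip: for $0<k<h$, the translate $\varphi(\cdot-ik)$ is analytic on $\mathbb{R}+i(-h+k,h+k)$, hence analytic and bounded on $T^h_+$ and continuous on $\overline{T^h_+}$, so Lemma \ref{subharmonicstrip}$(i)$ applies on the strip of \emph{full} width $h$. Crucially, since the decay estimate $|\varphi(z)|\leq\|\varphi\|^{h,\lambda}e^{-\omega(\lambda x)}$ holds on all of $T^h$ and not merely on $\mathbb{R}$, the boundary values $\varphi(x-ik)$ still decay like $e^{-\omega(\lambda x)}$, and your computation (with the change of variables $t=\lambda x$) then gives $\int_0^\infty\omega(t)e^{-\pi t/(h\lambda)}\,\mathrm{d}t<\infty$ with the exact exponent. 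A side remark: the ``alternative route'' you mention through Proposition \ref{praghmenlindelof} would be circular, since that proposition assumes $(\epsilon)_{\pi/h}$ as a hypothesis; fortunately your main argument does not rely on it.
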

\begin{proof}
Let $\varphi$ be a non-zero element of $\mathcal{A}_\omega^{h,\lambda}$. By applying Lemma~\ref{subharmonicstrip}$(i)$ to $\varphi(\cdot - ik)$, $0 < k < h$, we obtain
\begin{align*}
- \infty &< \int_{-\infty}^ \infty \log |\varphi(x - ik)|e^{-\pi |x|/h}\dx 
\\
&
\leq \log \| \varphi \|^{h,\lambda} \int_{-\infty}^ \infty e^{-\pi |x|/h}\dx - \frac{2}{\lambda} \int_0^\infty \omega(x)e^{-\pi x/(h\lambda)}\dx, 
\end{align*}
and so $\int_0^\infty \omega(x)e^{-\pi x/(h\lambda)}\dx<\infty$.
\end{proof}
\begin{proposition}\label{localnontriviality}
Let $\omega$ be a weight function. The space $\mathcal{A}^h_{(\omega)}$ ($\mathcal{A}^h_{\{\omega\}}$) is non-trivial if and only if $\omega$ satisfies $(\epsilon)_{0}$ ($(\epsilon)_{\infty}$).
Consequently, $\mathcal{A}_{\{\omega\}}(\R)$ is non-trivial if and only if $\omega$ satisfies $(\epsilon)_{\infty}$.
\end{proposition}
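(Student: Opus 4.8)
The plan is to prove the two equivalences simultaneously, using that, as sets, $\mathcal{A}^h_{(\omega)} = \bigcap_{k < h}\bigcap_{\lambda > 0}\mathcal{A}_\omega^{k,\lambda}$ and $\mathcal{A}^h_{\{\omega\}} = \bigcup_{k > h}\bigcup_{\lambda > 0}\mathcal{A}_\omega^{k,\lambda}$, and then to read off the assertion for $\mathcal{A}_{\{\omega\}}(\R) = \bigcup_{h > 0}\mathcal{A}^h_{\{\omega\}}$. For the necessity directions I would argue as follows. If $\mathcal{A}^h_{(\omega)}$ contains a non-zero $\varphi$, then $\varphi$ is a non-zero element of every $\mathcal{A}_\omega^{k,\lambda}$ with $k < h$ and $\lambda > 0$, so Proposition \ref{mandel} gives that $\omega$ satisfies $(\epsilon)_{\pi/(k\lambda)}$; fixing $k = h/2$ and letting $\lambda$ run through $(0,\infty)$ makes $\pi/(k\lambda)$ run through $(0,\infty)$, so $\omega$ satisfies $(\epsilon)_\mu$ for all $\mu > 0$, i.e. $(\epsilon)_0$. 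Likewise a non-zero element of $\mathcal{A}^h_{\{\omega\}}$ lies in some $\mathcal{A}_\omega^{k,\lambda}$ with $k > h$, and Proposition \ref{mandel} directly yields $(\epsilon)_{\pi/(k\lambda)}$, hence $(\epsilon)_\infty$. Since $(\epsilon)_\infty$ does not refer to $h$, the forward implication for $\mathcal{A}_{\{\omega\}}(\R)$ is immediate, and its converse will follow once the local Roumieu statement is established.

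For the sufficiency directions the idea is to manufacture test functions by \emph{inverting} the zero-free analytic functions produced in Section \ref{harmonic}. Consider first the Roumieu case and assume $(\epsilon)_\infty$, say $(\epsilon)_{\mu_0}$ holds for some $\mu_0 > 0$; then $(\epsilon)_\mu$ holds for all $\mu \geq \mu_0$, since $\int_0^\infty \omega(t)e^{-\mu t}\dt$ is non-increasing in $\mu$. Fix any $k > h$ and any $\lambda \in (0, \pi/(8k\mu_0)]$, so that $\omega$ satisfies $(\epsilon)_{\pi/(8k\lambda)}$. Proposition \ref{funda} then provides a zero-free $F \in \mathcal{O}(T^k)$ with $|F(z)| \geq e^{\omega(\lambda x)}$ on $T^k$, and $\varphi := 1/F \in \mathcal{O}(T^k)$ satisfies $|\varphi(z)|e^{\omega(\lambda x)} \leq 1$, so $\varphi$ is a non-zero element of $\mathcal{A}_\omega^{k,\lambda} \subseteq \mathcal{A}^h_{\{\omega\}}$.

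The Beurling case is the only step that requires more than a direct appeal to Proposition \ref{funda}, and it is where I expect the main difficulty: one needs a \emph{single} holomorphic function on $T^h$ that decays faster than $e^{-\omega(\lambda x)}$ \emph{simultaneously for every} $\lambda > 0$, whereas $1/F$ with $F$ as in Proposition \ref{funda} only handles one value of $\lambda$. This gap is bridged by Lemma \ref{weight}: assuming $(\epsilon)_0$, it supplies a weight function $\sigma$ satisfying $(\epsilon)_0$ with $\omega_\lambda(t) = o(\sigma(t))$ for all $\lambda > 0$. Since $\sigma$ satisfies $(\epsilon)_0$, in particular $(\epsilon)_{\pi/(8h)}$, Proposition \ref{funda} applied to $\sigma$ with its parameter equal to $1$ yields a zero-free $F \in \mathcal{O}(T^h)$ with $|F(z)| \geq e^{\sigma(x)}$ on $T^h$. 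Then $\varphi := 1/F \in \mathcal{O}(T^h)$ satisfies, for every $k < h$ and every $\lambda > 0$, $\sup_{z \in T^k}|\varphi(z)|e^{\omega(\lambda x)} \leq \sup_{x \in \R} e^{\omega(\lambda |x|) - \sigma(|x|)} < \infty$, because $\omega(\lambda t) - \sigma(t) \to -\infty$; hence $\varphi$ is a non-zero element of $\bigcap_{k < h}\bigcap_{\lambda > 0}\mathcal{A}_\omega^{k,\lambda} = \mathcal{A}^h_{(\omega)}$. It remains only to record the routine points that $F = e^{U+iV}$ (with $V$ a harmonic conjugate of the harmonic function $U$ from the construction) is holomorphic and zero-free on the relevant strip, so that $1/F$ is a bona fide holomorphic function there, and that the $\sigma$ produced by Lemma \ref{weight} still satisfies \eqref{fasterthanlog} (indeed $\sigma(t) \geq \omega(t)$).
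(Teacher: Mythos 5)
Your proof is correct and follows essentially the same route as the paper: necessity via Proposition \ref{mandel}, the Roumieu sufficiency by inverting the function from Proposition \ref{funda}, and the Beurling sufficiency by first passing to the auxiliary weight $\sigma$ of Lemma \ref{weight} and then taking $1/F$ for the $F$ of Proposition \ref{funda}. The extra bookkeeping you supply (the admissible range of $\lambda$ in the Roumieu case and the fact that $\sigma \geq \omega$ still satisfies \eqref{fasterthanlog}) is exactly what the paper leaves implicit.
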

\begin{proof}
The direct implication follows from Proposition \ref{mandel}. Moreover, if $\omega$ satisfies $(\epsilon)_\infty$, Proposition \ref{funda} gives the non-trivality of $\mathcal{A}^h_{\{\omega\}}$. Assume now that $\omega$ satisfies $(\epsilon)_0$. By Lemma \ref{weight}, there is a weight function $\sigma$ satisfying $(\epsilon)_0$ such that $\omega_\lambda(t) = o(\sigma(t))$ for all $\lambda > 0$. By Proposition \ref{funda} there is an analytic function $F$ on $T^h$ such that $|F(z)| \geq e^{\sigma(x)}$ for all $z \in T^h$. Then, $1/F$ is an element of $\mathcal{A}^h_{(\omega)}$ which is non-identically zero. 
\end{proof}
In the next section (Subsection \ref{sect-cohBeurling}) we shall show that $\mathcal{U}_{(\omega)}(\C)$ is non-trivial if and only if $\omega$ satisfies $(\epsilon)_{0}$, this will be a consequence of a representation theorem for its dual (see Theorem \ref{nontrivial2}). 

The rest of this section is devoted to computing the images of  $\mathcal{U}_{(\omega)}(\C)$ and $\mathcal{A}_{\{\omega\}}(\R)$ under the Fourier transform. These spaces are the ultradifferentiable counterparts of the classical Hasumi-Silva space $\mathcal{K}_1(\R)$ of exponentially rapidly decreasing smooth functions \cite{Hoskins,hasumi,zielezny}. We fix constants in the Fourier transform as 
$$
\mathcal{F}(\varphi)(\xi) = \widehat{\varphi}(\xi) = \int_{-\infty}^\infty \varphi(x) e^{-ix \xi } \dx.
$$
Let $\omega$ be a weight function. We write  $\mathcal{K}^h_{1,\omega}(\R)$ for the function space consisting of all $\psi \in L^1(\R)$ such that
$$
\rho_\omega(\psi) := \sup_{x \in \R} |\mathcal{F}^{-1}(\psi)(x)|e^{\omega(x)} < \infty, \qquad \rho^h(\psi) := \sup_{\xi \in \R} |\psi(\xi)|e^{h|\xi|} < \infty;
$$
it becomes a $(B)$-space when endowed with the norm
$$
\rho^h_\omega(\psi) :=  \max (\rho_\omega(\psi),\rho^h(\psi)), \qquad \psi \in \mathcal{K}^h_{1,\omega}(\R).
$$
We set further $\mathcal{K}^h_{1,\omega_\lambda}(\R) = \mathcal{K}^{h,\lambda}_{1,\omega}(\R)$,
$$
\rho_{\omega_\lambda}^h(\psi) = \rho_{\omega}^{h,\lambda}(\psi)  , \qquad \psi \in \mathcal{K}_{1,\omega}^{h,\lambda}(\R),
$$
and
$$
\mathcal{K}_{1,(\omega)}(\R)= \varprojlim_{\lambda \to \infty} \varprojlim_{h \to \infty}\mathcal{K}^{h,\lambda}_{1,\omega}(\R), \qquad \mathcal{K}_{1,\{\omega\}}(\R)= \varinjlim_{\lambda \to 0^+} \varinjlim_{h \to 0^+}\mathcal{K}^{h,\lambda}_{1,\omega}(\R).  
$$
\begin{proposition}\label{fourier}
Let $\omega$ be a weight function satisfying $(\delta)$ and $(\varepsilon_0)$ ($(\epsilon)_\infty)$. The Fourier transform is a topological isomorphism of $\mathcal{U}_{(\omega)}(\C)$ onto $\mathcal{K}_{1,(\omega)}(\R)$ (of $\mathcal{A}_{\{\omega\}}(\R)$ onto $\mathcal{K}_{1,\{\omega\}}(\R)$).
\end{proposition}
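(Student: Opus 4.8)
The plan is to construct continuous linear maps in both directions between the defining Banach spaces $\mathcal{A}_\omega^{h,\lambda}$ and $\mathcal{K}^{h,\lambda}_{1,\omega}(\R)$, changing the parameters $(h,\lambda)$ only in an inessential way, and then to pass to the projective limit in the Beurling case and to the inductive limit in the Roumieu case; the Fourier inversion theorem will make the two maps mutually inverse.

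\emph{The map $\mathcal{F}$ from $\mathcal{A}$ to $\mathcal{K}$.} Let $\varphi\in\mathcal{A}_\omega^{h,\lambda}$. Since $\omega\geq0$, $\varphi$ is bounded on $T^{h}$, and $|\varphi(x)|\leq\|\varphi\|^{h,\lambda}e^{-\omega(\lambda x)}$ with $e^{-\omega(\lambda x)}$ integrable over $\R$ by \eqref{fasterthanlog}. Cauchy's theorem then lets one shift the line of integration in $\widehat{\varphi}(\xi)=\int_{\R}\varphi(x)e^{-ix\xi}\dx$ to $\operatorname{Im}z=y$ for any $|y|<h$ (the vertical sides of the rectangle tend to $0$ by the decay in $x$); taking $\operatorname{sgn}y=-\operatorname{sgn}\xi$ and letting $|y|\uparrow h$ gives $|\widehat{\varphi}(\xi)|\leq\big(\int_{\R}e^{-\omega(\lambda x)}\dx\big)\|\varphi\|^{h,\lambda}e^{-h|\xi|}$. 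In particular $\widehat\varphi\in L^{1}(\R)$, so Fourier inversion yields $\mathcal{F}^{-1}(\widehat\varphi)=\varphi|_{\R}$ and hence $\rho_{\omega_{\lambda}}(\widehat\varphi)=\sup_{x\in\R}|\varphi(x)|e^{\omega(\lambda x)}\leq\|\varphi\|^{h,\lambda}$. Together with the previous estimate this shows that $\mathcal{F}\colon\mathcal{A}_\omega^{h,\lambda}\to\mathcal{K}^{h,\lambda}_{1,\omega}(\R)$ is continuous.

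\emph{The map $\mathcal{F}^{-1}$ from $\mathcal{K}$ to $\mathcal{A}$, which is the crux.} Let $\psi\in\mathcal{K}^{h,\lambda}_{1,\omega}(\R)$, which we may assume non-zero. The bound $|\psi(\xi)|\leq\rho^{h}(\psi)e^{-h|\xi|}$ makes $\Phi(z):=\frac{1}{2\pi}\int_{\R}\psi(\xi)e^{iz\xi}\dxi$ converge locally uniformly on $T^{h}$, so $\Phi\in\mathcal{O}(T^{h})$ with $\Phi|_{\R}=\mathcal{F}^{-1}(\psi)$, $\sup_{\overline{T^{h'}}}|\Phi|\leq\rho^{h}(\psi)/(\pi(h-h'))$ for every $h'<h$, and $|\Phi(x)|\leq\rho_{\omega_\lambda}(\psi)e^{-\omega(\lambda x)}$ on $\R$. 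The real issue is to promote this decay on the real axis to a weighted bound on a whole substrip; here I would invoke the Phragm\'en--Lindel\"of estimate of Proposition~\ref{praghmenlindelof}, with weight $\omega_{\lambda}$, applied to $\Phi$ on $T^{h'}_{+}$ and to its reflection $z\mapsto\overline{\Phi(\bar z)}$ on $T^{h'}_{+}$. The hypothesis there, that $\omega_{\lambda}$ satisfy $(\epsilon)_{\pi/h'}$, is automatic from $(\epsilon)_{0}$ in the Beurling case, and in the Roumieu case it follows by applying Lemma~\ref{subharmonicstrip}$(i)$ to the non-zero $\Phi$ on $T^{h'}_{+}$. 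Proposition~\ref{praghmenlindelof} then gives, for $z=x+iy\in T^{h'}$,
\[
|\Phi(z)|\leq \max\!\left(\tfrac{\rho^{h}(\psi)}{\pi(h-h')},\,\rho_{\omega_\lambda}(\psi)\right)\exp\!\left(-\tfrac{1}{2}\Big(1-\tfrac{|y|}{h'}\Big)\omega(\lambda x)\right),
\]
so on $T^{h''}$ with $h''<h'$ one gets $|\Phi(z)|\leq M'e^{-c\,\omega(\lambda x)}$ with $c=\tfrac12(1-h''/h')>0$ and $M'$ bounded by a constant times $\rho^{h,\lambda}_{\omega}(\psi)$. Finally, iterating $(\delta)$ into the form $\omega(t/H^{n})\leq 2^{-n}\omega(t)+\log A$ and choosing $n$ with $2^{-n}\leq c$, the factor $e^{-c\omega(\lambda x)}$ absorbs $e^{-\omega(\lambda' x)}$ with $\lambda':=\lambda/H^{n}$, up to the constant $A$; hence $\Phi\in\mathcal{A}_\omega^{h'',\lambda'}$ with norm controlled by $\rho^{h,\lambda}_{\omega}(\psi)$, i.e.\ $\mathcal{F}^{-1}\colon\mathcal{K}^{h,\lambda}_{1,\omega}(\R)\to\mathcal{A}_\omega^{h'',\lambda'}$ is continuous.

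\emph{Passage to the limit and conclusion.} Since each of the two iterated projective limits defining $\mathcal{U}_{(\omega)}(\C)$ and $\mathcal{K}_{1,(\omega)}(\R)$ equals the projective limit over the full family of parameters $(h,\lambda)$, the two maps above induce continuous operators $\mathcal{F}\colon\mathcal{U}_{(\omega)}(\C)\to\mathcal{K}_{1,(\omega)}(\R)$ and $\mathcal{F}^{-1}$ in the opposite direction, shrinking $h$ to $h''$ and replacing $\lambda$ by $\lambda/H^{n}$ being harmless in the limit. In the Roumieu case one argues identically with inductive limits, using that a linear map out of the $(DFS)$-space $\mathcal{K}_{1,\{\omega\}}(\R)$ is continuous provided each restriction to a step $\mathcal{K}^{h,\lambda}_{1,\omega}(\R)$ is. The Fourier inversion theorem (all functions in sight lie in $L^{1}$ and are continuous) together with the identity theorem for holomorphic functions then show that $\mathcal{F}\mathcal{F}^{-1}$ and $\mathcal{F}^{-1}\mathcal{F}$ are the identity, so $\mathcal{F}$ is a topological isomorphism. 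I expect the only genuine obstacle to be the second step — turning the estimate $|\Phi(x)|\leq Ce^{-\omega(\lambda x)}$ on the real axis into a bound valid throughout a substrip — which is precisely what Proposition~\ref{praghmenlindelof} is designed to deliver, with condition $(\delta)$ needed only afterwards to recover the weight parameter.
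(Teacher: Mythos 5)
Your proof is correct and follows essentially the same route as the paper's: the forward direction via shifting the contour to the lines $\operatorname{Im} z=\pm k$, and the inverse direction by observing that $\mathcal{F}^{-1}(\psi)$ extends holomorphically to $T^h$ with the bound $\rho^h(\psi)/(\pi(h-k))$ on substrips and then applying Proposition \ref{praghmenlindelof} together with condition $(\delta)$ to recover the weighted decay. Your additional care in the Roumieu case (deriving $(\epsilon)_{\pi/h'}$ for $\omega_\lambda$ from Lemma \ref{subharmonicstrip}$(i)$ applied to a non-zero $\Phi$ — equivalently one could restrict to a cofinal family of small steps, for which $(\epsilon)_\infty$ suffices) correctly fills in a detail the paper leaves implicit.
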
 
\begin{proof}
For  $\varphi \in \mathcal{A}^{h,\lambda}_{\omega}$ we have the following formulas for its Fourier transform  (see e.g. \cite[p.\ 167]{Hoskins})
$$
\widehat{\varphi}(\xi) =
\left\{
	\begin{array}{ll}
		\mbox{$\displaystyle \int_{-\infty}^\infty \varphi(x+ik)e^{-i(x+ik)\xi} \dx$},  &  \xi \leq 0, \\ \\
		\mbox{$\displaystyle \int_{-\infty}^\infty \varphi(x-ik)e^{-i(x-ik)\xi} \dx$},  & \xi \geq 0,
	\end{array}
\right.
$$
where $ 0  < k < h$. This shows that the Fourier transform is a well defined continuous mapping in both the Beurling and Roumieu case. Conversely, let $\psi \in \mathcal{K}^{h,\lambda}_{1,\omega}(\R)$. Then, there is $\varphi \in \mathcal{O}(T^h)$ with $\widehat{\varphi} = \psi$ such that 
$$
|\varphi(z)| \leq \frac{\rho^h(\psi)}{\pi(h-k)}, \qquad z \in T^{k},
$$   
where $ 0  < k < h$. Invoking Proposition \ref{praghmenlindelof} and condition $(\delta)$, we conclude that the inverse Fourier transform is also a  well defined continuous mapping in both the Beurling and Roumieu case. \end{proof}

We call the elements of $\mathcal{K}'_{1,(\omega)}(\R)$ and $\mathcal{K}'_{1,\{\omega\}}(\R)$ \emph{ultradistributions of class $(\omega)$ (of Beurling type) of exponential type} and \emph{ultradistributions of class $\{\omega\}$ (of Roumieu type) of infra-exponential type}, respectively. Observe that Proposition \ref{fourier} allows one to define the Fourier transform from $\mathcal{K}'_{1,(\omega)}(\R)$ onto $\mathcal{U}'_{(\omega)}(\C)$ and from $\mathcal{K}'_{1,\{\omega\}}(\R)$ onto $\mathcal{A}'_{\{\omega\}}(\R)$ via duality.

Let us remark that when $\omega = M$ is the associated function of a weight sequence $M_p$ satisfying $(M.2)$, then $ \mathcal{K}_{1,(M_p)}(\R) := \mathcal{K}_{1,(M)}(\R) = \mathcal{S}^{(M_p)}_{(p!)}(\R)$ and $\mathcal{K}_{1,\{M_p\}}(\R) := \mathcal{K}_{1,\{M\}}(\R) = \mathcal{S}^{\{M_p\}}_{\{p!\}}(\R)$, topologically. When $M_p$ is non-quasianalytic \cite{Komatsu}, we have the continuous and dense inclusions $\mathcal{D}^{(M_p)}(\R) \hookrightarrow \mathcal{K}_{1,(M_p)}(\R)$ and $\mathcal{D}^{\{M_p\}}(\R) \hookrightarrow \mathcal{K}_{1,\{M_p\}}(\R)$ and therefore the inclusions $\mathcal{K}'_{1,(M_p)}(\R) \subset {\mathcal{D}^{(M_p)}}'(\R)$ and $\mathcal{K}'_{1,\{M_p\}}(\R) \subset {\mathcal{D}^{\{M_p\}}}'(\R)$.

\section{Boundary values of analytic functions in spaces of (ultra)hyperfunctions of fast growth}
\label{section boundary values}
In this section we build an analytic representation theory for the spaces $\mathcal{U}'_{(\omega)}(\C)$ and $\mathcal{A}'_{\{\omega\}}(\R)$. We show that every ultrahyperfunction of $(\omega)$-type (hyperfunction of $\{\omega\}$-type) can be represented as the boundary value of an analytic function defined outside some strip (outside the real line) and satisfying certain growth bounds with respect to the weight function $\omega$. Silva obtained analytic representations of  ultrahyperfunctions via a careful analysis of the Cauchy-Stieltjes transform \cite{Hoskins, Silva}. We shall follow a similar approach, the functions constructed in Section \ref{harmonic} are essential for our method. Furthermore, we present an (ultra)hyperfunctional version of Painlev\'e's theorem on analytic continuation. These results allow us to express the dual spaces $\mathcal{U}'_{(\omega)}(\C)$ and $\mathcal{A}'_{\{\omega\}}(\R)$ as quotients of spaces of analytic functions in a very precise fashion.  As an application of these ideas, we characterize the non-triviality of the space $\mathcal{U}_{(\omega)}(\C)$.

\subsection{Analytic representations}
Given $0 < b < R$, we use the notation $T^{b,R} = T^R \backslash \overline{T^b}= \mathbb{R}+i ((-R,-b)\cup (b,R))$.  Let $\omega$ be a weight function. We define $\mathcal{O}_{\omega}^{b,R}$ as the $(B)$-space consisting of all analytic functions $F \in \mathcal{O}(T^{b,R})$ that satisfy
$$
|F|^{b,R}_\omega := \sup_{z \in T^{b,R }} |F(z)| e^{-\omega(x)} < \infty,
$$
and $\pw^{R}$ as the $(B)$-space consisting of all $P \in \mathcal{O}(T^{R})$ such that
$$
|P|^{R} := \sup_{z \in T^R} |P(z)| e^{-\omega(x)} < \infty.
$$
We set $\mathcal{O}^{b,R}_{\omega_\lambda} = \mathcal{O}^{b,R, \lambda}_{\omega}$, $\mathcal{P}^{R}_{\omega_\lambda} = \mathcal{P}^{R, \lambda}_{\omega}$,
$$
| F |^{b,R}_{\omega_\lambda} = | F |^{b,R,\lambda}_\omega= | F |^{b,R,\lambda}, \qquad F \in \mathcal{O}^{b,R, \lambda}_{\omega},
$$
and
$$
| P |^R_{\omega_\lambda} = | P |^{R,\lambda}_\omega= | P |^{R,\lambda}, \qquad P \in \mathcal{P}^{R, \lambda}_{\omega}.
$$
As in Lemma \ref{compact}, one easily obtains:
 \begin{lemma}\label{compactanalrepr}
Let $\omega$ and $\sigma$ be weight functions such that \eqref{eqomega-sigma} holds. 
Then, for $0 < b < c < L <  R$, the restriction mappings $\mathcal{O}^{b,R}_\omega \rightarrow \mathcal{O}^{c,L}_\sigma$ and $\mathcal{P}^{R}_\omega \rightarrow \mathcal{O}^{L}_\sigma$ are injective and compact.
\end{lemma}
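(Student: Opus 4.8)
The plan is to follow the proof of Lemma \ref{compact} essentially verbatim, the only change being that Montel's theorem is now applied on the (possibly disconnected) open set $T^{b,R}$ rather than on a strip. I treat both restriction mappings at once: write $\Omega$ for the larger domain ($T^{b,R}$, resp. $T^R$) and $\Omega'$ for the smaller one ($T^{c,L}$, resp. $T^L$). The geometric point is that $b<c$ and $L<R$ force $\overline{\Omega'}$ to be a closed subset of $\Omega$ whose imaginary part ranges over a fixed compact subinterval of the vertical extent of $\Omega$, whereas $\Omega'$ is still unbounded in the real direction. First I would record well-definedness and boundedness: since $\sigma(t)-\omega(t)\to\infty$ and both weights are non-decreasing, $\omega(t)-\sigma(t)$ is bounded above by some $D\ge 0$, so for $F$ in the source space and $z\in\Omega'$ we get $|F(z)|e^{-\sigma(x)} = (|F(z)|e^{-\omega(x)})\,e^{\omega(x)-\sigma(x)} \le e^{D}|F(z)|e^{-\omega(x)}$, whence the restriction operator has norm at most $e^{D}$.

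For compactness, let $(F_n)_n$ lie in the unit ball of the source space, so that $|F_n(z)|\le e^{\omega(x)}$ on $\Omega$. On every compact $K\subset\Omega$ this bound is uniform in $n$, so $(F_n)$ is locally bounded and hence, by Montel's theorem applied on each connected component of $\Omega$, has a subsequence (not relabeled) converging locally uniformly on $\Omega$ to some $F\in\mathcal{O}(\Omega)$, which again satisfies $|F(z)|\le e^{\omega(x)}$ and so lies in the source space. Put $G_n=F_n-F$, so $|G_n(z)|\le 2e^{\omega(x)}$ on $\Omega$ and $G_n\to 0$ locally uniformly. Fix $\varepsilon>0$. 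By \eqref{eqomega-sigma} choose $A>0$ with $e^{\omega(x)-\sigma(x)}\le\varepsilon/2$ whenever $|x|\ge A$; then $|G_n(z)|e^{-\sigma(x)}\le 2e^{\omega(x)-\sigma(x)}\le\varepsilon$ for \emph{all} $n$ at every $z\in\Omega'$ with $|x|\ge A$. The leftover set $\{z\in\overline{\Omega'}:|x|\le A\}$ is a compact subset of $\Omega$, on which $e^{-\sigma(x)}\le 1$ and $G_n\to 0$ uniformly, so $|G_n(z)|e^{-\sigma(x)}\le\varepsilon$ there once $n$ is large. Combining the two estimates, the weighted supremum of $G_n$ over $\Omega'$ tends to $0$, i.e. $F_n\to F$ in the target space; hence the restriction map is compact.

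Finally, injectivity follows from the identity theorem: an element of the source space vanishing on $\Omega'$ vanishes on a nonempty open subset of each connected component of $\Omega$, hence vanishes identically. I do not expect a genuine obstacle here — the paper is right that ``one easily obtains'' this — the only point beyond the textbook normal-families argument is that one must split the target region $\Omega'$ into a ``large $|\operatorname{Re}z|$'' part, controlled uniformly in $n$ by the growth gap \eqref{eqomega-sigma} alone, and a compact part, controlled by local uniform convergence; this split is precisely where the strict inequalities $b<c$, $L<R$ together with $\lim_{t\to\infty}(\sigma(t)-\omega(t))=\infty$ all get used.
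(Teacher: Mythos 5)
Your argument is correct and is exactly the proof the paper has in mind: the paper dispenses with it by invoking Montel's theorem and the identity theorem (as in Lemma \ref{compact}), and your write-up simply fills in the standard normal-families details, with the splitting of $T^{c,L}$ (resp.\ $T^{L}$) into the region $|\operatorname{Re}z|\geq A$ handled by \eqref{eqomega-sigma} and the compact remainder handled by local uniform convergence, which is precisely where $b<c<L<R$ enters. Note only that the target of the second mapping, written $\mathcal{O}^{L}_\sigma$ in the statement, is to be read as the space $\mathcal{P}^{L}_\sigma$ of functions analytic on $T^{L}$ with the weighted sup norm, exactly as you interpreted it.
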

 
Suppose that $\sigma$ is another weight function such that both \eqref{eqomega-sigma} and
$$
\int_0^\infty e^{\omega(t) - \sigma(t)}\dt < \infty
$$
hold. If $\omega$ satisfies $(\delta)$, the above conditions are fulfilled for $\omega = \omega_\lambda$ and $\sigma = \omega_{H\lambda}$ for each $\lambda > 0$.
Let $0 < b < h < R$. Given an analytic function $F \in \ow^{b,R}$, we associate to $F$ an element of $(\mathcal{A}^h_\sigma)'$ via the \emph{boundary value mapping}
$$
\langle \operatorname{bv}(F), \varphi \rangle = -\int_{\Gamma_{k}}F(z) \varphi(z)dz, \qquad  \varphi \in \mathcal{A}^h_\sigma, 
$$ 
where $b < k < h $  and $\Gamma_k$ is the (counterclockwise oriented) boundary of $T^k$. By Cauchy's integral theorem, the definition of $\operatorname{bv}(F)$ is independent of the chosen $k$ and  $f = \operatorname{bv}(F)$ is indeed a continuous linear functional on $\mathcal{A}^h_\sigma$. We say that $F$ is an \emph{analytic representation of $f$}. We have the following general result on the existence of analytic representations.
\begin{proposition}\label{analyticreprgeneral}
Let $0 < k < b < h < R$ and let $\omega$, $\sigma$, and $\kappa$ be three weight functions satisfying
\begin{equation}
\lim_{t \to \infty}\sigma(t) - \omega(t) = \infty, \qquad \lim_{t \to \infty}\kappa(t) - \sigma(t) = \infty,
\label{condition0}
\end{equation}
and
\begin{equation}
\int_0^\infty e^{\omega(t) - \sigma(t)}\dt < \infty, \qquad  \int_0^\infty e^{\sigma(t) - \kappa(t)}\dt < \infty.
\label{condition}
\end{equation}
Furthermore, suppose there is $P \in \mathcal{O}(T^R)$ such that
$$
C_1e^{\omega(x)} \leq |P(z)| \leq C_2e^{\sigma(x)}, \qquad  z \in T^R,
$$
for some $C_1, C_2 >0$. Then, every $f \in (\mathcal{A}^k_\omega)'$ is the boundary value of some element of $\mathcal{O}_\sigma^{b,R}$ on $\mathcal{A}_{\kappa}^h$, that is, there is $F \in \mathcal{O}_\sigma^{b,R}$ such that $\operatorname{bv}(F) = f$ on $\mathcal{A}_{\kappa}^h$.
\end{proposition}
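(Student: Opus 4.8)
The plan is to construct the analytic representation $F$ explicitly as a Cauchy–Stieltjes type transform of $f$, using the auxiliary function $P$ to absorb growth. Since $f \in (\mathcal{A}^k_\omega)'$ and the kernel $z \mapsto 1/(z-\zeta)$ needs to be tested against $f$, the natural candidate for $\zeta \in T^{b,R}$ is
\[
F(\zeta) = \frac{1}{2\pi i} \left\langle f_z, \frac{P(z)}{P(\zeta)(z-\zeta)} \right\rangle,
\]
where the division by $P(\zeta)$ restores the correct growth in $\zeta$ and multiplication by $P(z)$ is needed to make $z \mapsto P(z)/(z-\zeta)$ land in $\mathcal{A}^k_\omega$ (it is analytic on $T^k$ when $\zeta \notin \overline{T^k}$, and the factor $e^{-\omega(x)}$ from $1/P(z)$ times the decay forced by testing against $f$ will be controlled). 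The first step is therefore to verify that for fixed $\zeta \in T^{b,R}$, the function $z \mapsto P(z)/(z-\zeta)$ genuinely belongs to $\mathcal{A}^k_\omega = \mathcal{A}^{k}_{\omega}$: analyticity on $T^k$ is clear since $|\operatorname{Im}\zeta| > b > k$, and the bound $|P(z)/(z-\zeta)| e^{\omega(x)} \leq C_2 e^{\sigma(x) - \omega(x)}/\operatorname{dist}(\zeta, T^k)$ — wait, this is unbounded; so in fact one must be more careful and first test $f$ against a fixed element to split off decay, or equivalently work with $f/P$ as a functional.

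The cleaner route, which I would follow, is: since $|P(z)| \geq C_1 e^{\omega(x)}$ on $T^R \supset T^k$, multiplication by $1/P$ maps $\mathcal{A}^k_{\mathbf{0}}$ (bounded analytic functions on $T^k$, i.e. $\omega \equiv 0$) continuously into $\mathcal{A}^k_\omega$; hence $g := P^{-1}\!\cdot f$, defined by $\langle g,\psi\rangle = \langle f, \psi/P\rangle$, is a continuous functional on the space of bounded analytic functions on $T^k$. That space has a classical Cauchy-type analytic representation theory (K\"othe–Grothendieck duality for $H^\infty$ on a strip, or one reduces to the disk as in Lemma~\ref{subharmonicstrip}): there is $G$ analytic on $T^{b,R}$ (in fact on $\mathbb{C}\setminus\overline{T^k}$, restricted to our region) with at most polynomial-type growth — more precisely, because the dual of bounded functions involves measures, one gets $G$ with $|G(\zeta)| \lesssim 1/\operatorname{dist}(\zeta,\overline{T^k})$ near the boundary and bounded at infinity — such that $\langle g, \psi \rangle = -\int_{\Gamma_\ell} G(\zeta)\psi(\zeta)\,d\zeta$ for bounded analytic $\psi$. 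Then I set $F := P \cdot G$; it is analytic on $T^{b,R}$, and $|F(\zeta)| e^{-\sigma(x)} \leq C_2 |G(\zeta)|$, which is bounded on $T^{c,R}$ for any $b < c$, but may blow up as $\operatorname{Im}\zeta \to b^+$. This is exactly why the statement only claims membership in $\mathcal{O}^{b,R}_\sigma$ with the strict inequality $b < h$ and only asserts $\operatorname{bv}(F) = f$ \emph{on} $\mathcal{A}^h_\kappa$: one shrinks from $T^k$ to a slightly larger strip so that the $1/\operatorname{dist}$ singularity sits at level $b$, and then composes with the restriction $\mathcal{A}^h_\kappa \hookrightarrow \mathcal{A}^k_\omega$ (valid by \eqref{condition0}, which gives the compact inclusion of Lemma~\ref{compact}, after noting $\kappa \supset \sigma \supset \omega$ in the growth sense on the real axis and $h > k$ handles the strip).

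Finally I would check the boundary value identity: for $\varphi \in \mathcal{A}^h_\kappa$, unwinding the definition,
\[
\langle \operatorname{bv}(F), \varphi \rangle = -\int_{\Gamma_\ell} F(z)\varphi(z)\,dz = -\int_{\Gamma_\ell} P(z) G(z)\varphi(z)\,dz = \langle g, P\varphi \rangle = \langle f, \varphi \rangle,
\]
where $b < \ell < h$, the middle equality is the representation of $g$ applied to the bounded analytic function $P\varphi$ — here one uses the condition \eqref{condition} together with $|P(z)\varphi(z)| \leq C_2\|\varphi\|^h_\kappa e^{\sigma(x)-\kappa(x)}$, which is integrable and bounded, so $P\varphi$ indeed lies in the bounded-analytic class where the $G$-representation is valid — and the last equality is the definition of $g$. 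The integrability input \eqref{condition}, specifically $\int_0^\infty e^{\sigma(t)-\kappa(t)}\,dt < \infty$, is what lets the contour integral against the merely-bounded $G$ converge absolutely; the condition $\int_0^\infty e^{\omega(t)-\sigma(t)}\,dt < \infty$ plays the analogous role one level down, ensuring $\mathcal{A}^k_\omega$ contains enough functions (via Lemma~\ref{compactanalrepr}-type compactness) and that the $1/P$ multiplication interacts well with the duality. The main obstacle, and the step deserving genuine care, is the second one: pinning down the precise growth of the Cauchy transform $G$ of a functional on $H^\infty$ of a strip near the boundary — establishing the $O(1/\operatorname{dist})$ bound and, crucially, that $G$ extends analytically \emph{across} infinity with the $e^{\sigma(x)}$ bound after multiplying back by $P$ — which is where the conformal transfer to the disk from Lemma~\ref{subharmonicstrip} and a careful tracking of constants must be carried out; everything else is bookkeeping with the three weight functions and the nested strips.
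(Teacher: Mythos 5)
Your final object is, after unwinding, exactly the paper's representative: with $\langle g,\psi\rangle=\langle f,\psi/P\rangle$ and $G(z)=\frac{1}{2\pi i}\bigl\langle g_\zeta,\frac{1}{\zeta-z}\bigr\rangle$ one has $F(z)=P(z)G(z)=\frac{P(z)}{2\pi i}\bigl\langle f(\zeta),\frac{1}{(\zeta-z)P(\zeta)}\bigr\rangle$, which is precisely the function the paper writes down, and your accounting of where \eqref{condition0}, \eqref{condition} and the two-sided bound on $P$ enter is correct. The problem is the step you yourself flag as ``the main obstacle'' and then do not carry out: there is no ``classical Cauchy-type representation theory for $H^\infty$ of a strip'' in the form you invoke. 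For a general bounded analytic $\psi$ on $T^k$ the claimed identity $\langle g,\psi\rangle=-\int_{\Gamma_\ell}G(\zeta)\psi(\zeta)\,\dzeta$ is not even well posed: $\psi$ need not extend beyond $T^k$ (so a contour $\Gamma_\ell$ with $\ell>k$ is meaningless), and since neither $G$ nor $\psi$ decays along the unbounded contour the integral need not converge; the K\"othe--Grothendieck duality you cite concerns germs of analytic functions, not $H^\infty$ with the sup norm, and the conformal transfer to the disk does not rescue the statement for the whole dual of $H^\infty$. So the ``middle equality'' in your final chain is exactly the point that has to be proved, and only for the special test elements $\psi=P\varphi$ with $\varphi\in\mathcal{A}^h_\kappa$.

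The way the paper closes this gap is short and is what your argument is missing: for $\varphi\in\mathcal{A}^h_\kappa$ write Cauchy's formula $\varphi(\zeta)=\frac{1}{2\pi i\,P(\zeta)}\int_{\Gamma_b}\frac{\varphi(z)P(z)}{z-\zeta}\,\dz$ (legitimate because $|P\varphi|\leq C_2\|\varphi\|^h_\kappa e^{\sigma-\kappa}$ is integrable and tends to $0$ along the contour, by \eqref{condition0} and \eqref{condition}), approximate the integral by Riemann sums $R_n(\zeta)$, and check that $R_n(\zeta)/(2\pi iP(\zeta))\to\varphi(\zeta)$ in the norm of $\mathcal{A}^k_\omega$ — here the lower bound $|P(\zeta)|\geq C_1e^{\omega(\xi)}$ on $T^k$ and the distance $b-k>0$ from the contour are what give the weighted uniform convergence. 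Continuity of $f$ then lets you slide $f$ inside the integral, which is literally your middle equality. Two smaller corrections: the $1/\operatorname{dist}$ blow-up of $G$ occurs at $|\operatorname{Im}\zeta|=k$, not at $b$, so $G$ is already bounded on all of $T^{b,R}$ and $F=PG\in\mathcal{O}^{b,R}_\sigma$ with no shrinking of strips needed; and no analytic continuation of $G$ ``across infinity'' is required anywhere — the trivial estimate $|G(\zeta)|\leq\|g\|/(2\pi\operatorname{dist}(\zeta,\overline{T^k}))$, available directly from the definition of $G$ as a pairing, is all one uses.
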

\begin{proof}
Cauchy's integral formula yields 
\[ \varphi(\zeta) = \frac{1}{2\pi i P(\zeta)} \int_{\Gamma_{b}} \frac{\varphi(z)P(z)}{z -\zeta} \dz, \qquad  \zeta \in T^{k},\]
for each $\varphi \in \mathcal{A}_{\kappa}^h$.
Let $R_n(\zeta)$ be a sequence of Riemann sums converging to the integral in the right-hand side of the above expression. Then,
$R_n(\zeta)/(2\pi iP(\zeta)) \rightarrow \varphi(\zeta)$, as $n \to \infty$ in  $\zw^{k}$, as one readily verifies. Hence,
$$ 
\langle f(\zeta), \varphi(\zeta) \rangle = \int_{\Gamma_{b}}\frac{P(z)}{2\pi i} \left \langle f(\zeta), \frac{1}{(z -\zeta)P(\zeta)} \right \rangle \varphi(z) \dz.  
$$
Thus,
\[ F(z) = \frac{P(z)}{2\pi i} \left \langle f(\zeta), \frac{1}{(\zeta -z)P(\zeta)}\right \rangle \]
is an element of $\mathcal{O}_\sigma^{b,R}$ such that $\operatorname{bv}(F) = f$  on $\mathcal{A}_{\kappa}^h$.
\end{proof}
Our next result shows that functions whose boundary value give rise to the zero functional can be analytically continued. 
\begin{proposition}\label{edgegeneral}
Let $0 < b < h < R$ and let $\omega$, $\sigma$, and $\kappa$ be three weight functions satisfying \eqref{condition0} and \eqref{condition}. Furthermore, suppose there is $P \in \mathcal{O}(T^R)$ such that
$$
C_1e^{\sigma(x)} \leq |P(z)| \leq C_2e^{\kappa (x)}, \qquad  z \in T^R,
$$
for some $C_1, C_2 > 0$. If $F \in \ow^{b,R}$ is such that $\operatorname{bv}(F) = 0$ on $\mathcal{A}^h_\sigma$, then $F \in \mathcal{P}^{R}_\kappa$.
\end{proposition}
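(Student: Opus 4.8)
The plan is to decompose $F$ on the two connected components $\R+i(b,R)$ and $\R+i(-R,-b)$ of $T^{b,R}$ into differences of Cauchy-type integrals against the kernel $1/\bigl((z-\zeta)P(z)\bigr)$, and then to use the vanishing of $\operatorname{bv}(F)$ to glue the resulting pieces across the missing strip $\overline{T^b}$. The starting observation is that $P$ is zero-free on $T^R$ (as $|P|\geq C_1e^{\sigma}>0$) and that on $T^{b,R}$ one has $|F(z)/P(z)|\leq (|F|^{b,R}_\omega/C_1)\,e^{\omega(x)-\sigma(x)}$, so by the first inequality in \eqref{condition} the function $F/P$ is absolutely integrable on every horizontal line contained in $T^{b,R}$, with an $L^1$-bound uniform in the height of the line (and bounded as that height tends to $\pm b$ or $\pm R$). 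For fixed $b<k<R$ I set
\[
\Phi^\pm_k(\zeta)=\frac{1}{2\pi i}\int_{\operatorname{Im} z = \pm k}\frac{F(z)}{(z-\zeta)P(z)}\,\dz,
\]
and a routine contour shift (Cauchy's theorem on a rectangle whose lateral sides contribute nothing by the decay of $F/P$) shows that the value of $\Phi^+_k$ at points below the line $\operatorname{Im} z = k$ is independent of $k$, hence defines $\alpha_+\in\mathcal{O}(\{\operatorname{Im}\zeta<R\})$; in the same way one gets $\beta_-\in\mathcal{O}(\{\operatorname{Im}\zeta>-R\})$ from $\Phi^-_k$ above $\operatorname{Im} z = -k$, $\beta_+\in\mathcal{O}(\{\operatorname{Im}\zeta>b\})$ from $\Phi^+_k$ above $\operatorname{Im} z = k$, and $\alpha_-\in\mathcal{O}(\{\operatorname{Im}\zeta<-b\})$ from $\Phi^-_k$ below $\operatorname{Im} z = -k$. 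Cauchy's integral formula applied to $F/P$ on a thin substrip then yields $F/P=\beta_+-\alpha_+$ on $\R+i(b,R)$ and $F/P=\beta_--\alpha_-$ on $\R+i(-R,-b)$.

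The key step is the elementary remark that whenever $|\operatorname{Im}\zeta|>h$, the function $\varphi_\zeta(z)=1/\bigl((z-\zeta)P(z)\bigr)$ lies in $\mathcal{A}^h_\sigma$: it is holomorphic on $T^h$ because its only pole $\zeta$ is off $\overline{T^h}$ and $P$ is zero-free, and $|\varphi_\zeta(z)|e^{\sigma(x)}\leq\bigl(C_1(|\operatorname{Im}\zeta|-h)\bigr)^{-1}$ for $z\in T^h$. Feeding $\varphi_\zeta$ into the hypothesis $\langle\operatorname{bv}(F),\varphi_\zeta\rangle=0$, and recalling that the two line integrals making up $\operatorname{bv}(F)$ may be taken over $\operatorname{Im} z = \pm k$ for any $b<k<h$, I recognize them (up to the factor $2\pi i$) as $\Phi^+_k(\zeta)$ and $\Phi^-_k(\zeta)$; these equal $\beta_+(\zeta)$ and $\beta_-(\zeta)$ when $\operatorname{Im}\zeta>h$, and $\alpha_+(\zeta)$ and $\alpha_-(\zeta)$ when $\operatorname{Im}\zeta<-h$. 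Hence $\beta_+=\beta_-$ on $\{\operatorname{Im}\zeta>h\}$ and $\alpha_+=\alpha_-$ on $\{\operatorname{Im}\zeta<-h\}$, and since the relevant half-planes are connected the identity theorem propagates these equalities to all of $\{\operatorname{Im}\zeta>b\}$ and $\{\operatorname{Im}\zeta<-b\}$, respectively.

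I would then put $\widetilde F(\zeta)=P(\zeta)\bigl(\beta_-(\zeta)-\alpha_+(\zeta)\bigr)$, which is holomorphic on all of $T^R$ since $\beta_-$ is defined for $\operatorname{Im}\zeta>-R$ and $\alpha_+$ for $\operatorname{Im}\zeta<R$. On $\R+i(b,R)$ the identity $\beta_-=\beta_+$ gives $\widetilde F=P(\beta_+-\alpha_+)=F$, and on $\R+i(-R,-b)$ the identity $\alpha_+=\alpha_-$ gives $\widetilde F=P(\beta_--\alpha_-)=F$; thus $\widetilde F$ is the (necessarily unique, as $T^R$ is connected) analytic continuation of $F$ to $T^R$. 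For the growth estimate: on $T^{b,R}$ one has $|\widetilde F(z)|=|F(z)|\leq|F|^{b,R}_\omega e^{\omega(x)}\leq C e^{\kappa(x)}$ because $\kappa-\omega$ is bounded below, a consequence of \eqref{condition0}; on the remaining closed strip $\overline{T^b}$ the integrals $\beta_-(\zeta)$ and $\alpha_+(\zeta)$ are uniformly bounded, since the denominators $|z-\zeta|$ stay $\geq k-b>0$ while $F/P$ is integrable over $\operatorname{Im} z = \pm k$, so there $|\widetilde F(z)|\leq C_2 e^{\kappa(x)}\cdot\mathrm{const}$ by the upper bound on $P$. Combining, $\widetilde F\in\mathcal{P}^R_\kappa$, i.e.\ $F\in\mathcal{P}^R_\kappa$.

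The step I expect to be the main obstacle, and the one genuinely new ingredient, is the choice of test functions: the Cauchy kernel $1/\bigl((z-\zeta)P(z)\bigr)$ is \emph{not} admissible for $\mathcal{A}^h_\sigma$ when $\zeta$ lies inside the strip $T^h$ against which $\operatorname{bv}(F)$ is tested, but it \emph{is} when $|\operatorname{Im}\zeta|>h$; therefore the hypothesis only delivers the gluing identities on the two far half-planes, and one must transport them by analytic continuation into the region $b<|\operatorname{Im}\zeta|<R$ where $F$ actually lives. The remainder is bookkeeping — justifying the contour deformations through the uniform line-integrability of $F/P$ provided by \eqref{condition}, and keeping track of the domain of holomorphy of each of the four auxiliary functions $\alpha_\pm,\beta_\pm$.
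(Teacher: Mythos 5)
Your proof is correct and follows essentially the same route as the paper's: both hinge on testing $\operatorname{bv}(F)=0$ against the kernel $1/\bigl((\cdot-\zeta)P(\cdot)\bigr)$, which lies in $\mathcal{A}^h_\sigma$ thanks to the lower bound on $P$, and on gluing Cauchy-type integrals of $F/P$ over the lines $\operatorname{Im}z=\pm k$ to continue $F$ across $\overline{T^b}$ with the bound $|P|\leq C_2e^{\kappa}$ supplying the $\mathcal{P}^R_\kappa$ estimate. The paper packages this as the single identity $F(z)=\frac{P(z)}{2\pi i}\int_{\Gamma_L}\frac{F(\zeta)}{(\zeta-z)P(\zeta)}\,d\zeta$ established directly for $z\in T^{h,L}$, while you split it into the four half-plane Cauchy transforms and invoke the identity theorem, but this is only a difference of presentation.
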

\begin{proof}
Let $0 < b < k < h < L < R$. It suffices to show that
\begin{equation}
\label{eqeoftw}
 F(z) = \frac{P(z)}{2\pi i} \int_{\Gamma_{L}} \frac{F(\zeta)}{(\zeta -z)P(\zeta)} \dzeta, \qquad z \in T^{h,L}. 
\end{equation}
Fix $z \in \C$ with $h <  \operatorname{Im} z  < L$; the case $-L <  \operatorname{Im} z  < -h$ is analogous. We denote by $\Gamma^+$ ($\Gamma^-$) the part of a contour $\Gamma$ in the upper (lower) half-plane. Cauchy's integral formula yields 
$$
F(z) = \frac{P(z)}{2\pi i} \left(\int_{\Gamma^+_{L}} \frac{F(\zeta)}{(\zeta -z)P(\zeta)} \dzeta - \int_{\Gamma^+_{k}} \frac{F(\zeta)}{(\zeta -z)P(\zeta)} \dzeta\right).
$$
Since $\zeta \rightarrow 1/((\zeta -z)P(\zeta)) \in \mathcal{A}^h_\sigma$, the assumption $\operatorname{bv}(F) = 0$ on $\mathcal{A}^h_\sigma$ implies that 
$$
\int_{\Gamma^+_{k}} \frac{F(\zeta)}{(\zeta -z)P(\zeta)} \dzeta =  - \int_{\Gamma^-_{k}} \frac{F(\zeta)}{(\zeta -z)P(\zeta)} \dzeta.
$$
 Furthermore, because $\zeta \rightarrow 1/((\zeta -z)P(\zeta))$ is analytic on the horizontal strip $-R <  \operatorname{Im} \zeta  < -b$, we have by Cauchy's integral theorem that
 $$
 \int_{\Gamma^-_{k}} \frac{F(\zeta)}{(\zeta -z)P(\zeta)} \dzeta =  \int_{\Gamma^-_{L}} \frac{F(\zeta)}{(\zeta -z)P(\zeta)} \dzeta.
$$
This shows (\ref{eqeoftw}).
\end{proof}
Combining these two results with Proposition \ref{funda}, we obtain the following corollaries.
\begin{corollary}\label{analquan}
Let $0 < k < b < h < R$ and let $\omega$ be a weight function satisfying $(\delta)$ and $(\epsilon)_0$.   For every $f \in (\mathcal{A}^{k,\lambda}_\omega)'$ there is  $F \in \mathcal{O}_\omega^{b,R,2H^2\lambda}$ such that $\operatorname{bv}(F) = f$ on $\mathcal{A}^{h,2H^3\lambda}_\omega$.
\end{corollary}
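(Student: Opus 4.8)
The plan is to obtain this as a quantified instance of Proposition~\ref{analyticreprgeneral}, applied not to three unrelated weights but to three dilates of $\omega$, namely $\omega_\lambda$, $\omega_{2H^2\lambda}$, and $\omega_{2H^3\lambda}$, where $A,H\ge 1$ are the constants in $(\delta)$. Recall that $(\mathcal{A}^{k,\lambda}_\omega)'=(\mathcal{A}^k_{\omega_\lambda})'$, that $\mathcal{O}^{b,R,2H^2\lambda}_\omega=\mathcal{O}^{b,R}_{\omega_{2H^2\lambda}}$, and that $\mathcal{A}^{h,2H^3\lambda}_\omega=\mathcal{A}^h_{\omega_{2H^3\lambda}}$; under these identifications the assertion of the corollary is \emph{verbatim} the conclusion of Proposition~\ref{analyticreprgeneral} for the triple $(\omega,\sigma,\kappa)=(\omega_\lambda,\omega_{2H^2\lambda},\omega_{2H^3\lambda})$ and the given $0<k<b<h<R$. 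So it suffices to check that triple satisfies the hypotheses of that proposition.

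For the auxiliary function $P$: since $\omega$ satisfies $(\epsilon)_0$, it satisfies $(\epsilon)_{\pi/(8R\lambda)}$, so Proposition~\ref{funda}, applied on the strip $T^R$ with dilation parameter $\lambda$ and invoking its second, sharper upper bound (available because $\omega$ satisfies $(\delta)$), produces $P\in\mathcal{O}(T^R)$ with
$$
e^{\omega(\lambda x)}\le |P(z)|\le A^3Ce^{\omega(2H^2\lambda x)},\qquad z\in T^R,
$$
for some $C>0$. This is exactly the two-sided bound $C_1e^{\omega_\lambda(x)}\le|P(z)|\le C_2e^{\omega_{2H^2\lambda}(x)}$ required in Proposition~\ref{analyticreprgeneral} with middle weight $\sigma=\omega_{2H^2\lambda}$, and it is precisely here that $(\epsilon)_0$ rather than merely $(\epsilon)_\infty$ is used: we need the construction to work on the \emph{whole} strip $T^R$ for an arbitrarily large dilation.

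It remains to verify \eqref{condition0} and \eqref{condition} for $(\omega_\lambda,\omega_{2H^2\lambda},\omega_{2H^3\lambda})$. Iterating $(\delta)$ twice, together with monotonicity of $\omega$, gives $\omega(2H^2t)\ge 4\omega(t)-3\log A$, hence $\omega_{2H^2\lambda}(t)-\omega_\lambda(t)\ge 3(\omega(\lambda t)-\log A)$; one application of $(\delta)$ gives $\omega_{2H^3\lambda}(t)-\omega_{2H^2\lambda}(t)\ge\omega(2H^2\lambda t)-\log A$. Since $\omega(t)\to\infty$ by \eqref{fasterthanlog}, both differences tend to $\infty$, which is \eqref{condition0}. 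Moreover $e^{\omega_\lambda(t)-\omega_{2H^2\lambda}(t)}\le A^3e^{-3\omega(\lambda t)}$ and $e^{\omega_{2H^2\lambda}(t)-\omega_{2H^3\lambda}(t)}\le Ae^{-\omega(2H^2\lambda t)}$, and both $\int_0^\infty e^{-3\omega(\lambda t)}\dt$ and $\int_0^\infty e^{-\omega(2H^2\lambda t)}\dt$ converge because $\omega(t)/\log t\to\infty$ forces the integrands to be eventually $O(t^{-2})$; this is \eqref{condition}. Proposition~\ref{analyticreprgeneral} now applies and yields $F\in\mathcal{O}^{b,R}_{\omega_{2H^2\lambda}}=\mathcal{O}^{b,R,2H^2\lambda}_\omega$ with $\operatorname{bv}(F)=f$ on $\mathcal{A}^h_{\omega_{2H^3\lambda}}=\mathcal{A}^{h,2H^3\lambda}_\omega$, as claimed. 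There is no genuine obstacle: the whole argument is bookkeeping of dilation constants, the only point that repays attention being that the two-fold iteration of $(\delta)$ is what produces the factor $2H^2$ needed to match the upper bound for $P$ coming from Proposition~\ref{funda} with the middle weight $\sigma$.
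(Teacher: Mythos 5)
Your proposal is correct and is essentially the paper's own argument: the paper obtains Corollary \ref{analquan} by combining Proposition \ref{analyticreprgeneral} with the function $P$ from Proposition \ref{funda} (using the sharper upper bound under $(\delta)$), exactly for the triple $(\omega_\lambda,\omega_{2H^2\lambda},\omega_{2H^3\lambda})$. Your verification of \eqref{condition0} and \eqref{condition} via iterating $(\delta)$ and using \eqref{fasterthanlog} is the intended bookkeeping.
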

\begin{corollary}\label{edge}
Let $0 < b < h < R$ and let $\omega$ be a weight function satisfying $(\epsilon)_0$ and $(\delta)$. If $F \in \ow^{b,R,\lambda}$ is such that $\operatorname*{bv}(F) = 0$ on $\mathcal{A}^{h, H\lambda}_\omega$, then $F \in \pw^{R,2H^3\lambda}$.
\end{corollary}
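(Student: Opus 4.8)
The plan is to deduce the corollary from Proposition~\ref{edgegeneral} by specializing its three weight functions and producing the auxiliary function $P$ from Proposition~\ref{funda}; the bookkeeping runs exactly in parallel to the one behind Corollary~\ref{analquan}.

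Concretely, I would apply Proposition~\ref{edgegeneral} with $\omega$ replaced by $\omega_\lambda$, $\sigma$ replaced by $\omega_{H\lambda}$, and $\kappa$ replaced by $\omega_{2H^3\lambda}$, for the same $0<b<h<R$. Then $F\in\ow^{b,R,\lambda}=\mathcal{O}_{\omega_\lambda}^{b,R}$, the hypothesis that $\operatorname{bv}(F)=0$ on $\mathcal{A}^{h,H\lambda}_\omega=\mathcal{A}^h_{\omega_{H\lambda}}$ is exactly the assumption required in Proposition~\ref{edgegeneral}, and its conclusion $F\in\mathcal{P}^R_{\omega_{2H^3\lambda}}=\pw^{R,2H^3\lambda}$ is the desired one. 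To supply the function $P$ needed there, with lower bound $e^{\sigma(x)}=e^{\omega(H\lambda x)}$ and some admissible upper bound $e^{\kappa(x)}$, I would invoke Proposition~\ref{funda} on $T^R$ with dilation parameter $H\lambda$: condition $(\epsilon)_0$ yields every $(\epsilon)_\mu$ that proposition asks for, and since $\omega$ also satisfies $(\delta)$ its refined estimate gives $P\in\mathcal{O}(T^R)$ with $e^{\omega(H\lambda x)}\le|P(z)|\le A^3C\,e^{\omega(2H^2\cdot H\lambda x)}=A^3C\,e^{\omega(2H^3\lambda x)}$ on $T^R$; this is precisely $C_1e^{\sigma(x)}\le|P(z)|\le C_2e^{\kappa(x)}$ with $\kappa=\omega_{2H^3\lambda}$ as chosen.

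It then only remains to verify \eqref{condition0} and \eqref{condition} for the triple $(\omega_\lambda,\omega_{H\lambda},\omega_{2H^3\lambda})$, and here condition $(\delta)$ does everything. From $2\omega(t)\le\omega(Ht)+\log A$ one gets $\omega(Ht)-\omega(t)\ge\omega(t)-\log A\to\infty$, using that $\omega(t)\to\infty$ by \eqref{fasterthanlog}; substituting the relevant dilates of $t$ and using monotonicity of $\omega$ together with $1\le H\le H^2\le 2H^3$ gives both limits in \eqref{condition0}. The same inequality gives $\omega(t)-\omega(Ht)\le-\omega(t)+\log A$, so the two integrands in \eqref{condition} are bounded by $Ae^{-\omega(\lambda t)}$ and $Ae^{-\omega(H\lambda t)}$ respectively, and these are integrable on $[0,\infty)$ because \eqref{fasterthanlog} forces $\omega(t)\ge 2\log t$ for large $t$. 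With all hypotheses of Proposition~\ref{edgegeneral} verified, it follows that $F\in\mathcal{P}^R_{\omega_{2H^3\lambda}}=\pw^{R,2H^3\lambda}$.

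I do not expect a genuine obstacle: the proof is essentially a constant-chasing exercise. The only delicate point is that the single dilation parameter $2H^3\lambda$ in the target space $\pw^{R,2H^3\lambda}$ must simultaneously accommodate the upper bound $e^{\omega(2H^3\lambda x)}$ for $P$ coming out of Proposition~\ref{funda} and still leave the ``$\log A$-margins'' in \eqref{condition0}--\eqref{condition} to be absorbed by $(\delta)$. The choice $\sigma=\omega_{H\lambda}$, $\kappa=\omega_{2H^3\lambda}$ is calibrated exactly for this, matching the analogous computation already carried out for Corollary~\ref{analquan}.
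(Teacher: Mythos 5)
Your proposal is correct and is precisely the argument the paper intends: Corollary \ref{edge} is stated as an immediate consequence of Proposition \ref{edgegeneral} combined with Proposition \ref{funda}, applied with the triple $(\omega_\lambda,\omega_{H\lambda},\omega_{2H^3\lambda})$ and with $P$ produced on $T^R$ with dilation parameter $H\lambda$, exactly as you do. Your verification of \eqref{condition0} and \eqref{condition} via $(\delta)$, monotonicity, and \eqref{fasterthanlog} fills in the same routine bookkeeping the paper leaves implicit.
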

\subsection{Analytic representations of ultrahyperfunctions of $(\omega)$-type  and the non-triviality of $\mathcal{U}_{(\omega)}(\mathbb{C})$} \label{sect-cohBeurling}
We start by studying the analytic representations of the dual of $\mathcal{A}^h_{(\omega)}$. Let us introduce some further notation. For $0 < h < R$ we write
$$
\mathcal{O}_{(\omega)}^{h,R} = \varinjlim_{ b \to h^-} \varinjlim_{ L \to R^+} \varinjlim_{\lambda \to \infty} \mathcal{O}_{\omega}^{b,L, \lambda}, \qquad \mathcal{P}_{(\omega)}^{R} = \varinjlim_{ L \to R^+} \varinjlim_{\lambda \to \infty} \mathcal{P}_{\omega}^{L, \lambda}.
$$
Lemma \ref{compactanalrepr} implies that, if $\omega$ satisfies $(\delta)$, the spaces $\mathcal{O}_{(\omega)}^{h,R}$  and $\mathcal{P}_{(\omega)}^{R}$ are $(DFS)$-spaces. 

Furthermore,  the boundary value mapping
$$
\operatorname{bv}: \mathcal{O}_{(\omega)}^{h,R} \rightarrow (\mathcal{A}^h_{(\omega)})'
$$
is well defined and continuous. We need the following lemma.
\begin{lemma}  \label{denseinf}
Let $\omega$ be a weight function satisfying $(\epsilon)_0$ and $(\delta)$. For each $\lambda > 0$ the space 
$$
\mathcal{A}_{(\omega)}^{h,\infty} = \varprojlim_{\mu \to \infty} \mathcal{A}^{h,\mu}_\omega
$$
is dense in the space $\mathcal{A}_\omega^{h,H\lambda}$ with respect to the norm $\| \cdot \|^{h,\lambda}$. 
\end{lemma}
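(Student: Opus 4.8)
The plan is to approximate an arbitrary $\varphi \in \mathcal{A}_\omega^{h,H\lambda}$, in the weaker norm $\|\cdot\|^{h,\lambda}$, by products $\varphi g_n$, where $(g_n)_n$ is a sequence of analytic multipliers on $T^h$ that converge to $1$ uniformly on every vertical slab $\{|x| \le R\}$ while decaying at infinity faster than $e^{-\omega_\mu}$ for \emph{every} $\mu > 0$; the latter property forces $\varphi g_n \in \mathcal{A}_{(\omega)}^{h,\infty}$, and the former will give the convergence. The mechanism that makes the approximation possible is that, when measured in $\|\cdot\|^{h,\lambda}$ rather than in $\|\cdot\|^{h,H\lambda}$, the function $\varphi$ itself already decays, so throwing away its ``tail'' costs little.

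To build the multipliers I would first invoke Lemma \ref{weight} (this is the only place $(\epsilon)_0$ is used) to produce a weight function $\sigma$ satisfying $(\epsilon)_0$ with $\omega_\mu(t) = o(\sigma(t))$ for all $\mu > 0$. Since $\sigma$ then satisfies $(\epsilon)_{\pi/(8h)}$, Proposition \ref{funda} yields $G \in \mathcal{O}(T^h)$ with $e^{\sigma(x)} \le |G(z)|$ on $T^h$; in particular $G$ is continuous, zero-free, and $|G| \ge 1$. Put
$$
g_n(z) = \frac{G(0)}{G(z/n)}, \qquad z \in T^h,\ n \ge 1.
$$
Then $g_n \in \mathcal{O}(T^h)$, $|g_n(z)| \le |G(0)|\, e^{-\sigma(x/n)} \le |G(0)|$, and, writing $s = x/n$, for each $\mu > 0$,
$$
|g_n(z)|\, e^{\omega(\mu x)} \le |G(0)|\, e^{\,\omega(\mu n s) - \sigma(s)},
$$
which stays bounded because $\omega_{\mu n}(s) = o(\sigma(s))$ and $\sigma(s) \to \infty$. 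Hence $g_n \in \mathcal{A}_{(\omega)}^{h,\infty}$, and since $\varphi$ is bounded on $T^h$ (as $|\varphi(z)| \le \|\varphi\|^{h,H\lambda}$, using $\omega \ge 0$), the product $\varphi g_n$ again lies in $\mathcal{A}_{(\omega)}^{h,\infty}$.

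For the convergence, the key preliminary observation is that $\varphi$ decays in the weaker norm: by $(\delta)$ one has $\omega(\lambda x) - \omega(H\lambda x) \le -\omega(\lambda x) + \log A$, so
$$
|\varphi(z)|\, e^{\omega(\lambda x)} \le \|\varphi\|^{h,H\lambda}\, e^{\,\omega(\lambda x) - \omega(H\lambda x)} \le A\,\|\varphi\|^{h,H\lambda}\, e^{-\omega(\lambda x)},
$$
which tends to $0$ as $|x| \to \infty$ by \eqref{fasterthanlog}. Given $\varepsilon > 0$, choose $R_0$ with $|\varphi(z)| e^{\omega(\lambda x)} < \varepsilon$ for $z \in T^h$, $|x| \ge R_0$. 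On the slab $\{z \in T^h : |x| \le R_0\}$ one has $g_n \to 1$ uniformly: indeed $|g_n(z) - 1| \le |G(0) - G(z/n)|$ because $|G| \ge 1$, and as $z$ runs over this slab, $z/n$ runs over a compact neighbourhood of $0$ that shrinks to $\{0\}$, on which $G$ is continuous. Splitting $\|\varphi g_n - \varphi\|^{h,\lambda}$ over $\{|x| \le R_0\}$ and $\{|x| \ge R_0\}$, and using $\|\varphi\|^{h,\lambda} \le \|\varphi\|^{h,H\lambda} < \infty$ together with $|g_n| \le |G(0)|$, one gets, for $n$ large, $\|\varphi g_n - \varphi\|^{h,\lambda} \le \varepsilon + (|G(0)| + 1)\varepsilon$; letting $\varepsilon \to 0$ finishes the proof.

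The only genuinely delicate point is the design of the multipliers: they must decay faster than every $e^{-\omega_\mu}$ simultaneously and yet tend to $1$ on compact sets, and reconciling these two demands is precisely what the auxiliary weight $\sigma$ from Lemma \ref{weight} achieves — without $(\epsilon)_0$ there need not exist any analytic function decaying faster than all the $e^{-\omega_\mu}$. Everything else reduces to routine estimates with the weighted sup-norms.
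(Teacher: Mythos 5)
Your proof is correct and follows essentially the same route as the paper: multiply $\varphi$ by a rescaled cutoff $\psi(z/n)$ decaying faster than every $e^{-\omega_\mu}$, use $(\delta)$ to convert $\|\varphi\|^{h,H\lambda}$ into decay of $|\varphi(z)|e^{\omega(\lambda x)}$, and combine uniform convergence of the multiplier on vertical slabs with that decay at infinity. The only (cosmetic) difference is that you build the multiplier explicitly from Lemma \ref{weight} and Proposition \ref{funda} as $G(0)/G(z/n)$, whereas the paper simply picks a normalized element of $\mathcal{A}_{(\omega)}^{h+1}$, whose existence (Proposition \ref{localnontriviality}) is proved by exactly the construction you carried out.
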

\begin{proof}
Let $\varphi \in  \zw^{h,H\lambda}$. By Proposition \ref{localnontriviality}, $\mathcal{A}_{(\omega)}^{h+1}$ is non-trival, so select $\psi \in \mathcal{A}_{(\omega)}^{h+1}$ with $\psi(0) = 1$. Define $\varphi_n(z) = \varphi(z)\psi(z/n) \in \mathcal{A}_{(\omega)}^{h, \infty} $ for $n \geq 1$. Then,
$$
 \| \varphi - \varphi_n\|^{h, \lambda} = \sup_{z \in T^h} |\varphi(z)(1- \psi(z/n))|e^{\omega(\lambda x)} \leq A\|\varphi\|^{h,H\lambda} \sup_{z \in T^h} |1- \psi(z/n)|e^{-\omega(\lambda x)},
$$
which proves the result since $\psi(z/n) \to 1$, as $ n \to \infty$, uniformly on compact subsets of $T^{h+1}$.
\end{proof}

We can now show that $(\mathcal{A}_{(\omega)}^{h})'$ is isomorphic to the quotient space $ \mathcal{O}^{h,R}_{(\omega)} / \mathcal{P}^R_{(\omega)}$.
\begin{proposition}\label{Beurlinglocalcoh}
Let  $0 < h < R$ and let $\omega$ be a weight function satisfying $(\epsilon)_0$ and $(\delta)$. The following sequence
$$
0 \longrightarrow \mathcal{P}^R_{(\omega)} \longrightarrow \mathcal{O}^{h,R}_{(\omega)} \xrightarrow{\phantom,\operatorname{bv}\phantom,}  (\mathcal{A}_{(\omega)}^{h})' \longrightarrow 0
$$
is topologically exact. Moreover, for every $f \in (\mathcal{A}_{(\omega)}^{h})'$ one can find $0 < b < h$ and $\lambda > 0$ such that for every $R > h$ there is $F \in \mathcal{O}_{\omega}^{b,R, \lambda}$ such that $\operatorname{bv}(F)= f$. In addition, for every bounded set $B \subset (\mathcal{A}^h_{(\omega)})'$ there is a bounded set $A \subset \mathcal{O}^{h,R}_{(\omega)}$ such that $\operatorname{bv}(A) = B$.
\end{proposition}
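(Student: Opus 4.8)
The plan is to prove the three assertions in turn --- exactness of the displayed sequence, the quantitative refinement of surjectivity, and the lifting of bounded sets --- and throughout to assume, after replacing $\omega$ by an equivalent weight via Lemma \ref{equivalent} (which alters neither the spaces nor their topologies), that $\omega$ also satisfies $(\zeta)$; Lemma \ref{compactanalrepr} then makes $\mathcal{O}_{(\omega)}^{h,R}$, $\mathcal{P}_{(\omega)}^{R}$ and $(\mathcal{A}_{(\omega)}^{h})'$ into $(DFS)$-spaces. Injectivity of $\mathcal{P}_{(\omega)}^{R}\to\mathcal{O}_{(\omega)}^{h,R}$ is the identity theorem, since $T^{b,R}$ is a nonempty open subset of the connected strip $T^{R}$; and if $P\in\mathcal{P}_{(\omega)}^{R}$ and $\varphi\in\mathcal{A}_{(\omega)}^{h}$, then $P\varphi$ is holomorphic on a neighbourhood of $\overline{T^{k}}$ for $b<k<h$, so $\langle\operatorname{bv}(P),\varphi\rangle=-\int_{\Gamma_{k}}P\varphi\,dz=0$ by Cauchy's theorem, whence $\operatorname{bv}$ vanishes on the image of $\mathcal{P}_{(\omega)}^{R}$. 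For surjectivity, let $f\in(\mathcal{A}_{(\omega)}^{h})'$; since the topology of $\mathcal{A}_{(\omega)}^{h}$ is generated by the norms $\|\cdot\|^{k,\lambda}$ ($k<h$, $\lambda>0$), $f$ is bounded for one such norm $\|\cdot\|^{k,\lambda_{0}}$ and hence, by Hahn--Banach, extends to $\widetilde f\in(\mathcal{A}_{\omega}^{k,\lambda_{0}})'$. Fix $b,h''$ with $k<b<h''<h$; for an arbitrary $R>h$, Corollary \ref{analquan} applied with data $k<b<h''<R+1$ yields $F\in\mathcal{O}_{\omega}^{b,R+1,2H^{2}\lambda_{0}}$ with $\operatorname{bv}(F)=\widetilde f$ on $\mathcal{A}_{\omega}^{h'',2H^{3}\lambda_{0}}$, hence $\operatorname{bv}(F)=f$ because $\mathcal{A}_{(\omega)}^{h}\hookrightarrow\mathcal{A}_{\omega}^{h'',2H^{3}\lambda_{0}}$. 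This $F$ lies in $\mathcal{O}_{(\omega)}^{h,R}$, and its restriction to $T^{b,R}$ belongs to $\mathcal{O}_{\omega}^{b,R,\lambda}$ with $\lambda:=2H^{2}\lambda_{0}$ and $b,\lambda$ depending only on $f$, which is the ``moreover'' part. Finally, the representative supplied by Corollary \ref{analquan} is the Cauchy-type transform $F(z)=\frac{P(z)}{2\pi i}\langle\widetilde f(\zeta),((\zeta-z)P(\zeta))^{-1}\rangle$, which is linear in $\widetilde f$ and whose $\mathcal{O}_{\omega}^{b,R+1,2H^{2}\lambda_{0}}$-norm is dominated by a constant times the relevant dual norm of $\widetilde f$; so, given a bounded $B\subset(\mathcal{A}_{(\omega)}^{h})'$, regularity of the $(DFS)$-limit $(\mathcal{A}_{(\omega)}^{h})'=\varinjlim(\mathcal{A}_{\omega}^{k,\lambda})'$ places $B$ in and bounded in a single $(\mathcal{A}_{\omega}^{k,\lambda_{0}})'$, and its image $A$ under this explicit lift is a bounded subset of $\mathcal{O}_{\omega}^{b,R+1,2H^{2}\lambda_{0}}\hookrightarrow\mathcal{O}_{(\omega)}^{h,R}$ with $\operatorname{bv}(A)=B$.

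The heart of the matter is the inclusion $\ker\operatorname{bv}\subseteq\mathcal{P}_{(\omega)}^{R}$. Take $F\in\mathcal{O}_{\omega}^{b_{0},L_{0},\lambda_{0}}$ with $b_{0}<h<R<L_{0}$ and $\operatorname{bv}(F)=0$ on $\mathcal{A}_{(\omega)}^{h}$, and fix $L_{1}$ with $R<L_{1}<L_{0}$. The idea is to run the argument of Proposition \ref{edgegeneral}, but to verify the vanishing of the contour integrals that occur by a separate approximation. By Proposition \ref{funda} (applicable since $(\epsilon)_{0}$ and $(\delta)$ hold) fix a zero-free $P\in\mathcal{O}(T^{L_{1}})$ with
\[
e^{\omega(H^{2}\lambda_{0}x)}\leq|P(z)|\leq Ce^{\omega(2H^{4}\lambda_{0}x)},\qquad z\in T^{L_{1}};
\]
then $\omega_{\lambda_{0}}$, $\omega_{H^{2}\lambda_{0}}$, $\omega_{2H^{4}\lambda_{0}}$ satisfy the hypotheses \eqref{condition0} and \eqref{condition} of Proposition \ref{edgegeneral} by $(\delta)$ and $(\zeta)$. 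For $z$ with $h<|\operatorname{Im}z|<L_{1}$ put $g_{z}(\zeta)=((\zeta-z)P(\zeta))^{-1}$; this function is holomorphic on a strip strictly containing $\overline{T^{h}}$ and lies in $\mathcal{A}_{\omega}^{k,H^{2}\lambda_{0}}$ for $b_{0}<k<|\operatorname{Im}z|$, but it does \emph{not} belong to $\mathcal{A}_{(\omega)}^{h}$, its decay in $\operatorname{Re}\zeta$ being of one weight rate only. To get around this, choose $\psi\in\mathcal{A}_{(\omega)}^{h+1}$ with $\psi(0)=1$ as in Lemma \ref{denseinf} (possible by Proposition \ref{localnontriviality}); then $g_{z}\,\psi(\cdot/n)\in\mathcal{A}_{(\omega)}^{h}$ for every $n$, and the lower bound on $P$ together with $(\delta)$ gives, just as in the proof of Lemma \ref{denseinf}, that $g_{z}\,\psi(\cdot/n)\to g_{z}$ in $\mathcal{A}_{\omega}^{k,H\lambda_{0}}$. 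Since $\operatorname{bv}(F)$ is continuous on $\mathcal{A}_{\omega}^{k,H\lambda_{0}}$ (because $\int_{0}^{\infty}e^{\omega_{\lambda_{0}}(t)-\omega_{H\lambda_{0}}(t)}\,dt<\infty$ by $(\delta)$) and vanishes on $\mathcal{A}_{(\omega)}^{h}$, it follows that $\langle\operatorname{bv}(F),g_{z}\rangle=0$, i.e. $\int_{\Gamma_{k}}F(\zeta)\,((\zeta-z)P(\zeta))^{-1}\,d\zeta=0$ for $b_{0}<k<h$. Shifting contours with Cauchy's theorem exactly as in Proposition \ref{edgegeneral} now yields
\[
F(z)=\frac{P(z)}{2\pi i}\int_{\Gamma_{L_{1}}}\frac{F(\zeta)}{(\zeta-z)P(\zeta)}\,d\zeta,\qquad h<|\operatorname{Im}z|<L_{1};
\]
the right-hand side is holomorphic on all of $T^{L_{1}}$ and agrees with $F$ on a nonempty open subset of each of the two components of $T^{b_{0},L_{0}}$, so by the identity theorem $F$ extends holomorphically to $T^{L_{1}}$, and estimating the right-hand side using the bounds on $P$ and \eqref{condition} gives $F\in\mathcal{P}_{\omega}^{L_{1},2H^{4}\lambda_{0}}\subseteq\mathcal{P}_{(\omega)}^{R}$.

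It remains to upgrade the algebraic exactness just obtained to topological exactness. The map $\operatorname{bv}\colon\mathcal{O}_{(\omega)}^{h,R}\to(\mathcal{A}_{(\omega)}^{h})'$ is a continuous linear surjection of $(DFS)$-spaces, hence open by the open mapping theorem, so it is a topological quotient map and $\mathcal{O}_{(\omega)}^{h,R}/\ker\operatorname{bv}\cong(\mathcal{A}_{(\omega)}^{h})'$ topologically. As $\ker\operatorname{bv}$ is closed it is a $(DFS)$-space for the subspace topology, and the continuous bijection $\mathcal{P}_{(\omega)}^{R}\to\ker\operatorname{bv}$ is again open by the open mapping theorem, hence a topological isomorphism; this gives the topological exactness of the sequence.

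The step I expect to be the main obstacle is the inclusion $\ker\operatorname{bv}\subseteq\mathcal{P}_{(\omega)}^{R}$: the test functions $((\zeta-z)P(\zeta))^{-1}$ driving the edge-of-the-wedge argument of Proposition \ref{edgegeneral} carry only a single weight rate, so they are not elements of the Beurling space $\mathcal{A}_{(\omega)}^{h}$ on which $\operatorname{bv}(F)$ is assumed to vanish. The cut-off device of Lemma \ref{denseinf} rescues the argument precisely because, for $z$ off the strip $T^{h}$, these functions are holomorphic on a \emph{larger} strip, and the lower bound on $P$ supplies the additional decay that makes the approximants $g_{z}\,\psi(\cdot/n)$ converge in the Banach norm in which $\operatorname{bv}(F)$ is continuous; choosing the lower bound to be $e^{\omega(H^{2}\lambda_{0}x)}$ rather than $e^{\omega(H\lambda_{0}x)}$ is exactly what creates the room for this.
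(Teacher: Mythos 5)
Your proof is correct and follows essentially the same route as the paper: Hahn--Banach plus the Cauchy-transform representation (Proposition \ref{funda}/Corollary \ref{analquan}) for surjectivity and the quantitative statement, the cut-off density device of Lemma \ref{denseinf} combined with the edge-of-the-wedge continuation (Proposition \ref{edgegeneral}/Corollary \ref{edge}) for the kernel inclusion, and an open mapping theorem to upgrade to topological exactness. The only organizational differences are that you inline the $\psi(\cdot/n)$-approximation into the contour-shifting argument applied to the test functions $((\zeta-z)P(\zeta))^{-1}$ instead of first upgrading $\operatorname{bv}(F)=0$ to the Banach space $\mathcal{A}_\omega^{h,H^{2}\lambda}$ and then citing Corollary \ref{edge}, and that you obtain the bounded-set lifting from the explicit norm-controlled Cauchy lift (via equicontinuity and uniform Hahn--Banach extensions) rather than from the paper's abstract duality fact for reflexive $(DF)$-spaces.
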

\begin{proof}
In view of the Pt\'ak open mapping theorem \cite[Chap.\ 3, Prop.\ 17.2]{Horvath}, it suffices to show that the sequence is algebraically  exact. It is clear that $\mathcal{P}^R_{(\omega)} \subseteq \ker{\operatorname{bv}}$. Conversely, let $F \in \mathcal{O}^{h,R}_{(\omega)}$ and suppose $\operatorname{bv}(F) = 0$ on $\mathcal{A}_{(\omega)}^{h}$. Let $0 < b < h$, $L > R$, and $\lambda > 0$ be such that $F \in \mathcal{O}_{\omega}^{b,L,\lambda}$. Since $\mathcal{A}_{(\omega)}^{h,\infty}\subset \mathcal{A}_{(\omega)}^{h}$, Lebesgue's dominated convergence theorem and Lemma \ref{denseinf} imply that actually $\operatorname{bv}(F) = 0$ on $\mathcal{A}_{\omega}^{h,H^2\lambda}$. Hence, by Corollary \ref{edge}, we have that $F \in \mathcal{P}^{L,2H^4\lambda}_{\omega} \subset \mathcal{P}^R_{(\omega)}$. The second statement (and therefore also the surjectivity of the boundary value mapping) is a consequence of the Hahn-Banach theorem and Corollary \ref{analquan}. The last part follows from the general fact that for a surjective continuous linear mapping $T: E \rightarrow F$ between reflexive $(DF)$-spaces it holds that for every bounded set $B \subset F$ there is a bounded set $A \subset E$ such that $T(A) = B$; this follows from the fact that $T^t$ is an injective strict morphism and the bipolar theorem.
\end{proof}
We now proceed to show that $\mathcal{U}_{(\omega)}(\C)$ is non-trivial if and only if $\omega$ satisfies $(\epsilon)_{0}$. For it, we need some basic facts about projective spectra. Let $(X_n)_{n \in \N}$ be a sequence of topological spaces and let $u_n^{n+1}: X_{n+1} \to X_n$ be a continuous mapping for each $n \in \N$. Consider the projective limit $X$ of the spectrum $(X_n)_{n \in \N}$, that is,
$$
X = \{(x_n) \in \prod_{n \in \N} X_n \, : \, x_n = u_n^{n+1}(x_{n+1}), \quad n \in \N\},
$$
with the natural projection mappings $
u_j: X \rightarrow X_j:  (x_n) \to x_j.$ The spectrum is called \emph{reduced} if $u_j$ has dense range for each $j \in \N$. We shall employ the following well known result, due to De Wilde (see also \cite{Dubinsky} for the case of Banach spaces).
\begin{lemma}[\cite{DeWilde}]\label{intersection}
Let $(X_n)_{n \in \N}$ be a spectrum of complete metrizable topological spaces. If $u_n^{n+1}$ has dense range for each $n \in \N$, then the spectrum $(X_n)_{n \in \N}$ is reduced.  
\end{lemma}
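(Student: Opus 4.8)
The statement to prove is Lemma \ref{intersection}, which is attributed to De Wilde: if $(X_n)_{n \in \N}$ is a projective spectrum of complete metrizable topological vector spaces and each linking map $u_n^{n+1}$ has dense range, then the spectrum is reduced, i.e.\ each projection $u_j \colon X \to X_j$ has dense range.

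The plan is to argue by a Baire-category / successive-approximation scheme, exploiting completeness and metrizability of each $X_n$. Fix $j \in \N$; I want to show $u_j(X)$ is dense in $X_j$. Given $x_j \in X_j$ and a neighborhood of it, it suffices to produce a thread $(y_n)_{n\in\N} \in X$ with $y_j$ as close as desired to $x_j$. First I would choose a translation-invariant metric $d_n$ on each $X_n$ (possible by metrizability of a TVS). The construction proceeds by building, for $n \ge j$, elements $x_n^{(k)} \in X_n$ for $k \ge n$ that stabilize: start with $x_j$, and using density of the range of $u_j^{j+1}$ pick $x_{j+1} \in X_{j+1}$ with $d_j(u_j^{j+1}(x_{j+1}), x_j)$ small; then inductively, having $x_n$, pick $x_{n+1} \in X_{n+1}$ with $u_n^{n+1}(x_{n+1})$ close to $x_n$. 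The problem is that this only makes $u_n^{n+1}(x_{n+1}) \approx x_n$, not equal, so the sequence $(x_n)$ need not be a genuine thread. The fix is the standard diagonal refinement: for each level $n$, build a \emph{sequence} of approximants $x_n^{(k)}$, $k \to \infty$, where one uses continuity of the finite compositions $u_n^{n+1} \circ \cdots \circ u_{m-1}^m$ to choose corrections at level $m$ that only perturb the image at level $n$ by less than $2^{-k}$; completeness of $X_n$ then gives a limit $y_n := \lim_k x_n^{(k)}$, and the estimates are arranged so that $u_n^{n+1}(y_{n+1}) = y_n$ exactly, while $y_j$ is within the prescribed tolerance of $x_j$.

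More concretely, here is the order of steps I would carry out. (1) Reduce to: given $j$, $x \in X_j$, and $\varepsilon > 0$, find a thread $(y_n) \in X$ with $d_j(y_j, x) < \varepsilon$. (2) Construct by induction on $n \ge j$ points $x_n \in X_n$ together with the requirement that $u_n^{n+1}(x_{n+1})$ is so close to $x_n$ that, after applying the continuous maps down to every fixed lower level $\le n$, the displacement there is controlled by a summable sequence (say $\varepsilon 2^{-n}$); this uses density of each $u_n^{n+1}$'s range together with continuity of the linking maps to pull back smallness. (3) For each fixed level $m$, observe that the sequence $\big(u_m^{m+1}\circ\cdots\circ u_{n-1}^n(x_n)\big)_{n \ge m}$ is Cauchy in $X_m$ by the summability built into step (2), hence converges to some $y_m \in X_m$ by completeness. (4) Check compatibility: $u_m^{m+1}(y_{m+1}) = y_m$, which follows by passing to the limit in $u_m^{m+1}\big(u_{m+1}^{m+2}\circ\cdots\circ u_{n-1}^n(x_n)\big) = u_m^{m+1}\circ\cdots\circ u_{n-1}^n(x_n)$ and using continuity of $u_m^{m+1}$. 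So $(y_n) \in X$. (5) Finally estimate $d_j(y_j, x) = d_j(y_j, x_j) \le \sum_{n \ge j} (\text{displacement at level } j \text{ from step } n) < \varepsilon$, using that $x_j$ was chosen close to $x$ at the start and telescoping the triangle inequality.

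The main obstacle is purely bookkeeping: organizing the nested approximation so that corrections introduced at high levels do not destroy the accuracy already achieved at low levels. This is handled by the standard device of, at stage $n$, first recording the finite family of continuous compositions $u_m^{m+1}\circ\cdots\circ u_{n-1}^n$ for all $j \le m \le n$, then invoking continuity at the point being corrected to find a neighborhood in $X_n$ on which all these maps move points by less than $\varepsilon 2^{-n-1}$ at levels $\le n$, and finally using density of the range of $u_n^{n+1}$ to land $x_{n+1}$'s image inside that neighborhood. Since this is a classical argument (De Wilde; cf.\ also the Banach-space case in \cite{Dubinsky}), I would present it compactly rather than in full gory detail, and would simply cite \cite{DeWilde} for the precise statement, including only as much of the proof sketch as the reader needs to be convinced — indeed the paper does exactly this, stating it as a known lemma. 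If a self-contained proof is wanted, the five steps above constitute it.
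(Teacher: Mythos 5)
Your proposal is correct in substance, and it matches the situation in the paper exactly: the paper does not prove this lemma at all, it simply cites De Wilde (with Dubinsky for the Banach-space case), and the argument you sketch is the standard Mittag--Leffler successive-approximation proof that underlies that reference. Your five-step scheme is sound: at stage $n$ only \emph{pointwise} continuity of the finitely many compositions $u_m^{m+1}\circ\cdots\circ u_{n-1}^{n}$ at the current point $x_n$ is needed to find the neighborhood in $X_n$ guaranteeing displacement $<\varepsilon 2^{-n}$ at all levels $m\le n$, density of the range of $u_n^{n+1}$ lets you land in that neighborhood, completeness gives the limits $y_m$, and continuity of $u_m^{m+1}$ plus Hausdorffness (automatic from metrizability) gives the exact thread relation $u_m^{m+1}(y_{m+1})=y_m$. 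Two small points. First, drop the reduction to topological vector spaces and translation-invariant metrics: the lemma is stated for complete metrizable topological \emph{spaces}, and this generality is actually used in the paper --- in the proof of Proposition \ref{Roumieulocalcoh} the $X_n$ are closed affine subsets (cosets) $\{F:\operatorname{bv}(F)=f\}$ of Fr\'echet spaces, not linear subspaces --- and your argument never uses invariance of the metric anyway, only some compatible complete metric on each $X_n$. Second, your construction produces $y_n$ only for $n\ge j$; to get an element of the projective limit you should add the (trivial) remark that for $n<j$ one sets $y_n:=u_n^{n+1}\circ\cdots\circ u_{j-1}^{j}(y_j)$.
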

\begin{theorem}\label{nontrivial2}
The space $\mathcal{U}_{(\omega)}(\C)$ is non-trivial if and only if $\omega$ satisfies $(\epsilon)_{0}$. If, in addition, $\omega$ satisfies $(\delta)$, then $\mathcal{U}_{(\omega)}(\C)$ is dense in $\mathcal{A}_{(\omega)}^{h}$ for all $h > 0$.
\end{theorem}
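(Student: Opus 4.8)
The plan is to deduce the theorem from the analytic representation result, Proposition~\ref{Beurlinglocalcoh}, together with the local non-triviality statement, Proposition~\ref{localnontriviality}. First, the direction ``$\mathcal{U}_{(\omega)}(\C)$ non-trivial $\Rightarrow$ $\omega$ satisfies $(\epsilon)_0$'' is immediate: a non-zero $\varphi \in \mathcal{U}_{(\omega)}(\C)$ lies in $\mathcal{A}^{h,\lambda}_\omega$ for every $h,\lambda>0$, so Proposition~\ref{mandel} forces $(\epsilon)_{\pi/(h\lambda)}$ for every $h,\lambda>0$, i.e. $(\epsilon)_0$.

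For the converse, assume $\omega$ satisfies $(\epsilon)_0$. I would first reduce to the case in which $\omega$ also satisfies $(\delta)$: by Lemma~\ref{majorizationweight} there is a weight $\sigma \geq \omega$ satisfying $(\epsilon)_0$ and $(\delta)$, and since $\sigma \geq \omega$ pointwise we have the continuous inclusion $\mathcal{U}_{(\sigma)}(\C) \hookrightarrow \mathcal{U}_{(\omega)}(\C)$; hence it suffices to show $\mathcal{U}_{(\sigma)}(\C) \neq \{0\}$, so we may as well assume $\omega$ itself satisfies $(\delta)$ from the start. Now recall that $\mathcal{U}_{(\omega)}(\C) = \varprojlim_{h\to\infty} \mathcal{A}^h_{(\omega)}$ and that, by Proposition~\ref{localnontriviality}, each $\mathcal{A}^h_{(\omega)}$ is non-trivial under $(\epsilon)_0$. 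The issue is purely that a projective limit of non-zero spaces can collapse, so I must show the spectrum $(\mathcal{A}^h_{(\omega)})_{h}$ is reduced, which in particular makes all the projections (hence $\mathcal{U}_{(\omega)}(\C) \to \mathcal{A}^h_{(\omega)}$) have dense range; since the target is non-trivial, so is $\mathcal{U}_{(\omega)}(\C)$, and the asserted density follows too.

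To apply Lemma~\ref{intersection} (De Wilde), I need the linking maps $\mathcal{A}^{k}_{(\omega)} \to \mathcal{A}^{h}_{(\omega)}$ for $h<k$ to have dense range (the spaces are $(FS)$, hence complete metrizable, by Lemma~\ref{compact}). Here is where Proposition~\ref{Beurlinglocalcoh} enters: it identifies $(\mathcal{A}^h_{(\omega)})'$ with the quotient $\mathcal{O}^{h,R}_{(\omega)}/\mathcal{P}^R_{(\omega)}$ via the boundary value map, and dually the restriction $\mathcal{A}^{k}_{(\omega)} \to \mathcal{A}^{h}_{(\omega)}$ has dense range iff its transpose $(\mathcal{A}^h_{(\omega)})' \to (\mathcal{A}^k_{(\omega)})'$ is injective. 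Unwinding the identification, the transpose sends the class of an analytic representation $F$ on a region $T^{b,R}$ (with $b$ slightly below $h$) to its class as a representation on the larger region $T^{b',R}$ with $b'$ slightly below $k$; injectivity means: if $F \in \mathcal{O}^{b,R}_{\omega,\lambda}$ has $\operatorname{bv}(F)=0$ on $\mathcal{A}^{k}_{(\omega)}$, then already $F \in \mathcal{P}^R_{(\omega)}$ (so its class was zero in $(\mathcal{A}^h_{(\omega)})'$ too). But this is exactly the content of Corollary~\ref{edge} (together with Lemma~\ref{denseinf} to upgrade vanishing on the dense subspace $\mathcal{A}^{k,\infty}_{(\omega)}$ to vanishing on the full Banach norm needed), applied on the strip of height $k$: since $F$ is analytic across the real line with the right growth, $\operatorname{bv}(F)=0$ forces $F \in \mathcal{P}^{R,\lambda'}_\omega \subset \mathcal{P}^R_{(\omega)}$. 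Hence the transpose is injective, the linking maps have dense range, De Wilde's lemma gives a reduced spectrum, and we conclude.

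The main obstacle I anticipate is the bookkeeping in the dual-injectivity step: carefully matching the two quotient presentations $(\mathcal{A}^h_{(\omega)})' \cong \mathcal{O}^{h,R}_{(\omega)}/\mathcal{P}^R_{(\omega)}$ and $(\mathcal{A}^k_{(\omega)})' \cong \mathcal{O}^{k,R}_{(\omega)}/\mathcal{P}^R_{(\omega)}$ so that the transpose of restriction really is the ``shrink $b$'' map, and then invoking Corollary~\ref{edge} with the correct shifted parameters $\lambda \rightsquigarrow 2H^3\lambda$ and the density Lemma~\ref{denseinf} to handle the gap between vanishing on $\mathcal{A}^{k,\infty}_{(\omega)}$ and vanishing in the requisite Banach norm. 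Everything else is a formal consequence of the machinery already assembled.
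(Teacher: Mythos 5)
Your proposal is correct and takes essentially the same route as the paper: the necessity via Proposition \ref{mandel}, the reduction to the $(\delta)$ case via Lemma \ref{majorizationweight}, non-triviality of each $\mathcal{A}^h_{(\omega)}$ via Proposition \ref{localnontriviality}, and De Wilde's Lemma \ref{intersection} reduced to density of the linking maps, which the paper deduces from Proposition \ref{Beurlinglocalcoh} together with Hahn--Banach. Your unwinding of that last step as injectivity of the transpose, established through Corollary \ref{edge} and Lemma \ref{denseinf}, is exactly how the kernel part of Proposition \ref{Beurlinglocalcoh} is proved, so the two arguments coincide.
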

\begin{proof} 
The direct implication follows from Proposition \ref{mandel}. By Lemma~\ref{majorizationweight}, we may assume that $\omega$ satisfies $(\delta)$  for the first assertion. So, for the converse, first notice that Proposition \ref{localnontriviality} ensures that the spaces $\mathcal{A}_{(\omega)}^h$ are non-trivial for each $h>0$. Lemma \ref{intersection} implies that it suffices to show that for all $0 < k < h$ the space $\mathcal{A}_{(\omega)}^h$ is dense in $\mathcal{A}_{(\omega)}^{k}$, but this precisely follows from Proposition \ref{Beurlinglocalcoh} and the Hahn-Banach theorem.  
 \end{proof}
 
 Combining Lemma \ref{denseinf} and Theorem \ref{nontrivial2}, we obtain,
 
 \begin{corollary} The continuous inclusion $\mathcal{U}_{(\omega)}(\C) \hookrightarrow \mathcal{A}_{\{\omega\}}(\R)$ is dense if $\omega$ satisfies $(\epsilon)_{0}$ and $(\delta)$. In particular, one also obtains the continuous and dense inclusion $\mathcal{A}'_{\{\omega\}}(\R)\hookrightarrow \mathcal{U}'_{(\omega)}(\C) $.
 \label{dense corollary}
 \end{corollary}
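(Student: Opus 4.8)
The plan is to derive the density statement by combining Lemma~\ref{denseinf} with Theorem~\ref{nontrivial2} through a short functional-analytic argument, and then to obtain the dual statement by transposition.

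First I would fix $h>0$ and record the chain of continuous linear maps
\[
\mathcal{U}_{(\omega)}(\C)\hookrightarrow \mathcal{A}^{h}_{(\omega)}\hookrightarrow \mathcal{A}_{\{\omega\}}(\R):
\]
the first map is continuous because each seminorm $\|\cdot\|^{k,\lambda}$ with $k<h$, $\lambda>0$ defining $\mathcal{A}^{h}_{(\omega)}$ is a continuous seminorm on $\mathcal{U}_{(\omega)}(\C)$ as well, and the second because $\mathcal{A}^{h}_{(\omega)}$ carries a continuous projection onto some Banach step $\mathcal{A}^{k,\lambda}_{\omega}$ with $k<h$, which in turn injects canonically (hence continuously) into the inductive limit $\mathcal{A}_{\{\omega\}}(\R)$. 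All three spaces consist of genuine functions, so these maps are injective; in particular $\mathcal{U}_{(\omega)}(\C)\hookrightarrow \mathcal{A}_{\{\omega\}}(\R)$ is a continuous injection. By Theorem~\ref{nontrivial2}, $\mathcal{U}_{(\omega)}(\C)$ is dense in $\mathcal{A}^{h}_{(\omega)}$, so pushing this forward along the continuous inclusion $\mathcal{A}^{h}_{(\omega)}\hookrightarrow \mathcal{A}_{\{\omega\}}(\R)$ shows that the image of $\mathcal{A}^{h}_{(\omega)}$ lies in the $\mathcal{A}_{\{\omega\}}(\R)$-closure of $\mathcal{U}_{(\omega)}(\C)$. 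Thus it remains to check that an arbitrary $\varphi\in\mathcal{A}_{\{\omega\}}(\R)$ lies in that closure.

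Given such a $\varphi$, pick $h_0,\lambda_0>0$ with $\varphi\in\mathcal{A}^{h_0,\lambda_0}_{\omega}$. Applying Lemma~\ref{denseinf} with $h=h_0$ and $\lambda=\lambda_0/H$ (note $H>1$, since $(\delta)$ together with \eqref{fasterthanlog} rules out $H=1$), I obtain a sequence $(\varphi_n)$ in $\mathcal{A}^{h_0,\infty}_{(\omega)}\subseteq\mathcal{A}^{h_0}_{(\omega)}$ with $\|\varphi-\varphi_n\|^{h_0,\lambda_0/H}\to 0$. As $\varphi$ and all $\varphi_n$ belong to the Banach space $\mathcal{A}^{h_0,\lambda_0/H}_{\omega}$, which embeds continuously into $\mathcal{A}_{\{\omega\}}(\R)$, this gives $\varphi_n\to\varphi$ in $\mathcal{A}_{\{\omega\}}(\R)$. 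By the preceding paragraph each $\varphi_n$ lies in the $\mathcal{A}_{\{\omega\}}(\R)$-closure of $\mathcal{U}_{(\omega)}(\C)$, hence so does the limit $\varphi$; this proves the density of $\mathcal{U}_{(\omega)}(\C)$ in $\mathcal{A}_{\{\omega\}}(\R)$.

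For the final claim, write $j\colon\mathcal{U}_{(\omega)}(\C)\hookrightarrow\mathcal{A}_{\{\omega\}}(\R)$ for the continuous dense injection just established; its transpose $j^{t}\colon\mathcal{A}'_{\{\omega\}}(\R)\to\mathcal{U}'_{(\omega)}(\C)$ is the restriction-of-functionals map. It is injective because $j$ has dense range, and continuous for the strong topologies because $(j^{t})^{-1}(B^{\circ})=(j(B))^{\circ}$ is a $0$-neighbourhood in $\mathcal{A}'_{\{\omega\}}(\R)$ whenever $B\subseteq\mathcal{U}_{(\omega)}(\C)$ is bounded, $j(B)$ being bounded in $\mathcal{A}_{\{\omega\}}(\R)$. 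Finally, since $\mathcal{U}_{(\omega)}(\C)$ is an $(FS)$-space and hence reflexive, any continuous linear functional on $\mathcal{U}'_{(\omega)}(\C)$ is evaluation at some $\varphi\in\mathcal{U}_{(\omega)}(\C)$; if it vanishes on $j^{t}(\mathcal{A}'_{\{\omega\}}(\R))$, then $f(j\varphi)=0$ for every $f\in\mathcal{A}'_{\{\omega\}}(\R)$, whence $j\varphi=0$ and $\varphi=0$, so by the Hahn--Banach theorem $j^{t}$ has dense range. I expect the only point needing real care to be the topological bookkeeping in the middle two paragraphs: verifying that the $\|\cdot\|^{h_0,\lambda_0/H}$-convergence furnished by Lemma~\ref{denseinf} is genuinely convergence in a Banach step of $\mathcal{A}_{\{\omega\}}(\R)$, and that the density from Theorem~\ref{nontrivial2} survives transport along $\mathcal{A}^{h_0}_{(\omega)}\hookrightarrow\mathcal{A}_{\{\omega\}}(\R)$; the rest is soft.
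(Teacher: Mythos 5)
Your proof is correct and follows exactly the route the paper intends: the corollary is stated there as a direct combination of Lemma \ref{denseinf} (approximation in a Banach step, applied with the weight parameter $\lambda_0/H$ as you do) and Theorem \ref{nontrivial2} (density of $\mathcal{U}_{(\omega)}(\C)$ in $\mathcal{A}^{h}_{(\omega)}$), followed by the standard transposition argument using reflexivity of the $(FS)$-space $\mathcal{U}_{(\omega)}(\C)$ for the dual statement. Your write-up simply makes explicit the topological bookkeeping the paper leaves implicit.
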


Our next goal is to construct analytic representations of elements of $\mathcal{U}'_{(\omega)}(\R)$ which are globally defined, namely, everywhere outside some closed horizontal strip. The basic idea is to paste together the analytic representations obtained in Proposition \ref{Beurlinglocalcoh} with the aid of a Mittag-Leffler procedure. We define
$$
\mathcal{O}_{(\omega)}^{h} = \bigcup_{ \lambda > 0} \bigcup_{b < h} \bigcap_{R > b} \mathcal{O}_{\omega}^{b,R, \lambda}, \qquad \mathcal{O}_{(\omega)} = \bigcup_{h > 0} \mathcal{O}_{(\omega)}^{h}, \qquad
\mathcal{P}_{(\omega)} = \bigcup_{\lambda > 0} \bigcap_{R > 0} \mathcal{P}_{\omega}^{R, \lambda}.
$$
We use the union and intersection notation to emphasize that we do not topologize the latter spaces. We need the following lemma.
\begin{lemma} \label{runge} Let $0 < L < R$ and let $\omega$ be a weight function satisfying $(\epsilon)_0$ and $(\delta)$. Then, $\mathcal{U}_{(\omega)}(\C)$ is dense in $\mathcal{P}_{\omega}^{R,
\lambda}$ with respect to the norm $| \cdot |^{L,H\lambda}$.
\end{lemma}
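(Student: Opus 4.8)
The statement to prove is a Runge-type density assertion: $\mathcal{U}_{(\omega)}(\C)$ is dense in $\mathcal{P}_{\omega}^{R,\lambda}$ with respect to the weaker norm $|\cdot|^{L,H\lambda}$. Since an element $P \in \mathcal{P}_{\omega}^{R,\lambda}$ is an entire-on-$T^R$ function with $|P(z)| \le C e^{\omega(\lambda x)}$, and $\mathcal{U}_{(\omega)}(\C)$ consists of entire functions with faster-than-any-exponential decay on every strip, the natural route is via the Hahn-Banach theorem: it suffices to show that every continuous linear functional on $\mathcal{P}_{\omega}^{R,\lambda}$ (with the $|\cdot|^{L,H\lambda}$-topology) that annihilates $\mathcal{U}_{(\omega)}(\C)$ is the zero functional. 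Equivalently, working with the closure of $\mathcal{U}_{(\omega)}(\C)$ inside the Banach space obtained by completing $(\mathcal{P}_{\omega}^{R,\lambda}, |\cdot|^{L,H\lambda})$, I must identify the annihilator.

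First I would set up the duality. A functional continuous for $|\cdot|^{L,H\lambda}$ on the relevant space of analytic functions on $T^L$ with weight $\omega_{H\lambda}$ is, by the Hahn-Banach extension plus a Riesz-type representation on the Banach space $\mathcal{P}_\omega^{L,H\lambda}$ (or rather on $\mathcal{A}$-type spaces — here I would compare with how $(\mathcal{A}^k_\omega)'$ was handled), representable as integration against a measure supported on the closure of $T^L$ with an $e^{-\omega(H\lambda x)}$-type decay. The key computational device is the Cauchy/Cauchy-Stieltjes kernel: for a functional $\mu$, one forms its "analytic transform" $G(\zeta) = \langle \mu_z, 1/(z-\zeta)\rangle$ for $\zeta$ outside the support, and one must show that if $\mu$ kills all of $\mathcal{U}_{(\omega)}(\C)$ then $G \equiv 0$, which by a Cauchy-integral inversion forces $\mu = 0$. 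To make $1/(z-\zeta)$ into a legitimate element one multiplies and divides by a function $P_0 \in \mathcal{O}(T^{R'})$ with two-sided bounds $C_1 e^{\omega(\lambda x)} \le |P_0(z)| \le C_2 e^{\sigma(x)}$ supplied by Proposition \ref{funda} (using $(\epsilon)_0$ and $(\delta)$), exactly as in the proof of Proposition \ref{analyticreprgeneral}: then $\zeta \mapsto P_0(\zeta)/((\zeta-z)P_0)$ gives, after approximating the Cauchy integral by Riemann sums, approximants lying in $\mathcal{U}_{(\omega)}(\C)$ (this is where Theorem \ref{nontrivial2} guaranteeing non-triviality, and the reduced-spectrum density statements from Proposition \ref{Beurlinglocalcoh}, enter). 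Thus the annihilator of $\mathcal{U}_{(\omega)}(\C)$ is contained in the annihilator of a dense-in-$\mathcal{P}_\omega^{R,\lambda}$ subspace, hence trivial.

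The cleanest packaging is probably to reduce to the previous results rather than redo the Cauchy-Stieltjes analysis: given $P \in \mathcal{P}_\omega^{R,\lambda}$, regard $\operatorname{bv}$ of nothing — instead, cut off. Choose $\psi \in \mathcal{U}_{(\omega)}(\C)$ with $\psi(0)=1$ (possible by Theorem \ref{nontrivial2}) and set $P_n(z) = P(z)\psi(z/n)$; these lie in $\mathcal{U}_{(\omega)}(\C)$ since $P$ has only $e^{\omega(\lambda x)}$ growth on $T^R \supset T^L$ while $\psi(\cdot/n)$ decays faster than any exponential, and by the $(\delta)$-estimate $|P(z)(1-\psi(z/n))|e^{\omega(H\lambda x)} \le A |P|^{R,\lambda}\sup_{z\in T^L}|1-\psi(z/n)|e^{-\omega(\lambda x)}$, which tends to $0$ as $n\to\infty$ because $\psi(z/n)\to 1$ uniformly on compacts of $T^R$ and $e^{-\omega(\lambda x)}\to 0$; this is precisely the argument of Lemma \ref{denseinf}. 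So in fact the proof is a near-verbatim repetition of Lemma \ref{denseinf} with $\varphi$ replaced by $P$ and the target norm $\|\cdot\|^{h,\lambda}$ replaced by $|\cdot|^{L,H\lambda}$.

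The main obstacle — really the only point needing care — is the uniform control of the "tail" term $\sup_{z\in T^L}|P(z)||1-\psi(z/n)|e^{-\omega(H\lambda x)}$: one must split $T^L$ into a compact part $|x|\le r_n$, where $|1-\psi(z/n)|$ is small by uniform convergence, and an unbounded part $|x| > r_n$, where instead $|P(z)|e^{-\omega(H\lambda x)} \le A^{-1}|P|^{R,\lambda}e^{\omega(\lambda x)-\omega(H\lambda x)}$ is made small using that $\omega(H\lambda x)-\omega(\lambda x)\to\infty$ (a consequence of condition $(\zeta)$, available after passing to an equivalent weight via Lemma \ref{equivalent}, or directly from $(\delta)$ iterated) while $|1-\psi(z/n)|$ stays bounded. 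Balancing $r_n$ against $n$ gives the claim. No genuinely new idea is required; I expect the write-up to be short, citing Lemma \ref{denseinf} and Proposition \ref{funda}.
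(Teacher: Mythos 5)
Your second and third paragraphs reproduce the paper's argument (a multiplicative cutoff as in Lemma \ref{denseinf}, with the $(\delta)$-estimate absorbing $|P|$ and the tail controlled by $e^{-\omega(\lambda x)}\to 0$), and your tail analysis is fine — in fact condition $(\zeta)$ is not needed, since $(\delta)$ already gives $e^{\omega(\lambda x)-\omega(H\lambda x)}\leq Ae^{-\omega(\lambda x)}$. But there is a genuine gap at the key membership claim: you assert that $P_n(z)=P(z)\psi(z/n)$ lies in $\mathcal{U}_{(\omega)}(\C)$. It does not. An element $P\in\mathcal{P}_{\omega}^{R,\lambda}$ is only analytic on the strip $T^R$, whereas $\mathcal{U}_{(\omega)}(\C)$ consists of \emph{entire} functions with the weighted decay on every strip $T^h$, $h>0$; multiplying by the entire function $\psi(\cdot/n)$ cannot extend $P$ beyond $T^R$, so $P_n$ is merely an element of $\mathcal{A}_{(\omega)}^{R,\infty}$ (analytic on $T^R$ with all the weighted bounds), not of $\mathcal{U}_{(\omega)}(\C)$. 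As written, your approximants do not belong to the space whose density you are supposed to prove, so the proof is incomplete.

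The repair is exactly the reduction the paper performs at the outset and which you cite only for the existence of $\psi$: by the second statement of Theorem \ref{nontrivial2} (valid under $(\epsilon)_0$ and $(\delta)$), $\mathcal{U}_{(\omega)}(\C)$ is dense in $\mathcal{A}_{(\omega)}^{R}$, hence — since the norms $\|\cdot\|^{L,\mu}$ with $L<R$ dominate $|\cdot|^{L,H\lambda}$ — it suffices to approximate $P$ in the norm $|\cdot|^{L,H\lambda}$ by elements of $\mathcal{A}_{(\omega)}^{R,\infty}$; your cutoff functions $P_n$ accomplish precisely this. Your first paragraph's Hahn--Banach/Cauchy--Stieltjes sketch, which you abandon, is not needed and in any case is too vague to stand on its own; with the missing entirety step inserted, the cutoff argument coincides with the paper's proof.
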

\begin{proof}
By Theorem \ref{nontrivial2}, it suffices to show that $\mathcal{A}_{(\omega)}^{R,\infty}$ is dense in $\mathcal{P}_{\omega}^{R,
\lambda}$ with respect to the norm $| \cdot |^{L,H\lambda}$. Let $P \in \mathcal{P}^{R,\lambda}_{\omega}$ and choose $\varphi \in \mathcal{A}_{(\omega)}^{R, \infty}$ with $\varphi(0) = 1$.
Set $P_n(z) = P(z)\varphi(z/n) \in \mathcal{A}_{(\omega)}^{R,\infty}$ for $n \geq 1$. We have
$$
 | P - P_n|^{L, H\lambda} = \sup_{z \in T^{L}} |P(z)(1- \varphi(z/n))|e^{-\omega(H\lambda x)} \leq  A|P|^{ L, \lambda} \sup_{z \in T^{L}} |1- \varphi(z/n)|e^{-\omega(\lambda x)}.
$$
Since $\varphi(z/n) \to 1$, as $n \to \infty$, uniformly on compact subsets of $T^{R}$, this proves the result.
\end{proof}
\begin{proposition}\label{Beurlinglocalcoh2}
Let $\omega$ be a weight function satisfying $(\epsilon)_0$ and $(\delta)$. The sequence
$$
0 \longrightarrow \mathcal{P}_{(\omega)} \longrightarrow \mathcal{O}^{h}_{(\omega)} \xrightarrow{\phantom,\operatorname{bv}\phantom,}  (\mathcal{A}_{(\omega)}^{h})' \longrightarrow 0
$$
is exact. 
\end{proposition}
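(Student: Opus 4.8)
The plan is to upgrade the local exactness already established in Proposition \ref{Beurlinglocalcoh} to a global statement by means of a Mittag-Leffler / Runge-type gluing argument. Exactness at $\mathcal{P}_{(\omega)}$ is trivial since $\operatorname{bv}$ is injective on constants... more precisely, the inclusion $\mathcal{P}_{(\omega)} \subseteq \ker \operatorname{bv}$ is clear from Cauchy's theorem (an entire-in-a-strip function integrates to zero against any test function), and the reverse inclusion $\ker \operatorname{bv} \subseteq \mathcal{P}_{(\omega)}$ follows directly from Corollary \ref{edge}: if $F \in \mathcal{O}^{b,R,\lambda}_{\omega}$ for all $R>b$ and $\operatorname{bv}(F)=0$ on $\mathcal{A}^h_{(\omega)}$, then (by the density argument in the proof of Proposition \ref{Beurlinglocalcoh}, passing to $\mathcal{A}^{h,H^2\lambda}_\omega$ via Lemma \ref{denseinf}) we get $F \in \mathcal{P}^{R,2H^4\lambda}_\omega$ for each $R$, hence $F\in\mathcal{P}_{(\omega)}$. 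The remaining and only substantive point is surjectivity of $\operatorname{bv}: \mathcal{O}^h_{(\omega)} \to (\mathcal{A}^h_{(\omega)})'$.

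To prove surjectivity, fix $f \in (\mathcal{A}^h_{(\omega)})'$. By Proposition \ref{Beurlinglocalcoh} there exist $0<b<h$ and $\lambda>0$ such that for every $R>h$ one can find $F_R \in \mathcal{O}^{b,R,\lambda}_\omega$ with $\operatorname{bv}(F_R)=f$ on $\mathcal{A}^h_{(\omega)}$. Choose an increasing sequence $R_n \to \infty$ with $R_1 > h$. For consecutive indices, $\operatorname{bv}(F_{R_{n+1}} - F_{R_n}) = 0$ on $\mathcal{A}^h_{(\omega)}$, and $F_{R_{n+1}} - F_{R_n} \in \mathcal{O}^{b,R_n,\lambda'}_\omega$ for a suitable $\lambda'$ (taking a common refinement of the exponent parameters, using $(\delta)$); hence by Corollary \ref{edge} the difference lies in some $\mathcal{P}^{R_n, \mu_n}_\omega$, i.e. it extends holomorphically to all of $T^{R_n}$ with the corresponding growth bound. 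The plan is now to correct $F_{R_n}$ inductively: using Lemma \ref{runge}, which says $\mathcal{U}_{(\omega)}(\C)$ (and in particular $\mathcal{P}^{R_{n+1},\cdot}_\omega$-elements, or rather entire functions) is dense in $\mathcal{P}^{R_n,\mu_n}_\omega$ in the norm $|\cdot|^{L_n, H\mu_n}$ for any $L_n < R_n$, approximate $F_{R_{n+1}}-F_{R_n}$ by an element $G_n$ of $\mathcal{U}_{(\omega)}(\C)$ so that $|(F_{R_{n+1}}-F_{R_n}) - G_n|^{L_n, H\mu_n} \le 2^{-n}$, with $L_n \uparrow \infty$ chosen so that $L_n \ge R_{n-1}$. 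Replacing $F_{R_{n+1}}$ by $\widetilde F_{n+1} := F_{R_{n+1}} - \sum_{j\le n} G_j$ yields a sequence that is Cauchy on each fixed strip $T^{L}$ (the tail sums converge uniformly there with the weight), so $F := \lim_n \widetilde F_n$ exists as an analytic function on $T^{b,\infty} = \bigcup_R T^{b,R}$; one checks that $F \in \mathcal{O}^h_{(\omega)}$ because only finitely many of the $G_j$, each entire and of bounded $\omega_{(\cdot)}$-growth, are subtracted before the tail kicks in, so the growth exponent stabilizes. Since each $G_j \in \mathcal{U}_{(\omega)}(\C) \subset \ker(\operatorname{bv}|_{\mathcal{A}^h_{(\omega)}})$ by Cauchy's theorem, and boundary values are stable under the (locally uniform, weighted) limit, $\operatorname{bv}(F) = \operatorname{bv}(F_{R_1}) = f$ on $\mathcal{A}^h_{(\omega)}$.

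The main obstacle is bookkeeping the three interlocking parameters ($b$, the strip heights $R_n$, and the growth exponents $\lambda, \mu_n$) so that the Mittag-Leffler correction never pushes the exponent to infinity and the limit genuinely lands in $\mathcal{O}^h_{(\omega)} = \bigcup_{\lambda}\bigcup_{b<h}\bigcap_{R>b}\mathcal{O}^{b,R,\lambda}_\omega$; here condition $(\delta)$ is essential, since it is what allows finitely many exponent-doublings (from repeated use of Corollary \ref{edge} and Lemma \ref{runge}) to be absorbed into a single exponent $\lambda$. A secondary technical point is justifying that $\operatorname{bv}$ commutes with the limiting process, which is immediate from Lebesgue dominated convergence since on a fixed contour $\Gamma_k$ the functions $\widetilde F_n$ converge uniformly and the test function provides an integrable majorant. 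Everything else is a direct appeal to the already-proved Proposition \ref{Beurlinglocalcoh}, Corollary \ref{edge}, and Lemma \ref{runge}.
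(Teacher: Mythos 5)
Your proposal is correct and follows essentially the same route as the paper: it uses the uniform-in-$R$ representation (with fixed $b$ and $\lambda$) from Proposition \ref{Beurlinglocalcoh}, identifies the kernel via Corollary \ref{edge} together with Lemma \ref{denseinf}, and glues the representatives by a Mittag--Leffler correction based on the Runge-type density of $\mathcal{U}_{(\omega)}(\C)$ from Lemma \ref{runge}. The only (cosmetic) difference is that the paper adds the convergent tail $\sum_{k\geq n}(P_k-\varphi_k)$ so that the corrected representatives agree exactly on increasing regions, whereas you pass to a limit of partial corrections; both devices yield the same limit function with the same stabilized exponent $2H^{5}\lambda$.
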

\begin{proof}
The fact that $\mathcal{P}_{(\omega)} = \ker{\operatorname{bv}}$ is clear from Proposition \ref{Beurlinglocalcoh}. We show that the boundary value mapping is surjective. Let $f  \in (\mathcal{A}_{(\omega)}^{h})'$. By Proposition \ref{Beurlinglocalcoh} there are $0 < b < h$ and $\lambda > 0$ such that for every $n \in \N$ there is $G_n \in \mathcal{O}^{b,b+n+1,\lambda}_\omega$ such that $\operatorname{bv}(G_n) = f$. Corollary \ref{edge} and Lemma \ref{denseinf} yield $G_{n+1} -G_n = P_n \in \mathcal{P}_\omega^{b+n+1,2H^4\lambda}$ and thus, by Lemma \ref{runge}, there is $\varphi_n \in \mathcal{U}_{(\omega)}(\C)$ such that $|P_n - \varphi_n|^{b+n,2H^5\lambda} \leq 2^{-n}$.  Define
$$
F_n(z) = G_n(z) - \sum_{k=0}^{n-1}\varphi_k + \sum_{k=n}^{\infty}(P_k - \varphi_k), \qquad  z \in T^{b,b+n}. 
$$
Then, $F_n \in \ow^{b,b+n, 2H^5\lambda}$, $\operatorname{bv}(F_n) = f$, and $F_{n + 1}(z) = F_n(z)$ for $z \in T^{b,b+n}$. Hence, $F(z) := F_n(z)$ for $z \in T^{b,b+n}$, is a well defined element of $\of^h$ such that $\operatorname{bv}(F) = f$. 
\end{proof}
Summarizing, the following representation theorem should now be clear.
\begin{theorem}\label{Beurlingcoh}
Let $\omega$ be a weight function satisfying $(\epsilon)_0$ and $(\delta)$. The sequence
$$
0 \longrightarrow \mathcal{P}_{(\omega)} \longrightarrow \mathcal{O}_{(\omega)} \xrightarrow{\phantom,\operatorname{bv}\phantom,}  \mathcal{U}'_{(\omega)}(\C) \longrightarrow 0
$$
is exact. 
\end{theorem}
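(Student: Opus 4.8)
The plan is to pass from the ``local'' statement of Proposition~\ref{Beurlinglocalcoh2} to the ``global'' one by taking inductive limits over $h$. Recall that $\mathcal{U}_{(\omega)}(\C) = \varprojlim_{h \to \infty} \mathcal{A}^h_{(\omega)}$, so that its dual is $\mathcal{U}'_{(\omega)}(\C) = \varinjlim_{h \to \infty} (\mathcal{A}^h_{(\omega)})'$; this identification uses that $\mathcal{U}_{(\omega)}(\C)$ is an $(FS)$-space under condition $(\delta)$ (so its dual is the regular inductive limit of the duals) and that, by Theorem~\ref{nontrivial2}, the spectrum $(\mathcal{A}^h_{(\omega)})_{h}$ is reduced, so each linking map $\mathcal{A}^h_{(\omega)} \to \mathcal{A}^k_{(\omega)}$ ($k < h$) has dense range, hence the dual maps $(\mathcal{A}^k_{(\omega)})' \to (\mathcal{A}^h_{(\omega)})'$ are injective and the inductive limit is just an increasing union. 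On the other side, $\mathcal{O}_{(\omega)} = \bigcup_{h>0}\mathcal{O}^h_{(\omega)}$ and $\mathcal{P}_{(\omega)}$ are already defined as plain unions, and $\mathcal{P}_{(\omega)} \subseteq \mathcal{O}^h_{(\omega)}$ for every $h$ (a function that is analytic on every strip $T^R$ with the right bound is in particular analytic on $T^{b,R}$ for any $0 < b < h$).

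Concretely I would argue as follows. First, $\mathcal{P}_{(\omega)} = \ker(\operatorname{bv})$ on $\mathcal{O}_{(\omega)}$: if $F \in \mathcal{O}_{(\omega)}$, then $F \in \mathcal{O}^h_{(\omega)}$ for some $h$, and $\operatorname{bv}(F) = 0$ as an element of $\mathcal{U}'_{(\omega)}(\C)$ means in particular $\operatorname{bv}(F) = 0$ on $\mathcal{U}_{(\omega)}(\C) \subseteq \mathcal{A}^h_{(\omega)}$; by the density statement of Theorem~\ref{nontrivial2}, $\mathcal{U}_{(\omega)}(\C)$ is dense in $\mathcal{A}^h_{(\omega)}$, and since $\operatorname{bv}(F) \in (\mathcal{A}^h_{(\omega)})'$ is continuous, it vanishes on all of $\mathcal{A}^h_{(\omega)}$; Proposition~\ref{Beurlinglocalcoh2} then gives $F \in \mathcal{P}_{(\omega)}$. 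Conversely $\mathcal{P}_{(\omega)} \subseteq \ker(\operatorname{bv})$ is immediate from Proposition~\ref{Beurlinglocalcoh2} (or Proposition~\ref{Beurlinglocalcoh}). Exactness at the left end, i.e. injectivity of $\mathcal{P}_{(\omega)} \hookrightarrow \mathcal{O}_{(\omega)}$, is trivial.

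For surjectivity of $\operatorname{bv}: \mathcal{O}_{(\omega)} \to \mathcal{U}'_{(\omega)}(\C)$, take $f \in \mathcal{U}'_{(\omega)}(\C)$. Using the identification above, $f$ extends to a continuous functional on some $\mathcal{A}^h_{(\omega)}$, i.e. $f \in (\mathcal{A}^h_{(\omega)})'$. By Proposition~\ref{Beurlinglocalcoh2} (the already-established exactness of the local sequence, in particular surjectivity of the local boundary value map), there is $F \in \mathcal{O}^h_{(\omega)} \subseteq \mathcal{O}_{(\omega)}$ with $\operatorname{bv}(F) = f$ on $\mathcal{A}^h_{(\omega)}$, and a fortiori $\operatorname{bv}(F) = f$ as elements of $\mathcal{U}'_{(\omega)}(\C)$ (both restrict to the same functional on $\mathcal{U}_{(\omega)}(\C)$, which by density and continuity forces equality in $(\mathcal{A}^h_{(\omega)})'$, hence in the union). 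This proves the sequence
$$
0 \longrightarrow \mathcal{P}_{(\omega)} \longrightarrow \mathcal{O}_{(\omega)} \xrightarrow{\phantom,\operatorname{bv}\phantom,} \mathcal{U}'_{(\omega)}(\C) \longrightarrow 0
$$
is exact.

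The one point that deserves care — and which I expect to be the main (mild) obstacle — is the clean identification $\mathcal{U}'_{(\omega)}(\C) = \varinjlim_h (\mathcal{A}^h_{(\omega)})'$ as an increasing union of subspaces, since a priori a functional on the projective limit need not come from a single step and the restriction maps between the duals need not be injective. Both issues are resolved by the reducedness of the spectrum $(\mathcal{A}^h_{(\omega)})_{h}$ established in Theorem~\ref{nontrivial2} together with the fact that $\mathcal{U}_{(\omega)}(\C)$ is an $(FS)$-space (so the projective spectrum consists of Fréchet — indeed Banach-like $(FS)$ — spaces and duality behaves well): every $f \in \mathcal{U}'_{(\omega)}(\C)$ is bounded on a neighbourhood of $0$, hence factors through one $\mathcal{A}^h_{(\omega)}$, and the density of the linking maps makes the dual maps injective so that the union is ``honest''. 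Since no topology is placed on $\mathcal{O}_{(\omega)}$ or $\mathcal{P}_{(\omega)}$, there is nothing topological to check; the statement is purely algebraic, so once this identification is in place the result is an immediate assembly of Theorem~\ref{nontrivial2} and Proposition~\ref{Beurlinglocalcoh2}.
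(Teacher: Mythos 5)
Your argument is correct and is exactly the assembly the paper intends: the paper states Theorem \ref{Beurlingcoh} as an immediate consequence (``should now be clear'') of Proposition \ref{Beurlinglocalcoh2} together with the identification $\mathcal{U}'_{(\omega)}(\C)=\bigcup_{h>0}(\mathcal{A}^h_{(\omega)})'$, which rests on the density of $\mathcal{U}_{(\omega)}(\C)$ in each $\mathcal{A}^h_{(\omega)}$ from Theorem \ref{nontrivial2}, just as you use it for both the kernel and the surjectivity parts. Your extra care about factorization through one step of the projective spectrum and injectivity of the dual maps is precisely the point the paper leaves implicit, so there is nothing to add.
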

\subsection{Analytic representations of hyperfunctions of $\{\omega\}$-type}\label{analyticrepresentationsroumieu}
We now turn our attention to the representation of $\mathcal{A}'_{\{\omega\}}(\mathbb{R})$. In analogy to the previous subsection, we begin our study with the dual of $\mathcal{A}^h_{\{\omega\}}$. For $0 < h < R$  we set
$$
\mathcal{O}_{\{\omega\}}^{h,R,\lambda} = \varprojlim_{ b \to h^+} \varprojlim_{ L \to R^-} \varprojlim_{\mu \to \lambda^+} \mathcal{O}_{\omega}^{b,L, \mu}, \qquad \mathcal{O}_{\{\omega\}}^{h,R} = \varprojlim_{\lambda \to 0^+} \mathcal{O}_{\{\omega\}}^{h,R,\lambda}, 
$$
and
$$
\mathcal{P}_{\{\omega\}}^{R, \lambda} = \varprojlim_{ L \to R^-} \varprojlim_{\mu \to \lambda^+} \mathcal{P}_{\omega}^{L, \mu}, \qquad \mathcal{P}_{\{\omega\}}^{R} = \varprojlim_{\lambda \to 0^+} \mathcal{P}_{\{\omega\}}^{R, \lambda} .
$$
Lemma \ref{compactanalrepr} implies that, if $\omega$ satisfies $(\delta)$, the spaces $\mathcal{O}_{\{\omega\}}^{h,R}$  and $\mathcal{P}_{\{\omega\}}^{R}$ are $(FS)$-spaces. If $\omega$ satisfies $(\zeta)$ from Lemma \ref{equivalent}, this is also true for $\mathcal{O}_{\{\omega\}}^{h,R, \lambda}$ and $\mathcal{P}_{\{\omega\}}^{R, \lambda}$.  Furthermore, the boundary value mapping
$$
\operatorname{bv}: \mathcal{O}_{\{\omega\}}^{h,R} \rightarrow (\mathcal{A}^h_{\{\omega\}})'
$$
is well defined and continuous. We need to the following density lemma. 
\begin{lemma} \label{denseRoumieu} Let $0 < h < R$ and $0 < \lambda < \mu$. Let $\omega$ be a weight function satisfying $(\epsilon)_0$, $(\delta)$, and $(\zeta)$. Then, $\mathcal{U}_{(\omega)}(\C)$  is dense in $\mathcal{O}_{\{\omega\}}^{h,R,\lambda} \cap \mathcal{P}^{R,\mu}_{\{\omega\}}$ with respect to the topology of $\mathcal{O}_{\{\omega\}}^{h,R,\lambda}$.
\end{lemma}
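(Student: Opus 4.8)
The plan is to mimic the density arguments already used in Lemma \ref{denseinf} and Lemma \ref{runge}, but now carried out simultaneously with respect to \emph{two} weighted norms, since an element of $\mathcal{O}_{\{\omega\}}^{h,R,\lambda}\cap\mathcal{P}^{R,\mu}_{\{\omega\}}$ must be approximated in the projective topology of $\mathcal{O}_{\{\omega\}}^{h,R,\lambda}$, which is generated by the norms $|\cdot|^{b,L,\lambda'}$ for $b>h$, $L<R$, $\lambda'>\lambda$. First I would fix such $b<R'<L<R$ and $\lambda<\lambda'<\mu$ (with $b$ slightly bigger than $h$), take $F\in\mathcal{O}_{\{\omega\}}^{h,R,\lambda}\cap\mathcal{P}^{R,\mu}_{\{\omega\}}$, and observe the key point: since $F$ is actually \emph{holomorphic on all of} $T^{R}$ (being in $\mathcal{P}^{R,\mu}_{\{\omega\}}$) and globally bounded there by $C e^{\omega(\mu x)}$, we may reduce to approximating $F$ by entire functions in $\mathcal{U}_{(\omega)}(\C)$.

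Next, as in Lemma \ref{denseinf}, I would use Theorem \ref{nontrivial2} (applicable since $\omega$ satisfies $(\epsilon)_0$ and $(\delta)$) to pick a cutoff $\psi\in\mathcal{U}_{(\omega)}(\C)$ — more precisely $\psi\in\bigcap_{\mu'>0}\mathcal{A}^{R+1,\mu'}_{\omega}$ — with $\psi(0)=1$, and set $F_n(z)=F(z)\psi(z/n)\in\mathcal{U}_{(\omega)}(\C)$. The estimate splits into two regimes. On the strip $T^{L}$ one bounds
$$
|F-F_n|^{b,L,\lambda'}\le A\,|F|^{b,L,\lambda}\,\sup_{z\in T^{L}}|1-\psi(z/n)|e^{-(\omega(\lambda'x)-\omega(\lambda x))},
$$
using condition $(\delta)$ to absorb $\omega(\lambda'x)\le\omega(H\lambda x)+\log A$ after choosing $\lambda'\le H\lambda$; here $(\zeta)$ guarantees $\omega(\lambda'x)-\omega(\lambda x)\to\infty$, so the factor $e^{-(\omega(\lambda'x)-\omega(\lambda x))}$ kills the growth of $F$ away from a compact set, while on that compact set $\psi(z/n)\to1$ uniformly; hence this seminorm tends to $0$. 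The same computation, now invoking $|F|^{L,\mu}<\infty$ (i.e.\ membership in $\mathcal{P}^{R,\mu}_{\{\omega\}}$) and the gap $\mu-\lambda$, handles the $\mathcal{P}$-part of the topology on the strip $T^{R'}$. Since a neighborhood basis of $0$ in $\mathcal{O}_{\{\omega\}}^{h,R,\lambda}$ is generated by finitely many such seminorms at a time, letting $n\to\infty$ finishes the approximation.

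The main obstacle — really the only subtle point — is the interplay between the two growth scales $\lambda$ and $\mu$: one has to check that a single cutoff sequence $\psi(z/n)$ drives \emph{every} relevant seminorm to zero at once, which forces a careful bookkeeping of which of $(\delta)$, $(\zeta)$, and the two finiteness conditions $|F|^{b,L,\lambda}<\infty$, $|F|^{L,\mu}<\infty$ is used in each seminorm estimate. Condition $(\zeta)$ is exactly what makes the exponential damping $e^{-(\omega(\lambda' x)-\omega(\lambda x))}$ (resp.\ with $\mu$) genuinely go to $0$, and $(\delta)$ is what lets us move from $\lambda$ to a slightly larger $\lambda'$ at the cost of a multiplicative constant; without the hypothesis $\lambda<\mu$ one could not separate the two scales. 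I expect the whole proof to be essentially a two-line display once these choices are made, entirely parallel to Lemma \ref{runge}, so I would simply write ``Argue as in the proof of Lemma \ref{denseinf} (resp.\ Lemma \ref{runge}), using $(\zeta)$ to ensure the exponential factors tend to zero,'' and record the two seminorm estimates.
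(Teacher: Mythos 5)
Your overall strategy---multiplying $F$ by a dilated function $\psi(\cdot/n)$ with $\psi(0)=1$ and using $(\zeta)$ to make the damping factor $e^{-(\omega(\nu x)-\omega(\nu_0 x))}$ kill the growth at infinity---is the same as the paper's. But one step fails as written: you assert $F_n(z)=F(z)\psi(z/n)\in\mathcal{U}_{(\omega)}(\C)$. An element $F\in\mathcal{O}_{\{\omega\}}^{h,R,\lambda}\cap\mathcal{P}^{R,\mu}_{\{\omega\}}$ is analytic only on the strip $T^{R}$ (membership in $\mathcal{P}^{R,\mu}_{\{\omega\}}$ gives analyticity across $\overline{T^{h}}$, but nothing beyond $|\operatorname{Im}z|<R$), so $F\psi(\cdot/n)$ is again only analytic on $T^{R}$ and is certainly not entire, hence not in $\mathcal{U}_{(\omega)}(\C)$. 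What your computation actually yields is approximation of $F$, in the seminorms $|\cdot|^{b,L,\nu}$, by elements of $\mathcal{A}^{R}_{(\omega)}$ (this is where membership in $\mathcal{P}^{R,\mu}_{\{\omega\}}$, the decay of $\psi$ on $T^{R}$ with respect to every weight, and $(\delta)$ are used). To get from there to $\mathcal{U}_{(\omega)}(\C)$ you still need the density part of Theorem \ref{nontrivial2}: $\mathcal{U}_{(\omega)}(\C)$ is dense in $\mathcal{A}^{R}_{(\omega)}$, and since the inclusion $\mathcal{A}^{R}_{(\omega)}\hookrightarrow\mathcal{O}_{\{\omega\}}^{h,R,\lambda}$ is continuous, density transfers to the coarser topology. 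This reduction is exactly how the paper opens its proof (``it is enough to check that $\mathcal{A}^{R}_{(\omega)}$ is dense''); you invoke Theorem \ref{nontrivial2} only to produce the cutoff $\psi$, not for this step, so the gap is genuine, although it is repaired by precisely the two-step argument the paper uses.

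Two smaller points. In your displayed estimate the factor $|F|^{b,L,\lambda}$ need not be finite: $F\in\mathcal{O}_{\{\omega\}}^{h,R,\lambda}$ only guarantees $|F|^{b,L,\nu_0}<\infty$ for $\nu_0>\lambda$, so one must interpolate $\lambda<\nu_0<\nu$ and estimate $|F-F_n|^{b,L,\nu}\leq |F|^{b,L,\nu_0}\sup_{z\in T^{b,L}}|1-\psi(z/n)|e^{-(\omega(\nu x)-\omega(\nu_0 x))}$, as the paper does; neither $(\delta)$ nor the constant $A$ enters this seminorm estimate, only $(\zeta)$. Finally, there is no ``$\mathcal{P}$-part of the topology'': the lemma asks for density only with respect to the topology of $\mathcal{O}_{\{\omega\}}^{h,R,\lambda}$; the intersection with $\mathcal{P}^{R,\mu}_{\{\omega\}}$ merely specifies which functions are being approximated and is what makes $F\psi(\cdot/n)$ analytic on the full strip, so your extra bookkeeping with the scale $\mu$ in the seminorm estimates is unnecessary.
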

\begin{proof}
By Theorem \ref{nontrivial2} it is enough to check that $\mathcal{A}_{(\omega)}^{R}$  is dense in $\mathcal{O}_{\{\omega\}}^{h,R,\lambda} \cap \mathcal{P}^{R,\mu}_{\{\omega\}}$ with respect to the topology of $\mathcal{O}_{\{\omega\}}^{h,R,\lambda}$. Let $P \in \mathcal{O}_{\{\omega\}}^{h,R,\lambda} \cap \mathcal{P}^{R,\mu}_{\{\omega\}}$ and choose $\varphi \in \mathcal{A}_{(\omega)}^{R}$ with $\varphi(0) = 1$.
Set $P_n(z) = P(z)\varphi(z/n) \in \mathcal{A}_{(\omega)}^{R}$ for all $n \geq 1$. Let $h < b < L < R$ and $\nu > \lambda$ be arbitrary. For $\lambda < \nu_0 < \nu$ we have
$$
 | P - P_n|^{b, L, \nu} = \sup_{z \in T^{b,L}} |P(z)(1- \varphi(z/n))|e^{-\omega(\nu x)} \leq  |P|^{b, L, \nu_0} \sup_{z \in T^{b,L}} |1- \varphi(z/n)|e^{-(\omega(\nu x) - \omega(\nu_0 x))}.
$$
This shows the lemma because $\omega(\nu t) - \omega(\nu_0 t) \to \infty$, as $t \to \infty$, and $\varphi(z/n) \to 1$, as $n \to \infty$, uniformly on compact subsets of $T^{R}$.
\end{proof}
\begin{proposition}\label{Roumieulocalcoh}
Let  $0 < h < R$ and let $\omega$ satisfy $(\epsilon)_0$ and $(\delta)$. The sequence
$$
0 \longrightarrow \mathcal{P}^R_{\{\omega\}} \longrightarrow \mathcal{O}^{h,R}_{\{\omega\}} \xrightarrow{\phantom,\operatorname{bv}\phantom,}  (\mathcal{A}^h_{\{\omega\}})' \longrightarrow 0
$$
is topologically exact. Moreover, for every bounded set $B \subset (\mathcal{A}^h_{\{\omega\}})'$ there is a bounded set $A \subset \mathcal{O}^{h,R}_{\{\omega\}}$ such that $\operatorname{bv}(A) = B$.
\end{proposition}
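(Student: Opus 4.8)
The plan is to follow the blueprint of the proof of Proposition~\ref{Beurlinglocalcoh}, with the modifications forced by the Roumieu topology. Since $\omega$ satisfies $(\delta)$, all the spaces involved --- $\mathcal{O}^{h,R}_{\{\omega\}}$, $\mathcal{P}^R_{\{\omega\}}$ (both in its own topology and as a subspace of $\mathcal{O}^{h,R}_{\{\omega\}}$), and $(\mathcal{A}^h_{\{\omega\}})'$, the strong dual of a $(DFS)$-space --- are $(FS)$-spaces, in particular Fr\'echet spaces; hence the classical open mapping theorem for Fr\'echet spaces takes over the role of the Pt\'ak open mapping theorem, and it is enough to show that the sequence is \emph{algebraically} exact. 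Indeed, exactness at $\mathcal{O}^{h,R}_{\{\omega\}}$ together with the open mapping theorem applied to the continuous bijection $\mathcal{P}^R_{\{\omega\}}\to\ker\operatorname{bv}$ shows that the first arrow is a topological embedding with closed range, and surjectivity of $\operatorname{bv}$ together with the open mapping theorem shows that $\operatorname{bv}$ is open, hence induces a topological isomorphism $\mathcal{O}^{h,R}_{\{\omega\}}/\mathcal{P}^R_{\{\omega\}}\cong(\mathcal{A}^h_{\{\omega\}})'$. As Lemma~\ref{denseRoumieu} will be used below, we may assume, after replacing $\omega$ by an equivalent weight (Lemma~\ref{equivalent}), that $\omega$ also satisfies $(\zeta)$.

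Exactness at $\mathcal{O}^{h,R}_{\{\omega\}}$ is the routine part. The inclusion $\mathcal{P}^R_{\{\omega\}}\subseteq\ker\operatorname{bv}$ is immediate from Cauchy's integral theorem, since $P\in\mathcal{P}^R_{\{\omega\}}$ is holomorphic on $T^L$ for every $h<L<R$ and $P\varphi$ decays at $\pm\infty$. For the reverse inclusion, let $F\in\mathcal{O}^{h,R}_{\{\omega\}}$ with $\operatorname{bv}(F)=0$ on $\mathcal{A}^h_{\{\omega\}}$; given $\lambda>0$ and $L<R$, pick $h<b<h'<L$, observe that $F\in\mathcal{O}^{b,L,\lambda}_\omega$ and that $\operatorname{bv}(F)=0$ on the step $\mathcal{A}^{h',H\lambda}_\omega$ of the inductive limit $\mathcal{A}^h_{\{\omega\}}$, and apply Corollary~\ref{edge} to get $F\in\mathcal{P}^{L,2H^3\lambda}_\omega$. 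Letting $L\to R^-$ and $\lambda\to0^+$ yields $F\in\mathcal{P}^R_{\{\omega\}}$.

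The heart of the matter is the surjectivity of $\operatorname{bv}$. Unlike in the Beurling case --- where $(\mathcal{A}^h_{(\omega)})'$ is a $(DF)$-space, so that $f$ factors through a single Banach step and one application of Corollary~\ref{analquan} suffices --- here $(\mathcal{A}^h_{\{\omega\}})'$ is a projective limit and membership in $\mathcal{O}^{h,R}_{\{\omega\}}$ requires growth bounds at \emph{every} scale, so one must build the representation by a Mittag-Leffler procedure, as in the proof of Proposition~\ref{Beurlinglocalcoh2}. Fixing $f\in(\mathcal{A}^h_{\{\omega\}})'$, choose $h<k_n<b_n<h'_n<R'_n$ with $b_n\downarrow h$, $R'_n\uparrow R$, $\lambda_n\downarrow0$, and apply Corollary~\ref{analquan} to $f|_{\mathcal{A}^{k_n,\lambda_n}_\omega}$ to obtain $G_n\in\mathcal{O}^{b_n,R'_n,2H^2\lambda_n}_\omega$ with $\operatorname{bv}(G_n)=f$ on $\mathcal{A}^{h'_n,2H^3\lambda_n}_\omega$. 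The difference $P_n:=G_{n+1}-G_n$ has vanishing boundary value on the smaller of the two relevant steps, namely $\mathcal{A}^{h'_n,2H^3\lambda_n}_\omega$, so Corollary~\ref{edge} gives $P_n\in\mathcal{P}^{R'_n,4H^5\lambda_n}_\omega$, and Lemma~\ref{denseRoumieu} supplies $\varphi_n\in\mathcal{U}_{(\omega)}(\C)$ with $|P_n-\varphi_n|\le2^{-n}$ in a conveniently chosen seminorm of the ambient Roumieu space. Setting
\[
F_N(z)=G_N(z)-\sum_{k=1}^{N-1}\varphi_k(z)+\sum_{k=N}^{\infty}\bigl(P_k(z)-\varphi_k(z)\bigr)
\]
on an exhaustion $T^{c_N,S_N}\uparrow T^{h,R}$, one checks the telescoping identity $F_{N+1}=F_N$ on $T^{c_N,S_N}$, so that the $F_N$ patch into a single $F\in\mathcal{O}(T^{h,R})$; since each $\varphi_k\in\mathcal{U}_{(\omega)}(\C)\subseteq\mathcal{P}^R_{\{\omega\}}$ and each $P_k$ is holomorphic on $T^{R'_k}$, all the added terms have vanishing boundary value, whence $\operatorname{bv}(F)=\operatorname{bv}(G_N)=f$ on $\mathcal{A}^{h'_N,2H^3\lambda_N}_\omega$ for $N$ large and therefore $\operatorname{bv}(F)=f$ on $\mathcal{A}^h_{\{\omega\}}$; and as $F$ agrees on $T^{c_N,S_N}$ with $F_N\in\mathcal{O}^{c_N,S_N,\gamma_N}_\omega$ where $c_N\downarrow h$, $S_N\uparrow R$, $\gamma_N\downarrow0$, one gets $F\in\mathcal{O}^{h,R}_{\{\omega\}}$. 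I expect this construction to be the main obstacle: the families $(k_n),(b_n),(h'_n),(R'_n),(\lambda_n)$ and the seminorm parameters of the $\varphi_n$ must be interleaved carefully enough that all the series converge on the intended domains and the limit function picks up a bound at every scale $\mu>0$.

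Finally, the assertion about bounded sets follows, as in the Beurling case, from the general fact that a continuous linear surjection $T\colon E\to F$ of $(FS)$-spaces lifts bounded sets: bounded sets in an $(FS)$-space are relatively compact, so for bounded $B\subseteq F$ one uses the open mapping theorem to lift a null sequence whose closed absolutely convex hull contains $\overline{B}$ to a null sequence in $E$; its closed absolutely convex hull $K$ is compact, $T(K)\supseteq\overline{B}$, and $A:=K\cap T^{-1}(B)$ is bounded with $T(A)=B$.
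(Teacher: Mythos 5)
Your proposal is correct, and the kernel computation, the open-mapping reduction to algebraic exactness, and the lifting of bounded sets all match the paper in substance (your direct compactness argument for the bounded-set lifting simply replaces the paper's citation of \cite[Lemma 26.13]{Meise}). The surjectivity step, however, is where you genuinely diverge: the paper does \emph{not} redo the telescoping construction of Proposition \ref{Beurlinglocalcoh2}. Instead it sets $X_n=\{F\in\mathcal{O}_{\{\omega\}}^{h+(1/n),R,1/n}:\operatorname{bv}(F)=f \text{ on } \mathcal{U}_{(\omega)}(\C)\}$, notes that each $X_n$ is a non-empty (Corollary \ref{analquan}) closed subset of a Fr\'echet space, proves that $X_{n+1}$ is dense in $X_n$ by writing $F=G+P$ with $G\in X_{n+1}$ and $P$ in a $\mathcal{P}$-type space (Corollary \ref{edge}) and approximating $P$ by Lemma \ref{denseRoumieu}, and then invokes De Wilde's theorem (Lemma \ref{intersection}) to conclude the projective limit is non-empty; equality of boundary values, imposed only on $\mathcal{U}_{(\omega)}(\C)$, then propagates to all of $\mathcal{A}^h_{\{\omega\}}$ by density. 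This abstract route buys exactly what you flag as the main obstacle: it eliminates the three-parameter interleaving. Your concrete route does go through --- taking $G_n\in\mathcal{O}^{b_n,R'_n,2H^2\lambda_n}_{\omega}$ with $b_n\downarrow h$, $R'_n\uparrow R$, $\lambda_n\downarrow 0$, one gets $P_n\in\mathcal{P}^{R'_n,4H^5\lambda_n}_{\omega}$ as you say, approximates via Lemma \ref{runge} in the norms $|\cdot|^{S_n,4H^6\lambda_n}$ with $S_n\uparrow R$, $S_n<R'_n$, and then for any fixed $b>h$, $L<R$, $\mu>0$ one has $F=F_N$ on $T^{b,L}$ for all large $N$ while the growth scale of $F_N$'s bound (of order $H\lambda_N$) tends to $0$, which yields both $F\in\mathcal{O}^{h,R}_{\{\omega\}}$ and $\operatorname{bv}(F)=f$ on every step $\mathcal{A}^{k,\mu}_{\omega}$ --- but carrying this out cleanly is precisely the bookkeeping the paper's spectrum argument is designed to avoid, at the modest cost of one soft functional-analytic input (Lemma \ref{intersection}) plus the density of $\mathcal{U}_{(\omega)}(\C)$ at the end.
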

\begin{proof}
In view of the open mapping theorem it suffices to show that the sequence is algebraically  exact. Corollary \ref{edge} implies $\mathcal{P}^R_{\{\omega\}} = \ker{\operatorname{bv}}$. We now show that the boundary value mapping is surjective. By Lemma \ref{equivalent} we may assume that $\omega$ satisfies $(\zeta)$. Let $f \in (\mathcal{A}^h_{\{\omega\}})'$ and define
$$
X_n = \{ F \in \mathcal{O}_{\{\omega\}}^{h + (1/n),R,1/n} \, : \, \operatorname{bv}(F) = f  \mbox{ on } \mathcal{U}_{(\omega)}(\C)\}, \qquad n \geq 1.
$$
By Corollary \ref{analquan} $X_n$ is a non-empty closed subspace of $ \mathcal{O}_{\{\omega\}}^{h + (1/n),R,1/n}$ and therefore a complete metrizable topological space (with respect to the relative topology). Consider the projective spectrum $(X_n)_{n \in \Z_+}$ (where the linking mappings are just the inclusion mappings) and let $X$ be its projective limit. It suffices to show that $X$ is non-empty, which would be implied by Lemma \ref{intersection} if we verify that $X_{n+1}$ is dense in $X_n$. Since, by Corollary \ref{edge}, every $F \in X_n$ can be written as $F = G + P$ where $G \in X_{n+1}$ and $P \in \mathcal{O}_{\{\omega\}}^{h + (1/n),R,1/n} \cap \mathcal{P}^{R,4H^4/n}_{\{\omega\}}$, the density of $X_{n+1}$ in $X_{n}$ follows from Lemma \ref{denseRoumieu}. The second part follows from the general fact that for an exact sequence of Fr\'echet spaces 
$$
0 \longrightarrow E \longrightarrow F \xrightarrow{\phantom,T\phantom,} G \longrightarrow 0,
$$
with $E$ an $(FS)$-space, it holds that every bounded set $B \subset G$ can be written as $T(A) = B$ for some bounded set $A \subset F$ \cite[Lemma 26.13]{Meise}.
\end{proof}
We now construct global analytic representations. Set 
$$
\mathcal{O}_{\{\omega\}}^{h} = \varprojlim_{R \to \infty} \mathcal{O}_{\{\omega\}}^{h,R}, \qquad  \mathcal{O}_{\{\omega\}} = \varprojlim_{h \to 0^+} \mathcal{O}_{\{\omega\}}^{h},  \qquad \mathcal{P}_{\{\omega\}} = \varprojlim_{R \to \infty} \mathcal{P}_{\{\omega\}}^{R}.
$$
Note that all of the above spaces are $(FS)$-spaces if $\omega$ satisfies $(\delta)$,  as follows from Lemma \ref{compactanalrepr}.

\begin{proposition}\label{Roumieulocalcoh2}
Let $\omega$ be a weight function satisfying $(\epsilon)_0$ and $(\delta)$. The sequence
$$
0 \longrightarrow \mathcal{P}_{\{\omega\}} \longrightarrow \mathcal{O}^{h}_{\{\omega\}} \xrightarrow{\phantom,\operatorname{bv}\phantom,}  (\mathcal{A}_{\{\omega\}}^{h})' \longrightarrow 0
$$
is topologically exact. Moreover, for every bounded set $B \subset (\mathcal{A}^h_{\{\omega\}})'$ there is a bounded set $A \subset \mathcal{O}^{h}_{\{\omega\}}$ such that $\operatorname{bv}(A) = B$.
\end{proposition}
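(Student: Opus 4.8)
The plan is to globalise Proposition \ref{Roumieulocalcoh} in the strip parameter $R$ by a projective‑limit (Mittag–Leffler) argument, essentially reusing the scheme of the proof of Proposition \ref{Roumieulocalcoh} itself. Fix a sequence $h<R_1<R_2<\cdots$ with $R_n\to\infty$; by cofinality $\mathcal{O}^h_{\{\omega\}}=\varprojlim_n\mathcal{O}^{h,R_n}_{\{\omega\}}$ and $\mathcal{P}_{\{\omega\}}=\varprojlim_n\mathcal{P}^{R_n}_{\{\omega\}}$, and $\operatorname{bv}\colon\mathcal{O}^h_{\{\omega\}}\to(\mathcal{A}^h_{\{\omega\}})'$ is the continuous map obtained by composing the canonical projection $\mathcal{O}^h_{\{\omega\}}\to\mathcal{O}^{h,R_n}_{\{\omega\}}$ with the boundary value mapping of Proposition \ref{Roumieulocalcoh} (it does not depend on $n$, by Cauchy's theorem). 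Exactness at $\mathcal{O}^h_{\{\omega\}}$ is straightforward: $\mathcal{P}_{\{\omega\}}\subseteq\ker\operatorname{bv}$ because functions holomorphic on all of $\C$ have vanishing boundary value, while if $F\in\mathcal{O}^h_{\{\omega\}}$ satisfies $\operatorname{bv}(F)=0$ then Proposition \ref{Roumieulocalcoh} forces $F|_{T^{h,R_n}}\in\mathcal{P}^{R_n}_{\{\omega\}}$ for every $n$; these extensions across $\overline{T^h}$ agree by the identity theorem and assemble into an element of $\varprojlim_n\mathcal{P}^{R_n}_{\{\omega\}}=\mathcal{P}_{\{\omega\}}$. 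Injectivity of $\mathcal{P}_{\{\omega\}}\hookrightarrow\mathcal{O}^h_{\{\omega\}}$ is clear.

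For the surjectivity of $\operatorname{bv}$, fix $f\in(\mathcal{A}^h_{\{\omega\}})'$ and set
\[
X_n=\{\,F\in\mathcal{O}^{h,R_n}_{\{\omega\}}\;:\;\operatorname{bv}(F)=f\,\},\qquad n\geq 1 .
\]
By Proposition \ref{Roumieulocalcoh} each $X_n$ is non‑empty; it is a closed affine subspace of the $(FS)$‑space $\mathcal{O}^{h,R_n}_{\{\omega\}}$ (a translate of the closed subspace $\ker\operatorname{bv}=\mathcal{P}^{R_n}_{\{\omega\}}$), hence a complete metrizable space, and the restriction maps $X_{n+1}\to X_n$ turn $(X_n)_n$ into a projective spectrum whose limit is precisely the set of $F\in\mathcal{O}^h_{\{\omega\}}$ with $\operatorname{bv}(F)=f$. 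By Lemma \ref{intersection}, it therefore suffices to show that each restriction map $X_{n+1}\to X_n$ has dense range.

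Fix $n$ and pick $G\in X_{n+1}$. Given $F\in X_n$, the difference $P:=F-G|_{T^{h,R_n}}$ satisfies $\operatorname{bv}(P)=0$ on $\mathcal{A}^h_{\{\omega\}}$, hence $P\in\mathcal{P}^{R_n}_{\{\omega\}}$ by Proposition \ref{Roumieulocalcoh}. Now $\mathcal{U}_{(\omega)}(\C)\subset\mathcal{P}^{R_{n+1}}_{\{\omega\}}$, its elements being entire with rapid decay and therefore with vanishing boundary value; consequently, if $(\varphi_k)_k\subset\mathcal{U}_{(\omega)}(\C)$ converges to $P$ in the topology of $\mathcal{O}^{h,R_n}_{\{\omega\}}$, then $G+\varphi_k\in X_{n+1}$ and $(G+\varphi_k)|_{T^{h,R_n}}\to G+P=F$ in $\mathcal{O}^{h,R_n}_{\{\omega\}}$, which is exactly what is required. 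The existence of such $\varphi_k$ comes from Lemma \ref{runge}: it suffices to approximate $P$ with respect to each norm $|\cdot|^{L,\mu}:=\sup_{T^L}|\cdot|e^{-\omega(\mu x)}$ with $L<R_n$, $\mu>0$, since these dominate the seminorms of $\mathcal{O}^{h,R_n}_{\{\omega\}}$; choosing $L<L'<R_n$ one has $P\in\mathcal{P}^{L',\mu/H}_\omega$, and Lemma \ref{runge} applied with radii $L<L'$ and weight parameter $\mu/H$ yields $\varphi\in\mathcal{U}_{(\omega)}(\C)$ with $|P-\varphi|^{L,\mu}$ arbitrarily small. Hence $\varprojlim_n X_n\neq\emptyset$, so $\operatorname{bv}$ is surjective.

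It remains to record topological exactness and the statement on bounded sets. Since $\mathcal{A}^h_{\{\omega\}}$ is a $(DFS)$‑space, $(\mathcal{A}^h_{\{\omega\}})'$ is an $(FS)$‑space; together with $\mathcal{O}^h_{\{\omega\}}$ and its closed subspace $\mathcal{P}_{\{\omega\}}$, all three terms are Fréchet, so the algebraically exact sequence is automatically topologically exact by the open mapping theorem. Finally, for an exact sequence of Fréchet spaces $0\to E\to F\to G\to 0$ with $E$ an $(FS)$‑space, every bounded subset of $G$ is the image of a bounded subset of $F$ \cite[Lemma 26.13]{Meise}; applying this with $E=\mathcal{P}_{\{\omega\}}$ (an $(FS)$‑space, as noted after its definition) gives the last assertion. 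The only step that uses the analytic machinery of the paper — and hence the main point — is the density of the restriction maps $X_{n+1}\to X_n$, i.e. the approximation of functions in $\mathcal{P}^{R_n}_{\{\omega\}}$ by entire functions of class $\mathcal{U}_{(\omega)}(\C)$ in the topology of $\mathcal{O}^{h,R_n}_{\{\omega\}}$, which rests on Lemma \ref{runge} and through it on the constructions of Section \ref{harmonic}; everything else is the standard projective‑limit bookkeeping already carried out for Proposition \ref{Roumieulocalcoh}.
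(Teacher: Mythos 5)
Your argument is correct, and it rests on exactly the same two pillars as the paper's proof — the local exact sequence of Proposition \ref{Roumieulocalcoh} and the Runge-type density Lemma \ref{runge} (which you invoke in the right quantified form: $P\in\mathcal{P}^{L',\mu/H}_{\omega}$ approximated in $|\cdot|^{L,\mu}$) — but it is organized differently. The paper globalises in $R$ by assembling the local sequences into a projective sequence of short \emph{topologically exact} sequences with kernels $X_n=\mathcal{P}^{h+n}_{\{\omega\}}$ and middle terms $Y_n=\mathcal{O}^{h,h+n}_{\{\omega\}}$, where Lemma \ref{runge} gives density of $X_{n+1}$ in $X_n$, and then applies the Mittag--Leffler lemma \cite[Lemma 1.3]{Komatsu}; this yields surjectivity and topological exactness of the limit sequence in one stroke. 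You instead rerun, at the global level, the De Wilde fiber argument that the paper uses only for Proposition \ref{Roumieulocalcoh}: you show the spectrum of representative sets $X_n=\{F\in\mathcal{O}^{h,R_n}_{\{\omega\}}:\operatorname{bv}(F)=f\}$ (closed affine, hence complete metrizable) is reduced via Lemma \ref{intersection}, with the density of $X_{n+1}\to X_n$ again reduced to Lemma \ref{runge}, and then recover topological exactness by the open mapping theorem for Fr\'echet spaces and the bounded-lifting claim from \cite[Lemma 26.13]{Meise}, exactly as the paper does for that last part. The trade-off is minor: the paper's diagram formulation delivers exactness together with its topological form automatically, while your fiber approach avoids setting up the projective sequence of sequences at the cost of the (routine) open-mapping step; one small point you leave implicit is that approximation with respect to each single dominating norm $|\cdot|^{L,\mu}$ suffices for density because these seminorms form a directed (countable, cofinal) family defining the metrizable topology of $\mathcal{O}^{h,R_n}_{\{\omega\}}$.
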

\begin{proof}
Set $X_n =  \mathcal{P}_{\{\omega\}}^{h+n}$ and $Y_n = \mathcal{O}_{\{\omega\}}^{h,h + n}$ for $n \geq 1$. Consider the following projective sequence of short sequences
\begin{center}
\begin{tikzpicture}
  \matrix (m) [matrix of math nodes, row sep=3em, column sep=3em]
    { 0 & X_1  & Y_1  & (\mathcal{A}^h_{\omega})' & 0 \\
      0 & X_2 & Y_2 & (\mathcal{A}^h_{\omega})'   & 0 \\ 
        & \vdots & \vdots & \vdots &  \\ };
  { [start chain] \chainin (m-1-1);
\chainin (m-1-2);
\chainin (m-1-3);
\chainin (m-1-4) [join={node[above,labeled] {\operatorname{bv}}}];
\chainin (m-1-5); }
  { [start chain] \chainin (m-2-1);
    \chainin (m-2-2);
       { [start branch=X_2] \chainin (m-1-2)
        [join={node[right,labeled] {}}];}
    \chainin (m-2-3);
    { [start branch=Y_2] \chainin (m-1-3)
        [join={node[right,labeled] {}}];}
        
    \chainin (m-2-4) [join={node[above,labeled] {\operatorname{bv}}}];
    { [start branch=Y_2] \chainin (m-1-4)
        [join={node[right,labeled] {}}];}
    \chainin (m-2-5); }
      { [start chain]  
\chainin (m-3-2);
{ [start branch=X_2] \chainin (m-2-2)
        [join={node[right,labeled] {}}];}
 
 }
      { [start chain]  
\chainin (m-3-3);
{ [start branch=X_2] \chainin (m-2-3)
        [join={node[right,labeled] {}}];}
 
 }
      { [start chain]  
\chainin (m-3-4);
{ [start branch=X_2] \chainin (m-2-4)
        [join={node[right,labeled] {}}];}
 
 }
\end{tikzpicture}
\end{center}
By Proposition \ref{Roumieulocalcoh} every horizontal line is a topologically exact sequence. Moreover, by Lemma \ref{runge}, $X_{n+1}$ is dense in $X_n$ for all $n \geq 1$. Hence, the Mittag-Leffler lemma \cite[Lemma 1.3]{Komatsu} yields the topological exactness of the sequence 
$$
 0 \longrightarrow \varprojlim_{n}X_n = \mathcal{P}_{\{\omega\}} \longrightarrow \varprojlim_{n}Y_n = \mathcal{O}^{h}_{\{\omega\}} \xrightarrow{\phantom,\operatorname{bv}\phantom,}  (\mathcal{A}_{\{\omega\}}^{h})' \longrightarrow 0.
$$
The second statement follows from \cite[Lemma 26.13]{Meise} (cf.\ the end of the proof of Proposition \ref{Roumieulocalcoh}).
\end{proof}

We then have,

\begin{theorem}\label{Roumieucoh}
Let $\omega$ be a weight function satisfying $(\epsilon)_0$ and $(\delta)$. The sequence
$$
0 \longrightarrow \mathcal{P}_{\{\omega\}} \longrightarrow \mathcal{O}_{\{\omega\}} \xrightarrow{\phantom,\operatorname{bv}\phantom,}  \mathcal{A}'_{\{\omega\}}(\R) \longrightarrow 0
$$
is topologically exact. Moreover, for every bounded set $B \subset \mathcal{A}'_{\{\omega\}}(\R)$ there is a bounded set $A \subset \mathcal{O}_{\{\omega\}}$ such that $\operatorname{bv}(A) = B$.
\end{theorem}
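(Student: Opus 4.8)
The strategy is to deduce Theorem~\ref{Roumieucoh} from the local statement Proposition~\ref{Roumieulocalcoh2} by passing to the projective limit over $h\to 0^+$, in the same spirit in which Theorem~\ref{Beurlingcoh} follows from Proposition~\ref{Beurlinglocalcoh2} — the essential difference being that here the limit is projective rather than inductive, so exactness (in particular surjectivity of $\operatorname{bv}$) is not automatic and must be recovered through a Mittag-Leffler argument. Fix a decreasing sequence $h_n\searrow 0$, say $h_n=1/n$. Then $\mathcal{A}_{\{\omega\}}(\R)=\varinjlim_n \mathcal{A}^{h_n}_{\{\omega\}}$ is a $(DFS)$-space, hence a regular inductive limit of a compact spectrum, so its strong dual is the reduced projective limit $\mathcal{A}'_{\{\omega\}}(\R)=\varprojlim_n (\mathcal{A}^{h_n}_{\{\omega\}})'$, with linking maps the restrictions dual to the inclusions $\mathcal{A}^{h_{n}}_{\{\omega\}}\hookrightarrow \mathcal{A}^{h_{n+1}}_{\{\omega\}}$. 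Likewise $\mathcal{O}_{\{\omega\}}=\varprojlim_n \mathcal{O}^{h_n}_{\{\omega\}}$ by definition, while $\mathcal{P}_{\{\omega\}}$ is independent of $h$ and may be regarded as the projective limit of the constant spectrum with identity linking maps.

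First I would record, via Proposition~\ref{Roumieulocalcoh2}, that for each $n$ the row
$$
0 \longrightarrow \mathcal{P}_{\{\omega\}} \longrightarrow \mathcal{O}^{h_n}_{\{\omega\}} \xrightarrow{\phantom,\operatorname{bv}\phantom,} (\mathcal{A}^{h_n}_{\{\omega\}})' \longrightarrow 0
$$
is topologically exact, that all three spaces are $(FS)$-spaces (for $\mathcal{O}^{h_n}_{\{\omega\}}$ and $\mathcal{P}_{\{\omega\}}$ this was noted just after their definitions; for $(\mathcal{A}^{h_n}_{\{\omega\}})'$ it holds because $\mathcal{A}^{h_n}_{\{\omega\}}$ is a $(DFS)$-space), and that these rows form a projective sequence of short sequences with respect to the linking maps described above — compatibility of the $\operatorname{bv}$ maps with the restrictions is immediate from Cauchy's integral theorem, and the kernel column has identity linking maps. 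Since the kernel spectrum has identity, hence trivially dense-range, linking maps, the hypotheses of the Mittag-Leffler lemma \cite[Lemma 1.3]{Komatsu} are fulfilled, and it yields the topological exactness of
$$
0 \longrightarrow \varprojlim_n \mathcal{P}_{\{\omega\}} \longrightarrow \varprojlim_n \mathcal{O}^{h_n}_{\{\omega\}} \xrightarrow{\phantom,\operatorname{bv}\phantom,} \varprojlim_n (\mathcal{A}^{h_n}_{\{\omega\}})' \longrightarrow 0.
$$
By the identifications above, this is exactly the claimed sequence $0\to\mathcal{P}_{\{\omega\}}\to\mathcal{O}_{\{\omega\}}\xrightarrow{\operatorname{bv}}\mathcal{A}'_{\{\omega\}}(\R)\to 0$.

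For the statement on bounded sets, the resulting sequence is a topologically exact sequence of Fr\'echet spaces whose kernel $\mathcal{P}_{\{\omega\}}$ is an $(FS)$-space, so \cite[Lemma 26.13]{Meise} applies verbatim (as at the ends of the proofs of Propositions~\ref{Roumieulocalcoh} and~\ref{Roumieulocalcoh2}) and produces, for each bounded $B\subset\mathcal{A}'_{\{\omega\}}(\R)$, a bounded $A\subset\mathcal{O}_{\{\omega\}}$ with $\operatorname{bv}(A)=B$. The only point requiring real care is the bookkeeping of the limit: one must verify that the strong dual of the $(DFS)$-space $\mathcal{A}_{\{\omega\}}(\R)$ is the projective limit of the Fr\'echet spaces $(\mathcal{A}^{h_n}_{\{\omega\}})'$ as topological vector spaces, with linking maps matching those of the $\mathcal{O}^{h_n}_{\{\omega\}}$-spectrum, and then check the Mittag-Leffler hypotheses; since the kernel spectrum is constant these hypotheses are void, so once Proposition~\ref{Roumieulocalcoh2} is available there is no genuine difficulty and the theorem is essentially a formal consequence.
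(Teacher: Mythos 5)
Your proposal is correct and coincides with the paper's own argument: the paper likewise forms the projective sequence of short exact sequences $0\to\mathcal{P}_{\{\omega\}}\to\mathcal{O}^{1/n}_{\{\omega\}}\xrightarrow{\operatorname{bv}}(\mathcal{A}^{1/n}_{\{\omega\}})'\to 0$ furnished by Proposition~\ref{Roumieulocalcoh2}, applies the Mittag-Leffler lemma \cite[Lemma 1.3]{Komatsu} (the density hypothesis being trivial for the constant kernel column), and obtains the bounded-lifting statement from \cite[Lemma 26.13]{Meise}. Your extra care in identifying $\mathcal{A}'_{\{\omega\}}(\R)=\varprojlim_n(\mathcal{A}^{1/n}_{\{\omega\}})'$ via regularity of the $(DFS)$-limit is a detail the paper leaves implicit, but the route is the same.
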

\begin{proof}  We now set $Y_n =  \mathcal{O}_{\{\omega\}}^{1/n}$ and $Z_n = (\mathcal{A}^{1/n}_{\{\omega\}})'$ for $n \geq 1$. Consider the projective sequence of short sequences
\begin{center}
\begin{tikzpicture}
  \matrix (m) [matrix of math nodes, row sep=3em, column sep=3em]
    { 0 & \mathcal{P}_{\{\omega\}}  & Y_1  & Z_1& 0 \\
      0 & \mathcal{P}_{\{\omega\}} & Y_2 & Z_2  & 0 \\ 
        & \vdots & \vdots & \vdots &  \\ };
  { [start chain] \chainin (m-1-1);
\chainin (m-1-2);
\chainin (m-1-3);
\chainin (m-1-4) [join={node[above,labeled] {\operatorname{bv}}}];
\chainin (m-1-5); }
  { [start chain] \chainin (m-2-1);
    \chainin (m-2-2);
       { [start branch=X_2] \chainin (m-1-2)
        [join={node[right,labeled] {}}];}
    \chainin (m-2-3);
    { [start branch=Y_2] \chainin (m-1-3)
        [join={node[right,labeled] {}}];}
        
    \chainin (m-2-4) [join={node[above,labeled] {\operatorname{bv}}}];
    { [start branch=Y_2] \chainin (m-1-4)
        [join={node[right,labeled] {}}];}
    \chainin (m-2-5); }
      { [start chain]  
\chainin (m-3-2);
{ [start branch=X_2] \chainin (m-2-2)
        [join={node[right,labeled] {}}];}
 
 }
      { [start chain]  
\chainin (m-3-3);
{ [start branch=X_2] \chainin (m-2-3)
        [join={node[right,labeled] {}}];}
 
 }
      { [start chain]  
\chainin (m-3-4);
{ [start branch=X_2] \chainin (m-2-4)
        [join={node[right,labeled] {}}];}
 
 }
\end{tikzpicture}
\end{center}
By Proposition \ref{Roumieulocalcoh2} every horizontal line is a topologically exact sequence. Hence, the Mittag-Leffler lemma \cite[Lemma 1.3]{Komatsu} yields the topologically exact sequence 
$$
 0 \longrightarrow \mathcal{P}_{\{\omega\}} \longrightarrow \varprojlim_{n}Y_n = \mathcal{O}_{\{\omega\}} \xrightarrow{\phantom,\operatorname{bv}\phantom,}  \varprojlim_{n}Z_n = \mathcal{A}'_{\{\omega\}}(\R) \longrightarrow 0.
$$
For the second statement, we apply again \cite[Lemma 26.13]{Meise}.
\end{proof}
\section{Support}\label{sect-support}
Silva  introduced in \cite{Silva} a useful notion of real support for ultrahyperfunctions. We extend such considerations to ultrahyperfunctions and hyperfunctions of fast growth via the analytic continuation properties of their analytic representations. We will use these ideas to establish a support separation theorem, which in particular gives that every (ultra)hyperfunction of fast growth can be written as the sum of two (ultra)hyperfunctions of fast growth having support in the positive and negative half-axis, respectively. Based upon this result, we shall study in Section \ref{section laplace} analytic representations of ultradistributions of class $\omega$ of exponential type via Laplace transforms.
 
\subsection{Real support of ultrahyperfunctions of $(\omega)$-type}
Let $\overline{\R} = \R \cup \{-\infty, \infty\}$ be the extended real line endowed with its usual topology (two-point compactification of $\R$).  For $0 < b < R$ and $A \subseteq \overline{\R}$ we set $T^R(A) = (A \cap \R) + i(-R,R)$ and $T^{b,R}(A) = T^{b,R} \cup T^R(A)$. Let $\omega$ be a weight function. Given $K \Subset \overline{\R}$ a proper closed set with non-empty interior we denote by  $\mathcal{O}_{\omega}^{b,R}(K)$ the $(B)$-space of functions $F \in \mathcal{O}(T^{b,R}(\operatorname{int} K))$ that satisfy
$$
|F|^{b,R}_{\omega,K} :=  \sup \{|F(z)| e^{-\omega(x)} : \, z \in T^{b,R}(\operatorname{int} K) \}< \infty.
$$
For an open subset $\Omega$ of $\overline{\R}$, we then define
$$
 \mathcal{O}_{\omega}^{h,R}(\Omega) = \varprojlim_{K \Subset \Omega} \varprojlim_{b \to h^+} \varprojlim_{L \to R^-}  \mathcal{O}_{\omega}^{b,L}(K).
$$
We also write $\mathcal{O}^{b,R}_{\omega_\lambda}(K) = \mathcal{O}^{b,R, \lambda}_{\omega}(K)$, $\mathcal{O}^{h,R}_{\omega_\lambda}(\Omega) = \mathcal{O}^{h,R, \lambda}_{\omega}(\Omega)$, and
$$| F |^{b,R}_{\omega_\lambda,K} = | F |^{b,R,\lambda}_{\omega,K}= | F |^{b,R,\lambda}_K, \qquad F \in \mathcal{O}^{b,R, \lambda}_{\omega}(K).$$
Furthermore, we set
$$
\mathcal{O}^{h,R}_{(\omega)}(\Omega) = \bigcup_{\lambda >0} \mathcal{O}_{\omega}^{h,R, \lambda}(\Omega), \qquad  \mathcal{O}_{(\omega)}(\Omega+i\mathbb{R}) = \bigcup_{\lambda,h >0} \bigcap_{R >h} \mathcal{O}_{\omega}^{h,R, \lambda}(\Omega).
$$
Suppose that $\omega$ satisfies $(\delta)$ and $(\epsilon)_0$. Let $f \in \mathcal{U}'_{(\omega)}(\C)$ and $\Omega\subseteq\overline{\R}$ be open. We say that $f$ \emph{vanishes on} $\Omega$ if there is $F \in \mathcal{O}_{(\omega)}(\Omega+i\mathbb{R})$ such that $\operatorname{bv}(F) = f$. Theorem \ref{Beurlingcoh} implies that in such a case this property holds for all analytic representations of $f$ (in $\mathcal{O}_{(\omega)}$). Moreover, the proof of Proposition \ref{Beurlinglocalcoh2} shows that for $f$ to vanish on $\Omega$ it suffices that there is $F \in \mathcal{O}^{h,R}_{(\omega)}(\Omega)$, for some $0 < h < R$, such that $\operatorname{bv}(F) = f$. 

We define the \emph{real support of} $f \in \mathcal{U}'_{(\omega)}(\C)$, denoted by $\operatorname*{supp_{\overline{\R}}} f$, as the complement of the largest open  subset of $\overline{\R}$ on which $f$ vanishes. Given a closed subset $K\subseteq\overline{\R}$, we write
$$
\mathcal{U}'_{(\omega)}[K + i\R]= \{f\in\mathcal{U}'_{(\omega)}(\C)\: : \operatorname*{supp_{\overline{\R}}} f\subseteq K \},$$
the subspace of ultrahyperfunctions of $(\omega)$-type with real support in $K$.  When $I$ is a closed interval of $\overline{\R}$,  $f \in \mathcal{U}'_{(\omega)}[I + i\R]$, and $F \in \mathcal{O}^{h,R}_{(\omega)}(\overline{\R} \backslash I)$ is such that $\operatorname{bv}(F) = f$, Cauchy's integral theorem yields
$$
\langle f, \varphi \rangle = -\int_{\Gamma^{b}(J)}F(z) \varphi(z)dz, \qquad  \varphi \in \mathcal{U}_{(\omega)}(\C),
$$ 
where $h < b < R $, $J$ an interval in $\overline{\R}$ such that $I \Subset J$, and $\Gamma^b(J)$ the boundary of $T^b(J)$. More generally, if $K$ is a proper closed subset of $\overline{\R}$ and $J_{1},J_{2},\dots, J_{n}$ is a finite covering of $K$ by open intervals of the extended real line such that their finite end points do not belong to $K$, and $f=\operatorname{bv}(F) \in \mathcal{U}'_{(\omega)}[K + i\R]$ with $F \in \mathcal{O}^{h,R}_{(\omega)}(\overline{\R} \backslash K)$, we have the representation
\begin{equation}
\label{eqrepultracontour}
\langle f, \varphi \rangle = -\left(\int_{\Gamma^{b}(J_{1})}+\int_{\Gamma^{b}(J_{2})}+\dots+\int_{\Gamma^{b}(J_{n})}\right)F(z) \varphi(z)dz,
\end{equation}
for each $\varphi \in \mathcal{U}_{(\omega)}(\C)$.

The space $\mathcal{U}'_{(\omega)}[K + i\R]$ consists of analytic functionals if $K$ is a compact subset of $\mathbb{R}$. Indeed, if $S\subseteq\mathbb{C}$ is a closed set, we write
$$
\mathcal{O}[S] = \varinjlim_{ S\Subset V} \mathcal{O}(V).
$$
The Silva-K\"othe-Grothendieck theorem \cite{Morimoto2} now yields 
$$
\mathcal{U}'_{(\omega)}[K + i\R]  = \bigcup_{R > 0} \mathcal{O}'[K + i[-R,R]]=\mathcal{O}'[K+i\mathbb{R}],
$$
which is in fact a result due to Silva \cite{Silva}. 

Our next goal is to show a support separation theorem for $\mathcal{U}'_{(\omega)}(\C)$. For $a,b \in \R$, we employ the special notations
$$
\mathcal{U}'_{(\omega),a+} = \mathcal{U}'_{(\omega)}[[a,\infty] + i\R], \qquad \mathcal{U}'_{(\omega),b-} = \mathcal{U}'_{(\omega)}[[-\infty,b] + i\R].
$$
\begin{theorem}\label{splittingBeurling}
Let $-\infty < a \leq b < \infty$ and let $\omega$ be a weight function satisfying  $(\delta)$ and $(\epsilon)_0$. The sequence 
$$
0 \longrightarrow \mathcal{U}'_{(\omega)}[[a,b] + i\R] \longrightarrow \mathcal{U}'_{(\omega),a+}\oplus \mathcal{U}'_{(\omega),b-} \xrightarrow{\phantom-\lambda\phantom-}  \mathcal{U}'_{(\omega)}(\C) \longrightarrow 0
$$
is exact, where $\lambda((f_1,f_2)) = f_1 - f_2$.
\end{theorem}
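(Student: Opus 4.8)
The plan is to dispatch exactness at the two left-hand nodes by an essentially set-theoretic argument, and then to reduce surjectivity of $\lambda$ — which is the real content — to an analytic splitting of the defining functions supplied by Theorem \ref{Beurlingcoh}.

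For the easy part: the map $f\mapsto(f,f)$ is visibly injective, and I claim its image is exactly $\ker\lambda$. If $\lambda((f_1,f_2))=0$ then $f_1=f_2=:f$; since $\operatorname{supp}_{\overline{\R}}f$ is contained both in $[a,\infty]$ and in $[-\infty,b]$, it is contained in $[a,\infty]\cap[-\infty,b]=[a,b]$, so $f\in\mathcal{U}'_{(\omega)}[[a,b]+i\R]$; conversely any such $f$ has real support in that same intersection, hence maps to $(f,f)\in\ker\lambda$. This uses nothing beyond the definition of $\operatorname{supp}_{\overline{\R}}$ as the complement of the largest open set on which $f$ vanishes, together with the obvious monotonicity of ``vanishing'' in the open set.

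Now surjectivity. Given $f\in\mathcal{U}'_{(\omega)}(\C)$, by Theorem \ref{Beurlingcoh} pick $F\in\of$ with $\operatorname{bv}(F)=f$; say $F\in\mathcal{O}^{b_0,R,\lambda}_\omega$ for all $R>b_0$, so $F$ is holomorphic on $\{|\operatorname{Im}z|>b_0\}$ with $|F(z)|\le C_R e^{\omega(\lambda x)}$ there. It is enough to write $F=F_1-F_2$ with $F_1$ holomorphic and $O(e^{\omega(\mu x)})$ on $\{|\operatorname{Im}z|>b_0\}\cup\{\operatorname{Re}z<a\}$ and $F_2$ the analogous object for $\{|\operatorname{Im}z|>b_0\}\cup\{\operatorname{Re}z>b\}$ (I take $a<b$; the case $a=b$ follows by letting the splitting point shrink to $a$ and one extra Mittag--Leffler passage). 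Indeed $F_1\in\of([-\infty,a)+i\R)$ and $F_2\in\of((b,\infty]+i\R)$, so $f_1:=\operatorname{bv}(F_1)\in\mathcal{U}'_{(\omega),a+}$, $f_2:=\operatorname{bv}(F_2)\in\mathcal{U}'_{(\omega),b-}$, and $\lambda((f_1,f_2))=\operatorname{bv}(F_1-F_2)=\operatorname{bv}(F)=f$. To produce the splitting I would work one strip at a time: fix $R>b_0$ and consider the domain of holomorphy $\Omega_R=T^R\setminus\{a\le\operatorname{Re}z\le b,\ |\operatorname{Im}z|\le b_0\}$, covered by $V_1=T^R\setminus\{\operatorname{Re}z\ge a,\ |\operatorname{Im}z|\le b_0\}$ and $V_2=T^R\setminus\{\operatorname{Re}z\le b,\ |\operatorname{Im}z|\le b_0\}$ with $V_1\cap V_2=T^{b_0,R}$, the domain of $F$. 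By Proposition \ref{funda} (legitimate since $(\epsilon)_0$ gives $(\epsilon)_{\pi/(8R\mu)}$ for every $\mu$) together with $(\delta)$, choose a zero-free $P\in\mathcal{O}(T^R)$ with $e^{\omega(H\lambda x)}\le|P(z)|\le A^3C\,e^{\omega(2H^3\lambda x)}$, so that $G:=F/P$ is holomorphic and bounded on $V_1\cap V_2$, indeed $G(z)=O(e^{\omega(\lambda x)-\omega(H\lambda x)})\to0$ as $|x|\to\infty$. Then solve the additive (Cousin~I) problem for the cocycle $G$ on $\{V_1,V_2\}$: with $\theta\in C^\infty(\R)$, $\theta\equiv0$ on $(-\infty,a]$ and $\theta\equiv1$ on $[b,\infty)$, set $g_1=\theta(\operatorname{Re}z)\,G$, $g_2=(\theta(\operatorname{Re}z)-1)\,G$ on $V_1\cap V_2$, extended by $0$ across $\{\operatorname{Re}z<a,|\operatorname{Im}z|\le b_0\}$ and $\{\operatorname{Re}z>b,|\operatorname{Im}z|\le b_0\}$ respectively; then $g_i\in C^\infty(V_i)$, $g_1-g_2=G$, and $\overline{\partial}g_1=\overline{\partial}g_2=\tfrac12\theta'(\operatorname{Re}z)\,G=:\eta$ defines a smooth $\overline{\partial}$-closed $(0,1)$-form on $\Omega_R$ supported in the \emph{bounded} set $\{a\le\operatorname{Re}z\le b,\ b_0<|\operatorname{Im}z|<R\}$, on which $|\eta|$ is bounded. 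Solving $\overline{\partial}u=\eta$ on $\Omega_R$ by a H\"ormander $L^2$-estimate against a subharmonic weight that vanishes on $\operatorname{supp}\eta$ and grows no faster than $\omega(\mu\operatorname{Re}z)$ yields $u$ with $|u(z)|\le C_\mu e^{\omega(\mu x)}$, whence $F_i^{(R)}:=P\,(g_i-u|_{V_i})$ are holomorphic on $V_i$, satisfy $F_1^{(R)}-F_2^{(R)}=F$ on $T^{b_0,R}$, and by $(\delta)$ obey the bound $O(e^{\omega(\nu x)})$ for a suitable $\nu=\nu(\lambda,H)$. Finally, to pass from these strip-wise splittings to one pair $(F_1,F_2)$ holomorphic on all of $\{|\operatorname{Im}z|>b_0\}$, I would run a Mittag--Leffler argument completely parallel to the proof of Proposition \ref{Beurlinglocalcoh2}: the difference of two splittings over a common strip is a splitting of $0$, so its pieces extend across the fixed rectangle by a relative version of Corollary \ref{edge}, and these corrections are approximated in the relevant $\omega$-norms by functions holomorphic on the next larger domain via a Runge-type density statement in the spirit of Lemma \ref{runge}, which in turn rests on the non-triviality of $\mathcal{U}_{(\omega)}(\C)$ (Theorem \ref{nontrivial2}). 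Once $F_1$ is in hand, since it extends holomorphically across $\{\operatorname{Re}z<a\}$ its boundary value vanishes on $[-\infty,a)$, so $\operatorname{supp}_{\overline{\R}}f_1\subseteq[a,\infty]$, and symmetrically $\operatorname{supp}_{\overline{\R}}f_2\subseteq[-\infty,b]$.

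I expect the only genuinely delicate points to be the weighted $\overline{\partial}$-solvability with explicit growth of the solution, and the preservation of that growth control through the Mittag--Leffler patching; everything else is formal cohomological bookkeeping. The key device that keeps the $\overline{\partial}$-step routine is dividing $F$ by the zero-free function $P$ from Proposition \ref{funda}, which turns the data $\eta$ into a bounded, essentially compactly supported form. I would also note that via Theorem \ref{Beurlingcoh} one has the identifications $\mathcal{U}'_{(\omega),a+}=\operatorname{bv}(\of([-\infty,a)+i\R))$ and $\mathcal{U}'_{(\omega),b-}=\operatorname{bv}(\of((b,\infty]+i\R))$ and $\ker\operatorname{bv}=\pf$ on all of these, so surjectivity of $\lambda$ is \emph{equivalent} to the decomposition $\of=\of([-\infty,a)+i\R)+\of((b,\infty]+i\R)$; thus there is no way to avoid solving the splitting problem.
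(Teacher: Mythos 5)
Your treatment of exactness at the two left-hand nodes is acceptable, though note that even this step leans on Theorem \ref{Beurlingcoh}: to know that a largest open set of vanishing exists (equivalently, that vanishing on $[-\infty,a)$ and on $(b,\infty]$ entails vanishing on their union) one must glue two analytic representations, and the gluing uses $\ker\operatorname{bv}=\mathcal{P}_{(\omega)}$; this is exactly what the paper invokes, so it is not ``nothing beyond the definition''. For surjectivity your route is genuinely different from the paper's. You split a defining function $F$ by a Cousin~I argument (and in fact, after dividing by the zero-free $P$ of Proposition \ref{funda}, the $\overline{\partial}$-datum is bounded with bounded support, so the plain Cauchy transform already yields a bounded solution -- no H\"ormander weights are needed), whereas the paper never touches $\overline{\partial}$: it applies the duality criterion of Lemma \ref{checksurj} (surjectivity of the transpose is equivalent to injectivity plus closed range) to the restriction maps from the space of $\omega$-decaying analytic functions on a full strip to the two corresponding half-strip spaces, and then identifies the duals of those half-strip spaces with $\mathcal{U}'_{(\omega),a+}$ and $\mathcal{U}'_{(\omega),b-}$ via Corollary \ref{analquanlocal} and a Mittag--Leffler step. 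In particular your closing claim that ``there is no way to avoid solving the splitting problem'' is wrong as a statement about method: the paper obtains the splitting only a posteriori, from soft functional analysis.

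The genuine gap in your argument is the patching step. The difference of two strip-wise splittings is holomorphic on $T^{R}$ minus the fixed rectangle $[a,b]+i[-b_0,b_0]$, not on the full strip: its boundary value is an analytic functional carried by the rectangle and need not vanish, so Corollary \ref{edge} does not place it in a space $\mathcal{P}^{R,\cdot}_{\omega}$, and Lemma \ref{runge} -- whose proof multiplies by $\varphi(z/n)$ and appeals to Theorem \ref{nontrivial2}, and which only approximates functions that are already holomorphic on the whole strip -- does not provide the ``Runge-type density'' you invoke for these rectangle-complement domains; multiplication by $\varphi(z/n)$ never enlarges the domain of holomorphy in the imaginary direction. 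To close this you must either prove such a density statement (for instance, after dividing by $P$, split each correction by a Cauchy integral over the boundary of a slightly enlarged rectangle into a full-strip part, which Lemma \ref{runge} handles, plus a part already holomorphic on the complement of the rectangle and $O(1/|z|)$ at infinity), or run a Hahn--Banach/duality argument of precisely the kind the paper uses. The case $a=b$ is a second instance of the same issue: with any smooth cutoff you only obtain supports in $[a-\varepsilon,\infty]$ and $[-\infty,a+\varepsilon]$, so you need a further, doubly indexed Mittag--Leffler limit as $\varepsilon\to 0$ with uniform control of the weight parameter, which again rests on the missing approximation lemma. The architecture is viable, but as written the proof is incomplete exactly at its load-bearing analytic step.
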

We need some extra notions to prove Theorem \ref{splittingBeurling}. Let $\Omega\subseteq \overline{\R}$ be open and define $\mathcal{A}_{\omega}^{h}(\Omega)$ as the $(B)$-space of analytic functions $\varphi \in \mathcal{O}(T^h(\Omega))$ such that
$$
\| \varphi \|^h_{\omega, \Omega} := \sup_{z \in T^h(\Omega)}|\varphi(z)|e^{\omega(x)} < \infty. 
$$
Set $\mathcal{A}^h_{\omega_\lambda}(\Omega) =  \mathcal{A}_{\omega}^{h,\lambda}(\Omega)$ and
$$\| \varphi \|^h_{\omega_\lambda,\Omega} = \| \varphi \|^{h, \lambda}_{\omega,\Omega} = \| \varphi \|^{h,\lambda}_\Omega, \qquad \varphi \in \mathcal{A}_{\omega}^{h,\lambda}(\Omega).$$
We have the following refinement of Proposition \ref{analyticreprgeneral}.
\begin{proposition}\label{analyticreprgeneral2}
Let $0 < k < b < h < R$, let $U \Subset V \Subset \Omega$ be open subsets of $\overline{\R}$, and let $\omega$, $\sigma$, and $\kappa$ be three weight functions satisfying \eqref{condition0} and \eqref{condition}. Furthermore, suppose there is $P \in \mathcal{O}(T^R)$ satisfying
$$
C_1e^{\omega(x)} \leq |P(z)| \leq C_2e^{\sigma(x)}, \qquad  z \in T^R,
$$
for some $C_1, C_2 >0$. Then, for every $f \in (\mathcal{A}^k_\omega(U))'$ there is  $F \in \mathcal{O}_\sigma^{b,R}(\overline{\R}\backslash V)$ such that $\operatorname{bv}(F) = f$ on $\mathcal{A}_{\kappa}^h(\Omega)$.
\end{proposition}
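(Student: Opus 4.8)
The plan is to follow the construction in the proof of Proposition~\ref{analyticreprgeneral} almost verbatim, with the \emph{same} function $F$, but with two changes: the strip contour $\Gamma_b$ is replaced by a tube contour around $V$, and one checks that this $F$ extends analytically from $T^{b,R}$ to $T^{b,R}\cup T^{R}(\overline{\R}\backslash\overline{V})$. To set up, fix an open set $W\subseteq\overline{\R}$ (which may be taken to be a finite union of open intervals) with $\overline{V}\subseteq W$ and $\overline{W}\subseteq\Omega$; this is possible since $V\Subset\Omega$. Fix also $b''$ with $b<b''<h$ and put $\Gamma=\partial(T^{b''}(W))$. The boundary value on $\mathcal{A}^h_\kappa(\Omega)$ of a $G\in\mathcal{O}^{b,R}_\sigma(\overline{\R}\backslash V)$ is understood as $\langle\operatorname{bv}(G),\varphi\rangle=-\int_{\Gamma}G(z)\varphi(z)\,dz$, which is independent of $W$ and $b''$ by Cauchy's integral theorem. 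Since $k<h$, $U\subseteq\Omega$, and $\omega\leq\kappa+C$ by \eqref{condition0}, restriction is a continuous map $\mathcal{A}^h_\kappa(\Omega)\hookrightarrow\mathcal{A}^k_\omega(U)$, so $f$ makes sense on $\mathcal{A}^h_\kappa(\Omega)$ and the assertion is meaningful.

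First I would set
\[
F(z)=\frac{P(z)}{2\pi i}\left\langle f(\zeta),\frac{1}{(\zeta-z)P(\zeta)}\right\rangle
\]
and show that $F\in\mathcal{O}^{b,R}_\sigma(\overline{\R}\backslash V)$. The point is the geometric estimate $\operatorname{dist}(z,T^k(U))\geq\delta$ for some fixed $\delta>0$ and all $z\in T^{b,R}\cup T^{R}(\overline{\R}\backslash\overline{V})$: if $b<|\operatorname{Im}z|$ then $|\operatorname{Im}(\zeta-z)|>b-k$ for $\zeta\in T^k(U)$, whereas if $\operatorname{Re}z\notin\overline{V}$ then $\operatorname{Re}\zeta\in U\cap\R\subseteq\overline{U}$ and $|\operatorname{Re}(\zeta-z)|$ is bounded below by a positive constant independent of $\zeta,z$ — because $\overline{U}$ is a compact subset of $\overline{\R}$ disjoint from the closed set $\overline{\R}\backslash\operatorname{int}\overline{V}\supseteq\overline{\R}\backslash\overline{V}$ (note $\overline{U}\subseteq V\subseteq\operatorname{int}\overline{V}$). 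Together with $|P(\zeta)|\geq C_1e^{\omega(\operatorname{Re}\zeta)}$ this shows that $\zeta\mapsto((\zeta-z)P(\zeta))^{-1}$ lies in $\mathcal{A}^k_\omega(U)$ with norm at most $(C_1\delta)^{-1}$ and is a holomorphic function of $z$; hence $F$ is analytic on $T^{b,R}\cup T^{R}(\overline{\R}\backslash\overline{V})$ and
\[
|F(z)|\leq\frac{|P(z)|}{2\pi C_1\delta}\,\|f\|\leq\frac{C_2\|f\|}{2\pi C_1\delta}\,e^{\sigma(x)},
\]
where $\|f\|$ denotes the norm of $f$ in $(\mathcal{A}^k_\omega(U))'$, so indeed $F\in\mathcal{O}^{b,R}_\sigma(\overline{\R}\backslash V)$. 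This verification of the enlarged domain of analyticity — especially making the estimate uniform also near the two infinite ends of $\overline{\R}$ — is the only step that goes beyond Proposition~\ref{analyticreprgeneral}, and it is precisely where the chain $U\Subset V\Subset\Omega$ enters; I expect it to be the main (if minor) obstacle.

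It then remains to check $\operatorname{bv}(F)=f$ on $\mathcal{A}^h_\kappa(\Omega)$, which runs exactly as in Proposition~\ref{analyticreprgeneral} with $\Gamma$ in place of $\Gamma_b$. For $\varphi\in\mathcal{A}^h_\kappa(\Omega)$ and $\zeta\in T^k(U)$ (which lies inside $T^{b''}(W)$, as $\overline{U}\subseteq W$ and $k<b''$), Cauchy's integral formula applied to the analytic function $\varphi P$ on a neighbourhood of $\overline{T^{b''}(W)}$ yields
\[
\varphi(\zeta)=\frac{1}{2\pi iP(\zeta)}\int_{\Gamma}\frac{\varphi(z)P(z)}{z-\zeta}\,dz.
\]
Approximating the integral by Riemann sums $R_n(\zeta)=\sum_j\frac{\varphi(z_j)P(z_j)}{z_j-\zeta}\Delta z_j$ with $z_j\in\Gamma$, one checks that $R_n(\cdot)/(2\pi iP(\cdot))\to\varphi$ in $\mathcal{A}^k_\omega(U)$: the factor $1/P$ supplies the weight $e^{-\omega(\operatorname{Re}\zeta)}$, and the convergence is uniform in $\zeta\in T^k(U)$ because the integrand is dominated on $\Gamma$ by $(C_2\|\varphi\|/\operatorname{dist}(\Gamma,T^k(U)))\,e^{\sigma(x)-\kappa(x)}$, which is integrable by \eqref{condition}. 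Evaluating the continuous functional $f$, using $\langle f(\zeta),((z_j-\zeta)P(\zeta))^{-1}\rangle=-2\pi iF(z_j)/P(z_j)$, and passing to the limit once more, we obtain
\[
\langle f,\varphi\rangle=\lim_n\Big(-\sum_j\varphi(z_j)F(z_j)\,\Delta z_j\Big)=-\int_{\Gamma}F(z)\varphi(z)\,dz=\langle\operatorname{bv}(F),\varphi\rangle,
\]
the last Riemann-sum limit being legitimate since $|F\varphi|\lesssim e^{\sigma(x)-\kappa(x)}\in L^1(\Gamma)$. This finishes the argument.
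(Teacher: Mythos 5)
Your proof is correct and follows exactly the route the paper intends: Proposition~\ref{analyticreprgeneral2} is stated as a refinement of Proposition~\ref{analyticreprgeneral} with the proof left implicit, and you reproduce that Cauchy-kernel/duality argument, supplying the only genuinely new ingredient, namely the uniform lower bound on $|\zeta-z|$ for $\zeta\in T^k(U)$ and $z\in T^{b,R}\cup T^R(\overline{\R}\backslash\overline{V})$ coming from $U\Subset V$ (with the compactness of $\overline{\R}$ handling the ends at $\pm\infty$), which gives the enlarged domain of analyticity and the $e^{\sigma}$-bound for $F$. The contour $\partial T^{b''}(W)$ you use to interpret $\operatorname{bv}$ on $\mathcal{A}^h_\kappa(\Omega)$ is consistent with the paper's contour representations in Section~\ref{sect-support}, so nothing is missing.
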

In view of Proposition \ref{funda}, we have the following corollary.
\begin{corollary}\label{analquanlocal}
Let $0 < k < b < h < R$, let $U \Subset V \Subset \Omega$ be open subsets of $\overline{\R}$, and let $\omega$ be a weight function satisfying $(\delta)$ and $(\epsilon)_0$.  For every $f \in (\mathcal{A}^{k,\lambda}_\omega(U))'$ there is  $F \in \mathcal{O}_\omega^{b,R,2H^2\lambda}(\overline{\R}\backslash V)$ such that $\operatorname{bv}(F) = f$ on $\mathcal{A}^{h,2H^3\lambda}_\omega(\Omega)$.
\end{corollary}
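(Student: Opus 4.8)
The plan is to obtain Corollary \ref{analquanlocal} as a direct specialization of Proposition \ref{analyticreprgeneral2}, in complete analogy with the way Corollary \ref{analquan} follows from Proposition \ref{analyticreprgeneral}. Let $A,H\ge 1$ be the constants in $(\delta)$. I would invoke Proposition \ref{analyticreprgeneral2} with the same $k<b<h<R$ and the same $U\Subset V\Subset\Omega$, taking $\omega_{\lambda}$, $\omega_{2H^{2}\lambda}$, and $\omega_{2H^{3}\lambda}$ as the three weight functions $\omega$, $\sigma$, $\kappa$ occurring there. Once its hypotheses are checked for this choice, its conclusion states precisely that every $f\in(\mathcal{A}^{k}_{\omega_{\lambda}}(U))'=(\mathcal{A}^{k,\lambda}_{\omega}(U))'$ is the boundary value, on $\mathcal{A}^{h}_{\omega_{2H^{3}\lambda}}(\Omega)=\mathcal{A}^{h,2H^{3}\lambda}_{\omega}(\Omega)$, of some $F\in\mathcal{O}^{b,R}_{\omega_{2H^{2}\lambda}}(\overline{\R}\backslash V)=\mathcal{O}^{b,R,2H^{2}\lambda}_{\omega}(\overline{\R}\backslash V)$, which is the assertion of the corollary.

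So the work reduces to verifying the hypotheses of Proposition \ref{analyticreprgeneral2} for this triple. For the auxiliary function $P$: since $(\epsilon)_{0}$ trivially implies $(\epsilon)_{\mu}$ for every $\mu>0$, in particular $(\epsilon)_{\pi/(8R\lambda)}$ holds, and Proposition \ref{funda} together with $(\delta)$ then supplies a zero-free $P\in\mathcal{O}(T^{R})$ with $e^{\omega_{\lambda}(x)}\le|P(z)|\le A^{3}C\,e^{\omega_{2H^{2}\lambda}(x)}$ on $T^{R}$, which is exactly the two-sided bound required of $P$ (with $C_{1}=1$ and $C_{2}=A^{3}C$). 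For the growth conditions \eqref{condition0} and the integrability conditions \eqref{condition}: the defining inequality of $(\delta)$ can be rewritten as $\omega(Ht)-\omega(t)\ge\omega(t)-\log A$ for all $t\ge 0$; evaluating it at $\lambda t$ and using $2H^{2}\lambda\ge H\lambda$ with monotonicity gives $\omega_{2H^{2}\lambda}(t)-\omega_{\lambda}(t)\ge\omega(\lambda t)-\log A$, and evaluating it at $2H^{2}\lambda t$ (with $2H^{3}\lambda=H\cdot 2H^{2}\lambda$) gives $\omega_{2H^{3}\lambda}(t)-\omega_{2H^{2}\lambda}(t)\ge\omega(2H^{2}\lambda t)-\log A$. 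Since $\omega(t)\to\infty$, a consequence of \eqref{fasterthanlog}, both right-hand sides tend to $\infty$, yielding \eqref{condition0}; and since \eqref{fasterthanlog} forces $\omega(\lambda t)\ge 2\log t$ for $t$ large, the two integrands in \eqref{condition} are each bounded by $A\,e^{-\omega(\lambda t)}$, which is integrable on $(0,\infty)$.

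The only substantial ingredient is Proposition \ref{analyticreprgeneral2} itself; granting it, Corollary \ref{analquanlocal} is essentially bookkeeping with dilation parameters, and the one point that takes a little care is the choice of the exponents $2H^{2}$ and $2H^{3}$ — they must be large enough relative to $H$ that $(\delta)$ opens a genuine growth gap at each of the two stages $\omega_{\lambda}\to\omega_{2H^{2}\lambda}\to\omega_{2H^{3}\lambda}$, both for the separation in \eqref{condition0} and for the integrability in \eqref{condition}. Beyond this I anticipate no obstacle.
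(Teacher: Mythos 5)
Your proposal is correct and coincides with the paper's own argument: the corollary is stated as an immediate consequence of Proposition \ref{analyticreprgeneral2} applied to the triple $\omega_\lambda$, $\omega_{2H^2\lambda}$, $\omega_{2H^3\lambda}$, with the multiplier $P$ furnished by Proposition \ref{funda} (using $(\epsilon)_0$ and $(\delta)$), exactly as you do. Your verification of \eqref{condition0} and \eqref{condition} via $(\delta)$ and \eqref{fasterthanlog} fills in the routine checks the paper leaves implicit.
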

The proof of Theorem \ref{splittingBeurling} is based on the following functional analytic method for checking the surjectivity of linear continuous mappings.
\begin{lemma}\label{checksurj}
Let $E_1$, $E_2$, and $E$ be semi-reflexive $(DF)$-spaces and let $\rho_j: E \rightarrow E_j$, $j =1,2$, be continuous linear mappings. Then, 
\[ \iota: E'_1 \times E'_2 \rightarrow E': (x'_1, x'_2) \rightarrow \rho^t_1(x'_1) + \rho^t_2(x'_2) \]
is surjective if and only if $\rho : E \rightarrow E_1 \times E_2: x \rightarrow (\rho_1(x), \rho_2(x))$ is injective and has closed range.
\end{lemma}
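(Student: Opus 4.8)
The plan is to recognise $\iota$ as the transpose of $\rho$ acting between the strong duals and then to apply the closed range theorem for Fr\'echet spaces. To set up the duality, note that since $E$, $E_1$, $E_2$ are $(DF)$-spaces their strong duals $E'_\beta$, $(E_1)'_\beta$, $(E_2)'_\beta$ are Fr\'echet spaces and $(E_1 \times E_2)'_\beta = (E_1)'_\beta \times (E_2)'_\beta$; a direct computation of the pairings shows that, under this identification, $\iota$ is exactly the transpose $\rho^{t} \colon (E_1 \times E_2)'_\beta \to E'_\beta$ of the continuous linear map $\rho = (\rho_1, \rho_2)$, so in particular $\iota$ is continuous between these Fr\'echet spaces. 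Moreover, since semi-reflexive $(DF)$-spaces are reflexive, we have $E = (E'_\beta)'_\beta$ and $E_1 \times E_2 = ((E_1 \times E_2)'_\beta)'_\beta$ topologically, and the transpose of $\iota = \rho^{t}$ is again $\rho$, now read as the continuous linear map $E \to E_1 \times E_2$.

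With this in place, I would invoke the closed range theorem for Fr\'echet spaces \cite{Meise}: a continuous linear map between Fr\'echet spaces is surjective if and only if its transpose is injective and has closed range. Applying this to $\iota$, regarded as a continuous linear map between the Fr\'echet spaces $(E_1 \times E_2)'_\beta$ and $E'_\beta$, and using that its transpose is $\rho$, we conclude that $\iota$ is surjective if and only if $\rho$ is injective and $\operatorname{im}\rho$ is a closed subspace of $E_1 \times E_2$, which is precisely the claim. (Since a linear subspace is closed precisely when it is weakly closed, the closed-range condition may be read either weakly or in the original topology, so no ambiguity arises.)

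Of the two implications the direction ``$\iota$ surjective $\Rightarrow$ $\rho$ injective with closed range'' is routine: injectivity of $\rho$ is immediate because $E'$ separates the points of $E$ and every $x' \in E'$ equals $\iota(x_1', x_2')$ for some $(x_1', x_2')$, while $\rho$, being the transpose of a surjection between Fr\'echet spaces, is a topological isomorphism onto its range $(\ker \iota)^{\circ}$, which is closed. The content lies in the converse, and the step I expect to be the main obstacle is precisely that the bare hypothesis ``$\rho$ injective with closed range'' has to be upgraded to the statement that $\rho$ is a topological embedding of $E$ onto the closed subspace $\operatorname{im}\rho$ --- an open mapping assertion --- before a Hahn--Banach extension along $\rho$ yields, for a given $x' \in E'$, the required preimage $(x_1', x_2')$ with $\iota(x_1', x_2') = x'$. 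Routing this through the Fr\'echet strong duals, where the open mapping and closed range theorems are available (unlike for the $(DF)$-spaces themselves), is what makes the argument go through.
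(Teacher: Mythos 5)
Your argument is correct and is essentially the paper's own proof: identify $\iota$ with the transpose of $\rho$ acting between the Fr\'echet strong duals of the $(DF)$-spaces, apply the surjectivity criterion for continuous linear maps between Fr\'echet spaces (the closed range theorem, \cite[Thm.\ 37.3]{Treves} in the paper), and remove the weak/original closedness ambiguity by noting that closed convex sets agree for all topologies compatible with the duality. The only cosmetic difference is that you invoke full reflexivity of semi-reflexive $(DF)$-spaces, whereas the paper (and in fact your argument too) only needs semi-reflexivity, which already gives the purely algebraic identification of the transpose of $\iota$ with $\rho$; the topological identification $E=(E'_\beta)'_\beta$ plays no role in applying the criterion.
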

\begin{proof}
Since $E_1$, $E_2$, and $E$  are semi-reflexive we may identify the transpose of $\iota$ with $\rho$. A continuous linear mapping between two Fr\'echet spaces is surjective if and only if its transpose is injective and has weakly closed range \cite[Thm.\ 37.3]{Treves}. The result now follows from the fact that the closed convex sets of a Hausdorff locally convex space are the same for all topologies compatible with the dual pairing. 
\end{proof}
\begin{proof}[Proof of Theorem \ref{splittingBeurling}.]
Theorem \ref{Beurlingcoh} implies that
$$
 \mathcal{U}'_{(\omega)}[[a,b] + i\R] = \mathcal{U}'_{(\omega),a+} \cap \mathcal{U}'_{(\omega),b-}.
$$
It remains to show that $\lambda$ is surjective. By Lemma \ref{equivalent} we may assume that $\omega$ satisfies $(\zeta)$. Let $f \in \mathcal{U}'_{(\omega)}(\C)$, we show that there are $f_1 \in \mathcal{U}'_{(\omega),a+}$ and $f_2 \in  \mathcal{U}'_{(\omega),b-}$ such that $f = f_1 + f_2$. Choose $h, \lambda > 0$ such that $f$ can be extended to an element of $(\mathcal{A}^{h,\lambda}_\omega)'$ (we also write $f$ for this extension). Define 
$$
X^{h,\lambda} = \varinjlim_{\mu \to \lambda^+}\varinjlim_{k \to h^+} \mathcal{A}^{k,\mu}_\omega,
$$
and
$$
X_{a+}^{h,\lambda} =\varinjlim_{\mu \to \lambda^+} \varinjlim_{k \to h^+}\varinjlim_{\varepsilon \to 0^+} \mathcal{A}^{k,\mu}_\omega((a-\varepsilon, \infty]), \qquad X_{b-}^{h,\lambda} =\varinjlim_{\mu \to \lambda^+} \varinjlim_{k \to h^+}\varinjlim_{\varepsilon \to 0^+} \mathcal{A}^{k,\mu}_\omega([-\infty,b+\varepsilon)).
$$
Condition $(\zeta)$ implies that the three above spaces are $(DFS)$-spaces. Observe that $f \in (X^{h,\lambda})'$. Let $g \in (X_{a+}^{h,\lambda})'$ and choose $R > h$. By Corollary \ref{analquanlocal}, we have that for every $\varepsilon > 0$ and $k > h$ there is $G_{\varepsilon,k} = G \in  \mathcal{O}_\omega^{k,R,4H^2\lambda}([-\infty,a-\varepsilon])$ such that $\operatorname{bv}(G) = g$. By a similar Mittag-Leffler procedure as in the proof of Proposition \ref{Beurlinglocalcoh2}, one can now show that there is $G \in  \mathcal{O}_\omega^{h,R,8H^7\lambda}([-\infty,a))$ such that $\operatorname{bv}(G) = g$. Likewise, it holds that for every $g \in (X_{b-}^{h,\lambda})'$ there is $G \in  \mathcal{O}_\omega^{h,R,8H^7\lambda}((b,\infty])$ such that $\operatorname{bv}(G) = g$. Therefore, it suffices to show that the mapping
$$
 (X_{a+}^{h,\lambda})'  \times  (X_{b-}^{h,\lambda})' \rightarrow  (X^{h,\lambda})': (f_1, f_2) \rightarrow f_1 + f_2
$$
is surjective; but the latter is a consequence of Lemma \ref{checksurj}.
\end{proof}
\subsection{Support of hyperfunctions of $\{\omega\}$-type} We now define the support of hyperfunctions of  $\{\omega\}$-type. Given a compact subset $K\subset\mathbb{R}$, we write $\mathcal{A}[K]=\mathcal{O}[K]$ for the space of germs of analytic functions on $K$, so that $\mathcal{A}'[K]$ is the space of analytic functionals on $K$.

Let $\omega$ be a weight function satisfying $(\delta)$ and $(\epsilon)_0$. Given a proper open subset $\Omega$ of $\overline{\R}$, we define
$$
\mathcal{O}_{\{\omega\}}(\Omega) = \varprojlim_{\lambda \to 0^+} \varprojlim_{h \to 0^+} \varprojlim_{R \to \infty} \mathcal{O}_{\omega}^{h,R, \lambda}(\Omega),
$$
an $(FS)$-space. We say that $f \in \mathcal{A}'_{\{\omega\}}(\R)$ vanishes on $\Omega$ if there is $F \in \mathcal{O}_{\{\omega\}}(\Omega)$ such that $\operatorname{bv}(F) = f$. In view of Theorem \ref{Roumieucoh}, this definition is independent of the chosen analytic representation. We can therefore define the support of $f$, denoted by $\operatorname*{supp}f$, as the complement of the largest open set where $f$ vanishes. 

Given a closed subset $K$ in $\overline{\R}$, we set 
$$\mathcal{A}'_{\{\omega\}}[K]=\{f\in \mathcal{A}'_{\{\omega\}}(\mathbb{R}):\:  \operatorname*{supp}f \subseteq K\},$$ 
and for $a,b \in \R$, we write
$$
\mathcal{A}'_{\{\omega\},a+} = \mathcal{A}'_{\{\omega\}}[[a,\infty]], \qquad \mathcal{A}'_{\{\omega\},b-} = \mathcal{A}'_{\{\omega\}}[[-\infty,b]].
$$
If $f=\operatorname{bv}(F) \in \mathcal{A}'_{\{\omega\}}[K]$ with $F \in \mathcal{O}_{\{\omega\}}(\overline{\R} \backslash K)$ and $J_{1},J_{2},\dots, J_{n}$ is a finite covering of $K$ by open intervals of the extended real line (such that their finite end points do not belong to $K$), Cauchy's integral theorem also gives the contour integral representation (\ref{eqrepultracontour}), where $b > 0$ depends on $\varphi$ (concretely, for $\varphi\in\mathcal{A}^b_{\{\omega\}}$). In particular, for a compact set in $K$ in $\R$, 
$$
\mathcal{A}'_{\{\omega\}}[K] = \mathcal{A}'[K],
$$
as follows from the Silva-K\"othe-Grothendieck theorem. In case $K$ is unbounded, we can also represent $\mathcal{A}'_{\{\omega\}}[K] $ as a dual space. We define the (DFS)-space
$$
\mathcal{A}_{\{\omega\}}[K] = \varinjlim_{\lambda \to 0^+} \varinjlim_{h \to 0^+}  \varinjlim_{K \Subset \Omega}\mathcal{A}^{h,\lambda}_{\omega}(\Omega).
$$
Corollary \ref{analquanlocal} and the same method as in Section \ref{analyticrepresentationsroumieu} yield:
\begin{theorem}\label{dualcharacterization}
Let $K$ be a closed subset of $\overline{\R}$ and let $\omega$ be a weight function satisfying $(\delta)$ and $(\epsilon)_0$. Then, $(\mathcal{A}_{\{\omega\}}[K])' = \mathcal{A}'_{\{\omega\}}[K]$. Furthermore, the sequence
$$
0 \longrightarrow \mathcal{P}_{\{\omega\}} \longrightarrow \mathcal{O}_{\{\omega\}}(\overline{\R}\setminus K)  \xrightarrow{\phantom-\operatorname*{bv}\phantom-}  (\mathcal{A}_{\{\omega\}}[K])' \longrightarrow 0
$$
is topologically exact. 
\end{theorem}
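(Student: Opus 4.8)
The theorem bundles two assertions --- the duality identity $(\mathcal{A}_{\{\omega\}}[K])'=\mathcal{A}'_{\{\omega\}}[K]$ and the topological exactness of the boundary value sequence --- and the plan is to establish the exact sequence first and then read off the identity. For the sequence I would rerun the chain of arguments of Subsection~\ref{analyticrepresentationsroumieu} (Lemma~\ref{denseRoumieu}, Propositions~\ref{Roumieulocalcoh} and \ref{Roumieulocalcoh2}, and Theorem~\ref{Roumieucoh}) essentially verbatim, after three substitutions: the test spaces $\mathcal{A}^{h,\lambda}_\omega$ become the germ spaces $\mathcal{A}^{h,\lambda}_\omega(\Omega)$ with $K\Subset\Omega$; the representation spaces $\mathcal{O}^{b,R,\lambda}_\omega$ become $\mathcal{O}^{b,R,\lambda}_\omega(\overline{\R}\setminus V)$ with $K\Subset V$; and Corollary~\ref{analquan} is replaced by its localized form Corollary~\ref{analquanlocal}. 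All the functional-analytic inputs --- the Pt\'ak open mapping theorem, De~Wilde's Lemma~\ref{intersection}, Komatsu's Mittag-Leffler lemma --- apply unchanged, since $\mathcal{A}_{\{\omega\}}[K]$ is a $(DFS)$-space and $\mathcal{O}_{\{\omega\}}(\overline{\R}\setminus K)$ together with the Fr\'echet spaces appearing at the finite stages are $(FS)$-spaces by Lemma~\ref{compactanalrepr}.

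First I would identify $\ker\operatorname{bv}$. The inclusion $\mathcal{P}_{\{\omega\}}\subseteq\ker\operatorname{bv}$ is immediate from the contour representation~\eqref{eqrepultracontour}: for entire $F\in\mathcal{P}_{\{\omega\}}$ and $\varphi\in\mathcal{A}^{h,\lambda}_\omega(\Omega)$ one picks a finite covering $J_1,\dots,J_n$ of $K$ by intervals of $\overline{\R}$ with $\overline{J_j}\subset\Omega$ and finite endpoints outside $K$, and some $b<h$; then $F\varphi$ is holomorphic on a neighbourhood of $\overline{\bigcup_j T^b(J_j)}\subset T^h(\Omega)$, so Cauchy's theorem gives $\langle\operatorname{bv}(F),\varphi\rangle=0$. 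Conversely, if $F\in\mathcal{O}_{\{\omega\}}(\overline{\R}\setminus K)$ has $\operatorname{bv}(F)=0$ on $\mathcal{A}_{\{\omega\}}[K]$, then, ignoring the holomorphy of $F$ across $\overline{\R}\setminus K$, we have $F\in\mathcal{O}_{\{\omega\}}$; and deforming the contours $\Gamma^b(J_j)$ down onto $\R$ over $\overline{\R}\setminus K$ --- where $F$ and every global test function are holomorphic --- shows that $\operatorname{bv}(F)\in\mathcal{A}'_{\{\omega\}}(\R)$ coincides with the pullback of $\operatorname{bv}(F)\in(\mathcal{A}_{\{\omega\}}[K])'$ along the restriction $\mathcal{A}_{\{\omega\}}(\R)\to\mathcal{A}_{\{\omega\}}[K]$; hence $\operatorname{bv}(F)=0$ on $\mathcal{A}_{\{\omega\}}(\R)$, and Theorem~\ref{Roumieucoh} forces $F\in\mathcal{P}_{\{\omega\}}$. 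The same contour deformation also yields the localized edge-of-the-wedge statement used below: if $F\in\mathcal{O}^{b,R,\lambda}_\omega(\overline{\R}\setminus V)$ has $\operatorname{bv}(F)=0$ on $\mathcal{A}^{h,H\lambda}_\omega(\Omega)$ with $V\Subset\Omega$, then, viewing $F$ in $\mathcal{O}^{b,R,\lambda}_\omega$ and using $\mathcal{A}^{h,H\lambda}_\omega\hookrightarrow\mathcal{A}^{h,H\lambda}_\omega(\Omega)$, Corollary~\ref{edge} gives $F\in\mathcal{P}^{R,2H^3\lambda}_\omega$.

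Next I would prove surjectivity and topological exactness via the nested Mittag-Leffler scheme of Subsection~\ref{analyticrepresentationsroumieu}, the innermost step now also letting a neighbourhood of $K$ shrink. Concretely, for fixed $0<h<R$ one first shows topological exactness of $0\to\mathcal{P}^R_{\{\omega\}}\to\mathcal{O}^{h,R}_{\{\omega\}}(\overline{\R}\setminus K)\to(\varinjlim_{\lambda}\varinjlim_{K\Subset\Omega}\mathcal{A}^{h,\lambda}_\omega(\Omega))'\to 0$: by the Pt\'ak theorem it suffices to have algebraic exactness, hence (given the kernel computation) surjectivity of $\operatorname{bv}$, which is obtained --- after replacing $\omega$ by an equivalent weight satisfying $(\zeta)$ (Lemma~\ref{equivalent}) --- from De~Wilde's Lemma~\ref{intersection} applied to a spectrum $(X_n)$ of local analytic representations of $f$ with parameters $h+\tfrac{1}{n}\downarrow h$, $\tfrac{1}{n}\downarrow 0$ and neighbourhoods $V_n\downarrow K$, built exactly as the complete metrizable spaces $X_n$ in the proof of Proposition~\ref{Roumieulocalcoh}; Corollary~\ref{analquanlocal} supplies non-emptiness, and the density of $X_{n+1}$ in $X_n$ comes from the splitting $F=G+P$ ($G\in X_{n+1}$, $P\in\mathcal{P}^{R,\mu}_{\{\omega\}}$) provided by the localized edge-of-the-wedge statement, followed by the approximation $P\mapsto P(z)\varphi(z/n)$ with $\varphi\in\mathcal{U}_{(\omega)}(\C)$, $\varphi(0)=1$, exactly as in Lemmas~\ref{runge} and \ref{denseRoumieu} (note that $P$ is already holomorphic on a full strip, so $K$ plays no role here). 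Gluing in $R$ (as in Proposition~\ref{Roumieulocalcoh2}, with Lemma~\ref{runge} feeding Komatsu's Mittag-Leffler lemma) and then in $h$ (as in Theorem~\ref{Roumieucoh}), each a projective sequence of topologically exact short sequences of $(FS)$-spaces, yields the asserted sequence $0\to\mathcal{P}_{\{\omega\}}\to\mathcal{O}_{\{\omega\}}(\overline{\R}\setminus K)\xrightarrow{\operatorname{bv}}(\mathcal{A}_{\{\omega\}}[K])'\to 0$.

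Finally the duality identity drops out: pulling functionals back along the restriction $\mathcal{A}_{\{\omega\}}(\R)\to\mathcal{A}_{\{\omega\}}[K]$ gives a map $(\mathcal{A}_{\{\omega\}}[K])'\to\mathcal{A}'_{\{\omega\}}(\R)$ which is injective, since any $u$ in its kernel equals $\operatorname{bv}(F)$ with $F\in\mathcal{O}_{\{\omega\}}(\overline{\R}\setminus K)$ and $\operatorname{bv}(F)=0$ on $\mathcal{A}_{\{\omega\}}(\R)$, whence $F\in\mathcal{P}_{\{\omega\}}$ and $u=0$; and its image is exactly $\mathcal{A}'_{\{\omega\}}[K]$, because a representation $F\in\mathcal{O}_{\{\omega\}}(\overline{\R}\setminus K)$ is precisely a witness that the associated element of $\mathcal{A}'_{\{\omega\}}(\R)$ vanishes on $\overline{\R}\setminus K$, and every $f\in\mathcal{A}'_{\{\omega\}}[K]$ admits such a representation by definition of the support. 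I expect the main obstacle to be organizational rather than conceptual: threading the two shrinking families $(\Omega_n)$ and $(V_n)$ of neighbourhoods of $K$ coherently through all the Mittag-Leffler layers, and checking that the localized edge-of-the-wedge and density lemmas behave just like their global counterparts --- each, as noted, reducing to the global statement by a contour deformation inside $\overline{\R}\setminus K$ --- so that no new estimate is required, only careful index management.
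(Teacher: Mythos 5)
Your proposal follows exactly the route the paper takes: its entire proof of Theorem~\ref{dualcharacterization} is the one-line remark that Corollary~\ref{analquanlocal} together with the method of Subsection~\ref{analyticrepresentationsroumieu} (the localized versions of Lemma~\ref{denseRoumieu}, Propositions~\ref{Roumieulocalcoh}--\ref{Roumieulocalcoh2}, and Theorem~\ref{Roumieucoh}) yields the result, which is precisely the substitution scheme you spell out. Your additional details --- the contour-deformation identification of $\ker\operatorname{bv}$ with $\mathcal{P}_{\{\omega\}}$ via \eqref{eqrepultracontour} and Theorem~\ref{Roumieucoh}, and reading off $(\mathcal{A}_{\{\omega\}}[K])'=\mathcal{A}'_{\{\omega\}}[K]$ from the exact sequence --- are correct and consistent with the paper's intent.
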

We also have the ensuing support separation theorem.
\begin{theorem}\label{splittingRoumieu}
Let $-\infty < a \leq b < \infty$ and let $\omega$ be a weight function satisfying  $(\delta)$ and $(\epsilon)_0$. The sequence 
$$
0 \longrightarrow \mathcal{A}'_{\{\omega\}}[[a,b]] \longrightarrow \mathcal{A}'_{\{\omega\},a+} \oplus \mathcal{A}'_{\{\omega\},b-} \xrightarrow{\phantom-\lambda\phantom-}  \mathcal{A}'_{\{\omega\}}(\R) \longrightarrow 0
$$
is topologically exact, where $\lambda((f_1,f_2)) = f_1 - f_2$.
\end{theorem}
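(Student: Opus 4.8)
The plan is to reduce everything to a functional-analytic surjectivity criterion, exactly as in the proof of Theorem \ref{splittingBeurling}, but the argument is now considerably shorter: the relevant test function spaces are $(DFS)$-spaces, hence semi-reflexive $(DF)$-spaces, so Lemma \ref{checksurj} applies \emph{directly}, and the delicate passage from germs near $[-\infty,a-\varepsilon]$ to germs near $[-\infty,a)$ has already been carried out in Theorem \ref{dualcharacterization}.

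First I would settle exactness at the two left-hand spots. The map $\mathcal{A}'_{\{\omega\}}[[a,b]]\to\mathcal{A}'_{\{\omega\},a+}\oplus\mathcal{A}'_{\{\omega\},b-}$, $f\mapsto(f,f)$, is plainly injective and its image is $\ker\lambda$, which --- identifying $(f_{1},f_{2})\in\ker\lambda$ with $f_{1}$ --- equals $\mathcal{A}'_{\{\omega\},a+}\cap\mathcal{A}'_{\{\omega\},b-}$; so it suffices to prove that $\mathcal{A}'_{\{\omega\}}[[a,b]]=\mathcal{A}'_{\{\omega\},a+}\cap\mathcal{A}'_{\{\omega\},b-}$. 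Only the inclusion $\supseteq$ needs an argument. If $f$ vanishes on $[-\infty,a)$ and on $(b,\infty]$, choose analytic representations $F_{1}\in\mathcal{O}_{\{\omega\}}([-\infty,a))$ and $F_{2}\in\mathcal{O}_{\{\omega\}}((b,\infty])$; since $\operatorname{bv}(F_{1}-F_{2})=0$, Theorem \ref{Roumieucoh} gives $F_{1}-F_{2}\in\mathcal{P}_{\{\omega\}}$, an entire function with the required growth, so $F_{1}$ continues analytically across $(b,\infty]$ as well and thereby defines an element of $\mathcal{O}_{\{\omega\}}(\overline{\R}\setminus[a,b])$ with boundary value $f$. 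Hence $f$ vanishes on $\overline{\R}\setminus[a,b]$, i.e.\ $\operatorname*{supp}f\subseteq[a,b]$.

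It remains to prove that $\lambda$ is surjective; the topological exactness of the whole sequence then follows from the open mapping theorem, because $\mathcal{A}'_{\{\omega\}}[[a,b]]$, $\mathcal{A}'_{\{\omega\},a+}\oplus\mathcal{A}'_{\{\omega\},b-}$ and $\mathcal{A}'_{\{\omega\}}(\R)$ are all Fréchet spaces (indeed $(FS)$-spaces, by Theorem \ref{dualcharacterization} and the fact that they are finite products of strong duals of $(DFS)$-spaces). By Theorem \ref{dualcharacterization} and the definitions, $\lambda$ is the transpose of the map
\[
\delta:\mathcal{A}_{\{\omega\}}(\R)\longrightarrow\mathcal{A}_{\{\omega\}}[[a,\infty]]\oplus\mathcal{A}_{\{\omega\}}[[-\infty,b]],\qquad\delta(\varphi)=(\varphi,-\varphi),
\]
whose two components are the natural restriction maps to germs near $[a,\infty]$ and near $[-\infty,b]$ in $\overline{\R}$. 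All three spaces here are $(DFS)$-spaces, so Lemma \ref{checksurj} reduces the surjectivity of $\lambda$ to showing that $\delta$ is injective with closed range. Injectivity is immediate from the identity theorem. For the range, introduce the continuous map $d(\psi_{1},\psi_{2})=\psi_{1}+\psi_{2}$ (restricted to a germ near $[a,b]$) from $\mathcal{A}_{\{\omega\}}[[a,\infty]]\oplus\mathcal{A}_{\{\omega\}}[[-\infty,b]]$ to $\mathcal{A}_{\{\omega\}}[[a,b]]$; I claim $\operatorname{ran}\delta=\ker d$, which is closed since $d$ is continuous. The inclusion $\subseteq$ is clear. Conversely, if $\psi_{1}+\psi_{2}$ vanishes near $[a,b]$, the identity theorem forces $\psi_{1}=-\psi_{2}$ on the (connected) intersection of their domains, so $\psi_{1}$ and $-\psi_{2}$ glue to an analytic $\varphi$ on a strip over all of $\overline{\R}$; since the domains of $\psi_{1}$ and $\psi_{2}$ (neighbourhoods of $[a,\infty]$, resp.\ of $[-\infty,b]$) already cover $\overline{\R}$, the bound $|\varphi(z)|e^{\omega(\mu|x|)}\le C$ holds throughout that strip, with $\mu$ the smaller of the two decay parameters, whence $\varphi\in\mathcal{A}_{\{\omega\}}(\R)$ and $\delta(\varphi)=(\psi_{1},\psi_{2})$. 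This completes the proof.

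The only point that requires genuine care is the functional-analytic bookkeeping: one must verify that Theorem \ref{dualcharacterization} really identifies the sequence in the statement, \emph{together with its topology}, with the strong dual of $0\to\mathcal{A}_{\{\omega\}}(\R)\xrightarrow{\delta}\mathcal{A}_{\{\omega\}}[[a,\infty]]\oplus\mathcal{A}_{\{\omega\}}[[-\infty,b]]\xrightarrow{d}\mathcal{A}_{\{\omega\}}[[a,b]]\to0$, and that condition $(\delta)$ indeed makes all spaces in sight $(FS)$- resp.\ $(DFS)$-spaces (through Lemma \ref{compactanalrepr}). Once that is in place, the only analytic content left is the two short gluing/continuation arguments above.
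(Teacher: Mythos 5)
Your proof is correct and follows essentially the same route as the paper: exactness at the left is the intersection identity $\mathcal{A}'_{\{\omega\}}[[a,b]] = \mathcal{A}'_{\{\omega\},a+}\cap\mathcal{A}'_{\{\omega\},b-}$ deduced from Theorem \ref{Roumieucoh}, and the surjectivity of $\lambda$ comes from Lemma \ref{checksurj} combined with the identification $(\mathcal{A}_{\{\omega\}}[K])'=\mathcal{A}'_{\{\omega\}}[K]$ of Theorem \ref{dualcharacterization}. You merely spell out what the paper leaves implicit, namely the germ-gluing argument showing that the restriction map has closed range (kernel of $d$) and the analytic continuation giving the inclusion $\supseteq$ in the intersection identity.
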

\begin{proof} Theorem \ref{Roumieucoh} implies that
$$
 \mathcal{A}'_{\{\omega\}}[[a,b]] = \mathcal{A}'_{\{\omega\},a+} \cap \mathcal{A}'_{\{\omega\},b-}.
$$
The surjectivity of $\lambda$ follows from Lemma \ref{checksurj} and Theorem \ref{dualcharacterization}. 
\end{proof}

\section{Spaces of (ultra)hyperfunctions defined via subadditive weight functions} \label{section subadditive}
In this section we briefly indicate how the results from Sections \ref{testfunctions}--\ref{sect-support} can be extended to include spaces defined in terms of subadditive weight functions. Since the theory and methods are completely analogous to those already developed, we shall omit all proofs. These ideas will be applied in Section \ref{section laplace} to the study of analytic representations of ultradistributions of exponential type and Laplace transforms.

\subsection{Subadditive weight functions} \label{subsection subadditive} We collect here a number of properties of the weight functions that we shall employ in the rest of the paper. Let $\omega$ be a weight function (not necessarily satisfying \eqref{fasterthanlog}) such that $\omega(0) = 0$ (this will be assumed from now on). We are interested in the following conditions \cite{Bjorck}:
\begin{itemize}
\item[$(\alpha)$] $\omega(t_1 +t_2) \leq \omega(t_1) + \omega(t_2)$, $ t_1,t_2 \geq 0$,
\item[$(\gamma)$] $\omega(t) \geq c\log(1+t) + a$, for some $a \in \R$ and $c > 0$.
\end{itemize}

In the sequel we shall refer to condition \eqref{fasterthanlog} as $(\gamma)_0$. We set ${}_\lambda \omega(t) = \lambda\omega(t)$. Let $\sigma$ be another weight function, $\omega$ and $\sigma$ are said to be \emph{$\ast$-equivalent}, denoted by $\omega \asymp \sigma$, if $\omega(t)=O(\sigma(t))$ and $\sigma(t)=O(\omega(t))$ (as $t\to\infty$), or equivalently, 
$$
 {}_{\lambda_1} \omega(t) - C_1 \leq \sigma(t) \leq  {}_{\lambda_2} \omega(t) + C_2, \qquad t \geq 0,
$$
 for some $\lambda_1, \lambda_2, C_1, C_2 > 0$. If $\omega$ and $\sigma$ both satisfy $(\alpha)$ and $(\delta)$, then $\omega \asymp \sigma$ if and only if $\omega \sim \sigma$. 
 We point out that subadditivity, that is, condition $(\alpha)$,  always yields \cite[p.~240]{korevaarbook} the existence of the limit 
 $$
 \lim_{t\to\infty}\frac{\omega(t)}{t}.
 $$
Consequently, we either have $\omega(t) \asymp t$ or $\omega(t) = o(t)$.

 We shall need the following result of Petzsche and Vogt which allows one to replace a weight function by one enjoying better regularity properties.
\begin{lemma} \cite[Prop.\ 1.2]{Petzsche} \label{equivalentweightsub}
Let $\omega$ be a weight function satisfying $(\alpha)$ and $(\gamma)$. Then, there is another weight function $\sigma$ with $\omega \asymp \sigma$ such that either $\sigma(t) = t$ or $\sigma$ satisfies the following conditions:
\begin{itemize}
\item[$(i)$] $\displaystyle \sigma \in C^\infty(0, \infty)$ and $\lim_{t \to 0^+}\sigma'(t) = \infty$,
\item[$(ii)$] $\sigma$ is strictly concave,
\item[$(iii)$] $\displaystyle \lim_{t \to \infty}\sigma'(t) = 0$,
\item[$(iv)$] $\displaystyle \liminf_{t \to \infty}|\sigma''(t)|t^2 > 0$.
\end{itemize}
If $\omega$ satisfies $(\gamma)_0$, then condition $(iv)$ may be replaced by the stronger property
\begin{itemize}
\item[$(iv)'$] $\displaystyle \lim_{t \to \infty}|\sigma''(t)|t^2 = \infty$.
\end{itemize}
\end{lemma}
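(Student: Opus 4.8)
This is \cite[Prop.~1.2]{Petzsche}; I only outline the strategy. Since $\omega$ is subadditive, the limit $L:=\lim_{t\to\infty}\omega(t)/t$ exists (as recalled above); if $L>0$ then $\omega\asymp t$ and we simply take $\sigma(t)=t$, so there is nothing more to do. Assume henceforth $L=0$, i.e.\ $\omega(t)=o(t)$; note that $(\gamma)$ also forces $\omega(t)\to\infty$. The plan in this case is to replace $\omega$ by its least concave majorant, mollify, and then add lower order correction terms in order to obtain the prescribed behaviour at $0$ and at $\infty$.

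Let $\omega^{\ast}$ be the least concave majorant of $\omega$ on $[0,\infty)$. For $0<s\le t$, writing $t=ns+r$ with $n=\lfloor t/s\rfloor\ge 1$ and $0\le r<s$, monotonicity and subadditivity give $\omega(t)\le (n+1)\omega(s)\le (2t/s)\omega(s)$, i.e.\ $\omega(t)/t\le 2\omega(s)/s$; inserting this into the concave hull description of $\omega^{\ast}$ (and using $\omega(0)=0$) yields $\omega(t)\le\omega^{\ast}(t)\le 3\omega(t)$, while $\omega(t)=o(t)$ makes $\omega^{\ast}$ finite valued. Hence $\omega^{\ast}\asymp\omega$, and $\omega^{\ast}$ is concave, nondecreasing, unbounded, sublinear, with $\omega^{\ast}(t)\ge c\log(1+t)+a$ for large $t$. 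Mollifying, $\sigma_1:=\omega^{\ast}\ast\rho$ (standard mollifier, for $t>1$) is $C^\infty$, concave, and differs from $\omega^{\ast}$ by a bounded amount because the right derivative of $\omega^{\ast}$ is bounded; thus $\sigma_1\asymp\omega$.

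Now set $\sigma:=\sigma_1+\sqrt{\log(1+t)}+\log(1+t)$ for $t>1$, extended on $[0,1]$ as a $C^\infty$ strictly concave function with $\sigma'(0^{+})=\infty$ matching $\sigma_1$ to high order at $t=1$. Both correction terms are $C^\infty$ and \emph{strictly} concave on $(0,\infty)$, and by $(\gamma)$ each is $O(\omega)$, so $\sigma\asymp\omega$. The summand $\sqrt{\log(1+t)}$ provides $\sigma'(0^{+})=\infty$; strict concavity of $\sigma$ gives $(ii)$; and $(iii)$ holds since a concave increasing function whose derivative tends to a positive limit grows at least linearly, contradicting $\sigma\asymp\omega=o(t)$. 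For $(iv)$, all second derivatives occurring are $\le 0$, so $|\sigma''(t)|\ge|(\log(1+t))''|=(1+t)^{-2}$ and hence $|\sigma''(t)|t^2\to 1>0$.

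The main obstacle is the sharper conclusion $(iv)'$ under $(\gamma)_0$, where a fixed summand $\log(1+t)$ no longer yields unbounded curvature: the added term must be calibrated to the super-logarithmic growth of $\omega$. I would replace $\log(1+t)$ by a concave $\psi\in C^\infty(0,\infty)$ with $\psi=O(\omega)$ and $|\psi''(t)|t^2\to\infty$; such $\psi$ exists precisely because $(\gamma)_0$ gives $\omega(t)/\log t\to\infty$ (take $\psi$ the antiderivative of a decreasing $\theta$ with $t^2|\theta'(t)|\to\infty$ and $\int_1^t\theta\lesssim\omega(t)$, the rate of $\theta$ being read off from a suitably regularized monotone minorant of $t\mapsto\inf_{u\ge t}\omega(u)/\log u$). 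As before, all second derivatives in $\sigma=\sigma_1+\sqrt{\log(1+t)}+\psi$ are $\le 0$, so $|\sigma''(t)|t^2\ge|\psi''(t)|t^2\to\infty$. The only genuinely delicate point is to choose this $\psi$ concave and $\asymp$-negligible while preserving the normalizations at $0$ and $\infty$; everything else is routine.
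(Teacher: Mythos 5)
The paper offers no proof of this lemma at all: it is quoted from Petzsche--Vogt \cite[Prop.\ 1.2]{Petzsche}, so there is no internal argument to compare yours with; the only question is whether your sketch could serve as a self-contained substitute, and in outline it can. The dichotomy via $L=\lim_{t\to\infty}\omega(t)/t$, the two-point estimate giving $\omega\leq\widehat{\omega}\leq 3\omega$ for the least concave majorant $\widehat{\omega}$ (your computation with $\omega(v)\leq (2v/t)\omega(t)$ and $(1-\lambda)v\leq t$ is correct), the mollification, and the strictly concave summands $\sqrt{\log(1+t)}+\log(1+t)$ do yield $(i)$--$(iv)$ with $\sigma\asymp\omega$, exactly as you argue. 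One cosmetic repair: ``matching $\sigma_1$ to high order at $t=1$'' does not give $C^\infty$, and an ad hoc strictly concave extension carrying the whole jet while preserving global concavity is awkward; it is cleaner to extend $\widehat{\omega}$ below $t=1$ by its tangent line (which preserves concavity and monotonicity), mollify globally, and let the summand $\sqrt{\log(1+t)}$ alone produce $\sigma'(0^+)=\infty$, so that no gluing is needed. (Also beware a notational clash: the paper reserves $\omega^*$ for the Young conjugate.)

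The one substantive point is the $(\gamma)_0$ case, which you rightly flag as delicate: the rate cannot be read off from $q(t)=\inf_{u\geq t}\omega(u)/\log u$ without first slowing it down. If $\psi$ is increasing and $-\psi''(t)\geq\eta(t)/t^2$ with $\eta\to\infty$, then necessarily $\int^\infty \eta(u)u^{-2}\,\mathrm{d}u\leq\psi'(t_0)<\infty$, and with $\eta\asymp q$ this can fail: for a concave weight with $\omega(t)= t/\log\log t$ for large $t$ (concave, hence subadditive, and satisfying $(\gamma)_0$ and $(NA)$) one has $q(u)\asymp u/(\log u\,\log\log u)$, whose tail integral against $u^{-2}$ diverges, so no decreasing $\theta$ with that rate exists. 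The fix is simple but must be made explicit: cap the rate, e.g.\ choose a smooth nondecreasing $\eta\to\infty$ with $\eta(t)\leq\min\bigl(q(t),\log\log t\bigr)$, and set $\theta(t)=\int_t^\infty \eta(u)u^{-2}\,\mathrm{d}u$, $\psi(t)=\int_{t_0}^t\theta(s)\,\mathrm{d}s$. Then $t^2|\psi''(t)|=\eta(t)\to\infty$, $\psi$ is concave and increasing, and $\psi(t)\leq\int_{t_0}^t\eta(u)\,\frac{\mathrm{d}u}{u}+t\int_t^\infty\eta(u)u^{-2}\,\mathrm{d}u\leq q(t)\log t+O(\log\log t)=O(\omega(t))$, so $\psi$ is $\asymp$-negligible as required; adding it in place of $\log(1+t)$ gives $(iv)'$. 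With these two repairs your construction is complete and is, as far as I can tell, in the same spirit as the concave-regularization argument of Petzsche and Vogt that the paper cites.
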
 

The \emph{Young conjugate} of $\omega$ is defined as
$$
\omega^*(s)= \sup_{t \geq 0} (\omega(t) - ts),  \qquad s > 0,
$$
and is a convex non-increasing function. We set $\omega^*(s) = \omega^*(|s|)$ for $s \in \R$, $s \neq 0$.
If $\omega(t) = o(t)$, then $\omega^*(s) < \infty$ for all $s > 0$. Clearly, we have 
$$
({}_\lambda\omega)^*(s)= \lambda\omega^*\left(\frac{s}{\lambda}\right),  \qquad s > 0.
$$
\subsection{Test function spaces} Given a weight function $\omega$, we set $\mathcal{A}^h_{{}_\lambda \omega} = \mathcal{A}_{\boldsymbol{\omega}}^{h,\lambda}$ and
$$
\| \varphi \|^h_{{}_\lambda \omega} = \| \varphi \|^{h, \lambda}_{\boldsymbol{\omega}}, \qquad \varphi \in \mathcal{A}_{\boldsymbol{\omega}}^{h,\lambda}.
$$
Our basic spaces of entire and analytic functions are now defined as
$$
\mathcal{U}_{\boldsymbol{(\omega)}}(\C) =  \varprojlim_{ \lambda \to \infty}\varprojlim_{ h \to \infty}  \mathcal{A}_{\boldsymbol{\omega}}^{h,\lambda}, \qquad \mathcal{A}_{\boldsymbol{\{\omega\}}}(\R) =  \varinjlim_{\lambda \to 0^+}\varinjlim_{ h \to 0^+} \mathcal{A}_{\boldsymbol{\omega}}^{h,\lambda}.
$$
If $\omega(t) \rightarrow \infty$, as $t \to \infty$, then Lemma \ref{compact} yields that $\mathcal{U}_{\boldsymbol{(\omega)}}(\C)$ is an $(FS)$-space while $\mathcal{A}_{\boldsymbol{\{\omega\}}}(\R)$ is a (DFS)-space.
Let $\sigma$ be another weight such that $\omega \asymp \sigma$, then $\mathcal{U}_{\boldsymbol{(\omega)}}(\C) = \mathcal{U}_{\boldsymbol{(\sigma)}}(\C)$ and $\mathcal{A}_{\boldsymbol{\{\omega\}}}(\R)= \mathcal{A}_{\boldsymbol{\{\sigma\}}}(\R)$, topologically. If $\omega$ satisfies both conditions $(\alpha)$ and $(\delta)$, then $\mathcal{U}_{\boldsymbol{(\omega)}}(\C) = \mathcal{U}_{(\omega)}(\C)$ and $\mathcal{A}_{\boldsymbol{\{\omega\}}}(\R)= \mathcal{A}_{{\{\omega\}}}(\R)$, topologically. The elements of the dual spaces $\mathcal{U}'_{\boldsymbol{(\omega)}}(\C)$ and $\mathcal{A}'_{\boldsymbol{\{\omega\}}}(\R)$ are called \emph{ultrahyperfunctions of $\boldsymbol{(\omega)}$-type} and \emph{hyperfunctions of $\boldsymbol{\{\omega\}}$-type}, respectively. Observe that $\mathcal{U}_{\boldsymbol{(\log(1+t))}}(\C) = \mathcal{U}(\C)$ is the test function space for the Silva space of tempered ultrahyperfunctions \cite{Hoskins,hasumi,Silva} (called tempered ultradistributions there). If $\omega$ satisfies $(\alpha)$  and $(\gamma)$ ($(\gamma)_{0}$), we have the continuous and dense inclusions $\mathcal{S}_{(p!)}^{(p!)}=\mathcal{U}_{(t)}(\C) =\mathcal{U}_{\boldsymbol{(t)}}(\C)\rightarrow \mathcal{U}_{\boldsymbol{(\omega)}}(\C)$ and $\mathcal{S}^{\{p!\}}_{\{p!\}}= \mathcal{A}_{\{t\}}(\R) = \mathcal{A}_{\boldsymbol{\{t\}}}(\R) \rightarrow \mathcal{A}_{\boldsymbol{\{\omega\}}}(\R)$ (density follows from Theorem \ref{cohsub} below). 

We mention that if a (log-convex) weight sequence $M_p$ satisfies $(M.2)$ and
\begin{itemize}
\item [$(M.4)$] $\displaystyle \frac{M_{p}}{p!}\subseteq \left(\frac{M_p}{p!}\right)^{c}$,
\end{itemize}
Petzsche and Vogt have shown \cite[Sect.\ 5]{Petzsche} that there is a weight function $\omega$ satisfying $(\alpha)$, $(\gamma)_{0}$, and $\omega \asymp M$; under these circumstances, we thus have
$\mathcal{U}_{(M_p)}(\C) = \mathcal{U}_{\boldsymbol{(\omega)}}(\C)$ and $\mathcal{A}_{\{M_p\}}(\R) = \mathcal{A}_{\boldsymbol{\{\omega\}}}(\R)$, topologically.
Furthermore, they proved, under $(M.2)$, that $(M.4)$ is equivalent to the so-called Rudin condition:
\begin{itemize}
\item [$(M.4)''$] $\displaystyle \max_{q\leq p}\left(\frac{M_{q}}{q!}\right)^{\frac{1}{q}}\leq A\left(\frac{M_{p}}{p!}\right)^{\frac{1}{p}},$ $p\in\mathbb{N}$, for some $A>0$.
\end{itemize}
Observe that strong non-quasianalyticity (i.e., Komatsu's condition $(M.3)$ \cite{Komatsu}) automatically yields $(M.4)$, as shown by Petzsche \cite[Prop.\ 1.1]{Petzsche2}.  

\begin{theorem}\label{non-trivial-outside}
Let $\omega$ be a weight function satisfying $(\gamma)_0$. Then, $\mathcal{U}_{\boldsymbol{(\omega)}}(\C)$ ($\mathcal{A}_{\boldsymbol{\{\omega\}}}(\R)$) is non-trivial if and only if $\omega$ satisfies $(\epsilon)_0$ ($(\epsilon)_\infty$).
\end{theorem}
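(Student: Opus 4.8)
The plan is to prove the two implications separately, and within the sufficiency part to treat the Roumieu case by an explicit construction and the Beurling case by transporting Theorem \ref{nontrivial2}. \emph{Necessity} is the analogue of Proposition \ref{mandel}: if $\mathcal{A}_{\boldsymbol{\omega}}^{h,\lambda}$ contains a non-zero $\varphi$, then $\varphi$ is bounded on $T^h$ (because $\omega \geq 0$), so Lemma \ref{subharmonicstrip}$(i)$ applied to $\varphi(\cdot - ik)$ on $T^k_+$, for any $0 < k < h$, gives
\[
-\infty < \int_{-\infty}^\infty \log|\varphi(x-ik)|\, e^{-\pi|x|/k}\,\dx \leq C_1 - \lambda \int_{-\infty}^\infty \omega(x)\, e^{-\pi|x|/k}\,\dx ,
\]
whence $\int_0^\infty \omega(t) e^{-\pi t/k}\,\dt < \infty$, i.e.\ $(\epsilon)_{\pi/k}$. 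If $\mathcal{U}_{\boldsymbol{(\omega)}}(\C) \neq \{0\}$ this argument (with $\lambda = 1$) runs for every $h$, so $(\epsilon)_{\pi/k}$ holds for all $k > 0$, that is $(\epsilon)_0$; if $\mathcal{A}_{\boldsymbol{\{\omega\}}}(\R) \neq \{0\}$ it runs for a single $h$, giving $(\epsilon)_\infty$.

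For the \emph{sufficiency in the Roumieu case}, assuming $(\epsilon)_\infty$ I would pick $h > 0$ small enough that $(\epsilon)_{\pi/(8h)}$ holds, and apply the lower bound of Proposition \ref{funda} (with $\lambda = 1$) to obtain $F \in \mathcal{O}(T^h)$ with $|F(z)| \geq e^{\omega(x)}$ on $T^h$; then $1/F$ is a non-zero element of $\mathcal{A}_{\boldsymbol{\omega}}^{h,1} \subseteq \mathcal{A}_{\boldsymbol{\{\omega\}}}(\R)$. Only the lower estimate of Proposition \ref{funda} is needed, so $(\delta)$ does not enter here.

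For the \emph{sufficiency in the Beurling case}, assuming $(\epsilon)_0$ I would first invoke Lemma \ref{majorizationweight} to produce a weight function $\sigma \geq \omega$ that satisfies $(\epsilon)_0$ and $(\delta)$; it also satisfies $(\gamma)_0$ since $\sigma \geq \omega$, so Theorem \ref{nontrivial2} yields $\mathcal{U}_{(\sigma)}(\C) \neq \{0\}$. It then remains to show $\mathcal{U}_{(\sigma)}(\C) \subseteq \mathcal{U}_{\boldsymbol{(\sigma)}}(\C) \subseteq \mathcal{U}_{\boldsymbol{(\omega)}}(\C)$. The second inclusion is immediate from $\sigma \geq \omega$; for the first, iterating $(\delta)$ gives $2^n \sigma(t) \leq \sigma(H^n t) + 2^n \log A$ for all $n \in \N$, hence for each $\mu > 0$, choosing $n$ with $2^n \geq \mu$, one has $\mu\sigma(t) \leq \sigma(H^n t) + 2^n \log A$, so the decay $e^{-\sigma(H^n x)}$ available to every element of $\mathcal{U}_{(\sigma)}(\C)$ dominates $e^{-\mu\sigma(x)}$ up to a constant; letting $h$ and $\mu$ vary shows membership in $\mathcal{U}_{\boldsymbol{(\sigma)}}(\C)$.

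The part I expect to be the crux is this last reconciliation: the two families of spaces use genuinely different scalings of the weight, $e^{\lambda\omega(x)}$ in the $\boldsymbol{\omega}$-spaces versus $e^{\omega(\lambda x)}$ in the unbolded ones, and matching them is exactly what $(\delta)$ permits --- which is why one must first pass to the regularized $\sigma$ of Lemma \ref{majorizationweight}. Once this bridge is available, non-triviality is imported wholesale from Theorem \ref{nontrivial2}, and there is no need to redevelop the analytic-representation and Mittag-Leffler machinery of Sections \ref{section boundary values}--\ref{sect-support} in the subadditive-weight setting (in line with the blanket ``we shall omit all proofs'' of this section).
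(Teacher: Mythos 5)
Your proposal is correct and follows essentially the same route as the paper: necessity via the Proposition \ref{mandel}-type argument based on Lemma \ref{subharmonicstrip}$(i)$, Roumieu sufficiency from the lower bound in Proposition \ref{funda}, and Beurling sufficiency by majorizing $\omega$ with a $(\delta)$-weight $\sigma$ via Lemma \ref{majorizationweight} and importing Theorem \ref{nontrivial2} through the inclusions $\mathcal{U}_{(\sigma)}(\C)\subseteq\mathcal{U}_{\boldsymbol{(\sigma)}}(\C)\subseteq\mathcal{U}_{\boldsymbol{(\omega)}}(\C)$. Your explicit iteration of $(\delta)$ to reconcile the scalings $e^{\lambda\omega(x)}$ versus $e^{\omega(\lambda x)}$, and the observation that $\sigma\geq\omega$ inherits $(\gamma)_0$, are exactly the details the paper's one-line "hence $\mathcal{U}_{(\omega)}(\C)\subseteq\mathcal{U}_{\boldsymbol{(\omega)}}(\C)$" leaves implicit.
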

\begin{proof}
The conditions are necessary by Proposition \ref{mandel}. In the Roumieu case sufficiency is a consequence of Proposition \ref{funda}. For the Beurling case we notice that, by Lemma \ref{majorizationweight}, we may assume that $\omega$ satisfies $(\delta)$. Hence, $\mathcal{U}_{(\omega)}(\C)  \subseteq \mathcal{U}_{\boldsymbol{(\omega)}}(\C)$ and the result follows from Theorem \ref{nontrivial2}.
\end{proof}
\begin{remark}\label{remark-non-triv}
In \cite{Langenbruch} Langenbruch studied the space $\mathcal{A}_{\boldsymbol{\{\omega\}}}(\R)$ for weight functions $\omega$ satisfying $(\gamma)_0$ and
\begin{equation}
\limsup_{t \to \infty} \frac{\omega(t+1)}{\omega(t)} < \infty.
\label{cond-L}
\end{equation}
He showed that the space $\mathcal{A}_{\boldsymbol{\{\omega\}}}(\R)$ is (tamely) isomorphic to the strong dual of a finite type power series space \cite[Thm.\ 4.4]{Langenbruch} and thus, in particular, is non-trivial. Under the extra assumption \eqref{cond-L}, Theorem \ref{non-trivial-outside} in the Roumieu case is therefore due to Langenbruch. We mention that one can readily show that condition \eqref{cond-L} implies $(\epsilon)_\infty$ (without passing through Theorem \ref{non-trivial-outside}), whereas there exist weight functions satisfying $(\epsilon)_\infty$ and $(\gamma)_0$ but not \eqref{cond-L}.
\end{remark}

Finally, let us discuss the Fourier transform on our test function spaces. We set $\mathcal{K}^h_{1,{}_\lambda \omega}(\R) = \mathcal{K}^{h,\lambda}_{1,\boldsymbol{\omega}}(\R)$ and
$$
\rho^h_{{}_\lambda\omega}(\psi) = \rho^{h,\lambda}_{\boldsymbol{\omega}}(\psi), \qquad \psi \in \mathcal{K}^{h,\lambda}_{1,\boldsymbol{\omega}}(\R);
$$
define then
$$
\mathcal{K}_{1,\boldsymbol{(\omega)}}(\R)= \varprojlim_{\lambda \to \infty} \varprojlim_{h \to \infty}\mathcal{K}^{h,\lambda}_{1,\boldsymbol{\omega}}(\R), \qquad \mathcal{K}_{1,\boldsymbol{\{\omega\}}}(\R)= \varinjlim_{\lambda \to 0^+} \varinjlim_{h \to 0^+}\mathcal{K}^{h,\lambda}_{1,\boldsymbol{\omega}}(\R).  
$$
In analogy to the terminology from Section \ref{testfunctions}, we call $\mathcal{K}'_{1,\boldsymbol{(\omega)}}(\R)$ and $\mathcal{K}'_{1,\boldsymbol{\{\omega\}}}(\R)$ the spaces of \emph{ultradistributions of class $\boldsymbol{(\omega)}$ of exponential type} and \emph{ultradistributions of class $\boldsymbol{\{\omega\}}$ of infra-exponential type}, respectively. If $\omega$ satisfies $(\alpha)$, $(\gamma)$, and is non-quasianalytic, then we have the continuous and dense inclusions $\mathcal{D}_{(\omega)}(\R) \hookrightarrow \mathcal{K}_{1,\boldsymbol{(\omega)}}(\R)\hookrightarrow\mathcal{K}_{1,\boldsymbol{(\omega)}}'(\R)\hookrightarrow \mathcal{D}'_{(\omega)}(\R) $ (see \cite{Bjorck} for the definition of the Beurling-Bj\"orck space $\mathcal{D}_{(\omega)}(\R)$). In particular, $\mathcal{K}_{1,\boldsymbol{(\log(1+t))}}(\R) = \mathcal{K}_1(\R)$ is the space of exponentially rapidly decreasing smooth functions \cite{Hoskins} and we have that $\mathcal{D}(\R) \hookrightarrow \mathcal{K}_1(\R)\hookrightarrow \mathcal{K}_{1}'(\mathbb{R})\hookrightarrow\mathcal{D}'(\R)$.
\begin{proposition}\label{fouriersub}
Let $\omega$ be a weight function satisfying $(\alpha)$ and $(\gamma)$ ($(\gamma)_0$). The Fourier transform is a topological isomorphism from $\mathcal{U}_{\boldsymbol{(\omega)}}(\C)$ ($\mathcal{A}_{\boldsymbol{\{\omega\}}}(\R)$) onto $\mathcal{K}_{1,\boldsymbol{(\omega)}}(\R)$ ($\mathcal{K}_{1,\boldsymbol{\{\omega\}}}(\R)$).
\end{proposition}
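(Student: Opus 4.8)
The plan is to follow the proof of Proposition~\ref{fourier} in spirit, the one genuine change being that the ``strength'' parameter $\lambda$ now enters multiplicatively, as ${}_\lambda\omega=\lambda\omega$, rather than as a dilation $\omega_\lambda=\omega(\lambda\,\cdot)$. Consequently the halving of the weight produced by the Phragm\'en--Lindel\"of estimate is harmless here ($\tfrac12\lambda\omega=(\tfrac\lambda2)\omega$), and, in contrast with Proposition~\ref{fourier}, no appeal to condition $(\delta)$ is needed; the hypotheses $(\alpha)$ and $(\gamma)$ (resp.\ $(\gamma)_0$) will be used only for two elementary integrability facts.

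\emph{Direct inclusion.} Let $\varphi\in\mathcal{A}^{h,\lambda}_{\boldsymbol{\omega}}$. As in \cite[p.\ 167]{Hoskins} (cf.\ the proof of Proposition~\ref{fourier}), the decay $|\varphi(x+iy)|\le\|\varphi\|^{h,\lambda}_{\boldsymbol{\omega}}\,e^{-\lambda\omega(x)}\to 0$ as $|x|\to\infty$ licenses a contour shift giving $\widehat{\varphi}(\xi)=\int_{-\infty}^{\infty}\varphi(x+ik)e^{-i(x+ik)\xi}\dx$ for $\xi\le 0$ and the companion formula with $-ik$ for $\xi\ge 0$, where $0<k<h$. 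Hence $|\widehat{\varphi}(\xi)|\le e^{-k|\xi|}\|\varphi\|^{h,\lambda}_{\boldsymbol{\omega}}\int_{-\infty}^{\infty}e^{-\lambda\omega(x)}\dx$, and the last integral is finite: for every $\lambda>0$ under $(\gamma)_0$ in the Roumieu case, and — in the Beurling case, where $\varphi$ belongs to all $\mathcal{A}^{h,\lambda'}_{\boldsymbol{\omega}}$ so that one may take the strength parameter as large as one wishes — for those large $\lambda'$ by $(\gamma)$, since then $e^{-\lambda'\omega(x)}=O(|x|^{-\lambda' c})\in L^1$. Thus $\widehat{\varphi}\in L^1$ with $\rho^k(\widehat{\varphi})\le C\|\varphi\|^{h,\lambda'}_{\boldsymbol{\omega}}$ for a suitable strength parameter $\lambda'$, while $\mathcal{F}^{-1}\widehat{\varphi}|_\R=\varphi|_\R$ gives $\sup_{x\in\R}|\mathcal{F}^{-1}\widehat{\varphi}(x)|e^{\lambda\omega(x)}\le\|\varphi\|^{h,\lambda}_{\boldsymbol{\omega}}$. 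Therefore $\widehat{\varphi}\in\mathcal{K}^{k,\lambda}_{1,\boldsymbol{\omega}}(\R)$, with continuous dependence on $\varphi$, and $\mathcal{F}$ maps $\mathcal{U}_{\boldsymbol{(\omega)}}(\C)$ continuously into $\mathcal{K}_{1,\boldsymbol{(\omega)}}(\R)$ (resp.\ $\mathcal{A}_{\boldsymbol{\{\omega\}}}(\R)$ into $\mathcal{K}_{1,\boldsymbol{\{\omega\}}}(\R)$).

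\emph{Converse inclusion.} Let $\psi\in\mathcal{K}^{h,\lambda}_{1,\boldsymbol{\omega}}(\R)$. From $|\psi(\xi)|\le\rho^h(\psi)e^{-h|\xi|}$ one gets, exactly as in Proposition~\ref{fourier}, that $\varphi:=\mathcal{F}^{-1}\psi$ extends holomorphically to $T^h$ with $|\varphi(z)|\le\rho^h(\psi)/(\pi(h-|y|))$, so $\varphi$ is bounded on every closed substrip; on the real axis $|\varphi(x)|\le C\,e^{-\lambda\omega(x)}$ with $C:=\sup_{x\in\R}|\mathcal{F}^{-1}\psi(x)|e^{\lambda\omega(x)}<\infty$. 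Since $\omega$ is subadditive one has $\omega(t)=O(t)$, hence ${}_\lambda\omega$ satisfies $(\epsilon)_{\pi/k'}$ for every $k'>0$, and Proposition~\ref{praghmenlindelof} applied to $\varphi$ on $T^{k'}_{+}$ and on $T^{k'}_{-}$ (for any $k'\in(0,h)$, where $\varphi$ is holomorphic up to the boundary) yields
$$
|\varphi(z)|\le\max\!\Big(\tfrac{\rho^h(\psi)}{\pi(h-k')},\,C\Big)\exp\!\Big(-\tfrac\lambda2\big(1-\tfrac{|y|}{k'}\big)\omega(x)\Big),\qquad z=x+iy\in T^{k'}.
$$
Restricting to $T^{k}$ with $0<k<k'<h$ shows $\varphi\in\mathcal{A}^{k,\mu}_{\boldsymbol{\omega}}$ with $\mu=\tfrac\lambda2(1-k/k')>0$, continuously in $\psi$. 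In the Roumieu case this already places $\varphi$ in $\mathcal{A}_{\boldsymbol{\{\omega\}}}(\R)$; in the Beurling case $\psi$ lies in $\mathcal{K}^{h,\lambda}_{1,\boldsymbol{\omega}}(\R)$ for arbitrarily large $h$ and $\lambda$, so $k$ and $\mu$ may be taken arbitrarily large and $\varphi\in\mathcal{U}_{\boldsymbol{(\omega)}}(\C)$. Hence $\mathcal{F}^{-1}$ is continuous in both cases; since $\mathcal{F}$ and $\mathcal{F}^{-1}$ are mutually inverse, $\mathcal{F}$ is the claimed topological isomorphism.

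The only delicate point — more bookkeeping than a real obstacle — is to propagate the parameters $h,k,k',\lambda,\mu$ correctly through the Phragm\'en--Lindel\"of step so that $\varphi$ lands in the correct projective (Beurling) or inductive (Roumieu) limit. The assumptions on $\omega$ enter exactly twice: through the integrability of $e^{-\lambda\omega(x)}$ in the direct inclusion, which is precisely where the split between $(\gamma)$ and $(\gamma)_0$ in the two cases is used, and through $\omega(t)=O(t)$ — a consequence of $(\alpha)$ — which guarantees $(\epsilon)_{\pi/k'}$ and hence the applicability of Proposition~\ref{praghmenlindelof}.
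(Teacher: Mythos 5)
Your proof is correct and follows essentially the same route as the paper, whose proof of Proposition \ref{fouriersub} consists precisely of the remark that it can be shown as Proposition \ref{fourier}: contour-shifted Fourier formulas for the direct mapping and Proposition \ref{praghmenlindelof} for the inverse. Your additional observations — that $(\delta)$ becomes superfluous because the Phragm\'en--Lindel\"of halving only changes the multiplicative parameter $\lambda$ into $\lambda/2$, that subadditivity gives $\omega(t)=O(t)$ and hence the needed $(\epsilon)$-conditions, and that $(\gamma)$ versus $(\gamma)_0$ is exactly what makes $e^{-\lambda\omega}$ integrable in the Beurling and Roumieu cases respectively — are accurate bookkeeping of that same argument.
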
 
\begin{proof}
This can be shown in the same way as Proposition \ref{fourier}.
\end{proof}
The Fourier transform from $\mathcal{K}'_{1,\boldsymbol{(\omega)}}(\R)$ ($\mathcal{K}'_{1,\boldsymbol{\{\omega\}}}(\R)$) onto $\mathcal{U}'_{\boldsymbol{(\omega)}}(\C)$ ($\mathcal{A}'_{\boldsymbol{\{\omega\}}}(\R)$) is therefore well defined via duality.

\subsection{Boundary values}\label{subreview} Given $0 < b < R$, we set $\mathcal{O}^{b,R}_{{}_\lambda\omega} = \mathcal{O}^{b,R, \lambda}_{\boldsymbol{\omega}}$ and $\mathcal{P}^{R}_{{}_\lambda\omega} = \mathcal{P}^{R, \lambda}_{\boldsymbol{\omega}}$. Furthermore, we define
$$
\mathcal{O}_{\boldsymbol{(\omega)}} = \bigcup_{ \lambda,h > 0 } \bigcap_{R > h} \mathcal{O}^{h,R, \lambda}_{\boldsymbol{\omega}},  \qquad \mathcal{P}_{\boldsymbol{(\omega)}} = \bigcup_{\lambda > 0} \bigcap_{R > 0} \mathcal{P}^{R, \lambda}_{\boldsymbol{\omega}},
$$
and
$$
\mathcal{O}_{\boldsymbol{\{\omega\}}} = \varprojlim_{\lambda \to 0^+} \varprojlim_{h \to 0^+} \varprojlim_{R \to \infty} \mathcal{O}_{\boldsymbol{\omega}}^{h,R,\lambda}, \qquad  \mathcal{P}_{\boldsymbol{\{\omega\}}} =  \varprojlim_{\lambda \to 0^+}\varprojlim_{R \to \infty} \mathcal{P}^{R, \lambda}_{\boldsymbol{\omega}}.
$$
If $\omega(t) \rightarrow \infty$, as $t \to \infty$, Lemma \ref{compactanalrepr} implies that $\mathcal{O}_{\boldsymbol{\{\omega\}}}$ and $\mathcal{P}_{\boldsymbol{\{\omega\}}}$ are $(FS)$-spaces. The boundary value mappings
$$
\operatorname{bv}:\mathcal{O}_{\boldsymbol{(\omega)}} \rightarrow \mathcal{U}'_{\boldsymbol{(\omega)}}(\C), \qquad \operatorname{bv}:\mathcal{O}_{\boldsymbol{\{\omega\}}} \rightarrow \mathcal{A}'_{\boldsymbol{\{\omega\}}}(\R)
$$
are well defined. In the Roumieu case, this mapping is continuous. Taking Remark \ref{remarksubadd} into account, Propositions \ref{analyticreprgeneral} and  \ref{edgegeneral} yield the following corollaries.
\begin{corollary}\label{analquansub}
Let $0 < k < b < h < R$ and let $\omega$ be a weight function satisfying $(\alpha)$. 
\begin{itemize}
\item[$(i)$] If $\omega$ satisfies $(\gamma)$, then for every $f \in (\mathcal{A}^{k,\lambda}_{\boldsymbol{\omega}})'$ there is $F \in \mathcal{O}_{\boldsymbol{\omega}}^{b,R,4\lambda + 2c^{-1}}$ such that $\operatorname{bv}(F) = f$ on $\mathcal{A}^{h,4\lambda + 4c^{-1}}_{\boldsymbol{\omega}}$.
\item[$(ii)$] If $\omega$ satisfies $(\gamma)_0$, then for every $f \in (\mathcal{A}^{k,\lambda}_{\boldsymbol{\omega}})'$ there is  $F \in \mathcal{O}_{\boldsymbol{\omega}}^{b,R,4\lambda}$ such that $\operatorname{bv}(F) = f$ on $\mathcal{A}^{h,8\lambda}_{\boldsymbol{\omega}}$.
\end{itemize}
\end{corollary}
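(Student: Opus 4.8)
The plan is to derive both parts as a direct application of Proposition~\ref{analyticreprgeneral} to a triple of rescaled weights, the required zero-free comparison function being supplied by Remark~\ref{remarksubadd}. Throughout I write ${}_\mu\omega(t) = \mu\omega(t)$, so that, by the conventions of Section~\ref{subreview}, $\mathcal{A}^{k,\lambda}_{\boldsymbol\omega} = \mathcal{A}^k_{{}_\lambda\omega}$, $\mathcal{O}^{b,R,\mu}_{\boldsymbol\omega} = \mathcal{O}^{b,R}_{{}_\mu\omega}$, and $\mathcal{A}^{h,\mu}_{\boldsymbol\omega} = \mathcal{A}^h_{{}_\mu\omega}$. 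First I would note that the proof of Proposition~\ref{analyticreprgeneral} uses only conditions \eqref{condition0} and \eqref{condition} together with the two-sided bound on $P$, and never the standing hypothesis \eqref{fasterthanlog}; hence it applies verbatim to weights such as positive multiples of a merely subadditive $\omega$.

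Given $f \in (\mathcal{A}^{k,\lambda}_{\boldsymbol\omega})' = (\mathcal{A}^k_{{}_\lambda\omega})'$, I would feed Proposition~\ref{analyticreprgeneral} the base weight ${}_\lambda\omega$ and auxiliary weights $\sigma = {}_{\lambda_1}\omega$, $\kappa = {}_{\lambda_2}\omega$ with $\lambda < \lambda_1 < \lambda_2$ still to be fixed. Since $(\gamma)$ (a fortiori $(\gamma)_0$) forces $\omega(t) \to \infty$, the growth conditions $\sigma(t) - {}_\lambda\omega(t) \to \infty$ and $\kappa(t) - \sigma(t) \to \infty$ of \eqref{condition0} hold automatically, and positive multiples of $\omega$ are again (non-decreasing, non-negative) weight functions. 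For \eqref{condition} I would use: in case $(i)$, condition $(\gamma)$ gives $\omega(t) \geq c\log(1+t) + a$, whence $\int_0^\infty e^{-\nu\omega(t)}\dt < \infty$ as soon as $\nu c > 1$; in case $(ii)$, condition $(\gamma)_0$ makes $\omega$ eventually dominate $N\log t$ for every $N$, so $\int_0^\infty e^{-\nu\omega(t)}\dt < \infty$ for every $\nu > 0$. Finally, Remark~\ref{remarksubadd}, applied with parameter $\lambda$ on the strip $T^R$, produces $P \in \mathcal{O}(T^R)$ with $e^{{}_\lambda\omega(x)} \leq |P(z)| \leq Ce^{{}_{4\lambda}\omega(x)}$ on $T^R$; this is the hypothesis of Proposition~\ref{analyticreprgeneral} with $C_1 = 1$, $C_2 = C$, $\sigma = {}_{\lambda_1}\omega$, provided $\lambda_1 \geq 4\lambda$.

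It remains only to pin down $\lambda_1, \lambda_2$ to match the claimed exponents. In case $(i)$, take $\lambda_1 = 4\lambda + 2c^{-1}$ and $\lambda_2 = 4\lambda + 4c^{-1}$: then $\lambda_1 \geq 4\lambda$, $(\lambda_1 - \lambda)c = 3\lambda c + 2 > 1$, and $(\lambda_2 - \lambda_1)c = 2 > 1$, so \eqref{condition} holds and Proposition~\ref{analyticreprgeneral} delivers $F \in \mathcal{O}^{b,R}_{{}_{\lambda_1}\omega} = \mathcal{O}^{b,R,4\lambda + 2c^{-1}}_{\boldsymbol\omega}$ with $\operatorname{bv}(F) = f$ on $\mathcal{A}^h_{{}_{\lambda_2}\omega} = \mathcal{A}^{h,4\lambda + 4c^{-1}}_{\boldsymbol\omega}$. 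In case $(ii)$, take $\lambda_1 = 4\lambda$ and $\lambda_2 = 8\lambda$: integrability is free, $\lambda_1 > \lambda$, $\lambda_2 > \lambda_1$, and one obtains $F \in \mathcal{O}^{b,R,4\lambda}_{\boldsymbol\omega}$ with $\operatorname{bv}(F) = f$ on $\mathcal{A}^{h,8\lambda}_{\boldsymbol\omega}$. The only substantive ingredient is the construction, isolated in Remark~\ref{remarksubadd}, of a zero-free holomorphic function on a strip with two-sided bounds governed by a subadditive weight; granted that, everything else is bookkeeping with the dilation parameters, the only point to watch being the strict inequalities $\lambda < \lambda_1 < \lambda_2$, which hold because $\lambda > 0$.
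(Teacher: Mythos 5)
Your argument is correct and is precisely the route the paper intends: Corollary \ref{analquansub} is obtained there by combining Remark \ref{remarksubadd} with Proposition \ref{analyticreprgeneral} applied to dilated weights, and your choices $\sigma={}_{4\lambda+2c^{-1}}\omega$, $\kappa={}_{4\lambda+4c^{-1}}\omega$ in case $(i)$ (resp.\ ${}_{4\lambda}\omega$, ${}_{8\lambda}\omega$ in case $(ii)$) do verify \eqref{condition0}--\eqref{condition} and the two-sided bound on $P$, reproducing exactly the stated constants. Your explicit remark that the proof of Proposition \ref{analyticreprgeneral} never uses the standing hypothesis \eqref{fasterthanlog} is precisely the point the paper leaves implicit when transferring it to the subadditive setting.
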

\begin{corollary}\label{edgesub}
Let $0 < b < h < R$  and let $\omega$ be a weight function satisfying $(\alpha)$.
\begin{itemize}
\item[$(i)$] If $\omega$ satisfies $(\gamma)$ and $F \in \mathcal{O}_{\boldsymbol{\omega}}^{b,R,\lambda}$ is such that $\operatorname{bv}(F) = 0$ on $\mathcal{A}^{h, \lambda + 2c^{-1}}_{\boldsymbol{\omega}}$, then $F \in \mathcal{P}_{\boldsymbol{\omega}}^{R,4\lambda + 8c^{-1}}$.
\item[$(ii)$] If $\omega$ satisfies $(\gamma)_0$ and $F \in \mathcal{O}_{\boldsymbol{\omega}}^{b,R,\lambda}$ is such that $\operatorname{bv}(F) = 0$ on $\mathcal{A}^{h, 2\lambda}_{\boldsymbol{\omega}}$, then $F \in \mathcal{P}_{\boldsymbol{\omega}}^{R,8\lambda}$.
\end{itemize}
\end{corollary}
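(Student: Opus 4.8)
The plan is to read off both assertions as specializations of Proposition~\ref{edgegeneral}, exactly as announced before the statement. In that proposition I will take the three weight functions $\omega,\sigma,\kappa$ to be positive scalar multiples ${}_\mu\omega=\mu\omega$ of the given subadditive weight, and I will take for the auxiliary function $P\in\mathcal{O}(T^R)$ the one provided by Remark~\ref{remarksubadd}: for every $\mu>0$ there is $P\in\mathcal{O}(T^R)$ with $e^{\mu\omega(x)}\le|P(z)|\le Ce^{4\mu\omega(x)}$ on $T^R$. This is precisely the estimate $C_1e^{\sigma(x)}\le|P(z)|\le C_2e^{\kappa(x)}$ required by Proposition~\ref{edgegeneral} once one sets $\sigma={}_\mu\omega$ and $\kappa={}_{4\mu}\omega$.

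I would do part $(ii)$ first. Apply Proposition~\ref{edgegeneral} with the roles of $\omega,\sigma,\kappa$ played by ${}_\lambda\omega,\ {}_{2\lambda}\omega,\ {}_{8\lambda}\omega$, and with $P$ from Remark~\ref{remarksubadd} for $\mu=2\lambda$. The hypotheses \eqref{condition0} and \eqref{condition} are then immediate from $(\gamma)_0$: the former becomes $\lambda\omega(t)\to\infty$ and $6\lambda\omega(t)\to\infty$, and the latter the finiteness of $\int_0^\infty e^{-\lambda\omega(t)}\dt$ and $\int_0^\infty e^{-6\lambda\omega(t)}\dt$, all of which follow because $\omega(t)/\log t\to\infty$ forces $e^{-\nu\omega(t)}=O(t^{-2})$ for each $\nu>0$. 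Proposition~\ref{edgegeneral} then says precisely that $\operatorname{bv}(F)=0$ on $\mathcal{A}^h_{{}_{2\lambda}\omega}=\mathcal{A}^{h,2\lambda}_{\boldsymbol{\omega}}$ implies $F\in\mathcal{P}^R_{{}_{8\lambda}\omega}=\mathcal{P}^{R,8\lambda}_{\boldsymbol{\omega}}$.

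For part $(i)$ the scheme is the same, but now only $(\gamma)$ is available, i.e. $\omega(t)\ge c\log(1+t)+a$; hence the integrals in \eqref{condition} converge only when the relevant exponent strictly exceeds $1/c$, and this is exactly what forces the shift by $2c^{-1}$. Concretely, set $\mu=\lambda+2c^{-1}$ and apply Proposition~\ref{edgegeneral} with the weights ${}_\lambda\omega,\ {}_\mu\omega,\ {}_{4\mu}\omega$ and $P$ from Remark~\ref{remarksubadd} for this $\mu$. Then $\mu-\lambda=2c^{-1}$, so $e^{-(\mu-\lambda)\omega(t)}=e^{-2c^{-1}\omega(t)}=O((1+t)^{-2})$, and $e^{-3\mu\omega(t)}=O((1+t)^{-6})$ since $3\mu>6c^{-1}$; thus \eqref{condition} holds, while \eqref{condition0} holds because $\omega(t)\to\infty$. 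Proposition~\ref{edgegeneral} then gives that $\operatorname{bv}(F)=0$ on $\mathcal{A}^h_{{}_\mu\omega}=\mathcal{A}^{h,\lambda+2c^{-1}}_{\boldsymbol{\omega}}$ implies $F\in\mathcal{P}^R_{{}_{4\mu}\omega}=\mathcal{P}^{R,4\lambda+8c^{-1}}_{\boldsymbol{\omega}}$, as required.

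I do not anticipate a genuine obstacle; the only point deserving a remark is bookkeeping. In part $(i)$ the rescaled weights ${}_\mu\omega$ may fail the standing assumption $(\gamma)_0$ of Sections~\ref{sect-weight}--\ref{sect-support}, so one should note that the proof of Proposition~\ref{edgegeneral} (a Cauchy-integral/contour-deformation argument together with the fact that $\zeta\mapsto 1/((\zeta-z)P(\zeta))$ lies in $\mathcal{A}^h_\sigma$) uses only \eqref{condition0}, \eqref{condition}, and the existence of $P$, and so applies verbatim here. The rest is the mechanical matching of the factor $4$ between the two bounds on $|P|$ in Remark~\ref{remarksubadd} with the target weight $\kappa$, from which the constants $8\lambda$ and $4\lambda+8c^{-1}$ are read off.
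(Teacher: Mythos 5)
Your proposal is correct and is exactly the argument the paper intends: the corollary is stated as an immediate consequence of Proposition \ref{edgegeneral} combined with Remark \ref{remarksubadd}, specialized to the rescaled weights ${}_\lambda\omega,\ {}_\mu\omega,\ {}_{4\mu}\omega$ with $\mu=2\lambda$ under $(\gamma)_0$ and $\mu=\lambda+2c^{-1}$ under $(\gamma)$, which yields precisely the constants $8\lambda$ and $4\lambda+8c^{-1}$. Your closing observation that the proof of Proposition \ref{edgegeneral} only uses \eqref{condition0}, \eqref{condition}, and the existence of $P$ (so the standing assumption \eqref{fasterthanlog} is not needed in case $(i)$) is a correct and appropriate justification of what the paper leaves implicit.
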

Using exactly the same technique as in Sections \ref{sect-cohBeurling} and \ref{analyticrepresentationsroumieu} and applying Corollaries \ref{analquansub} and \ref{edgesub} (instead of Corollaries \ref{analquan} and \ref{edge}), one can show the ensuing theorem. We leave the details to the reader.   
\begin{theorem}\label{cohsub}
Let $\omega$ be a weight function satisfying $(\alpha)$. 
\begin{itemize}
\item[$(i)$] If $\omega$ satisfies $(\gamma)$, then the sequence
$$
0 \longrightarrow \mathcal{P}_{\boldsymbol{(\omega)}} \longrightarrow \mathcal{O}_{\boldsymbol{(\omega)}} \xrightarrow{\phantom,\operatorname{bv}\phantom,}  \mathcal{U}'_{\boldsymbol{(\omega)}}(\C) \longrightarrow 0
$$
is exact. 
\item[$(ii)$] If $\omega$ satisfies $(\gamma)_0$, then the sequence
$$
0 \longrightarrow \mathcal{P}_{\boldsymbol{\{\omega\}}} \longrightarrow \mathcal{O}_{\boldsymbol{\{\omega\}}} \xrightarrow{\phantom,\operatorname{bv}\phantom,}  \mathcal{A}'_{\boldsymbol{\{\omega\}}}(\R) \longrightarrow 0
$$
is topologically exact. Moreover, for every bounded set $B \subset \mathcal{A}'_{\boldsymbol{\{\omega\}}}(\R)$ there is a bounded set $A \subset \mathcal{O}_{\boldsymbol{\{\omega\}}}$ such that $\operatorname{bv}(A) = B$. 
\end{itemize}
\end{theorem}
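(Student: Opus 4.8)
The plan is to transcribe, step by step, the arguments of Subsections \ref{sect-cohBeurling} and \ref{analyticrepresentationsroumieu}, making throughout two systematic substitutions: Corollaries \ref{analquan} and \ref{edge} are replaced by their subadditive analogues Corollaries \ref{analquansub} and \ref{edgesub}, and wherever an analytic function on a strip with prescribed weighted lower and upper bounds was produced by Proposition \ref{funda}, one now invokes Remark \ref{remarksubadd} instead. Two simplifications occur because here the scaling parameter multiplies the weight ($\lambda\omega$) rather than dilating its argument ($\omega_\lambda$): the role played by condition $(\delta)$ in the density lemmas, and by condition $(\zeta)$ in the verification that the auxiliary spaces are $(FS)$- or $(DFS)$-spaces, is in both cases taken over by the single fact that $\omega(t)\to\infty$, which is part of $(\gamma)$. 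I also note that the cut-off multipliers needed in the density lemmas are available for free: subadditivity forces $\omega(t)=O(t)$, hence the continuous inclusion $\mathcal{S}^{(p!)}_{(p!)}=\mathcal{U}_{(t)}(\C)\subseteq\mathcal{U}_{\boldsymbol{(\omega)}}(\C)$, and subadditivity also makes $\mathcal{U}_{\boldsymbol{(\omega)}}(\C)$ invariant under translations and dilations, so one may fix a $\psi\in\mathcal{U}_{\boldsymbol{(\omega)}}(\C)$ with $\psi(0)=1$.

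For part $(i)$ I would fix $0<h<R$ and first establish the analogue of Lemma \ref{denseinf}: for $0<\nu<\mu$ the space $\varprojlim_{\kappa\to\infty}\mathcal{A}^{h,\kappa}_{\boldsymbol{\omega}}$ is dense in $\mathcal{A}^{h,\mu}_{\boldsymbol{\omega}}$ for the norm $\|\cdot\|^{h,\nu}_{\boldsymbol{\omega}}$, via $\varphi_n(z)=\varphi(z)\psi(z/n)$ and the bound $\|\varphi-\varphi_n\|^{h,\nu}_{\boldsymbol{\omega}}\le\|\varphi\|^{h,\mu}_{\boldsymbol{\omega}}\sup_{z\in T^h}|1-\psi(z/n)|\,e^{-(\mu-\nu)\omega(x)}\to0$, the factor $e^{-(\mu-\nu)\omega(x)}$ replacing the use of $(\delta)$. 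Combining this with Corollaries \ref{analquansub}$(i)$ and \ref{edgesub}$(i)$, the Hahn--Banach theorem (to extend a functional on $\mathcal{U}_{\boldsymbol{(\omega)}}(\C)$ to some $(\mathcal{A}^{k,\lambda}_{\boldsymbol{\omega}})'$) and the Pt\'ak open mapping theorem gives, exactly as in Proposition \ref{Beurlinglocalcoh}, the algebraic exactness of the local version of the sequence (with $h$ and $R$ fixed and $(\varprojlim_\lambda\mathcal{A}^{h,\lambda}_{\boldsymbol{\omega}})'$ in place of $\mathcal{U}'_{\boldsymbol{(\omega)}}(\C)$). A Mittag--Leffler construction over $R\to\infty$, using the analogue of Lemma \ref{runge} (same multiplication trick), then yields globally defined analytic representations as in Proposition \ref{Beurlinglocalcoh2}, and the remaining union over $h>0$ is purely set-theoretic, as in Theorem \ref{Beurlingcoh}; this proves the exactness of $0\to\mathcal{P}_{\boldsymbol{(\omega)}}\to\mathcal{O}_{\boldsymbol{(\omega)}}\xrightarrow{\operatorname{bv}}\mathcal{U}'_{\boldsymbol{(\omega)}}(\C)\to0$.

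For part $(ii)$ I would follow Subsection \ref{analyticrepresentationsroumieu} essentially verbatim. Since $\omega(t)\to\infty$, Lemma \ref{compactanalrepr} already makes all the reduced projective building blocks $\mathcal{O}^{h,R,\lambda}_{\boldsymbol{\omega}}$ and $\mathcal{P}^{R,\lambda}_{\boldsymbol{\omega}}$ into $(FS)$-spaces, so no substitute for condition $(\zeta)$ is needed (the Petzsche--Vogt regularization, Lemma \ref{equivalentweightsub}, may be used to upgrade $\omega$ but is not required here). Corollary \ref{edgesub}$(ii)$ gives the kernel statement; the surjectivity of $\operatorname{bv}$ for fixed $h$ follows from Corollary \ref{analquansub}$(ii)$, the density lemma analogous to Lemma \ref{denseRoumieu} (same cut-off, again using $e^{-(\mu-\nu)\omega(x)}\to0$) and De Wilde's Lemma \ref{intersection}, just as in Proposition \ref{Roumieulocalcoh}; the open mapping theorem then gives topological exactness. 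Two successive Mittag--Leffler arguments \cite[Lemma 1.3]{Komatsu}, first over $R\to\infty$ and then over $h\to0^+$ (as in Proposition \ref{Roumieulocalcoh2} and Theorem \ref{Roumieucoh}), upgrade this to the topological exactness of $0\to\mathcal{P}_{\boldsymbol{\{\omega\}}}\to\mathcal{O}_{\boldsymbol{\{\omega\}}}\xrightarrow{\operatorname{bv}}\mathcal{A}'_{\boldsymbol{\{\omega\}}}(\R)\to0$, and the bounded-set assertion then follows from \cite[Lemma 26.13]{Meise} applied to this exact sequence of Fr\'echet spaces with $(FS)$-kernel, exactly as in the proof of Theorem \ref{Roumieucoh}.

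The one point that is not entirely mechanical, and which I regard as the main obstacle, is that in part $(i)$ one may not shortcut the argument via the identification $\mathcal{U}_{\boldsymbol{(\omega)}}(\C)=\mathcal{U}_{(\omega)}(\C)$ together with Theorem \ref{Beurlingcoh}, since that identification requires $(\delta)$, which can genuinely fail under $(\gamma)$ alone (for example $\omega(t)=\log(1+t)$, the case of Silva's tempered ultrahyperfunctions). Hence the whole Beurling coherence theory must be re-derived directly for the multiplicative parameter, which is what the plan above does --- with the incidental benefit that, as indicated, the density estimates actually become simpler.
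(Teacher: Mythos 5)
Your proposal is correct and follows exactly the route the paper intends: Theorem \ref{cohsub} is proved there by repeating the arguments of Sections \ref{sect-cohBeurling} and \ref{analyticrepresentationsroumieu} with Corollaries \ref{analquansub} and \ref{edgesub} in place of Corollaries \ref{analquan} and \ref{edge} (and Remark \ref{remarksubadd} in place of Proposition \ref{funda}), the details being left to the reader. Your added observations --- that $(\delta)$ and $(\zeta)$ are no longer needed because the multiplicative parameter gives $(\mu-\nu)\omega(t)\to\infty$ directly, and that cut-off functions are available since $\mathcal{U}_{\boldsymbol{(t)}}(\C)\subseteq\mathcal{U}_{\boldsymbol{(\omega)}}(\C)$, so that the Beurling case cannot be reduced to Theorem \ref{Beurlingcoh} but must be redone with the parameter $\lambda\omega$ --- are precisely the details one has to supply, and they are accurate.
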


Let us briefly discuss the notion of support in this new setting. Define $\mathcal{O}^{h,R}_{{}_\lambda \omega}(\Omega) = \mathcal{O}^{h,R, \lambda}_{\boldsymbol{\omega}}(\Omega)$, and
$$
\mathcal{O}_{\boldsymbol{(\omega)}}(\Omega+i\mathbb{R}) = \bigcup_{\lambda,h >0}\bigcap_{R >h} \mathcal{O}_{\boldsymbol{\omega}}^{h,R, \lambda}(\Omega), \qquad \mathcal{O}_{\boldsymbol{\{\omega\}}}(\Omega) = \varprojlim_{\lambda \to 0^+} \varprojlim_{h \to 0^+} \varprojlim_{R \to \infty} \mathcal{O}_{\boldsymbol{\omega}}^{h,R, \lambda}(\Omega).
$$
We suppose that $\omega$ satisfies $(\alpha)$ and $(\gamma)$ ($(\gamma)_0$). Vanishing of $f\in \mathcal{U}'_{\boldsymbol{(\omega)}}(\C)$ $(f \in \mathcal{A}'_{\boldsymbol{\{\omega\}}}(\R))$ on an open set $\Omega\subseteq \overline{\R}$ means that there is $F \in \mathcal{O}_{\boldsymbol{(\omega)}}(\Omega+i\mathbb{R})$ $(F \in \mathcal{O}_{\boldsymbol{\{\omega\}}}(\Omega))$ such that $\operatorname*{bv}(F)= f$. The definition of $\operatorname*{supp}_{\overline{\mathbb{R}}}f$ ($\operatorname*{supp}f$) should now be clear. 

We shall adopt the same kind of notations as in Section \ref{sect-support} for the rest of the spaces defined there by simply replacing $\omega$ by $\boldsymbol{\omega}$. Furthermore, all results from that section remain valid in our new context; in fact, due to Remark  \ref{remarksubadd} and the general formulation of Proposition \ref{analyticreprgeneral2}, same proofs apply here. In particular, we state the support separation theorem for future reference.
\begin{theorem}\label{splittingsub}
Let $-\infty < a \leq b < \infty$ and let $\omega$ be a weight function satisfying $(\alpha)$.
\begin{itemize}
\item[$(i)$] If $\omega$ satisfies $(\gamma)$, then the  sequence 
$$
0 \longrightarrow \mathcal{U}'_{\boldsymbol{(\omega)}}[[a,b] + i\R] \longrightarrow \mathcal{U}'_{\boldsymbol{(\omega)},a+}\oplus \mathcal{U}'_{\boldsymbol{(\omega)},b-} \xrightarrow{\phantom-\lambda\phantom-}  \mathcal{U}'_{\boldsymbol{(\omega)}}(\C) \longrightarrow 0
$$
is exact, where $\lambda((f_1,f_2)) = f_1 - f_2$.
\item[$(ii)$] If $\omega$ satisfies $(\gamma)_0$, then the sequence 
$$
0 \longrightarrow \mathcal{A}'_{\boldsymbol{\{\omega\}}}[[a,b]] \longrightarrow \mathcal{A}'_{\boldsymbol{\{\omega\}},a+} \oplus \mathcal{A}'_{\boldsymbol{\{\omega\}},b-} \xrightarrow{\phantom-\lambda\phantom-}  \mathcal{A}'_{\boldsymbol{\{\omega\}}}(\R) \longrightarrow 0
$$
is topologically exact. 
\end{itemize}
\end{theorem}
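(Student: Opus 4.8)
The plan is to rerun, essentially verbatim, the proofs of Theorems~\ref{splittingBeurling} and~\ref{splittingRoumieu}, only substituting the subadditive analogues of the tools used there: Theorem~\ref{cohsub} in place of Theorems~\ref{Beurlingcoh}--\ref{Roumieucoh} for the global analytic representations, Corollaries~\ref{analquansub} and~\ref{edgesub} in place of Corollaries~\ref{analquan} and~\ref{edge} for the local representation and edge-of-the-wedge results, the general (arbitrary-weight) formulation of Proposition~\ref{analyticreprgeneral2} together with the zero-free functions furnished by Remark~\ref{remarksubadd} in place of Corollary~\ref{analquanlocal}, and, for part~$(ii)$, the subadditive counterpart of Theorem~\ref{dualcharacterization}. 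One structural point is worth settling at the outset: since $(\alpha)$ and $(\gamma)$ (a fortiori $(\gamma)_0$) force $\omega(t)\to\infty$, Lemma~\ref{compact} already yields that all the inductive-limit spaces occurring in the argument are $(DFS)$-spaces; hence, in contrast with Section~\ref{sect-support}, there is no need to replace $\omega$ by an equivalent weight satisfying $(\zeta)$. (Recall also that $(\alpha)$ gives $\omega(t)=O(t)$, so $(\epsilon)_0$ holds automatically and the test function spaces are non-trivial.)

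First I would settle exactness at the middle term, which in each case reduces to the identity $\mathcal{U}'_{\boldsymbol{(\omega)}}[[a,b]+i\R]=\mathcal{U}'_{\boldsymbol{(\omega)},a+}\cap\mathcal{U}'_{\boldsymbol{(\omega)},b-}$ (resp.\ its $\{\omega\}$-analogue), since $\ker\lambda$ is the diagonal copy of this intersection. This follows from Theorem~\ref{cohsub}: if $f=\operatorname{bv}(F)$ vanishes both on $[-\infty,a)$ and on $(b,\infty]$, then its global analytic representative $F$ extends holomorphically, with the requisite weighted bounds, across each of these two half-lines, hence across their union, so that $\operatorname{supp}f\subseteq[a,b]$; the reverse inclusion is trivial.

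It remains to prove that $\lambda$ is surjective, and here I would follow the scheme of the proof of Theorem~\ref{splittingBeurling}. Given $f$ in the target space, choose $h,\lambda$ so that $f$ is continuous on the germ-type $(DFS)$-space $X^{h,\lambda}$ built from the $\mathcal{A}^{k,\mu}_{\boldsymbol{\omega}}$, and introduce the one-sided variants $X^{h,\lambda}_{a+}$ and $X^{h,\lambda}_{b-}$ whose domains additionally contain a neighbourhood of $[a,\infty]$, resp.\ $[-\infty,b]$. Applying Proposition~\ref{analyticreprgeneral2} (with Remark~\ref{remarksubadd}) together with a Mittag-Leffler gluing exactly as in Proposition~\ref{Beurlinglocalcoh2}, one checks that every element of $(X^{h,\lambda}_{a+})'$ equals $\operatorname{bv}(G)$ for some $G$ holomorphic off $[-\infty,a]+i\R$ with the right bounds, hence lies in $\mathcal{U}'_{\boldsymbol{(\omega)},a+}$, and symmetrically on the other side. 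Thus surjectivity of $\lambda$ reduces to the surjectivity of $(X^{h,\lambda}_{a+})'\times(X^{h,\lambda}_{b-})'\to(X^{h,\lambda})'$, $(f_1,f_2)\mapsto f_1+f_2$. By Lemma~\ref{checksurj}, applicable because these are semireflexive $(DF)$-spaces, this is equivalent to the restriction map $\rho\colon X^{h,\lambda}\to X^{h,\lambda}_{a+}\times X^{h,\lambda}_{b-}$ being injective with closed range; injectivity is immediate since the two half-strips cover the whole strip (as $a\leq b$), and $\rho$ is in fact a topological embedding because the full-strip weighted sup-norm is the maximum of the two half-strip ones, whence its range is closed by completeness of the $(DFS)$-space $X^{h,\lambda}$. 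For part~$(ii)$ one then upgrades ``exact'' to ``topologically exact'' just as in Theorem~\ref{splittingRoumieu}: the identification $(\mathcal{A}_{\boldsymbol{\{\omega\}}}[K])'=\mathcal{A}'_{\boldsymbol{\{\omega\}}}[K]$ exhibits the relevant duals as strong duals of explicit $(FS)$-spaces, and the open mapping theorem (or one further application of the Mittag-Leffler lemma) finishes the job.

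The main obstacle I anticipate is purely the quantitative bookkeeping in the two Mittag-Leffler gluings: one must verify that the constant losses recorded in Corollaries~\ref{analquansub} and~\ref{edgesub} (the factors $4\lambda$, $8\lambda$, $2c^{-1}$, \ldots) still leave enough room for the iterated corrections to be summed against the weights, so that the corrected representatives converge in the intended spaces, and one must keep precise track of the $(DFS)$/$(FS)$ structure needed for Lemmas~\ref{intersection} and~\ref{checksurj}. Beyond this, the argument is a routine transcription of Section~\ref{sect-support}.
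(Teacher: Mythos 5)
Your proposal is correct and takes essentially the same route as the paper, whose own proof of this theorem consists precisely in rerunning the arguments of Theorems \ref{splittingBeurling} and \ref{splittingRoumieu} with the subadditive substitutes you list (Theorem \ref{cohsub}, the $\boldsymbol{\omega}$-version of Theorem \ref{dualcharacterization}, Remark \ref{remarksubadd} together with the general Proposition \ref{analyticreprgeneral2}, and Lemma \ref{checksurj}); your observation that the $(\zeta)$-reduction is superfluous here, since the weights ${}_\mu\omega$ scale multiplicatively and $(\gamma)$ already makes the linking maps compact, is an accurate simplification. One notational slip only: the analytic representative $G$ of an element of $(X^{h,\lambda}_{a+})'$ is holomorphic outside a half-strip around $[a,\infty]$ (hence analytic across $[-\infty,a)$), not ``off $[-\infty,a]+i\R$'', though the conclusion you draw from it, $\operatorname{bv}(G)\in\mathcal{U}'_{\boldsymbol{(\omega)},a+}$, is the intended and correct one.
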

\section{Boundary values of holomorphic functions in spaces of ultradistributions of exponential type} \label{boundary ultra exponential type}\label{section laplace}
This last section is devoted to boundary values of holomorphic functions in the spaces $\mathcal{K}'_{1,\boldsymbol{(\omega)}}(\R)$ and $\mathcal{K}'_{1,\boldsymbol{\{\omega\}}}(\R)$. Our main result is a representation theorem for these two spaces as quotients of certain spaces of analytic functions (Theorem \ref{cohsubexpultradistributions}). Our arguments rely on the study of the Laplace transform of (ultra)hyperfunctions of fast growth supported in a proper interval of $\overline{\R}$ and ideas from the theory of almost analytic extensions \cite{PilipovicK, Petzsche}. 

Let us fix some notation. We shall write for complex variables 
 $\zeta = \xi + i \eta$, $z=x+iy$, and
$$
\overline{\partial} = \frac{\partial}{\partial \bar{z}} =\frac{1}{2}\left( \frac{\partial}{\partial x} + i\frac{\partial}{\partial y}  \right).
$$
The following condition for weight functions plays a role below:
\begin{itemize}
\item[$(NA)$] $\omega(t) = o(t)$.
\end{itemize}

\subsection{Laplace transforms} \label{subsection laplace} We discuss here how to define the (Fourier-)Laplace transform of ultrahyperfunctions of $\boldsymbol{(\omega)}$-type and hyperfunctions of $\boldsymbol{\{\omega\}}$-type with support in a proper closed interval $I$ of $\overline{\R}$. We assume that $\omega$ satisfies $(\alpha)$ and $(\gamma)$. 
   
 Given  $f \in \mathcal{U}'_{\boldsymbol{(\omega)}}[I + i\R]$, we define its Laplace transform as 
\begin{equation}\label{eqlaplacedef}
 \mathcal{L}\{f; \zeta\} = -\frac{1}{2\pi}\int_{\Gamma^{b}(J)}F(z)e^{iz\zeta}\dz, 
\end{equation}
for $\zeta \in \C$ in a suitable domain to be specified below and where $F \in \mathcal{O}^{h,R}_{\boldsymbol{(\omega)}}(\R \backslash I)$ is an analytic representation of $f$, that is, $\operatorname{bv}(F) = f$, $ h < b < R $, and $J$ an interval in $\overline{\R}$ such that $I \Subset J$. The definition is clearly independent of the chosen representative of $F$. In the rest of our discussion, we distinguish three cases.

\emph{Case 1: a bounded interval $I = [-a,a]$, $0 \leq a < \infty$}. In this case (\ref{eqlaplacedef}) is defined for all $\zeta\in\mathbb{C}$. In fact, $\mathcal{L}\{f; \zeta\}$ is an entire function that satisfies the estimate: there is $h > 0$ such that for every $\varepsilon > 0$ 
\begin{equation}
\sup_{\zeta \in \C} |\mathcal{L}\{f; \zeta\}|e^{-(a + \varepsilon)|\eta| - (h+\varepsilon)|\xi|} < \infty.
\label{laplace1}
\end{equation}
Conversely, it is well known \cite[Thm.\ 2.5.2]{Morimoto2} that if an entire function $G$ satisfies the bound \eqref{laplace1}, then it is the Laplace transform of an analytic functional; more precisely, there is $f  \in \mathcal{O}'[I + i[-h,h]]\subset \mathcal{U}'_{\boldsymbol{(\omega)}}[I + i\R]$ such that $G(\zeta) = \mathcal{L}\{f; \zeta\}$. If $\omega$ satisfies $(\gamma)_0$ and $f \in \mathcal{A}'_{\boldsymbol{\{\omega\}}}[I]=\mathcal{A}'[I]$, then $\mathcal{L}\{f; \zeta\}$ satisfies \eqref{laplace1} for every $h,\varepsilon > 0$, and the converse holds true: if an entire function $G$ satisfies \eqref{laplace1} for every $h, \varepsilon > 0$, then there is $f  \in \mathcal{A}'[I]$ such that $G(\zeta) = \mathcal{L}\{f; \zeta\}$. 

\emph{Case 2: a left-bounded interval $I = [-a,\infty]$, $0 \leq a<\infty$}. As was pointed out in Subsection \ref{subsection subadditive}, either $\omega$ satisfies $(NA)$ or $\omega(t) \asymp t$. 

First assume that $\omega$ satisfies $(NA)$. Then, $\mathcal{L}\{f; \zeta\}$ is analytic on the upper half-plane $\operatorname*{Im} \zeta=\eta>0$ and satisfies the bound: there are $\lambda, h > 0$ such that for every $\varepsilon > 0$ 
\begin{equation}
\sup_{\eta > 0} |\mathcal{L}\{f; \zeta\}|e^{-(a + \varepsilon)\eta - (h+\varepsilon)|\xi| -\lambda \omega^*(\eta/\lambda)} < \infty.
\label{laplace2}
\end{equation}
Moreover, the Laplace transform has as boundary value on $\mathbb{R}$ the inverse Fourier transform of $f$, namely,
$$
\lim_{\eta \to 0^+} \mathcal{L}\{f; \cdot + i \eta\} =g, \qquad \mbox{in } \mathcal{K}'_{1,\boldsymbol{(\omega)}}(\R),
$$
where $f=\widehat{g}$.
If $\omega$ satisfies $(\gamma)_0$ and $f \in \mathcal{A}'_{\boldsymbol{\{\omega\}}}[I]$, then $\mathcal{L}\{f; \zeta\}$ satisfies \eqref{laplace2} for every $\lambda, h, \varepsilon > 0$ and 
$$
\lim_{\eta \to 0^+} \mathcal{L}\{f; \cdot + i \eta\} = g, \qquad \mbox{in } \mathcal{K}'_{1,\boldsymbol{\{\omega\}}}(\R).
$$

Next, assume $\omega(t) \asymp t$, so that $\mathcal{K}_{1,\boldsymbol{(\omega)}}(\R) = \mathcal{U}_{\boldsymbol{(t)}}(\C)$ and $\mathcal{K}_{1,\boldsymbol{\{\omega\}}}(\R) = \mathcal{A}_{\boldsymbol{\{t\}}}(\R)$ are invariant under the Fourier transform. In the Beurling case there are $\lambda, h > 0$ such that $\mathcal{L}\{f; \zeta\}$ is a holomorphic function on the half-plane $\{\zeta = \xi + i\eta \, : \, \eta > \lambda\}$ and satisfies 
\begin{equation}
\sup_{\eta > \lambda} |\mathcal{L}\{f; \zeta\}|e^{-(a + \varepsilon)\eta - (h+\varepsilon)|\xi|} < \infty.
\label{laplaceBeurling}
\end{equation}
If we set
$$
G(\zeta) =
\left\{
	\begin{array}{ll}
		\mbox{$\displaystyle \mathcal{L}\{f; \zeta\}$},  &  \eta > \lambda, \\ \\
		\mbox{$0$},  & \eta < -\lambda,
	\end{array}
\right.
$$
then $\operatorname{bv}(G) = g$ in $\mathcal{U}'_{\boldsymbol{(t)}}(\C)$ (in the sense of Section \ref{subreview}), where again $f=\widehat{g}$. In the Roumieu case $\mathcal{L}\{f; \zeta\}$ is analytic on the upper half-plane and satisfies \eqref{laplaceBeurling} for every $\lambda, h,\varepsilon > 0$. Furthermore, $\operatorname{bv}(G) = g$ in $\mathcal{A}'_{\boldsymbol{\{t\}}}(\R)$ where now
$$
G(\zeta) =
\left\{
	\begin{array}{ll}
		\mbox{$\displaystyle \mathcal{L}\{f; \zeta\}$},  &  \eta > 0, \\ \\
		\mbox{$0$},  & \eta < 0.
	\end{array}
\right.
$$

\emph{Case 3: a right-bounded interval $I = [-\infty,a]$, $ 0 \leq a < \infty$.} Here the treatment is completely analogous to case $2$ but with the upper half-planes replaced by lower ones.

\subsection{Analytic representations of ultradistributions of exponential type} Our next aim is to study boundary values of holomorphic function in the spaces $\mathcal{K}'_{1,\boldsymbol{(\omega)}}(\R)$ and $\mathcal{K}'_{1,\boldsymbol{\{\omega\}}}(\R)$. For it, we need the following modified version of a construction of almost analytic extensions by Petzsche and Vogt \cite[Prop.\ 2.2]{Petzsche}.
\begin{lemma}\label{almostanalyticextensionlemma}
Let $0 < k < h$ and let $\omega$ be a weight function satisfying the conditions $(i)$-$(iv)$ of Lemma \ref{equivalentweightsub}, and let $\sigma$ be another weight function such that
\begin{equation}
\int_0^\infty te^{\omega(t) - \sigma(t)}\dt < \infty.
\label{conditionalmostanalyticextension}  
\end{equation}
Then, for every $\varphi \in \mathcal{A}_\sigma^h$  there is $\Psi \in C^\infty(\C)$ with $\Psi_{|\R} = \widehat{\varphi}$ such that
\begin{equation}
|\overline{\partial}\Psi(\zeta)| \leq \| \varphi \|^h_\omega e^{-k|\xi|}|(\omega^*)''(\eta)|e^{-\omega^*(\eta)}, \qquad \zeta\in \C \backslash \R,
\label{inequality1}
\end{equation}
and
\begin{equation} 
|\Psi(\zeta)| \leq C\| \varphi \|^h_\sigma e^{-k|\xi|}, \qquad \zeta \in \C,
\label{inequality2}
\end{equation}
where 
$$
C = 2\int_0^\infty e^{\omega(t) - \sigma(t)}\dt.
$$
\end{lemma}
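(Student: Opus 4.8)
The plan is to adapt the truncated Taylor series construction of almost analytic extensions from \cite[Prop.\ 2.2]{Petzsche} to the present situation, where the function being extended is $\widehat{\varphi}$, where one has the extra exponential decay $e^{-k|\xi|}$ at one's disposal (because $\varphi$ is analytic on $T^h$ and $k<h$), and where the two weights must be kept apart: $\omega$ will govern $\overline{\partial}\Psi$ while $\sigma$ governs $|\Psi|$. Concretely, I would fix $\phi\in C_c^\infty(\R)$ with $0\le\phi\le 1$ and $\phi\equiv 1$ near $0$, choose a sequence $\eta_n\downarrow 0$ (the choice being the heart of the matter; see below), and put
\[
\Psi(\xi+i\eta)=\sum_{n=0}^\infty\frac{\widehat{\varphi}^{(n)}(\xi)}{n!}\,(i\eta)^n\,\phi(\eta/\eta_n).
\]
Since $\phi(0)=1$, only the $n=0$ term survives at $\eta=0$, so $\Psi_{|\R}=\widehat{\varphi}$; near any $\eta_0\ne 0$ the series has only finitely many nonzero terms, and it converges near $\eta=0$ once the $\eta_n$ are small enough, so $\Psi\in C^\infty(\C)$. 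It is convenient to rewrite $\Psi$, using the contour-shifted formula $\widehat{\varphi}^{(n)}(\xi)=\int_\R(-iz)^n\varphi(z)\,e^{-iz\xi}\dz$ along the line $\operatorname{Im}z=-k\operatorname{sgn}\xi$ (valid by analyticity of $\varphi$ on $T^h$ and its rapid decay), as $\Psi(\xi+i\eta)=\int_\R E_\eta(z)\,\varphi(z)\,e^{-iz\xi}\dz$ on that line, with $E_\eta(z)=\sum_n\frac{(\eta z)^n}{n!}\phi(\eta/\eta_n)$ a truncated exponential.

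The choice of $\eta_n$ is made in terms of the Young conjugate $\omega^*$. Conditions $(i)$--$(iv)$ of Lemma \ref{equivalentweightsub} guarantee that $\omega'$ is a decreasing bijection of $(0,\infty)$ and that $\omega^*\in C^\infty(0,\infty)$ with $(\omega^*)'(\eta)=-(\omega')^{-1}(\eta)$ and $(\omega^*)''(\eta)=1/|\omega''((\omega')^{-1}(\eta))|>0$; these are exactly the regularity facts one needs. I would take the $\eta_n$ so that $(2\eta_n)^n/n!=e^{-\mu_n}$, where $\mu_n=\sup_{u>0}(n\log u-\omega(u))$ is the (finite, $\omega$ outgrowing $\log$) conjugate of $\omega$ with respect to $\log$; this is precisely the device (standard for relating weight functions to weight sequences) for which each retained monomial of $E_\eta(z)$ is bounded by $|\operatorname{Re}z|^n e^{-\mu_n}\le e^{\omega(|\operatorname{Re}z|)}$ whenever $\phi(\eta/\eta_n)\ne 0$. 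Granting a summability gain from the shape of $\phi$, this gives $|E_\eta(z)|\lesssim e^{\omega(|\operatorname{Re}z|)}$ uniformly in $\eta$; inserting this into the integral representation and using $|\varphi|\le\|\varphi\|^h_\sigma e^{-\sigma}$ (and subadditivity of $\omega$ to absorb the shift) yields the size bound $|\Psi(\zeta)|\le e^{-k|\xi|}\|\varphi\|^h_\sigma\int_\R e^{\omega(s)-\sigma(s)}\,ds=C\|\varphi\|^h_\sigma e^{-k|\xi|}$, finite by \eqref{conditionalmostanalyticextension}.

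For \eqref{inequality1} I would differentiate the series: since the untruncated expansion is formally $\widehat{\varphi}(\xi+i\eta)$, its $\partial_\xi$ and $\partial_\eta$ parts cancel, and $2\,\overline{\partial}\Psi(\xi+i\eta)$ reduces to a sum in which every term carries a factor $\phi(\eta/\eta_n)-\phi(\eta/\eta_{n+1})$ or $\phi'(\eta/\eta_n)/\eta_n$, hence is supported in $\{\,|\eta|\asymp\eta_n\,\}$. For $n$ in that range one estimates $\tfrac{|\eta|^n}{n!}|\widehat{\varphi}^{(n)}(\xi)|\lesssim\|\varphi\|^h_\omega e^{-k|\xi|}\sqrt{(\omega^*)''(\eta)}\,e^{-\omega^*(\eta)}$ (Laplace's method for $\int u^n e^{-\omega(u)}\,du\asymp e^{\mu_n}\sqrt{(\omega^*)''(\eta)}$ together with the Stirling-type identity $\tfrac{|\eta|^n}{n!}e^{\mu_n}\asymp n^{-1/2}e^{-\omega^*(\eta)}$ when $|\eta|\asymp\eta_n$), and the remaining factor $1/\eta_n\asymp 1/|\eta|$, summed over the $\asymp\,$-many indices $n$ with $\eta_n\asymp|\eta|$, supplies the missing power of $\sqrt{(\omega^*)''(\eta)}$; the precise choices of $\phi$ and $\eta_n$ then pin the constant down to $1$.

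The step I expect to be the main obstacle is exactly this last one: arranging the scales $\eta_n$ so that differentiating the cutoffs reproduces the weight $|(\omega^*)''(\eta)|$ with no loss, which forces one to control the density of the sequence $\eta_n$ and the size of the relevant partial sums. This is precisely what the regularity packaged in Lemma \ref{equivalentweightsub} — strict concavity together with $(iv)$ (or $(iv)'$) — is for; by contrast, extracting the decay $e^{-k|\xi|}$ and separating the roles of $\omega$ and $\sigma$ is routine bookkeeping once the cutoffs are in place, and condition \eqref{conditionalmostanalyticextension}, with its weight $t$, is what secures both the absolute convergence (hence smoothness) of the truncated series and the finiteness of $C$.
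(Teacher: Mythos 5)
Your construction is genuinely different from the paper's, and the difference matters: the paper does not truncate a Taylor series in $\eta$ at all. It defines $\Psi$ by truncating the (contour-shifted) Fourier integral itself, $\Psi(\zeta)=\int_{-H(\eta)}^{H(\eta)}\varphi(x\pm ik)e^{-i(x\pm ik)\zeta}\dx$ with $H=(\omega')^{-1}$ and the sign of the shift chosen according to $\operatorname{sgn}\xi$. Since the integrand is entire in $\zeta$, $\overline{\partial}\Psi$ consists solely of the boundary terms produced by differentiating the limits $\pm H(\eta)$, so the factor $|H'(\eta)|=|(\omega^*)''(\eta)|$ and the value $e^{-\omega(H(\eta))+H(\eta)|\eta|}=e^{-\omega^*(\eta)}$ appear \emph{exactly}, with $e^{-k|\xi|}$ coming from the shifted contour; the bound \eqref{inequality2} then follows from $x|\eta|\leq x\omega'(x)\leq\omega(x)$ for $0<x<H(\eta)$ together with \eqref{conditionalmostanalyticextension}. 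This is why the statement can be proved with the clean constants it asserts.

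In your proposal the decisive step — the estimate \eqref{inequality1} — is exactly the one you leave unproved, and as written it would not go through. First, the statement demands the constant $\|\varphi\|^h_\omega$ and the precise factor $|(\omega^*)''(\eta)|e^{-\omega^*(\eta)}$; your chain of ``$\asymp$'' relations (Laplace's method for $\int u^n e^{-\omega(u)}\du$, the Stirling-type identity linking $|\eta|^n e^{\mu_n}/n!$ to $e^{-\omega^*(\eta)}$, the count of active indices) can at best yield bounds up to multiplicative constants and parameter losses, so even if completed it proves a weaker lemma, not this one. Second, and more seriously, those asymptotics are not justified under hypotheses $(i)$--$(iv)$ alone: $\mu_n$ is the conjugate of $\omega$ with respect to $\log$, not the Young conjugate, and the curvature entering Laplace's method at the critical point of $\omega(u)-n\log u$ is not $(\omega^*)''(\eta)$; moreover no lower bound on $(\omega^*)''$ is available (condition $(iv)$ only gives $(\omega^*)''(\eta)\lesssim H(\eta)^2$), so the factor $1/\eta_n\asymp 1/|\eta|$ produced by differentiating the cutoffs cannot in general be traded for the possibly very small quantity $|(\omega^*)''(\eta)|$, no matter how the scales $\eta_n$ and the cutoff $\phi$ are tuned. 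Since this is precisely the point you flag as ``the main obstacle'' and then assert rather than prove, the argument has a genuine gap; the paper's truncated-integral construction is the device that removes it.
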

\begin{proof}
The assumptions on $\omega$ imply that $\omega'$ is a smooth bijection on $(0,\infty)$. Set $H = (\omega')^{-1} \in C^\infty(0,\infty)$ and observe that
$$
\omega^*(s) = \omega(H(s)) - s H(s), \qquad s > 0.
$$
Differentiation shows that 
$$
(\omega^*)'(s) = -H(s), \qquad s > 0.
$$
We set $H(s) = H(|s|)$ for $s \in \R$, $s \neq 0$. Furthermore, since  $\omega$ is concave and increasing, we have that
\begin{equation}
t\omega'(t) \leq \omega(t), \qquad t \geq 0.
\label{concave}
\end{equation}
The rest of the proof is based on the following representation of the Fourier transform of $\varphi$ \cite[p.\ 167]{Hoskins}
$$
\widehat{\varphi}(\xi) =
\left\{
	\begin{array}{ll}
		\mbox{$\displaystyle \int_{-\infty}^\infty \varphi(x+ik)e^{-i(x+ik)\xi} \dx$},  &  \xi \leq 0, \\ \\
		\mbox{$\displaystyle \int_{-\infty}^\infty \varphi(x-ik)e^{-i(x-ik)\xi} \dx$},  & \xi \geq 0.
	\end{array}
\right.
$$
For $\zeta \in \C \backslash \R$ we define
$$
\Psi(\zeta) =
\left\{
	\begin{array}{ll}
		\mbox{$\displaystyle \int_{-H(\eta)}^{H(\eta)} \varphi(x+ik)e^{-i(x+ik)\zeta} \dx$},  &  \xi \leq 0, \\ \\
		\mbox{$\displaystyle \int_{-H(\eta)}^{H(\eta)} \varphi(x-ik)e^{-i(x-ik)\zeta} \dx$},  & \xi \geq 0,
	\end{array}
\right.
$$
and $\Psi(\xi) = \widehat{\varphi}(\xi)$ for $\xi \in \R$. Clearly, $\Psi \in C^\infty(\C\backslash \R)$. Employing \eqref{conditionalmostanalyticextension} and \eqref{concave} one can readily prove that  $\Psi \in C^1(\C)$ with ${\overline {\partial}} \Psi(\xi) = 0$, $\xi \in \R$, and thus $\Psi \in C^\infty(\C)$. We now show \eqref{inequality1}. Let $\zeta \in \C \backslash \R$ and assume $\xi \leq 0$, the case $\xi \geq 0$ can be treated similarly. Using the remarks given at the beginning of the proof, we have
\begin{align*}
|{\overline{\partial}}\Psi(\zeta)| &\leq \frac{1}{2}|H'(\eta)(\varphi(H(\eta) + ik)e^{-i(H(\eta)+ik)\zeta} + \varphi(-H(\eta) + ik)e^{-i(-H(\eta)+ik)\zeta})|  
\\
 & 
 \leq \| \varphi\|^h_\omega e^{-k|\xi|} |H'(\eta)|e^{-\omega(H(\eta))+ H(\eta)|\eta|} 
\\
& 
= \| \varphi \|^h_\omega e^{-k|\xi|}|(\omega^*)''(\eta)|e^{-\omega^*(\eta)}.
\end{align*}
It remains to establish \eqref{inequality2}. By continuity, it suffices to show this for $\zeta \in \C \backslash \R$. We  assume that $\xi \leq 0$,  the case $\xi \geq 0$ is similar. Then,
$$
|\Psi(\zeta)| \leq 2\| \varphi \|^h_\sigma e^{-k|\xi|}\int_0^{H(\eta)} e^{-\sigma(x) + x|\eta|} \dx.
$$
Notice that for $0 < x < H(\eta)$ we have $\omega'(x) > |\eta|$. Therefore, applying \eqref{concave}, we obtain that
$$
\int_0^{H(\eta)} e^{-\sigma(x) + x|\eta|} \dx \leq \int_0^{H(\eta)} e^{-\sigma(x) + x\omega'(x)} \dx \leq \int_0^\infty e^{\omega(x) - \sigma(x)}\dx,
$$
which completes the proof.
\end{proof}
\begin{corollary}\label{almostanalyticextension}
Let $\omega$ be a weight function satisfying $(\alpha)$ and $(NA)$. 
\begin{itemize}
\item[$(i)$] If $\omega$ satisfies $(\gamma)$, then for every $\psi \in \mathcal{K}_{1,\boldsymbol{(\omega)}}(\R)$ and every $\lambda, h > 0$ there is $\Psi \in C^\infty(\C)$ with $\Psi_{|\R} = \psi$ satisfying the bounds
\begin{equation}
\sup_{\zeta \in \C \backslash \R}|{\overline{\partial}}\Psi(\zeta)|e^{h|\xi|+\lambda\omega^*(\eta/\lambda)} < \infty, \qquad \sup_{\zeta \in \C}|\Psi(\zeta)|e^{h|\xi|} < \infty.
\label{inequality3}
\end{equation}
\item[$(ii)$]  If $\omega$ satisfies $(\gamma)_0$, then for every $\psi \in \mathcal{K}_{1,\boldsymbol{\{\omega\}}}(\R)$ there are $\lambda, h > 0$ and $\Psi \in C^\infty(\C)$ with $\Psi_{|\R} = \psi$ satisfying the inequalities \eqref{inequality3}.
\end{itemize}
\end{corollary}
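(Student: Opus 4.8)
The plan is to derive the corollary from Lemma \ref{almostanalyticextensionlemma} after two reductions. Since $\omega$ satisfies $(\alpha)$ and $(NA)$, we have $\omega(t)=o(t)$, so the exceptional linear case of Lemma \ref{equivalentweightsub} does not occur and $\omega$ is $\asymp$-equivalent to a weight function $\sigma$ enjoying the regularity properties $(i)$--$(iv)$ of that lemma. The spaces $\mathcal{K}_{1,\boldsymbol{(\omega)}}(\R)$ and $\mathcal{K}_{1,\boldsymbol{\{\omega\}}}(\R)$ are invariant under $\asymp$ (by Proposition \ref{fouriersub} together with the corresponding invariance of $\mathcal{U}_{\boldsymbol{(\omega)}}(\C)$ and $\mathcal{A}_{\boldsymbol{\{\omega\}}}(\R)$), and the Young conjugates obey $\lambda_1\omega^*(s/\lambda_1)-C_1\le\sigma^*(s)\le\lambda_2\omega^*(s/\lambda_2)+C_2$ for suitable constants; hence it is enough to prove the statement with $\omega$ replaced by $\sigma$, which I rename $\omega$. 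From now on $\omega$ satisfies $(i)$--$(iv)$, and I record that ${}_p\omega$ then satisfies $(i)$--$(iv)$ as well for every $p>0$, with $({}_p\omega)^*(s)=p\,\omega^*(s/p)$.

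Next, by Proposition \ref{fouriersub} I would write $\psi=\widehat\varphi$ with $\varphi\in\mathcal{U}_{\boldsymbol{(\omega)}}(\C)$ in case $(i)$ and $\varphi\in\mathcal{A}_{\boldsymbol{\{\omega\}}}(\R)$ in case $(ii)$, so that $\|\varphi\|^{h'}_{{}_\mu\omega}<\infty$ for all $h',\mu>0$ in the Beurling case and for some $h',\lambda'>0$ in the Roumieu case. Then I apply Lemma \ref{almostanalyticextensionlemma} to $\varphi$ with inner weight ${}_p\omega$ and outer weight ${}_q\omega$, choosing $0<p<q$ so that $\int_0^\infty t\,e^{-(q-p)\omega(t)}\dt<\infty$. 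In the Beurling case $\varphi$ lies in every $\mathcal{A}^{h'}_{{}_q\omega}$, so $q$ may be taken as large as needed and the integral converges once $q-p$ exceeds $2/c$, with $c$ the constant from $(\gamma)$; in the Roumieu case one takes $p<q=\lambda'$, and there $(\gamma)_0$ makes $\int_0^\infty t\,e^{-(q-p)\omega(t)}\dt$ converge for every $q>p$. The lemma then produces $\Psi\in C^\infty(\C)$ with $\Psi_{|\R}=\widehat\varphi=\psi$,
\[
|\overline\partial\Psi(\zeta)|\le\|\varphi\|^{h'}_{{}_p\omega}\,e^{-k|\xi|}\,|(({}_p\omega)^*)''(\eta)|\,e^{-p\,\omega^*(\eta/p)},\qquad |\Psi(\zeta)|\le C\,\|\varphi\|^{h'}_{{}_q\omega}\,e^{-k|\xi|},
\]
for every $0<k<h'$.

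It remains to translate these bounds into \eqref{inequality3}. The estimate for $|\Psi|$ is immediate on taking $k\ge h$ (in the Roumieu case the resulting $k$ is simply reported as the ``some $h>0$''). For $|\overline\partial\Psi|$ one must absorb the factor $|(({}_p\omega)^*)''(\eta)|=p^{-1}|(\omega^*)''(\eta/p)|$ into the exponential, i.e.\ show that $p^{-1}|(\omega^*)''(\eta/p)|\,e^{-p\,\omega^*(\eta/p)}\le C_{p,\lambda}\,e^{-\lambda\,\omega^*(\eta/\lambda)}$ for a suitable value of the remaining parameter. Condition $(iv)$ gives $|(\omega^*)''(s)|=1/|\omega''(H(s))|\lesssim H(s)^2$ for small $s$, with $H=(\omega')^{-1}=-(\omega^*)'$, whence $\log|(\omega^*)''(s)|\lesssim\log H(s)$; feeding this together with $(\gamma)$ in the shape $\omega(H(s))\ge c\log H(s)+a$ and the convexity and monotonicity of $\omega^*$, one dominates $\log|(\omega^*)''(\eta/p)|+\lambda\omega^*(\eta/\lambda)$ by $p\,\omega^*(\eta/p)$ up to an additive constant, provided $p$ is taken large enough relative to $\lambda$ — this is permissible in the Beurling case, whereas in the Roumieu case $(\gamma)_0$ forces $\log H(s)=o(\omega^*(s))$, so the absorption is automatic and $\lambda$ may be chosen small. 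Since on each region $|\eta|\ge\delta$ all the quantities involved are bounded by virtue of $(i)$--$(iv)$, the required inequality, once established near $\eta=0$, holds for all $\eta\ne0$, which completes the argument.

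I expect this last step — the joint bookkeeping of the scaling parameters $p,q,\lambda$ and the absorption of $|(\omega^*)''|$ into $e^{-\lambda\omega^*(\eta/\lambda)}$ through $(\gamma)$ (resp.\ $(\gamma)_0$) and $(iv)$ — to be the only genuine difficulty; the two reductions and the invocation of Lemma \ref{almostanalyticextensionlemma} are routine.
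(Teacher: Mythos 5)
Your overall architecture is the same as the paper's: regularize $\omega$ via Lemma \ref{equivalentweightsub} (with $(NA)$ excluding the linear case), write $\psi=\widehat{\varphi}$, and apply Lemma \ref{almostanalyticextensionlemma} to the dilated weights ${}_p\omega$, ${}_q\omega$. The step you yourself single out as the only real difficulty, however, contains a genuine gap. The paper settles the absorption of $|(\omega^*)''|$ by quoting Petzsche--Vogt \cite[Lemma 2.3]{Petzsche}: $\sup_{s>0}|(\omega^*)''(s)|e^{-\varepsilon\omega^*(s)}<\infty$ for some $\varepsilon>0$, and for every $\varepsilon>0$ when $(iv)'$ holds. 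You instead try to derive it from $(\gamma)$ (resp.\ $(\gamma)_0$), and that cannot work. Since $\omega^*(s)=\omega(H(s))-sH(s)$, the bound $\omega(H(s))\ge c\log H(s)+a$ gives no lower bound on $\omega^*(s)$ in terms of $\log H(s)$: the subtracted term $sH(s)$ can absorb almost all of $\omega(H(s))$. Concretely, a concave weight that is affine on long intervals $[a_n,b_n]$ keeps $\omega(t)-t\omega'(t)$ constant there while $\log t$ grows arbitrarily, and such weights can satisfy $(\alpha)$, $(NA)$, $(\gamma)$ and even $(\gamma)_0$ (they of course violate $(iv)$). In particular your claim that ``$(\gamma)_0$ forces $\log H(s)=o(\omega^*(s))$'' is false as stated. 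What actually produces the absorption is $(iv)$, resp.\ $(iv)'$: since $\frac{d}{dt}\bigl(\omega(t)-t\omega'(t)\bigr)=t|\omega''(t)|\ge c_0/t$ for large $t$, integration gives $\omega^*(s)\ge c_0\log H(s)-C$, hence $|(\omega^*)''(s)|\le H(s)^2/c_0\le C'e^{(2/c_0)\omega^*(s)}$ for small $s$; under $(iv)'$ the same computation yields $\omega^*(s)\ge M\log H(s)-C_M$ for every $M$, which is exactly what allows $\lambda$ to be taken small.

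This points to two concrete omissions. First, in the Roumieu case you only secured properties $(i)$--$(iv)$ from Lemma \ref{equivalentweightsub}, but you need the $(iv)'$-version, which that lemma provides precisely under $(\gamma)_0$: with $(iv)$ alone the absorption works only for one fixed $\varepsilon$, so it forces $p>\lambda+\varepsilon$, which is not available when $\varphi$ lies in a single $\mathcal{A}^{h',\lambda'}_{\boldsymbol{\omega}}$ with $\lambda'$ small. Second, the supremum in \eqref{inequality3} runs over all $\eta\neq0$, and your remark that everything is bounded on $|\eta|\ge\delta$ ``by virtue of $(i)$--$(iv)$'' is not justified: $(iv)$ constrains $\omega''$ only at infinity, whereas for large $s$ one has $|(\omega^*)''(s)|=1/|\omega''(H(s))|$ with $H(s)\to0^+$, so the behaviour of $\omega''$ near the origin enters; the cited Petzsche--Vogt estimate, stated as a supremum over all $s>0$ for their regularized weights, is what covers this. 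With these repairs --- replace the appeal to $(\gamma)$/$(\gamma)_0$ by $(iv)$/$(iv)'$, or simply quote \cite[Lemma 2.3]{Petzsche} as the paper does --- your bookkeeping of $p$, $q$, $\lambda$, $h$ and the rest of the argument coincide with the paper's proof.
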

\begin{proof}
We may assume that $\omega$ satisfies the conditions $(i)$-$(iv)$ from Lemma \ref{equivalentweightsub} (conditions $(i)$-$(iv)'$ in the Roumieu case). Petzsche and Vogt have shown \cite[Lemma 2.3]{Petzsche} that there is $\varepsilon > 0$ such that
$$
\sup_{s > 0} |(\omega^*)''(s)|e^{-\varepsilon\omega^*(s)} < \infty;
$$
if $\omega$ additionally satisfies $(iv)'$, the latter inequality holds for every $\varepsilon > 0$. Therefore the result follows by applying Lemma \ref{almostanalyticextensionlemma} to the weight ${}_\mu \omega$, for a suitable $\mu > 0$, and $\varphi = \mathcal{F}^{-1}(\psi)$.
\end{proof}

The next result gives a sufficient condition for the existence of boundary values of analytic functions in the ultradistribution spaces of exponential type $\mathcal{K}'_{1,\boldsymbol{(\omega)}}(\R)$ and $\mathcal{K}'_{1,\boldsymbol{\{\omega\}}}(\R)$.
\begin{proposition} \label{boundaryvalues}
Let $\omega$ be a weight function satisfying $(\alpha)$ and $(NA)$.
\begin{itemize}
\item[$(i)$] If $\omega$ satisfies $(\gamma)$, then every $G \in \mathcal{O}(T^R_+)$ satisfying 
\begin{equation}
\sup_{\zeta \in T^R_+}|G(\zeta)|e^{-h|\xi| - \lambda\omega^*(\eta/\lambda)} < \infty,
\label{inequality4}
\end{equation}
for some $\lambda, h > 0$, has boundary values in $\mathcal{K}'_{1,\boldsymbol{(\omega)}}(\R)$, that is, there is $g \in \mathcal{K}'_{1,\boldsymbol{(\omega)}}(\R)$, such that
$$
g = \lim_{\eta \to 0^+} G( \cdot + i \eta), \qquad \mbox{in } \mathcal{K}'_{1,\boldsymbol{(\omega)}}(\R).
$$
\item[$(ii)$]  If $\omega$ satisfies $(\gamma)_0$, then every $G \in \mathcal{O}(T^R_+)$ satisfying the inequality \eqref{inequality4} for every $\lambda, h > 0$, has boundary values in $\mathcal{K}'_{1,\boldsymbol{\{\omega\}}}(\R)$.
\end{itemize}
\end{proposition}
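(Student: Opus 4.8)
The plan is to reduce everything to the Beurling statement $(i)$ — part $(ii)$ will follow verbatim once the quantifiers are interchanged, since there $G$ satisfies \eqref{inequality4} for \emph{every} $\lambda,h>0$ while the test functions of $\mathcal{K}_{1,\boldsymbol{\{\omega\}}}(\R)$ only obey their bounds for \emph{some} parameters, and this trade-off is exactly what the decay estimate below requires. So fix $G\in\mathcal{O}(T^R_+)$ with $|G(\zeta)|\le Ce^{h_0|\xi|+\lambda_0\omega^{\ast}(\eta/\lambda_0)}$ on $T^R_+$ and fix $0<R_0<R$. Given a test function $\psi\in\mathcal{K}_{1,\boldsymbol{(\omega)}}(\R)$, I would set $\varphi=\mathcal{F}^{-1}(\psi)\in\mathcal{U}_{\boldsymbol{(\omega)}}(\C)$ (Proposition \ref{fouriersub}) and apply Corollary \ref{almostanalyticextension}$(i)$, with decay parameter $k>h_0$ and conjugate parameter $\lambda\ge\lambda_0$, to obtain $\Psi\in C^\infty(\C)$ with $\Psi_{|\R}=\psi$, $|\overline{\partial}\Psi(\zeta)|\le c\,e^{-k|\xi|-\lambda\omega^{\ast}(\eta/\lambda)}$ and $|\Psi(\zeta)|\le c\,e^{-k|\xi|}$.

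The next step is a Stokes/Cauchy--Pompeiu argument on a strip. The crucial preliminary observation is that $\iint_{0<\operatorname{Im}\zeta<R_0}|G\,\overline{\partial}\Psi|\,\mathrm{d}A<\infty$: since $\mu\mapsto\mu\omega^{\ast}(\eta/\mu)=({}_\mu\omega)^{\ast}(\eta)$ is non-decreasing, $\lambda\ge\lambda_0$ forces $\lambda_0\omega^{\ast}(\eta/\lambda_0)-\lambda\omega^{\ast}(\eta/\lambda)\le0$, so $|G\,\overline{\partial}\Psi|\le Cc\,e^{(h_0-k)|\xi|}$, which is integrable on $\R\times(0,R_0)$ because $k>h_0$. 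Applying Green's theorem to $G\Psi$ on the rectangles $\{|\xi|<a,\ \eta_1<\operatorname{Im}\zeta<R_0\}$ and letting $a\to\infty$ — the two vertical sides contribute at most $(R_0-\eta_1)Cc\,e^{(h_0-k)a}e^{\lambda_0\omega^{\ast}(\eta_1/\lambda_0)}\to0$ — gives, using $\overline{\partial}G=0$,
$$
\int_\R G(\xi+i\eta_1)\Psi(\xi+i\eta_1)\,\mathrm{d}\xi=\int_\R G(\xi+iR_0)\Psi(\xi+iR_0)\,\mathrm{d}\xi+2i\iint_{\eta_1<\operatorname{Im}\zeta<R_0}G\,\overline{\partial}\Psi\,\mathrm{d}A
$$
for all $0<\eta_1<R_0$; by dominated convergence the right-hand side converges as $\eta_1\to0^+$, so $\lim_{\eta_1\to0^+}\int_\R G(\xi+i\eta_1)\Psi(\xi+i\eta_1)\,\mathrm{d}\xi$ exists.

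The hard part will be to match this limit with $\lim_{\eta\to0^+}\langle G(\cdot+i\eta),\psi\rangle=\lim_{\eta\to0^+}\int_\R G(\xi+i\eta)\psi(\xi)\,\mathrm{d}\xi$, i.e.\ to show that $\int_\R G(\xi+i\eta)\big(\Psi(\xi+i\eta)-\psi(\xi)\big)\,\mathrm{d}\xi\to0$. This is where $(NA)$ and the rapid decay of $\varphi$ are indispensable. The plan is to use the contour-shifted formula $\widehat{\varphi}(\xi)=\int_\R\varphi(x\mp ik)e^{-i(x\mp ik)\xi}\,\mathrm{d}x$ (sign according to $\operatorname{sgn}\xi$) together with the explicit form of $\Psi$ from Lemma \ref{almostanalyticextensionlemma} to split $\Psi(\xi+i\eta)-\psi(\xi)$ into a tail term over $|x|>H(\eta)$ and an interior term over $|x|\le H(\eta)$; in both the $\xi$-dependence lies in $e^{\mp k\xi}$, so (using $k>h_0$) integrating against $|G(\xi+i\eta)|$ in $\xi$ only reproduces the constant $e^{\lambda_0\omega^{\ast}(\eta/\lambda_0)}$. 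The remaining $x$-integrals are controlled by $|\varphi(x\mp ik)|\le C_\mu e^{-\mu\omega(x)}$, valid for \emph{every} $\mu>0$ in the Beurling case: the tail is exponentially small in $\mu\,\omega(H(\eta))$, and the interior term, bounded using $x\le H(\eta)$, $\omega'(H(\eta))=\eta$ and the concavity inequality $t\omega'(t)\le\omega(t)$, is $\le C_\mu H(\eta)\,e^{-(\mu-1)\omega(H(\eta))-\omega^{\ast}(\eta)}$. Since $H(\eta)\to\infty$, hence $\omega(H(\eta))\to\infty$, as $\eta\to0^+$, choosing $\mu$ large enough makes $e^{-(\mu-1)\omega(H(\eta))}$ beat $e^{\lambda_0\omega^{\ast}(\eta/\lambda_0)}$ (in the Roumieu case one instead uses that $G$ satisfies \eqref{inequality4} for all $\lambda$ and takes $\lambda_0$ small). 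Granting this, I would define
$$
\langle g,\psi\rangle:=\lim_{\eta\to0^+}\int_\R G(\xi+i\eta)\psi(\xi)\,\mathrm{d}\xi=\int_\R G(\xi+iR_0)\Psi(\xi+iR_0)\,\mathrm{d}\xi+2i\iint_{0<\operatorname{Im}\zeta<R_0}G\,\overline{\partial}\Psi\,\mathrm{d}A,
$$
verify by one more use of Green's theorem that the right-hand side does not depend on $R_0$ or on the chosen extension $\Psi$, and conclude that $g$ is a well-defined linear form on $\mathcal{K}_{1,\boldsymbol{(\omega)}}(\R)$. Finally, continuity of $g$ and the fact that $G(\cdot+i\eta)\to g$ in the \emph{strong} topology of $\mathcal{K}'_{1,\boldsymbol{(\omega)}}(\R)$ would follow from the uniform estimates above, the Banach--Steinhaus theorem, and the Montel property of the spaces $\mathcal{K}_{1,\boldsymbol{(\omega)}}(\R)$ and $\mathcal{K}_{1,\boldsymbol{\{\omega\}}}(\R)$. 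The one genuinely delicate point is the boundary-matching estimate of this last paragraph; everything else is either a direct application of the machinery already developed or soft functional analysis.
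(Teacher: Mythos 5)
Your overall skeleton (reduce to a single test function via Banach--Steinhaus and the Montel property, take the almost analytic extension $\Psi$ of $\psi$ from Corollary \ref{almostanalyticextension} with $k>h_0$ and $\lambda\geq\lambda_0$, note that $\mu\mapsto\mu\omega^*(\eta/\mu)$ is non-decreasing so that $G\,\overline{\partial}\Psi$ is absolutely integrable on the strip, apply Stokes and kill the vertical sides) is exactly the paper's, and those steps are fine. The gap is the step you yourself flag as the hard part, and it is not merely delicate: it cannot be closed by the absolute-value estimate you propose. Your claimed interior bound $C_\mu H(\eta)e^{-(\mu-1)\omega(H(\eta))-\omega^*(\eta)}$ evaluates $e^{-\mu\omega(x)+\eta x}$ at the endpoint $x=H(\eta)$, where it is \emph{minimal}: on $[0,H(\eta)]$ one has $\omega'(x)\geq\eta$, so $-\mu\omega(x)+\eta x$ is decreasing and its maximum is $0$ at $x=0$. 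The contribution of bounded $x$ therefore carries no factor $e^{-(\mu-1)\omega(H(\eta))}$ at all; in fact $\partial_y\Psi=i\psi'$ on $\R$, so $\Psi(\xi+i\eta)-\psi(\xi)$ is genuinely of size $\eta\,e^{-k|\xi|}$, and after integrating against $|G(\xi+i\eta)|\leq Ce^{h_0|\xi|+\lambda_0\omega^*(\eta/\lambda_0)}$ the best brute-force bound for the mismatch is $O\bigl(\eta\,e^{\lambda_0\omega^*(\eta/\lambda_0)}\bigr)$, which diverges as $\eta\to0^+$ whenever $\omega$ grows faster than logarithmically (always, under $(\gamma)_0$). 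The mismatch does tend to zero, but only because of cancellation coming from the analyticity of $G$, so any splitting into tails and interiors estimated in absolute value is doomed.

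The paper avoids the matching problem altogether by a small but decisive change in the Stokes argument: instead of $G\Psi$, it applies Stokes on the rectangle $(-N,N)+i(0,L)$ to $\widetilde G(\xi+iv)=G(\xi+i(\eta+v))\,\Psi(\xi+iv)$, i.e. only $G$ is shifted upward by $i\eta$ while $\Psi$ is kept at the unshifted point. Since $\overline{\partial}\widetilde G(\xi+iv)=G(\xi+i(\eta+v))\overline{\partial}\Psi(\xi+iv)$ and the bottom edge is the real axis where $\Psi=\psi$, the boundary term is exactly $\int G(\xi+i\eta)\psi(\xi)\,d\xi$, and letting $N\to\infty$ and then $\eta\to0^+$ by dominated convergence (the domination being precisely your integrability observation, with $\omega^*((\eta+v)/\lambda)\leq\omega^*(v/\lambda)$) gives the existence of the limit directly. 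Repair your argument by this device; everything else in your proposal then goes through and coincides with the paper's proof, including the quantifier interchange between the Beurling and Roumieu cases.
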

\begin{proof}
We only treat the Roumieu case, the Beurling case is completely analogous. Due to the Banach-Steinhaus theorem and the fact that the space $\mathcal{K}_{1,\boldsymbol{\{\omega\}}}(\R)$ is Montel, it suffices to show that 
$$
\lim_{\eta \to 0^+} \int_{-\infty}^\infty G( \xi + i \eta)\psi(\xi) \dxi
$$
exists and is finite for every $\psi \in \mathcal{K}_{1,\boldsymbol{\{\omega\}}}(\R)$. By Corollary \ref{almostanalyticextension} there is $\Psi \in C^\infty(\C)$ with $\Psi_{|\R} = \psi$ satisfying the inequalities \eqref{inequality3} for some $\lambda , h > 0$. Choose $ 0 < L < R$ and fix $ 0 < \eta <  R- L$. Applying the Stokes theorem to the rectangle $(-N,N) + i(0, L)$, $N > 0$, and the function $\widetilde{G}(\xi + iv) = G(\xi + i(\eta + v))\Psi(\xi + iv)$ we obtain that
\begin{align*}
&\int_{-N}^N G(\xi + i \eta) \psi(\xi)\dxi =  \int_{-N}^N G(\xi + i (\eta+ L)) \Psi(\xi +iL)\dxi \\
&- \int_{0}^L G(N + i (\eta+v)) \Psi(N + iv)\dv  + \int_{0}^L G(-N + i (\eta+v)) \Psi(-N + iv)\dv  \\
&+  2i\int_{-N}^N\int_0^L G(\xi + i (\eta+v)) {\overline{\partial}}\Psi(\xi+iv)\dv\dxi.
\end{align*}
The second and third integral on the right hand side tend to zero, as $N \to \infty$. Hence,
\begin{align*}
\int_{-\infty}^\infty G(\xi + i \eta) \psi(\xi)\dxi =  &\int_{-\infty}^\infty G(\xi + i (\eta+ L)) \Psi(\xi +iL)\dxi \\
&+  2i\int_{-\infty}^\infty\int_0^L G(\xi + i (\eta+v)) \overline{\partial} \Psi(\xi+iv)\dv\dxi.
\end{align*}
By Lebesgue's dominated convergence theorem, we obtain that
\begin{align*}
\lim_{\eta \to 0^+}\int_{-\infty}^\infty G(\xi + i \eta) \psi(\xi)\dxi = & \int_{-\infty}^\infty G(\xi + iL) \Psi(\xi +iL)\dxi \\
&+  2i\int_{-\infty}^\infty\int_0^L G(\xi + iv) \overline{\partial}\Psi(\xi+iv)\dv\dxi.
\end{align*}
\end{proof}
 
We now have all necessary tools to express the spaces $\mathcal{K}'_{1,\boldsymbol{(\omega)}}(\R)$ and $\mathcal{K}'_{1,\boldsymbol{\{\omega\}}}(\R)$ as quotients of spaces of analytic functions. Let $a \geq 0$ and let $\omega$ be a weight function satisfying $(\alpha)$, $(\gamma)$, and $(NA)$. We introduce the space $\mathcal{O}^{\boldsymbol{\exp},h,a}_{\boldsymbol{\omega},\lambda}(\C \backslash \R)$ consisting of all $G \in \mathcal{O}(\C \backslash \R)$ that satisfy
$$
\sup_{\zeta \in \C \backslash \R} |G(\zeta)|e^{-(a + \varepsilon)|\eta| - (h+\varepsilon)|\xi| -\lambda \omega^*(\eta/\lambda)} < \infty,
$$
for every $\varepsilon > 0$. We set
$$
\mathcal{O}^{\boldsymbol{(\exp)},a}_{\boldsymbol{(\omega)}}(\C \backslash \R) = \bigcup_{\lambda ,h > 0} \mathcal{O}^{\boldsymbol{\exp},h,a}_{\boldsymbol{\omega},\lambda}(\C \backslash \R), \qquad \mathcal{O}^{\boldsymbol{\{\exp\}},a}_{\boldsymbol{\{\omega\}}}(\C \backslash \R) =\bigcap_{\lambda ,h > 0} \mathcal{O}^{\boldsymbol{\exp},h,a}_{\boldsymbol{\omega},\lambda}(\C \backslash \R).
$$
We define the boundary value mapping as follows
$$
\operatorname{bv}: \mathcal{O}^{\boldsymbol{(\exp)},a}_{\boldsymbol{(\omega)}}(\C \backslash \R) \rightarrow \mathcal{K}'_{1,\boldsymbol{(\omega)}}(\R), \quad G \mapsto  \lim_{\eta \to 0^+} G( \cdot + i \eta) - G( \cdot - i \eta) 
$$
Proposition \ref{boundaryvalues} guarantees  that $\operatorname{bv}$ is well defined. Moreover, if $\omega$ satisfies $(\gamma)_0$, then $\operatorname{bv}(\mathcal{O}^{\boldsymbol{\{\exp\}},a}_{\boldsymbol{\{\omega\}}}(\C \backslash \R)) \subseteq  \mathcal{K}'_{1,\boldsymbol{\{\omega\}}}(\R)$.
We also write $\mathcal{O}^{\boldsymbol{\exp},h,a}(\C)$ for the space of all entire functions $G \in \mathcal{O}(\C)$ that satisfy
$$
\sup_{\zeta \in \C} |G(\zeta)|e^{-(a + \varepsilon)|\eta| - (h+\varepsilon)|\xi|} < \infty,
$$
for every $\varepsilon > 0$ and set
$$
\mathcal{O}^{\boldsymbol{(\exp)},a}(\C) = \bigcup_{h > 0} \mathcal{O}^{\boldsymbol{\exp},h,a}(\C), \qquad \mathcal{O}^{\boldsymbol{\{\exp\}},a}(\C) =\bigcap_{h > 0} \mathcal{O}^{\boldsymbol{\exp},h,a}(\C).
$$
\begin{theorem}\label{cohsubexpultradistributions}
Let $a \geq 0$ and let $\omega$ be a weight function satisfying $(\alpha)$ and $(NA)$. 
\begin{itemize}
\item[$(i)$] If $\omega$ satisfies $(\gamma)$, then the sequence
$$
0 \longrightarrow \mathcal{O}^{\boldsymbol{(\exp)},a}(\C) \longrightarrow \mathcal{O}^{\boldsymbol{(\exp)},a}_{\boldsymbol{(\omega)}}(\C \backslash \R) \xrightarrow{\phantom,\operatorname{bv}\phantom,}  \mathcal{K}'_{1,\boldsymbol{(\omega)}}(\R) \longrightarrow 0
$$
is exact. 
\item[$(ii)$] If $\omega$ satisfies $(\gamma)_0$, then the sequence
$$
0 \longrightarrow \mathcal{O}^{\boldsymbol{\{\exp\}},a}(\C) \longrightarrow \mathcal{O}^{\boldsymbol{\{\exp\}},a}_{\boldsymbol{\{\omega\}}}(\C \backslash \R) \xrightarrow{\phantom,\operatorname{bv}\phantom,}  \mathcal{K}'_{1,\boldsymbol{\{\omega\}}}(\R) \longrightarrow 0
$$
is exact.
\end{itemize}
\end{theorem}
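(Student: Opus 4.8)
The plan is to prove, for part (i), that the boundary value map $\operatorname{bv}\colon\mathcal{O}^{\boldsymbol{(\exp)},a}_{\boldsymbol{(\omega)}}(\C\setminus\R)\to\mathcal{K}'_{1,\boldsymbol{(\omega)}}(\R)$ is surjective and has kernel exactly $\mathcal{O}^{\boldsymbol{(\exp)},a}(\C)$, regarded as a subspace via restriction to $\C\setminus\R$; part (ii) then follows by the same argument, replacing $(\gamma)$ by $(\gamma)_0$, the unions $\bigcup_{\lambda,h}$ by intersections $\bigcap_{\lambda,h}$, $\mathcal{K}_{1,\boldsymbol{(\omega)}}$ by $\mathcal{K}_{1,\boldsymbol{\{\omega\}}}$, and invoking the Roumieu counterparts of the cited results (Theorem~\ref{splittingsub}(ii), the ``for every $\lambda,h,\varepsilon$'' forms of \eqref{laplace1}--\eqref{laplace2}, and so on). Since $\omega$ satisfies $(NA)$, we are throughout in the $(NA)$ sub-case of Cases~2 and~3 of Subsection~\ref{subsection laplace}.

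\emph{Surjectivity.} Given $g\in\mathcal{K}'_{1,\boldsymbol{(\omega)}}(\R)$, set $f=\widehat g\in\mathcal{U}'_{\boldsymbol{(\omega)}}(\C)$ via Proposition~\ref{fouriersub}. As $-a\le a$, Theorem~\ref{splittingsub}(i) applied with endpoints $-a$ and $a$ yields $f=f_1-f_2$ with $\operatorname{supp}_{\overline{\R}}f_1\subseteq[-a,\infty]+i\R$ and $\operatorname{supp}_{\overline{\R}}f_2\subseteq[-\infty,a]+i\R$; write $f_j=\widehat{g_j}$. By Case~2 (interval $[-a,\infty]$), $G_1:=\mathcal{L}\{f_1;\cdot\}$ is holomorphic on $\{\eta>0\}$ and satisfies there (with $|\eta|=\eta$) the upper half-plane restriction of the defining estimate of $\mathcal{O}^{\boldsymbol{\exp},h,a}_{\boldsymbol{\omega},\lambda}(\C\setminus\R)$ for suitable $\lambda,h$, and $\lim_{\eta\to0^+}G_1(\cdot+i\eta)=g_1$ in $\mathcal{K}'_{1,\boldsymbol{(\omega)}}(\R)$; symmetrically, by Case~3 (interval $[-\infty,a]$), $G_2:=\mathcal{L}\{f_2;\cdot\}$ is holomorphic on $\{\eta<0\}$, obeys the matching estimate there, and $\lim_{\eta\to0^+}G_2(\cdot-i\eta)=g_2$. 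Then $G$, defined as $G_1$ on $\{\eta>0\}$ and $G_2$ on $\{\eta<0\}$, lies in $\mathcal{O}^{\boldsymbol{(\exp)},a}_{\boldsymbol{(\omega)}}(\C\setminus\R)$ and $\operatorname{bv}(G)=g_1-g_2$; since $\widehat{g_1-g_2}=f_1-f_2=f=\widehat g$ and $\mathcal{F}$ is injective, $\operatorname{bv}(G)=g$.

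\emph{Kernel.} If $G\in\mathcal{O}^{\boldsymbol{(\exp)},a}(\C)$ and $\psi\in\mathcal{K}_{1,\boldsymbol{(\omega)}}(\R)$, picking $h$ with $|\psi(\xi)|\lesssim e^{-(h+2\varepsilon)|\xi|}$ dominates $G(\xi\pm i\eta)\psi(\xi)$ for $0<\eta<1$ by an integrable function, so dominated convergence gives $\langle\operatorname{bv}(G),\psi\rangle=\int_{\R}(G(\xi)-G(\xi))\psi(\xi)\,\dxi=0$; thus $\mathcal{O}^{\boldsymbol{(\exp)},a}(\C)\subseteq\ker\operatorname{bv}$. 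Conversely, let $G\in\mathcal{O}^{\boldsymbol{(\exp)},a}_{\boldsymbol{(\omega)}}(\C\setminus\R)$ with $\operatorname{bv}(G)=0$. By Proposition~\ref{boundaryvalues} (and its reflection to the lower half-plane) the one-sided limits $g_+:=\lim_{\eta\to0^+}G(\cdot+i\eta)$ and $g_-:=\lim_{\eta\to0^+}G(\cdot-i\eta)$ exist in $\mathcal{K}'_{1,\boldsymbol{(\omega)}}(\R)$, and $\operatorname{bv}(G)=g_+-g_-=0$ forces $g_+=g_-=:g$. One then shows, via the analytic representation theory of Section~\ref{section boundary values} adapted to the half-plane setting — equivalently, the \emph{converse} direction of the Laplace dictionary of Cases~2 and~3 — that $\widehat g$ vanishes on $[-\infty,-a)$ (because $g_+$ is a boundary value from the upper half-plane) and on $(a,\infty]$ (because $g=g_-$ is a boundary value from the lower half-plane), so $\operatorname{supp}_{\overline{\R}}\widehat g\subseteq[-a,a]\subset\R$. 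By the Silva--K\"othe--Grothendieck identification, $\widehat g$ is then an analytic functional carried by $[-a,a]+i[-R_0,R_0]$ for some $R_0>0$, so the forward direction of Case~1 shows $\mathcal{L}\{\widehat g;\cdot\}$ is entire and satisfies \eqref{laplace1} (with this $a$ and $h=R_0$; for all $h$ in the Roumieu case), i.e. $\mathcal{L}\{\widehat g;\cdot\}\in\mathcal{O}^{\boldsymbol{(\exp)},a}(\C)$. Finally, $\mathcal{L}\{\widehat g;\cdot\}$ and $G$ are holomorphic on $\{\eta>0\}$ and on $\{\eta<0\}$ with the same exponential bound and the same boundary values $g$ as $\eta\to0^\pm$; a boundary-uniqueness argument (Phragm\'en--Lindel\"of, in the spirit of Proposition~\ref{praghmenlindelof}) identifies them, whence $G=\mathcal{L}\{\widehat g;\cdot\}|_{\C\setminus\R}\in\mathcal{O}^{\boldsymbol{(\exp)},a}(\C)$.

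\emph{Main obstacle.} The hard part is the middle step of the kernel argument: showing that a holomorphic function on a half-plane satisfying \eqref{laplace2} has, after Fourier transform of its one-sided boundary value, $\operatorname{supp}_{\overline{\R}}$ contained in the corresponding half-line — that is, the converse of the Laplace characterizations of Cases~2 and~3, which is not among the statements recorded in Subsection~\ref{subsection laplace}. The naive inverse-Laplace integral $\int_{\operatorname{Im}\zeta=c}G(\zeta)e^{-iz\zeta}\,\dz$ does not converge off the real axis, so one must deform and regularize the contour — dividing by a zero-free function of prescribed growth as supplied by Proposition~\ref{funda}/Remark~\ref{remarksubadd} — or build the required analytic representation (in the sense of Theorem~\ref{cohsub}) directly from the almost-analytic extension of Lemma~\ref{almostanalyticextensionlemma} together with Stokes' theorem, exactly as in the proof of Proposition~\ref{boundaryvalues}. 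The remaining ingredients — the pasting of $G_1$ and $G_2$, the dominated-convergence computation, and the passage from the half-plane estimates to membership in $\mathcal{O}^{\boldsymbol{(\exp)},a}(\C)$ — are routine.
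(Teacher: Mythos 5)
Your surjectivity argument is exactly the paper's: Fourier transform $g$, split $\widehat g=f_1-f_2$ by Theorem \ref{splittingsub}, Laplace transform each piece and paste, then read off membership in $\mathcal{O}^{\boldsymbol{(\exp)},a}_{\boldsymbol{(\omega)}}(\C\setminus\R)$ and $\operatorname{bv}(G)=g$ from Cases 1--3 of Subsection \ref{subsection laplace}. The problem is the kernel inclusion $\ker\operatorname{bv}\subseteq\mathcal{O}^{\boldsymbol{(\exp)},a}(\C)$, where you yourself flag the central step as an unproved ``main obstacle'': you need that a half-plane holomorphic function with the bound \eqref{laplace2} has one-sided boundary value $g_\pm$ whose Fourier transform is supported in the corresponding half-line $[-a,\infty]$ resp. $[-\infty,a]$. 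This converse Laplace dictionary is precisely the content of Theorem \ref{characterizationlaplace}, which the paper \emph{deduces from} the theorem you are proving; so your route has the logical order reversed, and the sketched remedies do not close it. Proposition \ref{boundaryvalues} (almost analytic extensions plus Stokes) only gives existence of the boundary value in $\mathcal{K}'_{1,\boldsymbol{(\omega)}}(\R)$, not any support information on its Fourier transform, and the ``regularized inverse Laplace integral'' you allude to is exactly the nontrivial construction that would have to be carried out. As it stands, the kernel half of your proof is a genuine gap, and even its last step (identifying $G$ with $\mathcal{L}\{\widehat g;\cdot\}$ by ``boundary uniqueness'') would again require knowing that a function on $\C\setminus\R$ with these bounds and vanishing $\operatorname{bv}$ is entire --- i.e.\ the very statement being proved.

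The paper's kernel argument is much shorter and avoids all of this: since $\omega^*(\eta/\lambda)$ is bounded on every region $h_0\leq|\eta|\leq R$, any $G\in\mathcal{O}^{\boldsymbol{(\exp)},a}_{\boldsymbol{(\omega)}}(\C\setminus\R)$ has infra-exponential growth in $x$ off every neighbourhood of $\R$, hence belongs to the domain of the boundary value map for the weight $\omega(t)=t$ (Section \ref{subreview}); testing against the dense subspace $\mathcal{K}_{1,\boldsymbol{(t)}}\subset\mathcal{K}_{1,\boldsymbol{(\omega)}}$ shows that $\operatorname{bv}(G)=0$ in the sense of Theorem \ref{cohsub}/\ref{Roumieucoh} with $\omega(t)=t$, so $G$ extends to an entire function of infra-exponential type on every strip, and combining this with the original estimates for $|\eta|$ bounded away from $0$ gives $G\in\mathcal{O}^{\boldsymbol{(\exp)},a}(\C)$ (your dominated-convergence verification of the easy inclusion $\mathcal{O}^{\boldsymbol{(\exp)},a}(\C)\subseteq\ker\operatorname{bv}$ is fine). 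I recommend replacing your kernel argument by this application of the already-established exact sequence for the weight $t$; alternatively, you would have to prove the converse Paley--Wiener statement from scratch, which amounts to redoing the proof of Theorem \ref{characterizationlaplace} without Theorem \ref{cohsubexpultradistributions} and is substantially more work.
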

\begin{proof}
We only give the proof in the Roumieu case, the Beurling case is similar. The fact that $\ker\operatorname{bv} = \mathcal{O}^{\boldsymbol{\{\exp\}},a}(\C)$ follows from Theorem \ref{Roumieucoh} (with $\omega(t) = t$). So, we only need to show that the boundary value mapping is surjective. Let $g \in \mathcal{K}'_{1,\boldsymbol{\{\omega\}}}(\R)$ and set $f = \widehat{g} \in \mathcal{A}'_{\boldsymbol{\{\omega\}}}(\R)$. By Theorem \ref{splittingsub} there are $f_+ \in \mathcal{A}'_{\boldsymbol{\{\omega\}},(-a)+}$ and $f_- \in \mathcal{A}'_{\boldsymbol{\{\omega\}},a-}$ such that $f = f_+ - f_-$. Define
$$
G(\zeta) =
\left\{
	\begin{array}{ll}
		\mbox{$\displaystyle \mathcal{L}\{f_+; \zeta\}$},  &  \eta > 0, \\ \\
		\mbox{$\displaystyle \mathcal{L}\{f_-; \zeta\}$},  &  \eta < 0.
	\end{array}
\right.
$$
From the discussion in Subsection \ref{subsection laplace} on the Laplace transform, it is clear that $G \in \mathcal{O}^{\boldsymbol{\{\exp\}},a}_{\boldsymbol{\{\omega\}}}(\C \backslash \R)$ and $\operatorname{bv}(G) = g$.
\end{proof}
As an application of Theorem \ref{cohsubexpultradistributions}, we characterize in a precise fashion those analytic functions on the upper half-plane that are the Laplace transform of an (ultra)hyperfunction of $\boldsymbol{\omega}$-type supported on a fixed half-axis. The following result is a theorem of Paley-Wiener type.
\begin{theorem}\label{characterizationlaplace}
Let $a \geq 0$, let $\omega$ be a weight function satisfying $(\alpha)$ and $(NA)$, and suppose that $G$ is an analytic function on the upper half-plane.
\begin{itemize}
\item[$(i)$]  If $\omega$ satisfies $(\gamma)$, then $G$ satisfies the estimate
\begin{equation}\label{eqboundLaplace}
\sup_{\eta > 0} |G(\zeta)|e^{-(a + \varepsilon)\eta - (h+\varepsilon)|\xi| -\lambda \omega^*(\eta/\lambda)} < \infty,
\end{equation}
for some $h, \lambda > 0$ and
 for every $\varepsilon > 0$ if and only if there is $f \in \mathcal{U}'_{{\boldsymbol{(\omega)}},(-a)+}$ such that $G(\zeta) = \mathcal{L}\{f;\zeta\}$.
\item[$(ii)$] If $\omega$ satisfies $(\gamma)_0$, then $G$ satisfies \eqref{eqboundLaplace} for every $h,\lambda, \varepsilon > 0$ if and only if there is $f \in \mathcal{A}'_{{\boldsymbol{\{\omega\}}},(-a)+}$ such that $G(\zeta) = \mathcal{L}\{f;\zeta\}$.
\end{itemize}
\end{theorem}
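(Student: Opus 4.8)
The plan is to assemble both implications from results already established in this section; no essentially new idea is needed. The implication ``$\Leftarrow$'' is exactly \emph{Case 2} of the discussion of the Laplace transform in Subsection~\ref{subsection laplace}: since $\omega$ satisfies $(NA)$ and the relevant interval is $I=[-a,\infty]$, the transform $\mathcal{L}\{f;\zeta\}$ of any $f\in\mathcal{U}'_{\boldsymbol{(\omega)},(-a)+}$ (resp.\ $f\in\mathcal{A}'_{\boldsymbol{\{\omega\}},(-a)+}$) is holomorphic on the upper half-plane and obeys the bound \eqref{laplace2}, which is precisely \eqref{eqboundLaplace} --- for some $h,\lambda>0$ in the Beurling case, and for every $h,\lambda>0$ in the Roumieu case. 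So all the work lies in the converse.

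For ``$\Rightarrow$'', suppose $G$ is holomorphic on the upper half-plane and satisfies \eqref{eqboundLaplace}. First I would extend $G$ by zero, i.e.\ set $\widetilde G=G$ on the upper half-plane and $\widetilde G\equiv 0$ on the lower one, obtaining $\widetilde G\in\mathcal{O}(\C\setminus\R)$ that inherits the bound \eqref{eqboundLaplace} (trivially below the real axis); hence $\widetilde G\in\mathcal{O}^{\boldsymbol{(\exp)},a}_{\boldsymbol{(\omega)}}(\C\setminus\R)$ in case $(i)$ and $\widetilde G\in\mathcal{O}^{\boldsymbol{\{\exp\}},a}_{\boldsymbol{\{\omega\}}}(\C\setminus\R)$ in case $(ii)$. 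By Theorem~\ref{cohsubexpultradistributions} --- or, more directly, by Proposition~\ref{boundaryvalues}, after absorbing the factor $e^{(a+\varepsilon)\eta}$ into a constant on each strip $0<\eta<R$ to put the estimate in the form \eqref{inequality4} --- the distribution $g:=\operatorname{bv}(\widetilde G)=\lim_{\eta\to0^+}G(\cdot+i\eta)$ belongs to $\mathcal{K}'_{1,\boldsymbol{(\omega)}}(\R)$ (resp.\ $\mathcal{K}'_{1,\boldsymbol{\{\omega\}}}(\R)$).

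Next I would run the construction from the proof of Theorem~\ref{cohsubexpultradistributions} on this particular $g$: put $f=\widehat g$, invoke the support separation Theorem~\ref{splittingsub} to write $f=f_+-f_-$ with $f_+\in\mathcal{U}'_{\boldsymbol{(\omega)},(-a)+}$ and $f_-\in\mathcal{U}'_{\boldsymbol{(\omega)},a-}$ (resp.\ their Roumieu counterparts), and let $G_0$ equal $\mathcal{L}\{f_+;\zeta\}$ on the upper half-plane and $\mathcal{L}\{f_-;\zeta\}$ on the lower one; as shown there, $G_0$ lies in the same analytic-function space as $\widetilde G$ and $\operatorname{bv}(G_0)=g$. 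Therefore $E:=\widetilde G-G_0$ lies in $\ker\operatorname{bv}$, which by the exactness in Theorem~\ref{cohsubexpultradistributions} equals $\mathcal{O}^{\boldsymbol{(\exp)},a}(\C)$ (resp.\ $\mathcal{O}^{\boldsymbol{\{\exp\}},a}(\C)$), so $E$ extends to an entire function satisfying the Paley--Wiener bound \eqref{laplace1}; the converse part of \emph{Case 1} of Subsection~\ref{subsection laplace} then yields $e\in\mathcal{U}'_{\boldsymbol{(\omega)}}[[-a,a]+i\R]$ (resp.\ $e\in\mathcal{A}'[[-a,a]]=\mathcal{A}'_{\boldsymbol{\{\omega\}}}[[-a,a]]$) with $E=\mathcal{L}\{e;\zeta\}$. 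Restricting the identity $\widetilde G=G_0+E$ to the upper half-plane and using linearity of the Laplace transform on functionals supported in $[-a,\infty]$ (any two of them have analytic representations holomorphic along a common contour $\Gamma^{b}(J)$ with $[-a,\infty]\Subset J$, so the contour formula \eqref{eqlaplacedef} is additive), I get $G(\zeta)=\mathcal{L}\{f_+;\zeta\}+\mathcal{L}\{e;\zeta\}=\mathcal{L}\{f;\zeta\}$ on the upper half-plane with $f:=f_++e$. Since $\operatorname*{supp}_{\overline{\R}}f_+\subseteq[-a,\infty]$ and $\operatorname*{supp}_{\overline{\R}}e\subseteq[-a,a]$, we conclude $f\in\mathcal{U}'_{\boldsymbol{(\omega)},(-a)+}$ (resp.\ $f\in\mathcal{A}'_{\boldsymbol{\{\omega\}},(-a)+}$), which finishes the argument.

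The argument is thus entirely modular. The only point demanding care is the bookkeeping of quantifiers through the chain Proposition~\ref{boundaryvalues} $\to$ Theorem~\ref{cohsubexpultradistributions} $\to$ Case~1 of Subsection~\ref{subsection laplace}: every step must carry ``for some $h,\lambda$'' in the Beurling case and ``for every $h,\lambda$'' in the Roumieu case, and the exponents $(a+\varepsilon)$, $(h+\varepsilon)$ together with the convex term $\lambda\omega^*(\eta/\lambda)$ must be tracked so that $\widetilde G$, $G_0$ and $E$ land in exactly the spaces named in Theorem~\ref{cohsubexpultradistributions} and in \eqref{laplace1}. I expect this bookkeeping --- rather than any conceptual difficulty --- to be the main obstacle.
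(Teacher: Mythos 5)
Your proposal is correct and follows essentially the same route as the paper: the forward implication via Case 2 of Subsection \ref{subsection laplace}, and the converse by taking boundary values with Proposition \ref{boundaryvalues}, splitting $\widehat{g}$ via Theorem \ref{splittingsub}, comparing with the two-sided Laplace transform through the exactness in Theorem \ref{cohsubexpultradistributions}, and absorbing the entire discrepancy through Case 1 (your $E$, $e$ are the paper's $H$, $h$). The only (harmless) blemish is the reuse of the letter $f$ both for $\widehat{g}$ and for the final functional $f_{+}+e$.
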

\begin{proof}
We only treat the Roumieu case, the Beurling case is analogous. It has already been pointed out in Subsection \ref{subsection laplace} that the Laplace transform of an element of $\mathcal{A}_{{\boldsymbol{\{\omega\}}},(-a)+}$ satisfies the required bounds. Conversely, let $G$ be an analytic function on the upper half-plane satisfying \eqref{eqboundLaplace} for every $h,\lambda, \varepsilon > 0$. By Proposition \ref{boundaryvalues} there is $g \in \mathcal{K}'_{1,\boldsymbol{\{\omega\}}}(\R)$ such that
$$
g = \lim_{\eta \to 0^+} G( \cdot + i \eta), \qquad \mbox{in } \mathcal{K}'_{1,\boldsymbol{\{\omega\}}}(\R).
$$
Let $f = \widehat{g} \in \mathcal{A}'_{\boldsymbol{\{\omega\}}}(\R)$. By Theorem \ref{splittingsub}, there are $f_+ \in \mathcal{A}'_{\boldsymbol{\{\omega\}},(-a)+}$ and $f_- \in \mathcal{A}'_{\boldsymbol{\{\omega\}},a-}$ such that $f = f_+ - f_-$. Define
$$
\widetilde{G}(\zeta) =
\left\{
	\begin{array}{ll}
		\mbox{$\displaystyle \mathcal{L}\{f_+; \zeta\}$},  &  \eta > 0, \\ \\
		\mbox{$\displaystyle \mathcal{L}\{f_-; \zeta\}$},  &  \eta < 0.
	\end{array}
\right.
$$
Notice that $\widetilde{G} \in \mathcal{O}^{\boldsymbol{\{\exp\}},a}_{\boldsymbol{\{\omega\}}}(\C \backslash \R)$ and $\operatorname{bv}(\widetilde{G}) = g$. Hence, by Theorem \ref{cohsubexpultradistributions}, there is $H \in 
\mathcal{O}^{\boldsymbol{\{\exp\}},a}(\C)$ such that $G = \widetilde{G} + H$ on the upper half-plane. Since there is $h \in \mathcal{A}'[[-a,a]]= \mathcal{A}_{\boldsymbol{\{\omega\}}}'[[-a,a]]$ such that $H(\zeta) = \mathcal{L}\{h;\zeta\}$ (Case $1$ in Subsection \ref{subsection laplace}), we conclude that 
$G(\zeta) = \mathcal{L}\{f_+ + h;\zeta\}$.
\end{proof}

 If $\omega$ satisfies $(\alpha)$ but not $(NA)$, we must have $\omega(t) \asymp t$ and thus $\mathcal{K}_{1,\boldsymbol{(\omega)}}(\R) = \mathcal{U}_{\boldsymbol{(t)}}(\C)$ and $\mathcal{K}_{1,\boldsymbol{\{\omega\}}}(\R) = \mathcal{A}_{\boldsymbol{\{t\}}}(\R)$. In this case, the counterparts of Theorems \ref{cohsubexpultradistributions} and \ref{characterizationlaplace} go back to the work of Silva and Morimoto \cite{Morimoto-78,Silva}. For the sake of completeness, we end this article by stating these theorems. Let $a \geq 0$. We define $\mathcal{O}^{\boldsymbol{\exp},h,a}(\C \backslash \overline{T^\lambda})$ as the space of all $G \in \mathcal{O}(\C \backslash \overline{T^\lambda})$ such that
$$
\sup_{\zeta \in \C \backslash \overline{T^\lambda}} |G(\zeta)|e^{-(a + \varepsilon)|\eta| - (h+\varepsilon)|\xi|}< \infty,
$$
for every $\varepsilon > 0$, and 
$$
\mathcal{O}^{\boldsymbol{(\exp)},a} = \bigcup_{\lambda ,h > 0} \mathcal{O}^{\boldsymbol{\exp},h,a}(\C \backslash \overline{T^\lambda}), \qquad \mathcal{O}^{\boldsymbol{\{\exp\}},a}(\C \backslash \R) =\bigcap_{\lambda ,h > 0} \mathcal{O}^{\exp,h,a}(\C \backslash \overline{T^\lambda}).
$$
The proofs of the ensuing two results go along the same lines as those of Theorems \ref{cohsubexpultradistributions} and \ref{characterizationlaplace}, we therefore choose to omit them.
\begin{theorem}
Let $a\geq 0$. The sequences
$$
0 \longrightarrow \mathcal{O}^{\boldsymbol{(\exp)},a}(\C) \longrightarrow \mathcal{O}^{\boldsymbol{(\exp)},a} \xrightarrow{\phantom,\operatorname{bv}\phantom,}  \mathcal{U}'_{\boldsymbol{(t)}}(\C) \longrightarrow 0
$$
and
$$
0 \longrightarrow \mathcal{O}^{\boldsymbol{\{\exp\}},a}(\C) \longrightarrow \mathcal{O}^{\boldsymbol{\{\exp\}},a}(\C \backslash \R) \xrightarrow{\phantom,\operatorname{bv}\phantom,}  \mathcal{A}'_{\boldsymbol{\{t\}}}(\R) \longrightarrow 0
$$
are exact (the boundary value operator being interpreted in the sense of Section \ref{subreview}).
\end{theorem}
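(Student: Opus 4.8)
The plan is to transcribe the proofs of Theorems~\ref{cohsubexpultradistributions} and \ref{characterizationlaplace}, now with the weight $\omega(t)=t$ (which satisfies $(\alpha)$ and $(\gamma)_0$ but \emph{not} $(NA)$), feeding in the machinery of Sections~\ref{section subadditive}--\ref{section laplace} specialized to this weight. Recall from Subsection~\ref{subsection laplace} that when $\omega(t)\asymp t$ one has $\mathcal{K}_{1,\boldsymbol{(\omega)}}(\R)=\mathcal{U}_{\boldsymbol{(t)}}(\C)$ and $\mathcal{K}_{1,\boldsymbol{\{\omega\}}}(\R)=\mathcal{A}_{\boldsymbol{\{t\}}}(\R)$, so by Proposition~\ref{fouriersub} the Fourier transform is an automorphism of $\mathcal{U}'_{\boldsymbol{(t)}}(\C)$ (resp.\ of $\mathcal{A}'_{\boldsymbol{\{t\}}}(\R)$). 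I describe the Beurling case; the Roumieu one is entirely parallel.

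For exactness at the first two terms, injectivity of the inclusion $\mathcal{O}^{\boldsymbol{(\exp)},a}(\C)\hookrightarrow\mathcal{O}^{\boldsymbol{(\exp)},a}$ is trivial, and $\operatorname{bv}(G)=0$ for every entire $G\in\mathcal{O}^{\boldsymbol{(\exp)},a}(\C)$ by Cauchy's integral theorem (collapsing the contour). Conversely, since on every bounded substrip the factor $e^{(a+\varepsilon)|\eta|}$ is bounded, one has $\mathcal{O}^{\boldsymbol{(\exp)},a}\subseteq\mathcal{O}_{\boldsymbol{(t)}}$, so the boundary value map in the statement is the restriction of the one in Theorem~\ref{cohsub}$(i)$ with $\omega(t)=t$, whose kernel on $\mathcal{O}_{\boldsymbol{(t)}}$ is $\mathcal{P}_{\boldsymbol{(t)}}$ (equivalently, apply Corollary~\ref{edgesub}$(i)$ on each substrip and patch). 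Thus a $G$ with $\operatorname{bv}(G)=0$ extends to an entire function which still obeys $|G(\zeta)|\le C_\varepsilon e^{(a+\varepsilon)|\eta|+(h+\varepsilon)|\xi|}$ off the strip $\overline{T^\lambda}$ and is of exponential type inside it; a Phragm\'en--Lindel\"of argument for functions of exponential type in the strip $\overline{T^\lambda}$ (whose edge bounds are $O(e^{(h+\varepsilon)|\xi|})$) then propagates the bound to all of $\C$, so that $G\in\mathcal{O}^{\boldsymbol{(\exp)},a}(\C)$.

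For surjectivity, let $g\in\mathcal{U}'_{\boldsymbol{(t)}}(\C)$, put $f=\widehat{g}\in\mathcal{U}'_{\boldsymbol{(t)}}(\C)$, and use the support separation Theorem~\ref{splittingsub}$(i)$ with $\omega(t)=t$ to write $f=f_+-f_-$ with $f_+\in\mathcal{U}'_{\boldsymbol{(t)},(-a)+}$ and $f_-\in\mathcal{U}'_{\boldsymbol{(t)},a-}$. By the Laplace transform analysis in Subsection~\ref{subsection laplace} (the sub-case $\omega(t)\asymp t$ of Cases~2 and~3), $\mathcal{L}\{f_+;\cdot\}$ is holomorphic on a half-plane $\{\eta>\lambda_+\}$ and $\mathcal{L}\{f_-;\cdot\}$ on $\{\eta<-\lambda_-\}$, each with the required exponential bound. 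Setting $\lambda=\max(\lambda_+,\lambda_-)$ and defining $G$ on $\C\backslash\overline{T^\lambda}$ by gluing $\mathcal{L}\{f_+;\cdot\}$ on the upper part to $\mathcal{L}\{f_-;\cdot\}$ (with the sign dictated by the orientation of the boundary-value contour) on the lower part, one gets $G\in\mathcal{O}^{\boldsymbol{(\exp)},a}$, and the boundary-value relations recorded in Cases~2--3, together with the injectivity of the Fourier transform, yield $\operatorname{bv}(G)=g$. In the Roumieu case one argues identically via Theorem~\ref{splittingsub}$(ii)$; since there each Laplace transform is holomorphic on the whole half-plane, the resulting $G$ lies in $\mathcal{O}^{\boldsymbol{\{\exp\}},a}(\C\backslash\R)$.

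The only step that is not a formal transcription is the identification of $\ker\operatorname{bv}$ with $\mathcal{O}^{\boldsymbol{(\exp)},a}(\C)$ (resp.\ $\mathcal{O}^{\boldsymbol{\{\exp\}},a}(\C)$): one has to upgrade exponential bounds valid off a strip to bounds valid on all of $\C$, which is precisely a Phragm\'en--Lindel\"of statement for functions of exponential type in a strip (equivalently, the classical Paley--Wiener description of Fourier ultrahyperfunctions), and one has to check that the glued Laplace transforms land in the prescribed growth classes. Both are standard and go back to Silva and Morimoto, which is why the remaining bookkeeping can safely be left to the reader.
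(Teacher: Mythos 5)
Your proof follows exactly the route the paper intends and leaves to the reader: the kernel is identified through the $\omega(t)=t$ case of Theorem \ref{cohsub} (equivalently Theorems \ref{Beurlingcoh} and \ref{Roumieucoh}), and surjectivity comes from the Fourier transform, the support splitting of Theorem \ref{splittingsub}, and the $\omega(t)\asymp t$ part of the Laplace-transform discussion in Subsection \ref{subsection laplace}, precisely as in the proof of Theorem \ref{cohsubexpultradistributions}. The only cosmetic difference is your Phragm\'en--Lindel\"of step, which is dispensable: once $\operatorname{bv}(G)=0$ forces the entire extension to lie in $\mathcal{P}_{\boldsymbol{(t)}}$ (resp.\ $\mathcal{P}_{\boldsymbol{\{t\}}}$), one already has exponential bounds of some (resp.\ every) type on the exceptional strip, and combining these with the original estimates off the strip gives $G\in\mathcal{O}^{\boldsymbol{(\exp)},a}(\C)$ (resp.\ $G\in\mathcal{O}^{\boldsymbol{\{\exp\}},a}(\C)$) directly, since the Beurling class is a union over the type $h$.
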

\begin{theorem}
Let $a \geq 0$.
\begin{itemize}
\item[$(i)$]  Suppose $G$ is analytic on the half-plane $\{\zeta = \xi + i\eta \in \C \, : \, \eta > \lambda\}$ for some $\lambda > 0$, and satisfies 
\begin{equation}
\sup_{\eta > \lambda} |G(\zeta)|e^{-(a + \varepsilon)|\eta| - (h+\varepsilon)|\xi|}< \infty,
\label{charlaplace2}
\end{equation}
for some $h > 0$ and every $\varepsilon > 0$, then there is $f \in \mathcal{U}'_{{\boldsymbol{(t)}},(-a)+}$ with $G(\zeta) = \mathcal{L}\{f;\zeta\}$. Conversely, the Laplace transform $\mathcal{L}\{f;\zeta\}$ of any $f \in \mathcal{U}'_{{\boldsymbol{(t)}},(-a)+}$ is analytic on some half-plane $\{\zeta = \xi + i\eta \in \C \, : \, \eta > \lambda\}$, $\lambda > 0$, and satisfies \eqref{charlaplace2} for some $h > 0$ and every $\varepsilon > 0$.
\item[$(ii)$] A function $G$ analytic on the upper half-plane satisfies \eqref{charlaplace2} for every $h,\lambda, \varepsilon > 0$ if and only if there is $f \in \mathcal{A}'_{{\boldsymbol{\{t\}}},(-a)+}$ such that $G(\zeta) = \mathcal{L}\{f;\zeta\}$.
\end{itemize}
\end{theorem}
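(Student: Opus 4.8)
The plan is to run the argument of the proof of Theorem~\ref{characterizationlaplace} with the constant weight $\omega(t)=t$ (which satisfies $(\alpha)$ and $(\gamma)_0$), replacing Proposition~\ref{boundaryvalues} and Theorem~\ref{cohsubexpultradistributions} by the boundary value theory of Section~\ref{subreview} for $\mathcal{U}'_{\boldsymbol{(t)}}(\C)$ and $\mathcal{A}'_{\boldsymbol{\{t\}}}(\R)$, namely by the theorem stated immediately before the present one, and using the Laplace transform estimates for the case $\omega(t)\asymp t$ collected in Subsection~\ref{subsection laplace}. I shall detail the Roumieu case of $(ii)$; the Beurling case of $(i)$ follows the same scheme, with the upper half-plane replaced throughout by a shifted half-plane $\{\zeta=\xi+i\eta:\eta>\lambda\}$, the space $\mathcal{O}^{\boldsymbol{\{\exp\}},a}(\C\backslash\R)$ replaced by $\mathcal{O}^{\boldsymbol{(\exp)},a}$, the splitting taken from Theorem~\ref{splittingsub}$(i)$, and the Fourier identification $\mathcal{K}'_{1,\boldsymbol{(t)}}(\R)=\mathcal{U}'_{\boldsymbol{(t)}}(\C)$ (legitimate by Proposition~\ref{fouriersub}) kept in mind.

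The direct implication is essentially a citation: if $f\in\mathcal{A}'_{\boldsymbol{\{t\}},(-a)+}$, then by the discussion of Case~2 ($\omega(t)\asymp t$) in Subsection~\ref{subsection laplace}, $\mathcal{L}\{f;\zeta\}$ is analytic on the upper half-plane and satisfies \eqref{charlaplace2} for all $h,\lambda,\varepsilon>0$. For the converse, let $G$ be analytic on the upper half-plane and satisfy \eqref{charlaplace2} for all $h,\lambda,\varepsilon>0$. First I would extend $G$ to $\C\backslash\R$ by setting $\widetilde{G}(\zeta)=G(\zeta)$ for $\eta>0$ and $\widetilde{G}(\zeta)=0$ for $\eta<0$; since $|\eta|=\eta$ on $\{\eta>0\}$, the bounds \eqref{charlaplace2}, valid for all parameters, show immediately that $\widetilde{G}\in\mathcal{O}^{\boldsymbol{\{\exp\}},a}(\C\backslash\R)$. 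By the preceding theorem, $g:=\operatorname{bv}(\widetilde{G})$ is a well-defined element of $\mathcal{A}'_{\boldsymbol{\{t\}}}(\R)$, and I set $f=\widehat{g}\in\mathcal{A}'_{\boldsymbol{\{t\}}}(\R)$, using that $\mathcal{A}_{\boldsymbol{\{t\}}}(\R)$ is Fourier invariant. By the support separation theorem, Theorem~\ref{splittingsub}$(ii)$ with $\omega(t)=t$, there are $f_+\in\mathcal{A}'_{\boldsymbol{\{t\}},(-a)+}$ and $f_-\in\mathcal{A}'_{\boldsymbol{\{t\}},a-}$ with $f=f_+-f_-$.

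Next, using the Case~2 and Case~3 descriptions of $\mathcal{L}$ in Subsection~\ref{subsection laplace}, the function $\widetilde{G}_0$ equal to $\mathcal{L}\{f_+;\zeta\}$ on $\{\eta>0\}$ and to $\mathcal{L}\{f_-;\zeta\}$ on $\{\eta<0\}$ lies in $\mathcal{O}^{\boldsymbol{\{\exp\}},a}(\C\backslash\R)$, and its boundary value is $\operatorname{bv}(\widetilde{G}_0)=\mathcal{F}^{-1}(f_+)-\mathcal{F}^{-1}(f_-)=\mathcal{F}^{-1}(f)=g$. Hence $\operatorname{bv}(\widetilde{G}-\widetilde{G}_0)=0$, so by the exactness of the preceding theorem the difference $H:=\widetilde{G}-\widetilde{G}_0$ extends to an entire function lying in $\mathcal{O}^{\boldsymbol{\{\exp\}},a}(\C)$. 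On the upper half-plane this reads $G=\mathcal{L}\{f_+;\cdot\}+H$, and by Case~1 of Subsection~\ref{subsection laplace} there is $h_0\in\mathcal{A}'[[-a,a]]=\mathcal{A}'_{\boldsymbol{\{t\}}}[[-a,a]]$ with $H=\mathcal{L}\{h_0;\cdot\}$. Since $f_+$ and $h_0$ are both supported in $[-a,\infty]$, the element $f_++h_0$ lies in $\mathcal{A}'_{\boldsymbol{\{t\}},(-a)+}$ and $G(\zeta)=\mathcal{L}\{f_++h_0;\zeta\}$ on the upper half-plane, as desired. In the Beurling case one proceeds identically: $G$ is extended by $0$ on $\{\eta<-\lambda\}$ to a member of $\mathcal{O}^{\boldsymbol{\exp},h,a}(\C\backslash\overline{T^\lambda})\subseteq\mathcal{O}^{\boldsymbol{(\exp)},a}$, the preceding theorem and Theorem~\ref{splittingsub}$(i)$ produce $g$ and the splitting $f=f_+-f_-$, the Case~2/Case~3 estimates (now yielding holomorphy only on some half-plane $\{\eta>\lambda'\}$) produce $\widetilde{G}_0$, and Case~1 absorbs the entire remainder.

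The step I expect to need the most care --- the nearest thing to an obstacle --- is checking that the piecewise-defined functions $\widetilde{G}$ and $\widetilde{G}_0$ genuinely belong to $\mathcal{O}^{\boldsymbol{\{\exp\}},a}(\C\backslash\R)$ (respectively $\mathcal{O}^{\boldsymbol{(\exp)},a}$), with compatible exponential types on the two half-planes, and that $\operatorname{bv}(\widetilde{G}_0)$ really equals $\mathcal{F}^{-1}(f_+-f_-)$, with the sign dictated by the orientation convention of Section~\ref{sect-support}; these are precisely the points where the growth and sign conventions of the three Laplace transform cases and of the boundary value operator must be invoked consistently. Beyond this bookkeeping, no complex-analytic input is required that was not already developed for Theorem~\ref{characterizationlaplace}.
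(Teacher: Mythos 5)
Your proposal is correct and follows essentially the route the paper intends: the paper omits this proof, stating it goes along the same lines as Theorems \ref{cohsubexpultradistributions} and \ref{characterizationlaplace}, and you carry out precisely that adaptation with $\omega(t)=t$. In particular, you rightly replace Proposition \ref{boundaryvalues} (unavailable here since $(NA)$ fails) by the contour boundary value of Section \ref{subreview} together with the preceding exact-sequence theorem, which is exactly the modification required.
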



\begin{thebibliography}{99}
\setlength{\itemsep}{0pt}

\bibitem{Bjorck} G.~Bj\"orck, \emph{Linear partial differential operators and generalized distributions}, Ark. Mat. \textbf{6} (1966), 351--407.

\bibitem{b-n2004} E.~Br\"{u}ning, S.~Nagamachi, \emph{Relativistic quantum field theory with a fundamental length,} J. Math. Phys. \textbf{45} (2004), 2199--2231. 

\bibitem{b-n2010} E.~Br\"{u}ning, S.~Nagamachi, \emph{
Edge of the wedge theorem for tempered ultra-hyperfunctions,} Complex Var. Elliptic Equ. \textbf{59} (2014), 787--808. 

\bibitem{PilipovicK} R.~Carmichael, A.~Kami\'nski, S.~Pilipovi\'c, \emph{Boundary values and convolution in ultradistribution spaces}, World Scientific Publishing Co. Pte. Ltd., Hackensack, NJ, 2007.

\bibitem{C-M} 
    R.~Carmichael, D.~Mitrovi\'{c}, 
    \emph{Distributions and analytic functions}, 
    Pitman Research Notes in Mathematics Series, 206, Longman Scientific \& Technical, Harlow; John Wiley \& Sons, Inc., New York, 1989.

\bibitem{c-k-l-1997} S.-Y.~Chung, D.~Kim, S.~Lee, \emph{Characterization for Beurling-Bj\"{o}rck space and Schwartz space,} Proc. Amer. Math. Soc. \textbf{125} (1997), 3229--3234.



\bibitem{DeWilde} M.~De Wilde, \emph{Crit\`eres de densit\'e et de s\'eparation dans des limites projectives et inductives d\'enombrables},  Bull. Soc. Roy. Sci. Li\'ege \textbf{41} (1973), 155--162.

\bibitem{d-p-v}P.~Dimovski, S.~Pilipovi\'{c}, J.~Vindas, \emph{Boundary values of holomorphic functions in translation-invariant distribution spaces,} Complex Var. Elliptic Equ. \textbf{60} (2015), 1169--1189.

\bibitem{Dubinsky} E. Dubinsky, \emph{
Projective and inductive limits of Banach spaces,}  Studia Math. \textbf{42} (1972), 259--263. 


\bibitem{estrada-vindasB2013} R.~Estrada, J.~Vindas, \emph{On Borel summability and analytic functionals,} Rocky Mountain J. Math. \textbf{43} (2013), 895--903.

\bibitem{f-g-g}	
    C.~Fern\'{a}ndez, A.~Galbis, M.~C.~G\'{o}mez-Collado, 
    \emph{(Ultra)distributions of $L_p$-growth as boundary values of holomorphic functions}, 
    Rev. R. Acad. Cienc. Exactas F\'{i}s. Nat. Ser. A Mat. \textbf{97} (2003),  243--255.

\bibitem{franco-l-r}D.~H.~T.~Franco, J.~A.~Louren\c{c}o, L.~H.~Renoldi, \emph{The ultrahyperfunctional approach to non-commutative quantum field theory,} J.~Phys.~A \textbf{41} (2008), 095402, 21 pp.

\bibitem{franco-renoldi2007} D.~H.~T.~Franco, L.~H.~Renoldi, \emph{A note on Fourier-Laplace transform and analytic wave front set in theory of tempered ultrahyperfunctions,} J. Math. Anal. Appl. \textbf{325} (2007), 819--829.

\bibitem{Gelfand} I.~M.~Gelfand, G.~E.~Shilov, \emph{Generalized functions. Vol. 2. Spaces of Fundamental and Generalized Functions}, Academic Press, New York-London, 1968.

\bibitem{Gelfand2} I.~M.~Gelfand, G.~E.~Shilov, \emph{Generalized functions. Vol. 3. Theory of differential equations}, Academic Press, New York, 1967.

\bibitem{hasumi} M. Hasumi, \emph{Note on the $n$-dimensional tempered ultra-distributions}, T\^{o}hoku Math. J. \textbf{13} (1961), 94--104.


\bibitem{Horvath} J.~ Horv\'ath, \emph{Topological vector spaces and distributions}, Addison-Wesley, Reading, MA, 1966.

\bibitem{Hoskins} R.~F.~Hoskins, J.~Sousa Pinto, \emph{Theories of generalised functions. Distributions, ultradistributions and other generalised functions}, Horwood Publishing Limited, Chichester, 2005.

\bibitem{Kaneko}  A.~Kaneko,  \emph{Introduction to hyperfunctions},  Kluwer Academic Publishers Group, Dordrecht; SCIPRESS, Tokyo, 1988. 

\bibitem{Kawai} T.~Kawai, \emph{On the theory of Fourier hyperfunctions and its applications to partial differential equations with constant coefficients,} J. Fac. Sci. Tokyo Sect. IA Math. \textbf{17} (1970), 467--517.

\bibitem{Komatsu} H.~Komatsu, \emph{Ultradistributions I. Structure theorems and a characterization}, J. Fac. Sci. Tokyo Sect. IA Math. \textbf{20} (1973), 25--105. 

\bibitem{Koosis} P.~Koosis, \emph{Introduction to $H_p$ spaces}, London Mathematical Society Lecture Note Series, 40, Cambridge University Press, Cambridge-New York, 1980.

\bibitem{korevaarbook} J.~Korevaar, \emph{Tauberian theory. A century of developments}, Grundlehren der Mathematischen Wissenschaften, 329, Springer-Verlag, Berlin, 2004.



\bibitem{Kothe2} G.~K\"othe, \emph{Die Randverteilungen analytischer Funktionen}, Math. Z. \textbf{57} (1952), 13--33.

\bibitem{Kothe3} G.~K\"othe, \emph{Dualit\"at in der Funktionentheorie}, J. Reine Angew. Math. \textbf{191} (1953), 30--49.

\bibitem{Langenbruch} M.~Langenbruch, \emph{Bases in spaces of analytic germs}, Ann. Polon. Math. \textbf{106} (2012), 223--242.

\bibitem{Meise} R.~Meise, D.~Vogt, \emph{Introduction to functional analysis}, Clarendon Press, Oxford, 1997.

\bibitem{Morimoto} M.~Morimoto, \emph{Sur les ultradistributions cohomologiques}, Ann. Inst. Fourier (Grenoble) \textbf{19} (1969), 129--153.

\bibitem{Morimoto-78} M.~Morimoto, \emph{Analytic functionals with non-compact carrier,} Tokyo J. Math. \textbf{1} (1978), 77--103.
 
\bibitem{Morimoto2} M.~Morimoto, \emph{An introduction to Sato's hyperfunctions}, A.M.S., Providence, 1993.

\bibitem{N-B} S.~Nagamachi, E.~Br\"{u}ning, \emph{Frame independence of the fundamental length in relativistic quantum field theory,} J. Math. Phys. \textbf{51} (2010), 022305, 18 pp. 

\bibitem{oka-yoshino} Y.~Oka, K.~Yoshino, \emph{Solvability of Lewy equation in the space of the tempered ultrahyperfunctions,} J. Pseudo-Differ. Oper. Appl. \textbf{3} (2012), 321--328. 

\bibitem{Park} Y.~S.~Park, M.~Morimoto, \emph{Fourier ultra-hyperfunctions in the Euclidean $n$-space}, J. Fac. Sci. Univ. Tokyo Sect. IA Math. \textbf{20} (1973), 121--127.

\bibitem{Petzsche} H.-J.~Petzsche, D.~Vogt, \emph{Almost analytic extension of ultradifferentiable functions and the boundary values of holomorphic functions,} Math. Ann. \textbf{267} (1984), 17--35.

\bibitem{Petzsche2} H.-J.~Petzsche, \emph{On E. Borel's theorem,} Math. Ann. \textbf{282} (1988), 299--313.

\bibitem{PPV} S.~Pilipovi\'c, B.~Prangoski, J.~Vindas, \emph{On quasianalytic classes of Gelfand-Shilov type. Parametrix and convolution}, preprint, arXiv:1507.08331.

\bibitem{Silva1956} J.~Sebasti\~{a}o e Silva, \emph{Le calcul op\'{e}rationnel au point de vue des distributions,} Portugal. Math. \textbf{14} (1956), 105--132. 

\bibitem{Silva} J.~Sebasti\~{a}o e Silva, \emph{Les fonctions analytiques comme ultra-distributions dans le calcul op\'erationnel}, Math. Ann. \textbf{136} (1958), 58--96.

\bibitem{soloviev} M.~A.~Soloviev, \emph{Quantum field theory with a fundamental length: a general mathematical framework,} J. Math. Phys. \textbf{50} (2009), 123519, 17 pp. 

\bibitem{SousaPinto} J.~Sousa Pinto, \emph{Silva tempered ultradistributions,} Portugal.~Math. \textbf{47} (1990), 267--292. 

\bibitem{Treves} F.~Tr\`{e}ves, \emph{Topological vector spaces, distributions and kernels}, Academic Press, New York, 1967.

\bibitem{Widder} D.~V.~Widder, \emph{Functions harmonic in a strip},  Proc. Amer. Math. Soc. \textbf{12} (1961), 67--72. 

\bibitem{y-s2006} K.~Yoshino, M.~Suwa, \emph{The structure of positive definite Fourier ultra-hyperfunctions,} Complex Var. Elliptic Equ. \textbf{51} (2006), 611--624. 

 \bibitem{zharinov} V.~V.~Zharinov, \emph{Fourier-ultrahyperfunctions,} (Russian) Izv. Akad. Nauk SSSR Ser. Mat. \textbf{44} (1980), 533--570; English translation: Math. USSR-Izv. \textbf{16} (1981), 479--511.
 
 \bibitem{zielezny} Z.~Ziele\'{z}ny, \emph{On the space of convolution operators in $\mathcal{K}'_{1}$}, Studia Math. \textbf{31} (1968), 111--124. 

\end{thebibliography}
\end{document}